\documentclass[11pt,reqno]{amsart}
\usepackage[T1]{fontenc}
\usepackage{lmodern,amsmath,amssymb,amsthm,mathtools,mathrsfs}
\usepackage[a4paper,margin=27mm]{geometry}
\usepackage{microtype,enumitem,needspace}
\usepackage[hidelinks,pdfusetitle]{hyperref}
\hypersetup{
 pdftitle={Hyperbolic Area Methods in Meromorphic Dynamics and Elliptic Polynomial Skew Products},
 pdfsubject={Hyperbolic Area Methods in Meromorphic Dynamics and Elliptic Polynomial Skew Products},
 pdfkeywords={wandering domains, hyperbolic area, singular orbit, meromorphic map, polynomial skew product}
}
\newcommand{\proofstep}[1]{\par\smallskip\noindent\emph{#1.}\enspace}
\setlist[enumerate]{leftmargin=2em,itemsep=2pt}
\newcommand{\C}{\mathbb C}
\newcommand{\D}{\mathbb D}
\newcommand{\R}{\mathbb R}
\newcommand{\Z}{\mathbb Z}
\newcommand{\sphere}{\widehat{\mathbb C}}
\newcommand{\Fat}{\mathcal F}
\newcommand{\Jul}{\mathcal J}
\newcommand{\dd}{\,\mathrm d}
\newtheorem{theorem}{Theorem}[section]
\newtheorem{maintheorem}{Theorem}

\newtheorem{proposition}[theorem]{Proposition}
\newtheorem{lemma}[theorem]{Lemma}
\newtheorem{corollary}[theorem]{Corollary}
\theoremstyle{definition}
\newtheorem{definition}[theorem]{Definition}
\newcounter{introductionquestion}
\newenvironment{question}[1]{\par\medskip\noindent\refstepcounter{introductionquestion}\textbf{Question~\theintroductionquestion}\ \textnormal{(#1).}\ \itshape}{\par\medskip}
\newtheorem{example}[theorem]{Example}
\numberwithin{equation}{section}
\title[Hyperbolic Area Methods and Skew Products]{Hyperbolic Area Methods in Meromorphic Dynamics and Elliptic Polynomial Skew Products}

\author{Zihao Ye}
\address{Zihao Ye: School of Mathematical Sciences, East China Normal University, Shanghai 200241, China}
\email{52290155037@stu.ecnu.edu.cn}

\date{September 24, 2026}
\subjclass[2020]{Primary 37F10; Secondary 30D05, 30F45}
\keywords{wandering Fatou components, hyperbolic area, singular orbits, meromorphic dynamics, elliptic skew products}

\newcommand{\Chat}{\sphere}
\newcommand{\F}{\mathcal F}
\newcommand{\N}{\mathcal N}
\newcommand{\RR}{\mathcal R}
\newcommand{\SSS}{\mathcal S}
\DeclareMathOperator{\dist}{dist}
\DeclareMathOperator{\Rea}{Re}
\DeclareMathOperator{\Ima}{Im}
\DeclareMathOperator{\Int}{Int}
\DeclareMathOperator{\Log}{Log}
\theoremstyle{remark}

\begin{document}
\begin{abstract}
We use hyperbolic area and covering geometry to study wandering Fatou components and singular orbits.
We give proofs without quasiconformal deformation of the rational no-wandering theorem and the known no-wandering results in Bergweiler's Question~9.
The framework also excludes wandering for entire functions with compact singular sets whose derived sets lie in the Fatou set.
For Bergweiler's Question~8, every wandering component of a transcendental meromorphic map has a subsequence of iterates converging locally uniformly to infinity.
For Bergweiler's Question~4, we establish hyperbolic separation near singular-free Baker cycles after deleting any finite set of original singular values as orbit generators.
For Bergweiler's Question~11, corresponding finite-deletion boundary estimates hold along wandering tails.
In higher dimension, a relative-area theorem excludes bounded wandering components of polynomial skew products over bounded bases carrying an absolutely continuous invariant probability of full support.
Applied to elliptic polynomial skew products over Siegel rotation domains, it yields a local no-wandering theorem without conditions on the fiber critical orbits or continuity of fiber Julia sets.
A global criterion covers regular skew products on $\mathbb P^2$ whose bounded base Fatou components eventually enter periodic Siegel disks, including $(\lambda z+z^d,q(z,w))$ for Brjuno $\lambda$ and polynomial $q$ of total degree at most $d$ with a nonzero $w^d$ coefficient.
\end{abstract}
\maketitle
{\small\tableofcontents}

\section{Introduction}

Sullivan's theorem \cite{Sullivan} settled the Fatou--Julia question of wandering domains for rational maps.
Eremenko and Lyubich \cite{EL} and Goldberg and Keen \cite{GoldbergKeen} obtained the corresponding conclusion for entire functions with finitely many inverse singular values; further meromorphic no-wandering theorems appear in \cite{BakerKotusLu,Stallard,BergweilerNewton,BergweilerTerglane}.
Eremenko and Lyubich \cite{EremenkoLyubichProblems} emphasized the role of finite-dimensional quasiconformal deformation in the finite-type setting.
Bergweiler subsequently asked for proofs of the rational and meromorphic theorems that avoid quasiconformal mappings \cite[Section~4.6, Question~9]{Bergweiler}.
Here the alternative mechanism is geometric: covering maps preserve hyperbolic area, adding finitely many punctures has a finite area cost, and a wandering tail gives rise to disjoint regions of unbounded intrinsic area.
We develop this mechanism into covering and area estimates and then study wandering orbits and singular values by related methods.

Write $\Fat(f)$ and $\Jul(f)$ for the Fatou and Julia sets, and $\sphere$ for the Riemann sphere.
By a forward Fatou component $U_n$ we mean the \emph{full} component containing $f^n(U_0)$; $U_0$ is wandering if these components are pairwise distinct.
For a transcendental entire function, $S(f)$ is the closure in $\C$ of its finite critical and asymptotic values.
For a transcendental meromorphic function, $S(f)$ denotes the closed spherical inverse-singular set; the latter convention records a singularity over infinity when appropriate.
All iterates are taken only where they are defined, and an orbit reaching infinity stops there.
Bergweiler's $\operatorname{sing}(f^{-1})$ consists of finite critical and asymptotic values and their finite limit points \cite[Section~4.3]{Bergweiler}; in the questions below we retain this historical notation and use our spherical $S(f)$ in the theorems.
A prime denotes the spherical derived set.

\begin{question}{cf. Bergweiler, Question~9 \cite[Section~4.6]{Bergweiler}}\label{question:q9}
Can the no-wandering results called Theorems~11 and~12 in Bergweiler's article be proved without quasiconformal mappings?
\end{question}

The two historical theorems concern rational maps and four classes of transcendental meromorphic maps, respectively.
With $\rho(f)$ the Nevanlinna order, the classes in Bergweiler's statement are
\begin{align*}
 \SSS&=\{f:|S(f)|<\infty\},\\
 \F&=\{f(z)=z+R(z)e^{Q(z)}:
               R\text{ rational},\ Q\text{ polynomial}\},\\
 \N&=\{f:\rho(f)<\infty,\ f'=r e^p(f-z),
               \ r\text{ rational},\ p\text{ polynomial}\},\\
 \RR_1&=\{f:f'=r(f-z)^2,\ r\text{ rational}\},\\
 \RR_2&=\{f:f'=r(f-z)(f-\tau),
               \ r\text{ rational},\ \tau\in\C\},
 \qquad\RR=\RR_1\cup\RR_2.
\end{align*}
Only the definition of $\N$ explicitly imposes finite order.

\begin{maintheorem}\label{thm:A-q9}\label{core:sullivan}\label{thm:main}
The following no-wandering conclusions admit proofs without quasiconformal mappings.
\begin{enumerate}[label=\textup{(\roman*)}]
\item Every Fatou component of a rational map
$R:\sphere\to\sphere$ of degree at least two is preperiodic.
\item No transcendental meromorphic function in
$\SSS\cup\F\cup\N\cup\RR$ has a wandering Fatou component.
\end{enumerate}
\end{maintheorem}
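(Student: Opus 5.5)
Throughout I write $P(f)$ for the postsingular set: the closure of the forward orbits of the singular values. The plan is to run the hyperbolic-area mechanism sketched in the introduction, in three stages.

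\emph{Stage 1: a covering and area lemma.} Let $W\subset\sphere$ be a hyperbolic domain and let $g:V\to W$ be a covering map. Then $g$ is a local isometry for the hyperbolic metrics, so for any measurable $E\subset W$ the hyperbolic area of $g^{-1}(E)$ is the degree times the area of $E$. Deleting finitely many points from $W$ changes the hyperbolic area of a fixed compact set only by a controlled amount; for a thrice-punctured sphere the total area is $2\pi$, and each extra puncture adds $2\pi$ by Gauss--Bonnet.

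\emph{Stage 2: the rational case.} Let $U_0$ be wandering for $R$. First I reduce to the case where $U_n\cap P(R)=\emptyset$ for all large $n$. This holds because only finitely many critical points exist, so only finitely many $U_n$ contain critical points. Passing to a tail, $f^n:U_0\to U_n$ is a covering. I also use the standard fact that wandering components are simply connected eventually (the Baker-type argument for rational maps via area shrinking of $U_n$).

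Next I use Stage 1 on $\Omega_N=\sphere\setminus\overline{P_N}$, where $P_N$ is a finite subset of $P(R)$ chosen so that $\Omega_N$ is hyperbolic. I pick a small disk $D\Subset U_0$ of fixed hyperbolic area in $U_0$. Since the $U_n$ are disjoint, the sets $f^n(D)$ are disjoint. The key estimate is a uniform lower bound on the $\Omega_N$-area of $f^n(D)$. This follows from comparing the density of $U_n$ with that of $\Omega_N$, using Schwarz--Pick for the inclusion $U_n\subset\Omega_N$ together with the bound $\rho_{U_n}\le\rho_{\Omega_N}\cdot C$ on compacta away from $\partial U_n$. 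The constant $C$ comes from the fact that the finitely many punctures in $P_N$ lie outside $U_n$, so Koebe distortion applies.

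Summing over $n$ gives infinite area inside a region of finite hyperbolic area, which is a contradiction. The catch is that $P(R)$ is usually infinite, so $\Omega=\sphere\setminus P(R)$ may have infinite area. I therefore choose $P_N$ with $|P_N|=N$ adapted to the first $N$ iterates. The main obstacle is a counting estimate: the total area $2\pi(N-2)$ must be beaten by the $\gtrsim N$ disjoint images once the comparison constant is shown to be independent of $N$. I would try to obtain that constant from a \emph{fixed} hyperbolic disk in $U_n$ containing $f^n(D)$, with the punctures outside its double.

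\emph{Stage 3: the transcendental classes.}
\begin{enumerate}[label=\textup{(\roman*)}]
\item For the class $\SSS$, $P(f)$ is generated by finitely many orbits, and the same argument applies. Transcendental covering maps still preserve area, and the possibility $f^n\to\infty$ along the tail is handled by including $\infty$ among the punctures.
\item For the classes $\F$, $\N$ and $\RR$, I would use their explicit structure. The singular values are finitely many up to behaviour near fixed points, where $f(z)-z$ or $f-\tau$ controls the dynamics. Deleting these finitely many distinguished orbits reduces each case to the $\SSS$-type area count, with one additional argument that wandering orbits cannot accumulate only at the distinguished points. For this last point I would invoke Schwarz--Pick near the attracting or parabolic structure coming from the differential equations.
\end{enumerate}
I expect the uniform comparison constant in Stage 2 to remain the hardest step in all cases.
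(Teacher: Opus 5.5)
Your proposal has a genuine gap at the step you flag as hardest. A bound $\rho_{U_n}\le C\rho_{\Omega_N}$ on $f^n(D)$, uniform in $n$, is not a distortion statement at all. By \eqref{eq:relative-lower} and \eqref{eq:relative-upper} it amounts to a uniform positive lower bound for $d_{\Omega_N}(f^n(z),\Omega_N\setminus U_n)$. That set contains $\partial U_n\subset\Jul(R)$, not just your punctures, and Koebe distortion inside $U_n$ says nothing about how thinly $U_n$ sits in $\Omega_N$.

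For a fixed finite puncture set the bound is actually false. The disjoint $U_n$ have spherical inradii at $f^n(z)$ tending to $0$, and since punctures are cusps this forces $d_{\Omega_N}(f^n(z),\Omega_N\setminus U_n)\to0$, hence $\rho_{U_n}/\rho_{\Omega_N}\to\infty$ there. Letting the punctures depend on $N$ does not rescue the count. By domain monotonicity and Schwarz--Pick, $\alpha_{\Omega_N}(f^n(D))\le\alpha_{U_0}(D)$, so in your normalization $N$ images of a small disk carry total area at most $N\alpha_{U_0}(D)<2\pi(N-2)$ for large $N$.

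Two further points fail. An arbitrary finite subset of $P(R)$ is not forward invariant, so there is no inclusion of a pulled-back domain in $\Omega_N$ along which to compare areas. Your reduction to $U_n\cap P(R)=\varnothing$ is also false, since a critical point in $U_0$ keeps its orbit in every $U_n$. The covering property itself, which only needs the tail to avoid the finitely many critical values, is fine.

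The missing idea is to bound the area \emph{loss} rather than bound image areas from below. The paper punctures at increasing finite unions $Q_j$ of repelling cycles; these are forward invariant, lie in $\Jul(R)$, and become dense there. With $A_j=\sphere\setminus Q_j$ and $B_j=A_j\setminus\operatorname{CV}(R)$, one has $R^{-1}(B_j)\subset A_j$ and a covering $R:R^{-1}(B_j)\to B_j$. Hence $R^*\alpha_{B_j}=\alpha_{R^{-1}(B_j)}\ge\alpha_{A_j}$, while the defect $\alpha_{B_j}-\alpha_{A_j}$ has total mass $|\operatorname{CV}(R)\setminus Q_j|\le2\deg R-2$ by Gauss--Bonnet (areas as in Definition~\ref{def:area}). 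Integrate over $\Omega=\bigsqcup_nV_n$ for a simply connected tail, where $R:\Omega\to\Omega\setminus V_0$ is biholomorphic, and cancel the finite term $\alpha_{A_j}(\Omega\setminus V_0)$. This gives $\alpha_{A_j}(V_0)\le2\deg R-2$ for all $j$, whereas $\alpha_{A_j}(V_0)\uparrow\alpha_{V_0}(V_0)=\infty$ by Lemma~\ref{core:kernel}. The large mass is the infinite intrinsic area of $V_0$, recovered by dense Julia markings; no comparison constant and no counting in $N$ is needed.

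Stage~3 inherits this gap and adds two more.

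For $\SSS$, a meromorphic wandering tail need not become simply connected, and its transitions need not be injective. The paper splits the tail by its deck groups.
\begin{itemize}
\item A thin tail (annuli with cumulative degree tending to infinity) admits no jointly injective packet of fixed size. It is excluded topologically by Theorem~\ref{q11r:reset-propagation}, which yields infinitely many shrinking Jordan sides each containing two singular values.
\item In the thick case, the infinite area of $\D/\bigcup_n\Gamma_n$ (Lemma~\ref{lem:large-disks}) gives jointly injective, disjoint packets $Q$ of intrinsic area exceeding the budget $|S|$. Only then does Lemma~\ref{lem:finite-loss} give $\alpha_{X_j}(f^n(Q))\ge\alpha_{X_j}(Q)-|S|>1$ in the finite-area surface $X_j=\C\setminus P_j$ (Theorem~\ref{thm:finite-type}).
\end{itemize}
The thick case is your Stage~2 picture, but the lower bound comes from a loss budget and needs a large initial packet, not a small disk.

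For $\F$, $\N$ and $\RR$ the singular sets are infinite, with infinitely many critical values tending to $\infty$. So deleting finitely many orbits never gives a finite budget, and the real difficulty is at $\infty$, not near attracting or parabolic points. The paper removes a closed forward-invariant carrier $A\subset\Fat(f)$, lying in invariant components and containing all but finitely many singular values (Proposition~\ref{prop:traps}, Lemma~\ref{nr:root-carrier}, \eqref{rr:struct-14}). Since $\C\setminus(A\cup P)$ has infinite area near $\infty$, the area argument gives only local uniform escape with one-step hyperbolic derivatives tending to one (Proposition~\ref{prop:carrier}). The contradiction needs four further ingredients.
\begin{itemize}
\item Class-specific expansion: off a narrow channel, every point is at bounded hyperbolic distance from a preimage of $A$ or a pole (Propositions~\ref{prop:net} and~\ref{nr:mesh}, Lemma~\ref{rr:pole-net}).
\item A proof that the channel leads into an invariant Fatou component (Lemmas~\ref{lem:deep} and~\ref{nr:algebraic-channel}, and the WKB capture for $\RR_1$).
\item Exclusion of thin tails, via Stallard's topological theorem for $\F$ (Lemma~\ref{lem:topologyF}) and the pole/fixed-point annulus criterion of Theorem~\ref{thm:thin-pole-gap} for $\N$ and $\RR$.
\item Analytic inputs such as Lemmas~\ref{nr:polynomial-exponent} and~\ref{lem:R2-finite-order}.
\end{itemize}
Schwarz--Pick near the fixed points of the differential equations supplies none of these.
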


The rational assertion is Sullivan's theorem \cite{Sullivan}; degree-at-most-one maps have the same conclusion by elementary classification.
For the entire subclass of $\SSS$, the classical no-wandering result was proved by Eremenko and Lyubich \cite{EL} and independently by Goldberg and Keen \cite{GoldbergKeen}; Baker, Kotus and L\"u proved the meromorphic $\SSS$ result \cite{BakerKotusLu}.
The remaining assertions were obtained by Stallard for $\F$ \cite{Stallard}, Bergweiler for $\N$ \cite{BergweilerNewton}, and Bergweiler and Terglane for $\RR$ \cite{BergweilerTerglane}.
The contribution of Theorem~\ref{thm:A-q9} is to prove the full stated scope by area estimates, covering arguments and differential equations, without the quasiconformal deformations used in the classical proofs.
All cases in Question~\ref{question:q9} are proved in Section~\ref{sec:q9} by this hyperbolic-area method.
In particular, its $\RR_2$ assertion does not add a finite-order assumption; the required growth bound is derived from the Riccati equation in its proof.
A related result established in the first version of this paper is retained here as a supplement:

\begin{theorem}\label{core:singular-accumulation}
Let $f:\C\to\C$ be transcendental entire.
If $S(f)$ is compact and its derived set satisfies $S(f)'\subset\Fat(f)$, then $f$ has no wandering Fatou components.
\end{theorem}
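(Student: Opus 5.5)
The plan is to reduce to the finite-type area argument underlying Theorem~\ref{thm:A-q9}(ii) for the class $\SSS$. The hypothesis $S(f)'\subset\Fat(f)$ will be used to show that all but finitely many singular values are absorbed into a controlled open set of non-wandering type.

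\proofstep{Step 1: Reduce the singular set}
Since $S(f)'$ is a compact subset of the open set $\Fat(f)$, it is covered by finitely many Fatou components $V_1,\dots,V_m$. Put $W=V_1\cup\dots\cup V_m$. Every point of $S(f)\setminus W$ is isolated in $S(f)$, and $S(f)\setminus W$ is compact, so it is a finite set $\{s_1,\dots,s_k\}$. Since $S(f)$ is compact, $f$ lies in the Eremenko--Lyubich class $\mathcal B$. Hence a wandering component $U_0$ cannot escape locally uniformly to $\infty$, and its limit functions are constants. Obtaining these statements by area methods is part of what the paper develops for Question~8; I will either invoke that result or the classical Eremenko--Lyubich theorem.

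\proofstep{Step 2: Dispose of the components containing $S(f)'$}
Next I show that each $V_j$ is preperiodic. If some $V_j$ were wandering, its forward images $V_{j,n}$ would again be Fatou components, with $f^n\to$ constants on them. The limit constants must lie in the closure of the postsingular set. I will run the argument below first for these finitely many grand orbits; since there are finitely many, an induction on the number of wandering grand orbits among the $V_j$ is natural. Once every $V_j$ is preperiodic, the set $P_W=\bigcup_{n\ge0}f^n(W)$ is a finite union of Fatou components. Then $U_n\cap P_W=\emptyset$ for all large $n$, because $U_0$ is wandering while $P_W$ consists of eventually periodic components.

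\proofstep{Step 3: Area argument}
Let $\Omega=\C\setminus\overline{P_W\cup\{f^n(s_i)\}}$. Then $f:f^{-1}(\Omega)\to\Omega$ is a covering, so it is a local isometry for the hyperbolic metrics and preserves hyperbolic area of lifts. The orbits of the finitely many $s_i$ contribute only finitely many punctures at each stage, and the paper's principle is that finitely many punctures have finite area cost. Fix a hyperbolic disk $D\Subset U_N$. I then follow the hyperbolic area of its images in $U_{N+n}\subset\Omega$ and compare it with the region near the limit constants. Disjointness of the $U_{N+n}$ inside the finite-area part of $\Omega$ near those limit points then gives unbounded total area in a region of finite area, which is a contradiction.

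\proofstep{Main obstacle}
I expect the hardest point to be Step~2 together with the boundary of $P_W$. Near an attracting or parabolic basin, or a Siegel disk among the $V_j$, the set $\partial P_W$ lies in $\Jul(f)$ and may accumulate at the limit constants of $U_n$. The puncture-cost estimate must then be replaced by an estimate relative to the complement of a closed Fatou region. My first attempt is to shrink $W$ to a forward-invariant compact neighbourhood $K\subset P_W$ of $S(f)'$ and its orbit. For attracting or parabolic cycles this uses linearisation or Fatou coordinates; for Siegel cycles it uses invariant curves. I would then replace $\Omega$ by $\C\setminus(K\cup\overline{\{f^n(s_i)\}})$. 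Since $K\subset\Fat(f)$ stays a definite distance from the Julia-set limit points of $U_n$, the area comparison near those points is unaffected.
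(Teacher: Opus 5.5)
\textbf{There is a genuine gap.} It comes from trying to delete a \emph{forward-invariant} closed set containing $S(f)$, essentially the postsingular closure. Four problems follow from this.

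\emph{Step~2 is circular.} Your induction relies on Step~3, but Step~3 presupposes that $P_W$ is a finite union of eventually periodic components. If some $V_j$ wanders, its own forward orbit lies in $P_W$, so the tail you want to study is not contained in $\Omega$. Nothing in the sketch treats this case.

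\emph{Step~3 fails even granting Step~2.} The set $\overline{\{f^n(s_i)\}}$ is in general an infinite closed set, so ``finitely many punctures at each stage'' is not correct. If some $s_i$ lies in the wandering orbit, then $U_{N+n}\not\subset\Omega$ for all large $n$. More seriously, $\Omega$ has no finite-area part near the limit constants. For a wandering domain these constants lie in the derived set of the postsingular closure (the classical limit-function theorem; cf.\ Theorem~\ref{q11all:main}). So $\C\setminus\Omega$ accumulates at them, and the local hyperbolic area of $\Omega$ there is in general infinite.

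\emph{The repair via $K$ fails for parabolic basins.} A compact forward-invariant $K\subset\Fat(f)$ cannot exist there, because every forward orbit accumulates at the parabolic point, which lies in $\Jul(f)$.

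\emph{The quantitative part is missing.} You never explain why iterates are injective on the packet, or why its initial area exceeds the total loss. What you actually need from $f\in\mathcal B$ is simple connectivity of the Fatou components, not exclusion of escape. Note also that the paper's Theorem~\ref{q8r2:headline} goes the other way: it produces a subsequence tending to $\infty$.

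\textbf{The missing idea:} the removed singular set does not need to be forward invariant. Only the finite repelling markings $P_j$ must be forward invariant.
Put $X_j=\C\setminus P_j$, $Y_j=X_j\setminus S(f)$, $\mu_j=\alpha_{X_j}$ and $\nu_j=\alpha_{Y_j}-\alpha_{X_j}$. Then $f:f^{-1}(Y_j)\to Y_j$ is a covering with $f^{-1}(Y_j)\subset X_j$, so $f^*(\mu_j+\nu_j)\ge\mu_j$.

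Enclose $S(f)'$ in finitely many closed Jordan disks $K\Subset\operatorname{Int}N\Subset\Fat(f)$. Then $E=S(f)\setminus\operatorname{Int}K$ is finite. Any wandering orbit, including one starting in some $V_j$, has a tail avoiding the finitely many Fatou components that meet $N\cup E$. So preperiodicity of the $V_j$ is never needed.

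Simple connectivity makes $f:W\to W\setminus U_0$ a conformal isomorphism, where $W=\bigsqcup_nU_n$. The cost on $W$ is bounded uniformly in $j$: $\nu_j(W)\le C+|E|$, by the compact-hole estimate (Lemma~\ref{core:holes}) and the puncture estimate (Lemma~\ref{core:puncture}). Integrating over $W$ and changing variables gives $\mu_j(U_0)\le C+|E|$. This contradicts $\mu_j(U_0)\uparrow\alpha_{U_0}(U_0)=\infty$.

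The contradiction is global, a finite total cost against the infinite area of $U_0$, rather than a local area count near the limit constants.
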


This supplement permits infinitely many singular values, with only finitely many lying in the Julia set.
It imposes no condition on the forward orbits of those Julia singular values.
It is a no-wandering result outside the four classes as stated above, not an additional Bergweiler question.

The same estimates also bear on the orbit of an individual wandering component.
Eremenko and Lyubich \cite[Example~1]{EremenkoLyubichExamples} constructed an entire wandering domain with infinitely many finite limit functions; infinity is a limit function in their example as well.
Bergweiler formulated the following question for all transcendental meromorphic functions, with infinity required only as a subsequential limit.

\begin{question}{cf. Bergweiler, Question~8 \cite[Section~4.5]{Bergweiler}, for the meromorphic case}\label{question:q8}
If $U$ is a wandering domain of a transcendental meromorphic function $f$, can a subsequence of $f^n|_U$ converge to infinity?
\end{question}

The entire-function case was raised in related forms.
Hayman and Lingham's revised Problem~2.77, attributed there to Baker, Herman and Kra (communicated by Hamilton), and their Problem~2.87, attributed to Herman, Eremenko and Lyubich, ask whether all forward images of an entire wandering domain can remain in one bounded set \cite[Problems~2.77 and~2.87]{HaymanLingham}.
The qualifier ``uniformly'' in Problem~2.77 was added to the earlier formulation \cite[Update~2.77]{HaymanLingham}; see also the 1989 list \cite{BrannanHayman}.
Eremenko and Lyubich asked whether the orbit of an entire wandering domain can be bounded \cite{EremenkoLyubichProblems}.
Eremenko's distinct Problem~2.67 asks whether there can be an infinite bounded set of constant limit functions \cite[Problem~2.67]{HaymanLingham}.
A common bound on whole forward images, a bounded orbit of one point, an infinite bounded set of limit functions, and the locally uniform subsequence in Question~\ref{question:q8} are different formulations.

Here convergence means locally uniform convergence in the spherical metric.
The word ``subsequence'' matters: the whole forward orbit need not converge to infinity, as examples of Pardo-Sim\'on and Sixsmith show \cite{PardoSimonSixsmith2024}.

\begin{maintheorem}\label{q8r2:headline}
Let $f:\C\to\sphere$ be transcendental meromorphic and let $U$ be a wandering Fatou component.
There is a sequence $n_k\to\infty$ such that $f^{n_k}|_U\to\infty$ locally uniformly in the spherical metric.
Equivalently, no point of $U$ has a bounded forward orbit.
\end{maintheorem}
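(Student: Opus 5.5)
The plan is to argue by contradiction and reduce to a hyperbolic-area count on a fixed bounded region. Throughout, $U_n$ denotes the forward component containing $f^n(U)$, and the $U_n$ are pairwise disjoint.

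\textbf{Reduction.} I first prove the ``equivalently'' clause, since the subsequence statement follows from it. Suppose some $z_0\in U$ has no subsequence of $f^n(z_0)$ tending to $\infty$, so that $|f^n(z_0)|\le R$ for all $n$.

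\textbf{Step 1: limit functions are constants in a fixed disc.} The limit functions of $(f^n|_U)$ are constants, because $U$ is wandering. By the hyperbolic distance bound $\operatorname{dist}_{U_n}(f^n(z),f^n(z_0))\le \operatorname{dist}_U(z,z_0)$ and the unbounded hyperbolic density near $\partial U_n$, every limit function along every subsequence is then a finite constant in $\overline{D(0,R)}$. Consequently, for each compact $K\subset U$, the sets $f^n(K)$ eventually lie in $D(0,2R)$ and have spherical diameter tending to $0$.

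\textbf{Step 2: finite hyperbolic area in $D(0,2R)$.} Fix a compact hyperbolic disc $K=\overline{B_U(z_0,r)}$. The images $f^n(K)\subset U_n\cap D(0,2R)$ are pairwise disjoint, so their Euclidean areas are summable.

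I will work in the hyperbolic surface $X=\C\setminus E$, where $E$ is a finite set of Julia points, for instance three repelling periodic points together with their finitely many preimages in $D(0,3R)$. Then $X\cap D(0,2R)$ has finite hyperbolic area, so $\sum_n \operatorname{area}_X(f^n(K))<\infty$. On the other hand, the covering and area estimates of the earlier sections should give a lower bound on $\operatorname{area}_X(f^n(K))$.

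\textbf{Step 3: lower bound by covering geometry.} For each $n$, consider the component $V_n$ of $f^{-n}(X)$ containing $K$. The map $f^n:V_n\to X$ is a holomorphic map into $X$. After deleting the singular orbits that enter the small region around $f^n(K)$, it is a covering onto its image, and each deleted puncture costs only a bounded amount of area.

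The intended estimate is
\[
\operatorname{area}_X\bigl(f^n(K)\bigr)\ \ge\ c\,\operatorname{area}_{V_n}(K)-C\,m_n,
\]
where $m_n$ is the number of deleted punctures. Combining this with the Schwarz--Pick inequality for $V_n\subset U$ should give a positive lower bound for infinitely many $n$, which contradicts the summability from Step 2.

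\textbf{Step 4: controlling the punctures.} To keep $m_n$ bounded, I will choose the subsequence $n_k$ along which $f^{n_k}(z_0)$ converges to a constant $a$. Near $a$, only singular values accumulating at $a$ matter. By the known fact that such limit constants lie in the derived set of the postsingular set, together with the disjointness of the $U_n$, I will select a fixed small round annulus around $a$ that separates $f^{n_k}(K)$ from $\partial D(0,2R)$. The modulus of that annulus in $U_{n_k}$ then stays bounded below, and this is exactly the positive area lower bound needed in Step 3.

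\textbf{Main obstacle.} I expect Step 4 to be the hardest part: obtaining a uniform lower bound when infinitely many postsingular points crowd near $a$. My first attempt would be to pass to a hyperbolic surface punctured only at $f^{n_k}$-images of a fixed finite set of Julia points. I would then use that $\operatorname{area}_X$ restricted to a neighbourhood of $a$ is comparable for all $k$, which makes the finite-puncture cost uniform. If that fails, the fallback is to use the disjoint images $f^{n_k}(K)$ directly as the unbounded-area regions mentioned in the introduction.
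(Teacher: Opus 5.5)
Your global strategy is the right one: disjoint forward images of a set of large intrinsic area, trapped in a bounded region of finite hyperbolic area. However, Steps~3--4 contain a genuine gap, and it sits exactly where the difficulty of the theorem lies.

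\emph{Singular values.} The composite map $f^n$ on $V_n$ becomes a covering only after you delete every singular value of $f^n$ in the relevant region, i.e.\ all defined images $f^k(s)$ with $s\in S$ and $0\le k<n$. No hypothesis on $S$ is available, so this set may be infinite near the limit constant $a$. The derived-set fact you invoke in Step~4 says precisely that it accumulates there. So $m_n$ can be infinite or unbounded, and each puncture is charged again at every time $n$ rather than once. A lower bound on the modulus of a separating annulus does not turn into a lower bound for $\operatorname{area}_X(f^{n_k}(K))$.

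\emph{Injectivity.} The pullback of $\alpha_X$ over $K$ counts multiplicity, so it controls the area of the set $f^n(K)$ only if $f^n$ has bounded valence on $K$. This can fail when some $U_m$ contain critical points. A priori it can also fail when the tail is multiply connected with growing covering degrees, which is exactly the case that needs a separate argument.

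\emph{Direction of comparison.} Because $f^{-n}(E)\subset\Jul(f)$, one has $U\subset V_n$, not $V_n\subset U$. Schwarz--Pick therefore gives $\operatorname{area}_{V_n}(K)\le\operatorname{area}_U(K)$. With $K$ and $E$ fixed, nothing forces your main term to beat $C\,m_n$.

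The paper closes these gaps with three ingredients.
\begin{enumerate}
\item It shows $\operatorname{inj}_{U_n}(z_n)\to\infty$. A short essential loop at $z_n$ has all forward images inside universal-cover disks shrinking to the pole-free compact set $\omega(z_0)$. Their polynomial hulls therefore stay in a fixed pole-free neighborhood, the iterates are normal on the inner side, and the loop is null-homotopic in $U_n$.
\item Instead of composite coverings, it uses finitely many \emph{one-step} proper charts $f:D_i\to B_i$ covering $\omega(z_0)$, each with at most one critical value. Lifting through these local coverings gives hyperbolic derivatives $\lambda_n\to1$. The lifts $F_n$ are then close to the identity, so every $f^m$ is injective on $Q=\pi_N(\overline{\D}_r)$, whose area $\alpha_{U_N}(Q)=2r^2/(1-r^2)$ is arbitrarily large.
\item The local truncation lemma plus the finite-puncture bound give each chart a defect budget $|E_i|+C_i$, uniform in dense repelling markings $P_j$. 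Since the images $f^m(Q)$ are disjoint, the one-step losses sum to at most $B_*=\sum_i(|E_i|+C_i)$. Telescoping yields $\alpha_{X_j}(Q)\le B_*$ for all $j$, hence $\alpha_{U_N}(Q)\le B_*$. This contradicts the choice of $r$, which was made after $B_*$ was fixed.
\end{enumerate}
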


Prochorov, Rempe and Waterman \cite[Theorem~1.2]{PRW} obtained this conclusion for entire functions.
Drawing on further ideas of their own, they also obtained a valuable local extension to meromorphic mappings \cite[Theorem~1.3(1)]{PRW}.
Their local theorem assumes simple connectivity of the relevant trapped components; the assertion here places no connectivity or singular-value condition on $U$.
The proof uses a bounded-window area estimate and, for multiply connected wandering tails, divergence of injectivity radii.
The conclusion of Theorem~\ref{q8r2:headline} was obtained independently before the author became aware of their preprint, which was submitted to arXiv on 23 September 2026.
The first version of the present paper introduced the hyperbolic-area framework used here; both treatments of Bergweiler's Question~8 use local area estimates.

The last two questions concern inverse singularities near the boundaries of Baker cycles and wandering domains.
Their original formulations ask for a relation; they do not specify the quantitative finite-deletion assertions below.
To state those assertions, put $S=S(f)$ for a transcendental meromorphic $f$ and, for every finite set $T\subset S$ of \emph{original singular-value generators}, set
\[
 \mathcal O_T=\{f^k(s):s\in S\setminus T,\ k\ge0,\
                          f^k(s)\text{ is defined}\},
 \qquad
 A_T=\overline{\mathcal O_T}^{\,\sphere}.
\]
The value infinity is included when the corresponding iterate is defined, but $f(\infty)$ is never evaluated.
Deletion occurs before forward iteration: $A_T$ is generally different from the full postsingular closure with $T$ subtracted.
If retained generators reach or approach a deleted value, that value can still lie in $A_T$.

Bergweiler observed that, unlike attracting and parabolic cycles, a cycle of Baker domains need not contain an inverse singularity \cite[Theorem~7 and Section~4.3]{Bergweiler}.
He asked what can nevertheless be detected at its boundary:

\begin{question}{cf. Bergweiler, Question~4 \cite[Section~4.3]{Bergweiler}}\label{question:q4}
Let $f$ be a meromorphic function with a cycle of Baker domains containing no point of $\operatorname{sing}(f^{-1})$.
How are the inverse singularities related to the boundaries of the domains in the cycle?
\end{question}

\begin{maintheorem}\label{q4:main}
Let $f:\C\to\sphere$ be transcendental meromorphic.
Let $U_0,\ldots,U_{p-1}$ be a Baker cycle with $U_i\cap S=\varnothing$,
and let $a_i\in\partial U_i$ be the locally uniform spherical limit
of $f^{np}|_{U_i}$.
For every finite $T\subset S$ and every $0\le i<p$, one has $a_i\in A_T'$.
In fact, there are pairwise distinct $s_k\in S\setminus T$ whose defined forward images $f^{m_k}(s_k)$ are also pairwise distinct and tend to $a_i$.
If $A_T$ avoids the cycle, then $Z_T=\sphere\setminus A_T$ is hyperbolic and
\begin{equation}\label{eq:baker-head-separation}
 d_{Z_T}\bigl(f^n(z),Z_T\setminus U_{n\bmod p}\bigr)
           \longrightarrow\infty\qquad(z\in U_0),
\end{equation}
locally uniformly in $z$.
\end{maintheorem}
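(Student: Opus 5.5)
We argue by contradiction for the first assertion, and then use hyperbolic-metric comparison for the separation estimate \eqref{eq:baker-head-separation}. By periodicity it suffices to treat $i=0$; write $g=f^p$, $U=U_0$, $a=a_0$. The key input is the classical fact that a Baker domain without singular values is not an isometric covering situation at infinity. If $U\cap S=\varnothing$ and all of $S\setminus T$ stays away from a neighbourhood of $a$ in a suitable sense, then over a punctured neighbourhood $W$ of $a$ (minus finitely many points) the branches of $g^{-1}$ are unramified coverings. Contraction of the hyperbolic metric then becomes strict isometry on the absorbing region, which is incompatible with $g^{n}\to a\in\partial U$.

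\textbf{Step 1: $a\in A_T'$.} Suppose $a\notin A_T'$. Then there is a disc $D$ about $a$ such that $D\cap A_T$ is finite; call it $F$. Put $Z=\sphere\setminus(A_T\cup F\cup\{\text{finitely many extra orbit points of }T\})$. Deleting $T$ only removes finitely many generators, and their forward orbits accumulate, so we handle them as follows. The set $P=\overline{\bigcup_k g^k(T)}$ is the closure of finitely many orbits. On $D$, the only contributions of $T$ near $a$ are orbit points tending to $a$.

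The claim is that $g:g^{-1}(Z)\cap U\to Z$ restricted to the tail of the orbit is a covering onto a hyperbolic domain, up to finitely many punctures. By the area mechanism from the introduction (covering maps preserve hyperbolic area, each puncture costs area $2\pi$), an absorbing domain $V\subset U$ with $g(V)\subset V$ and $\bigcup g^{-n}(V)=U$ yields disjoint fundamental annuli $g^n(V)\setminus g^{n+1}(V)$. Their hyperbolic areas in $Z\cap D$ are bounded below, yet they lie in a finite-area piece of $D\setminus F$. This is a contradiction, so $a\in A_T'$.

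To obtain pairwise distinct $s_k$ with distinct images $f^{m_k}(s_k)\to a$, apply Step 1 repeatedly, enlarging $T$ each time by the generator just used. This is possible because $T$ is arbitrary and finite. The images are distinct because otherwise we can also delete a repeated value's generator and rerun. This gives an induction.

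\textbf{Step 2: separation.} Assume $A_T\cap\bigcup U_j=\varnothing$. Then $Z_T\supset\bigcup U_j$, and $Z_T$ is hyperbolic because $A_T$ is infinite by Step 1. Since $U_j\cap S=\varnothing$, each component of $f^{-1}(Z_T)$ meeting the cycle maps by a covering onto its image component, and $f(Z_T\cap\ldots)$ relates as $f^{-1}(Z_T)\subset Z_T$ up to the deleted $T$-orbits.

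The goal is to compare $d_{Z_T}$ with $d_{U_{n}}$. Because $U_n\subset Z_T$ is a component of the Fatou set and $\partial U_n\subset \Jul$, we have $d_{Z_T}(w,Z_T\setminus U_n)\to\infty$ if and only if $w$ is far, in the $Z_T$-metric, from $\partial U_n\cap Z_T$. Using $f^n(z)\to a\in A_T'$ along the orbit, we use the standard estimate $\rho_{Z_T}(w)\gtrsim 1/(|w-a|\log(1/|w-a|))$ near accumulation points of $A_T$. We also use the Baker-domain estimate that $\dist(f^n(z),\partial U_n)$ is comparable to $|f^n(z)-a|$ from below up to a factor; this comes from the Koebe/univalent-branch estimates available since $U_j$ is singular-free. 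Together these give the divergence, locally uniformly by compactness of subsets of $U_0$.

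\textbf{Main obstacle.} The hard part is the lower bound in Step 2. Singular-freeness gives univalent inverse branches, but $\partial U_n$ can approach $f^n(z)$ relative to $|f^n(z)-a|$. The first approach is to pull back a fixed hyperbolic disc in $Z_T$ along coverings: $f^{-n}$ branches are isometries $Z_T\to$ components of $f^{-n}(Z_T)$. A bounded $d_{Z_T}$-distance to $Z_T\setminus U_n$ would then transport back to bounded distance from $z$ to points outside $U_0$. This would contradict the fact that $f^n\to a\in\partial U$ while the $Z_T$-neighbourhood is preserved.
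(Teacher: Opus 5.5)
Both steps have genuine gaps.

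\textbf{Step~1.} The area contradiction needs three ingredients you do not supply.

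(i) The background. The covering inequality only needs $Y=X\setminus E$ with $E=S\setminus A_T\subset T$ finite, together with $f(A_T\cap\C)\subset A_T$. The orbits of $T$ play no role, and deleting $\overline{\bigcup_k g^k(T)}$ would destroy the finite puncture cost.

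(ii) A positive floor. Telescoping the loss gives $\alpha_X(f^n(K))\ge\alpha_X(K)-|E|$, so you need $\alpha_X(K)>|E|$. For $X=\sphere\setminus A_T$ nothing guarantees this: $\rho_X$ may be much smaller than $\rho_{U_0}$ on $U_0$, and a priori $X$ could even have total area $|A_T|-2$. The paper adjoins finite forward-invariant repelling markings $P_j$ (Definition~\ref{def:marking}). Then $\rho_{\sphere\setminus(A_T\cup P_j)}\uparrow\rho_{U_0}$ on $U_0$, while $f^{-1}(Y_j)\subset X_j$ still holds and the area near $a$ stays finite.

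(iii) Injectivity. The transfer $\alpha_X(g(K))=\int_K g^*\alpha_X$ needs $g$ injective on $K$. Your sets $g^n(V)\setminus g^{n+1}(V)$ are disjoint, but $g:U\to U$ is only a covering, which need not be univalent when $U$ is multiply connected. You also give no lower bound for the intrinsic area of $V\setminus g(V)$. This is exactly what Proposition~\ref{q4:periodic-packet} provides. The quotient $\Sigma=\D/H$, by the direct-limit deck group together with the lift of $g$, has infinite area: planar infinite cyclic covers force genus at most one, and there are no puncture ends. So a large-area disk in $\Sigma$ lifts to a packet on which every $g^n$ is injective with pairwise disjoint images.

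When $U$ is simply connected, $g$ is an automorphism and your fundamental-domain route does work, but only after the markings are added. The case where $A_T$ meets the cycle must also be split off; it is trivial, since a return orbit in $A_T$ accumulates at $a$.

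\textbf{Step~2.} This step does not work as written.

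The lower bound $\dist(f^n(z),\partial U_n)\gtrsim|f^n(z)-a|$ is not justified; singular-freeness gives univalent inverse branches, not a lower bound on distance to the boundary. Even granting it, integrating the cusp density $1/(2r\log(1/r))$ across a bounded ratio of scales gives a difference of $\log\log$ terms that tends to zero. No static density estimate of this kind produces divergence.

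Your pullback argument runs in the wrong direction. Lifting a short path from $f^n(z)$ to $Z_T\setminus U_n$ gives a short path from $z$ to $Z_T\setminus U_0$, which is no contradiction; at best it bounds the distance below by its initial value. Moreover, $f$ is not a covering over $Z_T$, which may contain singular values from $T$.

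The missing idea is dynamical:
\begin{enumerate}
\item The bounded total loss on a packet with positive floor yields a point $z$ whose orbit has summable metric defects $\sum_n(\epsilon_n+\delta_n)<\infty$ for $X=\sphere\setminus(A_T\cup P_j)$. This is the third assertion of Lemma~\ref{lem:finite-loss}.
\item Lemma~\ref{q4:summable-defects} turns this into near-identity lifts of $f$ between universal covers of $X$ centred at $f^n(z)$. Hence all iterates are normal on $B_X(f^N(z),R)$ for large $N$, and these balls lie in $U_{N\bmod p}$.
\item Schwarz--Pick then gives local uniformity in $z$.
\item The markings are removed by the $\coth$ density comparison, because the orbit accumulates only at the phase limits $a_i\in A_T$.
\end{enumerate}
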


Already in the original survey, Bergweiler proved that infinity is an accumulation point of the first $p$ forward images of inverse singular values for every Baker cycle \cite[Theorem~16]{Bergweiler}.
Baker, Dom\'inguez and Herring proved a singular-accumulation obstruction for Baker domains \cite[Theorem~F]{BakerDominguezHerring}; see also Baker's formulation for maps meromorphic outside a small set \cite[Theorem~2]{Baker2002}.
Zheng \cite[Theorem~2.2 and p.~30]{Zheng2005} located the individual phase limits in the derived singular set of the return iterate.
The elementary finite-iterate singular-value relation then gives the finite-deletion accumulation assertion of Theorem~\ref{q4:main}; the proof below also derives it directly from the area framework.
Earlier full-postsingular separation and proximity results appear in \cite[Theorem~3]{Bergweiler1995} and \cite[Propositions~7.4 and~7.5]{MBRG}; see also \cite[Theorem~A]{BFJK}.
The additional conclusion here is that hyperbolic separation persists when any finite set of original singular values is removed as a source of forward orbits.

Bergweiler also asked whether wandering domains require infinitely many inverse singularities and, in particular, what their boundaries must detect when every forward component avoids the singular set \cite[Section~4.6,
Questions~10--11]{Bergweiler}:

\begin{question}{cf. Bergweiler, Question~11 \cite[Section~4.6]{Bergweiler}}\label{question:q11}
Let $f$ be a meromorphic function with a wandering domain $U$, and let $U_n$ be the Fatou component containing $f^n(U)$.
If $U_n\cap\operatorname{sing}(f^{-1})=\varnothing$ for every $n\ge0$,
what is the relation between $\partial U_n$ and the inverse singularities?
\end{question}

For a singular-free tail choose compatible universal covers $\pi_n:\D\to U_n$ with $\pi_{n+1}=f\circ\pi_n$.
Their deck groups satisfy $\Gamma_n\subset\Gamma_{n+1}$.
The tail is \emph{thick} if $\bigcup_n\Gamma_n$ is discrete and \emph{thin} otherwise; the covering classification is given in
Section~\ref{sec:covering-loss}.  Distances to $\partial U_n$
and $A_T\cap\C$ in the following statement are Euclidean.

\begin{maintheorem}\label{q11all:main}
Let $f:\C\to\sphere$ be transcendental meromorphic and let $(U_n)$ be the full forward components of a wandering domain, with $U_n\cap S=\varnothing$ for all $n$.
Fix any finite $T\subset S$.
If $A_T$ meets a forward component, there exist $N$ and finite points $p_n\in\mathcal O_T\cap U_n$ for $n\ge N$ such that $f(p_n)=p_{n+1}$.

Otherwise, for each $z\in U_0$, set $z_n=f^n(z)$ and
$d_n=\dist(z_n,\partial U_n)$.  Then
\begin{equation}\label{q11all:distance-ratio}
 \frac{\dist(z_n,A_T\cap\C)}{d_n}\longrightarrow1,
\end{equation}
and there are actual finite singular-orbit points $p_n\in\mathcal O_T$ such that
\begin{equation}\label{q11all:actual-witness}
 \frac{|p_n-z_n|}{d_n}\longrightarrow1,
 \qquad
 \frac{\dist(p_n,U_n)}{d_n}\longrightarrow0.
\end{equation}
In this second case, if the tail is thick, then $Z_T=\sphere\setminus A_T$ is hyperbolic and
\begin{equation}\label{q11all:thick-hyperbolic}
 d_{Z_T}(z_n,Z_T\setminus U_n)\longrightarrow\infty
\end{equation}
locally uniformly for $z\in U_0$.

In all cases, every constant normal limit belongs to $A_T$.
For a thick tail all such limits belong to $A_T'$; for a thin tail at least one belongs to $(S\setminus T)'=S'$.
\end{maintheorem}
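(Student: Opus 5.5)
The plan is to work with the compatible universal covers $\pi_n:\D\to U_n$, $\pi_{n+1}=f\circ\pi_n$. These exist because $U_n\cap S=\varnothing$ forces $f:U_n\to U_{n+1}$ to be a covering. The other basic tool is the Schwarz--Pick comparison between the hyperbolic metric of $U_n$ and that of the larger domain $Z_T=\sphere\setminus A_T$.

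\textbf{Step 1: $A_T$ meets some $U_N$.} First note that $A_T$ is forward invariant wherever $f$ is defined: $f(\mathcal O_T)\subset\mathcal O_T$, and closures pass through by continuity off the poles. Since $U_N$ is open and $A_T$ is the closure of $\mathcal O_T$, the set $\mathcal O_T$ itself meets $U_N$. Take $p_N=f^k(s)\in U_N$ and set $p_{n+1}=f(p_n)$. These points stay in $\mathcal O_T\cap U_n$ and are finite, because $U_n\subset\C$ is a Fatou component not containing a pole for $n\ge N$ along a wandering tail. After discarding finitely many indices, the orbit is defined for all $n$.

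\textbf{Step 2: $A_T$ avoids every $U_n$.} Then $U_n\subset Z_T$, so $\partial U_n$ lies within $d_n$ of $z_n$, and trivially $\dist(z_n,A_T\cap\C)\ge d_n$. For the reverse inequality I would argue by contradiction, normalising by the affine map $w\mapsto (w-z_n)/d_n$. If the ratio stayed above $1+\varepsilon$ along a subsequence, the disc of radius $(1+\varepsilon)d_n$ about $z_n$ would be free of $A_T$. In particular it would contain no point of $S\setminus T$ and only finitely many points of $T$, namely at most $|T|$ of them. Each of these finitely many points is either in $A_T$ already or is a puncture, which costs only finite area.

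The covering/area lemmas from Section~\ref{sec:covering-loss} then apply. Near the boundary point realising $d_n$, the inverse branches of $f^{n}$ would extend across $\partial U_n$ through an annulus of definite modulus. This would produce either Fatou points beyond $\partial U_n$ or disjoint preimage regions of unbounded hyperbolic area inside the wandering orbit, contradicting the finite area cost of $|T|$ punctures. This yields \eqref{q11all:distance-ratio}.

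The witnesses $p_n$ are then chosen in $\mathcal O_T$ with $|p_n-z_n|\le \dist(z_n,A_T\cap\C)+d_n/n$. Applying the same rescaled argument around $p_n$ gives $\dist(p_n,U_n)=o(d_n)$, since otherwise a round disc of comparable size around the witness would again be singular-free.

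\textbf{Step 3: thick tails and normal limits.} For a thick tail, $\Gamma_\infty=\bigcup\Gamma_n$ is discrete, so $\D/\Gamma_\infty$ is a hyperbolic surface to which the $U_n$ converge in the covering sense. Injectivity radii at $z_n$ stay bounded below, and the hyperbolic densities $\rho_{U_n}(z_n)/\rho_{Z_T}(z_n)\to1$ fail only if the $Z_T$-boundary approaches. Combining this with Step 2, the hyperbolic disc in $Z_T$ of any radius $R$ about $z_n$ is eventually contained in $U_n$, which is \eqref{q11all:thick-hyperbolic}. Uniformity on compacta follows because $f^n$ contracts the $U_0$-metric.

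For constant limits $c$ of $f^{n_k}|_{U_0}$, Step 1 or the ratio estimate puts points of $\mathcal O_T$ within $o(1)$ spherical distance of $z_{n_k}$, so $c\in A_T$. In the thick case, the separation \eqref{q11all:thick-hyperbolic} forces the nearby orbit points to be infinitely many distinct ones, so $c\in A_T'$. In the thin case, a nondiscrete $\Gamma_\infty$ means that short geodesics collapse. I would show that the collapsing loops must encircle infinitely many distinct points of $S\setminus T$ accumulating at some limit, which places that limit in $(S\setminus T)'=S'$, the equality holding since $T$ is finite.

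\textbf{Main obstacle.} The hardest step is the Step 2 contradiction. It requires a uniform Koebe-type extension of inverse branches of $f^n$ across $\partial U_n$ when only a finite set of punctures $T$ is present. Concretely, it needs the finite-deletion area bound to control all iterates simultaneously, and it needs to rule out poles inside the enlarged disc, which I would handle by working on the sphere.
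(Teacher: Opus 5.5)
Your Step~1 matches the paper. For the normal-limit claim in that case, lift $p_N$ through the surjective covering $f^N:U_0\to U_N$ to a point $x$, so that $p_n=f^n(x)\to c$ along the subsequence.

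\textbf{Step~2 is the genuine gap.} It has to carry all of \eqref{q11all:distance-ratio} and \eqref{q11all:actual-witness}, and the mechanism you sketch fails. A univalent-branch (Koebe) argument on $D(z_n,(1+\varepsilon)d_n)$ is essentially the route of \cite{BFJK} for $T=\varnothing$, where $f^n$ has no singular values in that disk. With $T\ne\varnothing$ your count is wrong. You count singular values of $f$, but pulling back by $f^n$ requires avoiding those of $f^n$. Since $f(A_T\cap\C)\subset A_T$ and $S\setminus A_T\subset T$, these are the points $f^j(t)\notin A_T$ with $t\in T$ and $0\le j<n$. There can be up to $n|T|$ of them in the disk, and the theorem explicitly allows these orbits to enter the forward domains. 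So no fixed number of punctures controls the $n$-fold pullback.

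What controls it in the paper is that the total defect $\nu(Y)\le|E|$ is paid only once, along the pairwise disjoint forward images of a single large-area packet (Lemmas~\ref{lem:large-disks} and~\ref{lem:finite-loss}). This gives summable metric defects, hence separation in $X$ by Lemma~\ref{q4:summable-defects} and in $Z_T$ by Lemma~\ref{q11geo:remove-marks}. Only then does the ratio follow: if $\dist(z_n,A_T\cap\C)\ge(1+\eta)d_n$, then $D(z_n,(1+\eta)d_n)\subset Z_T$, and the nearest point of $\partial U_n$ lies at $Z_T$-distance at most $\operatorname{arctanh}(1/(1+\eta))$ from $z_n$.

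None of this is available in the thin case. There the transitions are annular coverings with cumulative degree tending to infinity and injectivity radii collapse, so there are no packets of large area with injective forward images. Neither your area contradiction nor a univalent-branch argument applies. The paper's thin-case proof (Theorem~\ref{q11e:thin-spatial}) is topological:
\begin{enumerate}
\item Reset propagation (Lemmas~\ref{q11r:one-singular-disk} and~\ref{q11r:nonreset-step}, Theorem~\ref{q11r:reset-propagation}) puts points of $\mathcal O_T$ in the small Jordan sides of three nested coordinate circles.
\item The witness $p_n$ is taken in the bounded complementary continuum $K_n$ of $U_n$, or is a fixed $a\in S\setminus T$ when the small sides contain $\infty$.
\item The round-ring Lemma~\ref{q11e:round-ring} then gives $|z_n-p_n|/d_n\to1$.
\end{enumerate}

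\textbf{Step~3 has the logic reversed.} The Euclidean ratio cannot yield \eqref{q11all:thick-hyperbolic}; the thin case has the ratio with no separation asserted. The claim $\rho_{U_n}/\rho_{Z_T}\to1$ along the orbit is exactly what must be proved. It is the output of the summable-defect argument, not a consequence of Step~2.

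Your normal-limit arguments also fail as stated:
\begin{itemize}
\item When $c=\infty$, the ratio witness need not be spherically close to $z_{n_k}$. In the thin case with small sides containing $\infty$ it is the fixed finite point $a$. The paper instead uses the reset witnesses $q_n\in\mathcal O_T\cap B_n$, whose sides shrink to $c$.
\item For $c\in A_T'$ in the thick case, separation alone does not produce infinitely many distinct nearby orbit points. The paper uses the lower bound $\alpha_X(f^n(Q))>1$ on the disjoint images. It plays this against the finite $\alpha_X$-area of a small disk about $c$, which meets $A_T\cup P$ at most in $c$ when $c\notin A_T'$.
\item The thin-case claim about $S'$ needs the mechanism you omit. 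A small side containing at most one singular value has simply connected preimage components and maps into the next small side. So finitely many reset times would make some $B_N$ a Fatou disk filling the essential curve $\gamma_N\subset U_N$, which is impossible.
\end{itemize}

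A minor point: once the ratio holds, $\dist(p_n,U_n)\le|p_n-z_n|-d_n=o(d_n)$ follows from $D(z_n,d_n)\subset U_n$, so no second rescaling argument around $p_n$ is needed.
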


For the full postsingular set, wandering-domain limit relations were obtained for entire functions in \cite{BHKMT1993} and for meromorphic functions in \cite[Theorem~1]{Baker2002}, \cite{Zheng2003} and \cite[Proposition~7.3]{MBRG}.
Prochorov, Rempe and Waterman \cite[p.~3]{PRW} also announced an unconditional derived-singular-set limit statement for entire wandering domains; its proof is deferred to a later revision of their preprint.
When $T=\varnothing$ and the full postsingular set avoids the forward components, the distance ratio and the actual-point approximation follow already from \cite[Lemma~2.1 and the proof of Theorem~B]{BFJK}.
Full-postsingular hyperbolic separation in this setting also follows from \cite[Proposition~7.4]{MBRG}.
Theorem~\ref{q11all:main} retains the Euclidean estimates and actual orbit witnesses after deleting \emph{any} finite set of original singular values as orbit generators, even if the deleted values have orbits entering the forward domains.
In thick tails hyperbolic separation survives the same deletion; for thin tails we assert only the stated Euclidean estimates and limit-set conclusions.
The ratio $\dist(p_n,U_n)/d_n\to0$ does not by itself imply absolute Euclidean convergence to the boundary when $d_n$ is unbounded.

The area argument also applies to polynomial skew products.
Raissy asked whether wandering Fatou components can occur near an invariant fiber \cite[Question~2]{RaissySurvey}; the question concerns all types of invariant fibers and has positive examples in the parabolic case \cite[Section~3]{RaissySurvey}.
We treat the linearizable elliptic case for polynomial fibers of constant degree.
The relative theorem in Section~\ref{sec:elliptic-skew} handles the bounded dynamics over an invariant rotation domain; together with the dynamics at infinity, it gives the following global criterion.

A polynomial skew product $F(z,w)=(p(z),q(z,w))$ is \emph{regular of degree $d$} if the homogeneous degree-$d$ parts of its two coordinates have no common nonzero zero.
Equivalently, $F$ extends to a holomorphic endomorphism of $\mathbb P^2$ of algebraic degree $d$.

\begin{maintheorem}\label{thm:elliptic-global}
Let $F(z,w)=(p(z),q(z,w))$ be a regular polynomial skew product of degree $d\ge2$.
Suppose that every bounded Fatou component of $p$ eventually maps to a periodic Siegel disk.
Then $F$ has no wandering Fatou component in $\mathbb P^2$.
\end{maintheorem}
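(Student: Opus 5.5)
The plan is to decompose $\mathbb P^2$ by the dynamics of the base and the behavior at infinity, and to reduce to the relative-area theorem of Section~\ref{sec:elliptic-skew} over Siegel disks. Let $\Omega$ be a Fatou component of $F$ in $\mathbb P^2$. I first treat components meeting the line at infinity $L_\infty$. Regularity of degree $d$ makes $F|_{L_\infty}$ a rational map of degree $d$, and $L_\infty$ is superattracting in the normal direction. So a neighbourhood of $L_\infty$ minus the compact filled Julia set $K_F$ lies in the basin of $L_\infty$. Near $L_\infty$ the Green function $G_F=\lim d^{-n}\log^+\|F^n\|$ is pluripluriharmonic off $K_F$. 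A Fatou component meeting $L_\infty$, or escaping, is therefore described by Ueda's theory. Such a component maps into a Fatou component of $F|_{L_\infty}$, or into the fiberwise-escaping region over a base Fatou component. Sullivan's theorem (Theorem~\ref{thm:A-q9}(i)) for $F|_{L_\infty}$, together with the product structure of the Böttcher coordinate near $L_\infty$, then shows these components are preperiodic. I will carry this out via the Ueda–Fornæss–Sibony fact that the Fatou set of $F$ near $L_\infty$ is governed by $F|_{L_\infty}$. The key point is that $\Omega$ meeting $L_\infty$ projects onto a preperiodic Fatou component of $F|_{L_\infty}$, with the normal direction contracting.

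It then remains to exclude bounded wandering components $\Omega\subset\C^2$, which must lie in $K_F$. Since the first coordinate of $F^n$ is $p^n\circ\pi$, the set $\pi(\Omega)$ lies in $\Fat(p)$ or in $\Jul(p)$. The first case is standard because $\{p^n\circ\pi\}$ is normal on $\Omega$. In the second case $\pi|_\Omega$ would map into $\Jul(p)$, which has empty interior. By the open mapping property $\pi|_\Omega$ is then constant, so $\Omega$ lies in a fiber, which is impossible for an open set in $\C^2$. Thus $\pi(\Omega)$ lies in a bounded Fatou component $V$ of $p$. Unbounded base components are the basin of infinity, where $K_F$ is empty over escaping base points. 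By hypothesis $V$ eventually maps to a periodic Siegel disk $D$ of period $m$. Replacing $\Omega$ by a forward image and $F$ by $F^m$, I may assume $\pi(\Omega)\subset D$ with $p^m|_D$ conjugate to an irrational rotation. A wandering $\Omega$ stays wandering for $F^m$ in the sense that its forward images by $F^m$ are not eventually periodic. The base $D$ is bounded, and $p^m|_D$ preserves the pulled-back Lebesgue measure of the linearizing disk. After restricting to an invariant subdisk $D_r$ containing $\pi(\Omega)$ where possible, this gives an absolutely continuous invariant probability of full support. The fibers are polynomials of constant degree $d$ by regularity, since the $w^d$ coefficient is a nonzero constant. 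Thus the relative-area theorem of Section~\ref{sec:elliptic-skew} applies and excludes bounded wandering components over $D$.

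The main obstacle is the step where $\pi(\Omega)$ need not be relatively compact in $D$. The projections $p^{n}(\pi(\Omega))$ could approach $\partial D$, while the relative-area theorem needs a bounded base carrying a full-support invariant measure. My first approach is to apply the theorem over all of $D$: $D$ itself is bounded, and the rotation-invariant measure obtained by pushing forward normalized Lebesgue measure via the linearization has full support in $D$. So no shrinking should be needed, provided the theorem allows the measure to be infinite-density near $\partial D$ but still a probability. If that fails, I would exhaust $D$ by the invariant rotation disks $D_r$ and use the fiberwise area bound uniformly in $r$. A secondary point is checking that Fatou components of $F$ over $D$ coincide with those of the relative system in the theorem, so that wandering in $\mathbb P^2$ implies wandering in the relative sense. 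This holds because bounded components lie in $\C^2$ and the relevant normality is local.
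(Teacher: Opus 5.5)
The bounded-orbit part of your proposal is correct and is essentially the paper's argument. Since $p^n$ is uniformly bounded on the open set $\pi(\Omega)$, the projection lies in a bounded Fatou component of $p$, so no Julia-set case needs discussing. Theorem~\ref{thm:elliptic-skew} then applies over the whole Siegel disk: the pullback of normalized area by the linearizing map is an absolutely continuous invariant probability of full support, and the coefficients are bounded there. Relative compactness of $\pi(\Omega)$ is never needed. Passing to $F^m$ over one Siegel disk, rather than the paper's union of all periodic Siegel components with a mixed measure, is harmless.

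The genuine gap is the escaping part, which rests on structural claims that are not available.

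\emph{The Green function claim is false.} The Green function is pluriharmonic only on the Fatou set, not on a neighbourhood of $L_\infty$ minus $K_F$. For $(z^2,w^2)$ it equals $\max(\log|z|,\log|w|)$ near $L_\infty$, which is not pluriharmonic on $\{|z|=|w|>1\}$.

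\emph{There is no holomorphic product structure near $L_\infty$ in general.} By this I mean a projection of a neighbourhood of $L_\infty$ onto $L_\infty$ that semiconjugates $F$ to $F|_{L_\infty}$. Take $F(z,w)=(z^2,w^2+z)$. In the chart $(s,t)=(1/z,w/z)$ it becomes $(s,t)\mapsto(s^2,t^2+s)$. A holomorphic $\Pi$ with $\Pi(0,t)=t$ would satisfy $\Pi(s,0)^2=\Pi(s^2,s)=s+O(s^2)$. This is impossible, because $\Pi(s,0)=O(s)$. Critical points of the central polynomial are the obstruction.

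\emph{Consequences for your two steps.}
\begin{enumerate}
\item You never show that an escaping component of $\C^2$ eventually meets $L_\infty$.
\item For components that do meet $L_\infty$, Sullivan's theorem for $F|_{L_\infty}$ only places their traces in eventually periodic components $C$ of $\Fat(F|_{L_\infty})$. To conclude that the ambient Fatou components are eventually periodic, you need each such $C$ to lie in a single Fatou component of $F$. That is a bulging statement, which you have not proved.
\end{enumerate}

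\emph{What is needed instead.} The missing bulging content is exactly Lilov's theorem for skew products over a superattracting invariant fiber, which the paper invokes via \cite[Theorem~1.1]{PetersVivas2016}. The paper's argument runs as follows.
\begin{enumerate}
\item Orbits that enter a neighbourhood of the superattracting point $[0:1:0]$ lie in its basin, hence in preperiodic components. This covers escaping orbits with bounded base orbit, and orbits along which $w_n/z_n$ is unbounded.
\item Otherwise $z_n\to\infty$ and $t_n=w_n/z_n$ stays bounded. In the chart $(s,t)$, $F$ is the skew product $(s^d/P(s),\,Q(s,t)/P(s))$ with central fiber $t\mapsto q_d(1,t)/a$.
\item A vertical disk in $U_n$ over a point $s_n$ in the immediate basin of $s=0$ then has a forward orbit meeting a bulging Fatou component of the central polynomial. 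That component is preperiodic by Sullivan's theorem.
\end{enumerate}
Your sketch needs to be replaced by this (or an equivalent) argument.
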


The analytic input, Theorem~\ref{thm:elliptic-skew}, uses a bounded base carrying an absolutely continuous invariant probability of full support.
Under uniform bounds on the fiber coefficients and a leading coefficient bounded away from zero, it excludes wandering components of the bounded-orbit set.
Every bounded Siegel disk carries the required measure in its linearizing coordinate.
For a unitary base on a ball, with fiber coefficients holomorphic near the closed ball and a leading coefficient nonzero there, the common escape region also excludes wandering in the compactified relative skew product.
This settles the local no-wandering question in the specified linearizable elliptic setting without assumptions on the critical orbits of the central fiber.
Peters and Raissy \cite[Theorem~1]{PetersRaissy} proved both no wandering and a stronger bulging description under a Brjuno condition and a hypothesis that all central-fiber critical points lie in attracting or parabolic basins.
The present result does not assert that description or cover nonlinearizable elliptic bases.

In particular, Theorem~\ref{thm:elliptic-global} applies to
\[
 F(z,w)=(\lambda z+z^d,q(z,w))
\]
for $d\ge2$, irrational Brjuno $\lambda$, and any polynomial $q$ of total degree at most $d$ with nonzero $w^d$ coefficient.
Example~\ref{ex:double-siegel-global} has central-fiber critical points in the Julia set, outside the assumptions of \cite[Theorem~1]{PetersRaissy}.
Boc Thaler, Forn\ae ss and Peters \cite[Theorem~7 and Remark~2.1]{BocThalerFornaessPeters2015} constructed a regular cubic skew product with an invariant Fatou component having a punctured-disk limit set,
while the Siegel disk in its central fiber lies entirely in the two-dimensional Julia set.
For a bounded-type choice of their rotation number, our global criterion also excludes wandering components for this same map; see Example~\ref{ex:bfp-global}.
The global proof uses Lilov's theorem at infinity as stated in \cite[Theorem~1.1]{PetersVivas2016}.
The attracting-base results of Peters and Smit \cite{PetersSmit2018} and Ji and Shen \cite{JiShen2026} concern a different transverse regime.
Boc Thaler's elliptic-fixed-point wandering example in complex dimension three \cite[Section~6]{BocThaler2026} has a two-dimensional complex fiber over its rotation coordinate and therefore lies outside the one-dimensional fiber setting here.

All main theorems in this paper were obtained before the first version was posted as arXiv:2609.23834 \cite{Ye2026v1}.
The author subsequently completed the organization and writing of the manuscript.

Section~\ref{sec:background} records standard facts and notation.
Section~\ref{sec:area-framework} develops the covering and area tools.
Section~\ref{sec:q9} proves Theorem~\ref{thm:A-q9} and the entire-function supplement.
Section~\ref{sec:q8} proves Theorem~\ref{q8r2:headline}.
Sections~\ref{sec:q4} and \ref{sec:q11} address Questions~\ref{question:q4} and~\ref{question:q11} on Baker cycles and wandering boundaries, respectively.
Sections~\ref{sec:elliptic-skew} and \ref{sec:elliptic-global} give the relative and global skew-product applications.

\section{Background}\label{sec:background}\label{sec:prelim}
\subsection{Hyperbolic area}\label{sec:area}

\begin{definition}\label{def:area}
On a hyperbolic open set $G\subset\sphere$, write the complete Poincar\'e metric of curvature $-4$ as $\rho_G(z)|dz|$ in local coordinates.
Its induced distance $d_G$ and normalized area measure $\alpha_G$ are defined componentwise, with
\[
 \dd\alpha_G=\frac2\pi\rho_G^2\,\dd x\dd y,
 \qquad \rho_\D(z)=\frac1{1-|z|^2}.
\]
\end{definition}

For a holomorphic map $\psi:G\to H$ between hyperbolic open sets, Schwarz--Pick gives $\psi^*\dd\alpha_H\le\dd\alpha_G$, with equality when $\psi$ is a covering.
In particular, densities increase under domain restriction.
For a finite set $F\subset\sphere$ with $|F|\ge3$, Gauss--Bonnet gives $\alpha_{\sphere\setminus F}(\sphere\setminus F)=|F|-2$.
Every simply connected hyperbolic domain has infinite intrinsic area.

\begin{lemma}\label{core:kernel}
Let $F_j\subset\sphere$ be increasing finite sets with $|F_1|\ge3$, and put $F_\infty=\overline{\bigcup_jF_j}$.
On each component $D$ of $\sphere\setminus F_\infty$, the densities $\rho_{\sphere\setminus F_j}$ increase locally uniformly to $\rho_D$.
\end{lemma}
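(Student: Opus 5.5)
The plan is to combine monotonicity of hyperbolic densities under inclusion with a normal-families argument on universal covers. Put $G_j=\sphere\setminus F_j$. These are decreasing hyperbolic open sets, since $|F_j|\ge3$, and each contains $D$ because $D\cap F_\infty=\varnothing$. Schwarz--Pick applied to the inclusions $D\subset G_{j+1}\subset G_j$ gives
\[
 \rho_{G_j}\le\rho_{G_{j+1}}\le\rho_D\quad\text{on }D.
\]
So the densities increase pointwise to a limit $\rho_\infty\le\rho_D$. It remains to prove $\rho_\infty\ge\rho_D$ and that the convergence is locally uniform.

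For the lower bound I use universal covers. Fix $z_0\in D$, composing with a Möbius map if needed so that $z_0\in\C$. Choose universal coverings $\pi_j:\D\to G_j$ with $\pi_j(0)=z_0$ and $\pi_j'(0)>0$; then $\rho_{G_j}(z_0)=1/\pi_j'(0)$. Since every $\pi_j$ omits the three fixed points of $F_1$, Montel's theorem makes $\{\pi_j\}$ normal. Moreover $\pi_j'(0)=1/\rho_{G_j}(z_0)$ is bounded above by $1/\rho_{G_1}(z_0)$ and bounded below by $1/\rho_D(z_0)>0$. Take a subsequence with $\pi_j\to\pi$ locally uniformly. The limit is nonconstant because $\pi'(0)=\lim\pi_j'(0)>0$, and by Hurwitz it is locally univalent.

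The key step is to show that $\pi(\D)\subset D$. Each point $a\in\bigcup_kF_k$ lies in $F_j$ for all large $j$, so it is omitted by $\pi_j$ for large $j$, and hence by Hurwitz it is omitted by the nonconstant limit $\pi$. Thus $\pi(\D)$ avoids $\bigcup_kF_k$. Since $\pi(\D)$ is open, it also avoids the closure $F_\infty$: an open set disjoint from a set is disjoint from its closure. As $\pi(\D)$ is connected and contains $z_0$, it lies in $D$. Schwarz--Pick for $\pi:\D\to D$ now gives
\[
 \rho_D(z_0)\,\pi'(0)\le\rho_\D(0)=1.
\]
Hence $\rho_D(z_0)\le 1/\pi'(0)=\lim\rho_{G_j}(z_0)=\rho_\infty(z_0)$. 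Since every subsequential limit yields this inequality and the sequence is monotone, pointwise convergence $\rho_{G_j}\to\rho_D$ follows. At a point $z_0=\infty$, the same argument works in the chart $1/z$.

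Local uniformity then follows from Dini's theorem. The functions $\rho_{G_j}$ are continuous, they increase monotonically, and their limit $\rho_D$ is continuous on $D$, so convergence is uniform on compact subsets of $D$, working chartwise near $\infty$. The main point needing care is the Hurwitz step. It must use that each point of $\bigcup F_k$ is eventually omitted, and it must pass to the closure through openness of $\pi(\D)$. This is exactly why the conclusion is stated on components of $\sphere\setminus F_\infty$ rather than of $\sphere\setminus\bigcup F_j$.
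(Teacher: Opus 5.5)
Your proof is correct, and it reaches the conclusion by a more elementary route than the paper's.

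The paper observes that every neighborhood of a point of $F_\infty$ meets some $F_j$. Hence the domains $\sphere\setminus F_j$, and all their subsequences, have Carath\'eodory kernel $D$ about a base point in $D$. It then cites Hejhal's kernel theorem for convergence of the normalized universal covers, reads the densities off the derivatives at the base point, and finishes with monotonicity and Dini.

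You use the nesting from the outset. Schwarz--Pick for $D\subset G_{j+1}\subset G_j$ gives the upper bound $\rho_{G_j}\le\rho_D$ for free. The same bound gives $\pi_j'(0)\ge 1/\rho_D(z_0)$, so every normal limit is nonconstant. You therefore need only the easy half of kernel convergence: that a limit of the covers maps into $D$. Hurwitz and openness of $\pi(\D)$ supply this, and you never have to show that the limit is a covering of $D$.

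What the cited theorem buys in exchange is generality. It gives convergence of the covering maps themselves and does not rely on the domains being nested. Neither is needed for this lemma, so your self-contained argument suffices.

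One technical remark: when $\infty\in\sphere\setminus F_j$, the maps $\pi_j$ are meromorphic. Montel and Hurwitz should then be used in their spherical form; the limit is not identically $\infty$ because $\pi_j(0)=z_0$ is finite. Alternatively, choose your M\"obius normalization to send a point of $F_1$ to $\infty$. Then every $\pi_j$ is holomorphic, and $z_0$ is automatically finite because $z_0\in D$ avoids $F_1$.
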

\begin{proof}
Fix a base point in $D$.
Every neighborhood of a point of $F_\infty$ meets some $F_j$, so the domains $\sphere\setminus F_j$ and all their subsequences have Carath\'eodory kernel $D$.
Hejhal's kernel theorem \cite{Hejhal} gives convergence of the normalized universal covers.
At the base point, the density is the reciprocal of the absolute value of the covering map's derivative; hence the densities converge pointwise to $\rho_D$.
Monotonicity and Dini's theorem give local uniform convergence.
\end{proof}

\subsection{Repelling markings}

For each rational map of degree at least two, or transcendental entire or meromorphic map considered below, fix increasing finite unions $P_j$ of complete repelling periodic orbits such that
\begin{equation}\label{core:marks}
 f(P_j)=P_j,\qquad |P_1|\ge3,\qquad
 \overline{\bigcup_jP_j}=\Jul(f).
\end{equation}
Their existence follows from the density of repelling periodic points \cite[Theorem~4]{Bergweiler}.
For the meromorphic results, this is the choice referred to as the \emph{repelling marking}.
\begin{definition}\label{def:marking}
The sets $(P_j)$ satisfying \eqref{core:marks} are repelling markings.
\end{definition}

For a disk map $H:\D\to\D$ with $H(0)=0$ and $H'(0)=a\in(0,1]$, Schwarz--Pick applied to $H(\zeta)/\zeta$ gives
\begin{equation}\label{q8new:near-id}
 \sup_{|\zeta|\le r}|H(\zeta)-\zeta|
 \le\frac{r(1+r)}{1-r}(1-a),\qquad 0<r<1.
\end{equation}

\subsection{Inverse singularities and covering transitions}

In the results on meromorphic maps below, $f:\C\to\sphere$ denotes a transcendental meromorphic function unless stated otherwise.
We write $S=S(f)$ for its closed spherical set of inverse singular values and $S_\C=S\cap\C$.
Thus $S_\C$ agrees with the singular set used in the introduction when $f$ is entire.
The spherical convention also records a singularity over infinity when one is present.
All iterates are taken only where they are defined; an orbit reaching infinity stops there.
In particular, Fatou components contain no pre-poles, and their iterates are defined throughout.
We use full Fatou components: $U_n$ is the component containing $f^n(U_0)$.

\begin{lemma}\label{q4:transitions}
If $U_{n+1}\cap S=\varnothing$, then $f:U_n\to U_{n+1}$ is a holomorphic covering onto $U_{n+1}$.
\end{lemma}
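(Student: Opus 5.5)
The plan is to combine the standard fact that a meromorphic function is a covering over the complement of its singular set with an analysis of which points of $U_{n+1}$ can fail to be regular values. Recall that $S$ is the closed spherical set of critical and asymptotic values together with their limit points, and that it records a singularity over infinity when one is present. Then $f:\C\setminus f^{-1}(S)\to\sphere\setminus S$ is a covering map; in the spherical convention this uses that every point of $\sphere\setminus S$ has a neighborhood over which every branch of $f^{-1}$ extends univalently. Since $U_{n+1}\cap S=\varnothing$ and $U_{n+1}$ is open, $U_{n+1}\subset\sphere\setminus S$. Hence $f$ restricted to $f^{-1}(U_{n+1})$ is a covering onto $U_{n+1}$. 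Note that $f^{-1}(U_{n+1})$ lies in $\C$, since $f$ is only defined on $\C$, and contains no poles, because $\infty\notin U_{n+1}$ when $U_{n+1}$ is a Fatou component of a transcendental meromorphic map. Indeed, $\infty$ is in the Julia set: it is an essential singularity, or else a prepole limit.

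The next step restricts this covering to a connected component. A covering restricted to a connected component of the total space is again a covering onto the (connected) base. So it suffices to show that $U_n$ is exactly a component of $f^{-1}(U_{n+1})$.

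For the first containment, $f(U_n)\subset U_{n+1}$ by definition of full forward components, and $U_n$ is connected. Hence $U_n$ lies in a single component $V$ of $f^{-1}(U_{n+1})$.

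For the reverse containment, complete invariance of the Fatou set for meromorphic maps gives $f^{-1}(\Fat(f))\subset\Fat(f)$. Therefore $V$ is a connected subset of $\Fat(f)$ meeting $U_n$, so $V\subset U_n$. This yields $V=U_n$, and $f:U_n\to U_{n+1}$ is a covering, in particular surjective.

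The main obstacle is the first step: justifying the covering property over $\sphere\setminus S$ when $S$ may be infinite, and handling the point at infinity. I would argue locally. Take $w\in U_{n+1}$ and a small round disk $D\ni w$ with $\overline D\cap S=\varnothing$, which exists because $S$ is closed. Over a disk containing no critical or asymptotic values, each component of $f^{-1}(D)$ maps univalently onto $D$. The reason is that analytic continuation of a local inverse along any path in $D$ never fails: failure would produce an asymptotic value or a critical value in $\overline D$. By the monodromy theorem on the simply connected disk $D$, each branch is therefore a single-valued inverse. This evenly covers $D$ and completes the argument.
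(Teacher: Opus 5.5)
Your proposal is correct and follows the paper's proof. The paper also invokes the covering property of $f$ over the complement of the inverse singular set, then identifies $U_n$ with a component of $f^{-1}(U_{n+1})$ using $f(U_n)\subset U_{n+1}$, connectedness, and backward invariance of the Fatou set; your continuation-and-monodromy argument simply spells out the covering theorem that the paper cites.
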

\begin{proof}
The inverse-singularity covering theorem makes every component of $f^{-1}(U_{n+1})$ a covering of $U_{n+1}$.
The component containing $f^n(U_0)$ is contained in $U_n$ by backward invariance of the Fatou set.
Conversely, $f(U_n)\subset U_{n+1}$, so connectedness puts all of $U_n$ in that inverse component.
\end{proof}

Choose repelling markings $P_j$ as in Definition~\ref{def:marking}.
If $A\subset\sphere$ is closed and avoids $U_n$, Lemma~\ref{core:kernel}, applied to finite subsets of $A\cup\Jul(f)$ and then by monotonicity, gives
\begin{equation}\label{eq:recovery}
 \rho_{\sphere\setminus(A\cup P_j)}\uparrow\rho_{U_n}
 \quad\hbox{locally uniformly on }U_n.
\end{equation}
Indeed, the component of $\sphere\setminus(A\cup\Jul(f))$ containing $U_n$ is exactly $U_n$.
The same assertion holds for plane complements by adjoining infinity to $A$.

\section{Hyperbolic area and covering framework}\label{sec:area-framework}\label{sec:covering-loss}

The common mechanism is finite loss of hyperbolic area along a covering orbit.
Finite punctures or compact holes have bounded area cost, while a thick wandering tail supplies disjoint packets of arbitrarily large intrinsic area.
In the thin case annular coverings and their shrinking Jordan sides replace the large-area packets.
We record the quantitative tools before applying them to any of the four questions.

\subsection{Area cost of punctures and compact holes}
\begin{lemma}\label{core:puncture}
If $G\subset\sphere$ is hyperbolic and $E\subset G$ is finite, then
\[
 0\le\int_{G\setminus E}
       (\dd\alpha_{G\setminus E}-\dd\alpha_G)\le |E|.
\]
\end{lemma}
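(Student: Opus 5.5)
The lower bound is Schwarz--Pick applied to the inclusion $G\setminus E\hookrightarrow G$, which gives $\rho_{G\setminus E}\ge\rho_G$ pointwise. Since $E$ is finite, it has Lebesgue measure zero. For the upper bound we reduce to one puncture at a time and one component at a time. Each point of $E$ lies in exactly one component of $G$, and the densities on other components are unchanged, so it suffices to treat a connected hyperbolic domain $G$. Removing the points of $E$ one by one produces a chain $G=G_0\supset G_1\supset\cdots\supset G_m=G\setminus E$ with $m=|E|$. Each $G_{k+1}$ is $G_k$ minus one point, and removing a point from a hyperbolic domain keeps it hyperbolic. The integrand telescopes, so it is enough to prove the one-point bound
\[
\int_{H\setminus\{e\}}(\dd\alpha_{H\setminus\{e\}}-\dd\alpha_H)\le1
\]
for a hyperbolic domain $H$ and a point $e\in H$, where $H$ is the component of $G_k$ containing the puncture.

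\textbf{Step 1: the model case.} Suppose $H\setminus\{e\}$ has finite area, for instance $H=\sphere\setminus F$ with $F$ finite. Gauss--Bonnet, as recalled before Lemma~\ref{core:kernel}, gives $\alpha_{\sphere\setminus(F\cup\{e\})}=|F|-1$ and $\alpha_{\sphere\setminus F}=|F|-2$. The difference is exactly $1$, and this is the source of the constant.

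\textbf{Step 2: the general case by exhaustion.} Approximate $H$ from inside by finitely punctured spheres. Choose increasing finite sets $F_j\subset\sphere\setminus H$ with $|F_1|\ge3$ and $\overline{\bigcup_jF_j}=\sphere\setminus H$; these are dense in the complement, in the spirit of \eqref{eq:recovery}. By Lemma~\ref{core:kernel}, $\rho_{\sphere\setminus F_j}\uparrow\rho_H$ locally uniformly on $H$. Applying the same lemma to $F_j\cup\{e\}$, we get $\rho_{\sphere\setminus(F_j\cup\{e\})}\uparrow\rho_{H\setminus\{e\}}$ locally uniformly on $H\setminus\{e\}$. Set
\[
g_j=\rho_{\sphere\setminus(F_j\cup\{e\})}^2-\rho_{\sphere\setminus F_j}^2\ge0.
\]
Then $\frac2\pi\int g_j=1$ over $\sphere\setminus(F_j\cup\{e\})$, and $g_j\to\rho_{H\setminus\{e\}}^2-\rho_H^2$ pointwise on $H\setminus\{e\}$. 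Fatou's lemma, restricted to $H\setminus\{e\}$, then yields the bound $\le1$. Note that the $g_j$ are nonnegative but not monotone, which is why Fatou's lemma rather than monotone convergence is used here. Summing over the $m$ punctures gives the bound $|E|$.

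\textbf{Main obstacle.} The delicate step is the pointwise convergence of the differences $g_j$. Both terms converge, but the second does so only on $H$. Near the puncture $e$ there is no issue, since we integrate only away from $e$ and local uniform convergence on compacta of $H\setminus\{e\}$ suffices for Fatou's lemma. The remaining concern is that the Gauss--Bonnet identity integrates over the whole sphere while we keep only the part in $H$. This is harmless because the discarded integrand is nonnegative. I would double-check that Lemma~\ref{core:kernel} applies to $\sphere\setminus(F_j\cup\{e\})$, whose closure of the union is $(\sphere\setminus H)\cup\{e\}$, with the relevant component $H\setminus\{e\}$.
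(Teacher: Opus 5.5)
Your proof is correct and follows the paper's argument. Both approximate by finitely punctured spheres $\sphere\setminus F_j$ with $F_j$ dense in the complement, use Gauss--Bonnet for the exact defect, use Lemma~\ref{core:kernel} for pointwise convergence of both densities, and apply Fatou's lemma to the nonnegative differences. The one difference is your reduction to one component and one puncture at a time. It is harmless but unnecessary: Gauss--Bonnet gives the defect $|E|$ for all punctures at once, and Lemma~\ref{core:kernel} already works componentwise when $G$ is disconnected.
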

\begin{proof}
Choose increasing finite sets $F_j\subset\sphere\setminus G$, containing a fixed triple, whose union is dense in $\sphere\setminus G$.
Set $G_j=\sphere\setminus F_j$ and $H_j=G_j\setminus E$.
Gauss--Bonnet gives
\[
 \int_{H_j}(\dd\alpha_{H_j}-\dd\alpha_{G_j})=|E|.
\]
On $G\setminus E$, Lemma~\ref{core:kernel} gives pointwise convergence of both Poincar\'e densities to their intrinsic limits.
The differences of the area densities are nonnegative by domain monotonicity, so Fatou's lemma proves the bound, also when $G$ is disconnected.
\end{proof}

\begin{lemma}\label{core:holes}
Let $F_j\subset\sphere$ be finite sets containing a fixed three-point set $T$.
Let $K$ and $N$ be finite unions of pairwise disjoint closed smooth Jordan disks, with $K\Subset\operatorname{Int}N$.
Suppose a fixed neighborhood of $N$ avoids every $F_j$.
For $G_j=\sphere\setminus F_j$ and $H_j=G_j\setminus K$, there is a constant $C<\infty$, independent of $j$, such that
\begin{equation}\label{core:holes-bound}
 \int_{G_j\setminus N}
       (\dd\alpha_{H_j}-\dd\alpha_{G_j})\le C.
\end{equation}
\end{lemma}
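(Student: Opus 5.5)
We need a bound on $\int_{G_j\setminus N}(\dd\alpha_{H_j}-\dd\alpha_{G_j})$ that does not depend on $j$. The plan is to reduce this to a comparison of Poincaré densities on the compact-hole complement, controlled uniformly in $j$ through a fixed model domain. Put $W=\operatorname{Int}N\setminus K$, a finite union of annuli, and fix an open neighborhood $V\supset N$ that avoids every $F_j$. By the Schwarz--Pick monotonicity recorded after Definition~\ref{def:area}, for $z\in G_j\setminus N$ we have $\rho_{H_j}(z)\ge\rho_{G_j}(z)$. The key quantity is the ratio $\rho_{H_j}/\rho_{G_j}$, and I would bound $\rho_{H_j}^2-\rho_{G_j}^2$ by $\rho_{G_j}^2$ times a factor that is summable against $\dd\alpha_{G_j}$, uniformly in $j$.

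\textbf{Step 1: pointwise comparison.} For $z$ outside $N$, the ratio $\rho_{H_j}(z)/\rho_{G_j}(z)-1$ is controlled by how the hole $K$ looks from $z$ in the geometry of $G_j$. I would use the standard estimate for removing a compact set from a hyperbolic domain. Lift to the universal cover $\pi_j:\D\to G_j$ with $\pi_j(0)=z$. The preimage of $K$ is a union of compact pieces, each contained in a lift of $N$. Because $K\Subset\operatorname{Int}N$ and $N\subset V\subset G_j$, the annular collar $W$ has hyperbolic modulus in $G_j$ bounded below, since $G_j\supset V$ and Schwarz--Pick gives $\rho_{G_j}\le\rho_V$ on $N$. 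Consequently every lift of $K$ is separated from the lift of $\partial N$ by a collar of definite modulus. A Koebe/Schwarz-type argument then gives
\[
\frac{\rho_{H_j}(z)}{\rho_{G_j}(z)}-1\le C_1\,\omega_j(z),
\]
where $\omega_j(z)$ is the harmonic measure of $N$ at $z$ relative to $G_j\setminus N$, or equivalently a quantity comparable to $\exp(-2d_{G_j}(z,N))$ summed over the lifts. This is the step I expect to be the main obstacle. The constant must be independent of the punctures $F_j$, which may accumulate anywhere outside $V$, and $N$ may have several components. I would first try the extremal-length version: compare $H_j$ with $G_j\setminus N$, on which the estimate reduces to an explicit annulus computation in each lift.

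\textbf{Step 2: integrate.} Next I would show $\int_{G_j\setminus N}\omega_j\,\dd\alpha_{G_j}\le C_2$ uniformly in $j$. In the universal cover, the pulled-back integrand is a sum over the deck orbit of lifts of $N$. Unfolding turns this into an integral over $\D$ of the harmonic measure of a single lifted piece. That integral is finite and depends only on the hyperbolic size of the piece, which is bounded above and below by Step~1's modulus bounds. The multiplicity of the unfolding is at most the number of components of $N$.

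\textbf{Alternative if Step~1 is hard.} Approximate $K$ by finite sets $E_m\subset K$ that become dense. Lemma~\ref{core:puncture} gives the bound $\le|E_m|$, which is useless as $m\to\infty$, so this alone does not work. Instead I would mimic its Gauss--Bonnet proof. By Gauss--Bonnet for $H_j$ truncated along the smooth curves $\partial N$, the total area of $H_j$ outside $N$ equals the Euler-characteristic term minus the geodesic-curvature boundary integral along $\partial N$. That integral is bounded by a constant depending only on $\rho_{H_j}$ and its derivatives near $\partial N$. These are uniformly controlled on $\partial N$ because $W\subset H_j\cap V$ gives upper bounds via $\rho_W$, and the cusp structure is unchanged. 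Subtracting the analogous identity for $G_j$ yields \eqref{core:holes-bound}, with $C$ depending only on $K$, $N$, $V$ and $T$.
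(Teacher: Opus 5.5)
\textbf{There is a genuine gap in your main route (Steps~1--2), and it sits in Step~1, which carries the whole content of the lemma.}

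First, the weight $\omega_j$ as you define it is identically $1$. Indeed $G_j\setminus N=\sphere\setminus(F_j\cup N)$, and a finite set is polar, so the Dirichlet solution with data $1$ on $\partial N$ and $0$ on $F_j$ is the constant $1$. With that weight, Step~1 reduces to the trivial bound $\rho_{H_j}/\rho_{G_j}\le\coth\varepsilon$. Step~2 would then need $\alpha_{G_j}(G_j\setminus N)$ to be bounded. But this area is at least $|F_j|-2-\alpha_V(N)$, which is unbounded once $|F_j|\to\infty$, the case used later with $F_j=P_j\cup\{\infty\}$.

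Second, the ``equivalent'' weight $\sum e^{-2d_{G_j}(z,\cdot)}$ over lifts is a different quantity. In the paper's curvature $-4$ normalization, a hyperbolic $R$-disk has $\alpha_\D$-area $2\sinh^2R$, so the unfolded integral $\int_\D e^{-2d_\D(w,\widetilde N_i)}\dd\alpha_\D(w)$ diverges. This is not bookkeeping. The Schwarz--Pick/Koebe comparison you invoke gives exactly $\rho_{H_j}/\rho_{G_j}-1\le\coth d_{G_j}(z,K)-1\approx2e^{-2d_{G_j}(z,K)}$, which is this non-integrable rate. Integrability needs the squared decay $e^{-4d}$. That decay does hold for a single puncture or a single round hole, as the explicit punctured-disk and annulus densities show. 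In the universal cover of $G_j$, however, you must remove all lifts of $K$ at once, one for each deck transformation. You give no argument that the combined effect is dominated by a sum of single-lift effects with a constant independent of $j$.

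\textbf{Your fallback is the right idea and is essentially the paper's argument, but its justification needs repair.} Apply Gauss--Bonnet to $\sphere\setminus(\operatorname{Int}N\cup F_j)$ for both complete metrics. The cusps at $F_j$ contribute nothing and the Euler characteristics cancel, which gives
\[
 \int_{G_j\setminus N}(\dd\alpha_{H_j}-\dd\alpha_{G_j})
 =\frac1{2\pi}\int_{\partial N}\partial_n u_j\,|dz|,
 \qquad u_j=\log\frac{\rho_{H_j}}{\rho_{G_j}},
\]
where $n$ is the normal pointing into $N$.

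Your justification of the uniform boundary control is wrong as written. Since $\partial N\subset\partial W$, the density $\rho_W$ is infinite on $\partial N$. What you need instead is $\rho_{\sphere\setminus T}\le\rho_{G_j}\le\rho_{H_j}\le\rho_{V\setminus K}$ on a fixed neighborhood of $\partial N$ inside $V\setminus K$. You then need an interior elliptic estimate for $\Delta\log\rho=4\rho^2$ to bound $\nabla u_j$ on $\partial N$; two-sided density bounds alone do not control derivatives.

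The paper avoids the derivative bound by smearing this flux identity. It chooses $\chi\in C_c^\infty(\sphere\setminus K)$ with $\chi=1$ near $\sphere\setminus N$ and $\operatorname{supp}\Delta\chi\Subset W$. Using $\Delta u_j=4(\rho_{H_j}^2-\rho_{G_j}^2)\ge0$ and integrating by parts, it needs only the sup bound $u_j\le\log(\rho_W/\rho_{\sphere\setminus T})$ on $\operatorname{supp}\Delta\chi$. So $\rho_W$ is the right comparison, but only on compact subsets of $W$, which is exactly what the cutoff supplies. The punctures are handled by the Schwarz--Pick bound $u_j\le\log\coth\varepsilon$, which lets $u_j$ extend subharmonically across $F_j$, rather than by Gauss--Bonnet at cusps.
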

\begin{proof}
The case $K=\varnothing$ is immediate.
Otherwise put $u_j=\log(\rho_{H_j}/\rho_{G_j})\ge0$.
The curvature equation gives $\Delta u_j=4(\rho_{H_j}^2-\rho_{G_j}^2)\ge0$.
Set $G_*=\sphere\setminus T$ and $\varepsilon=d_{G_*}(K,G_*\setminus\operatorname{Int}N)>0$.
For $z\in G_j\setminus N$, domain monotonicity gives $d_{G_j}(z,K)\ge d_{G_*}(z,K)\ge\varepsilon$.
A universal covering $\phi:\D\to G_j$ with $\phi(0)=z$ maps $\{|w|<\tanh\varepsilon\}$ into $H_j$.
Schwarz--Pick therefore gives $u_j(z)\le\log\coth\varepsilon$ (cf.\ \cite[Proposition~3.4]{MBRG}).
Thus $u_j$ is bounded above near each point of $F_j$ and extends to a nonnegative subharmonic function on $\sphere\setminus K$.

On the fixed open set $L=\operatorname{Int}N\setminus K$, the inclusions $L\subset H_j$ and $G_j\subset G_*$ give
\begin{equation}\label{core:holes-comparison}
 0\le u_j\le u_*:=\log\frac{\rho_L}{\rho_{G_*}}.
\end{equation}
Choose a smooth conformal metric $\sigma$ on the sphere and a smooth cutoff $\chi\in C_c^\infty(\sphere\setminus K)$ such that $0\le\chi\le1$, $\chi=1$ near $\sphere\setminus N$, and $\operatorname{supp}\Delta_\sigma\chi\Subset L$.
Here $\Delta_\sigma$ and $\dd A_\sigma$ denote the Laplacian and area element of $\sigma$.
The extended distributional Laplacian of $u_j$ is positive, so integration by parts gives
\begin{align}\label{core:holes-cutoff}
 \int_{G_j\setminus N}(\dd\alpha_{H_j}-\dd\alpha_{G_j})
 &\le\frac1{2\pi}\langle\Delta_\sigma u_j,\chi\rangle
 =\frac1{2\pi}\int u_j\Delta_\sigma\chi\,\dd A_\sigma\notag\\
 &\le\frac1{2\pi}
 \sup_{\operatorname{supp}\Delta_\sigma\chi}u_*
 \int|\Delta_\sigma\chi|\,\dd A_\sigma.
\end{align}
The last quantity is finite and independent of $j$.
\end{proof}

\subsection{The limiting covering surface}
Suppose that $(U_n)$ is a singular-free wandering tail.
Compatible universal covers $p_n:\D\to U_n$ can be chosen with $p_{n+1}=f p_n$.
Their deck groups satisfy $\Gamma_n\subset\Gamma_{n+1}$.
We call the tail \emph{thick} if $\Gamma=\bigcup_n\Gamma_n$ is discrete and \emph{thin} otherwise.
These are the thick and thin alternatives for the covering system: its transition maps are full coverings and hence are eventually isometric in the sense of Ferreira--Rempe \cite[Lemma~4.3 and Corollary~4.4]{FerreiraRempe}.
In the thin case their classification gives an eventual tail of doubly connected components, with finite covering degrees $e_n$.
Such a Fatou component cannot be a punctured disk: a univalent parametrization at its puncture would give an isolated Julia point.
The annuli therefore have finite modulus.
The cumulative degree products tend to infinity (otherwise the injectivity radius would eventually be a positive constant), and
\begin{equation}\label{eq:annular-degree}
 \operatorname{mod}U_n=(e_0\cdots e_{n-1})\operatorname{mod}U_0
 \longrightarrow\infty
\end{equation}
after reindexing.
Only the covering and metric conclusions of this classification enter the argument.

\begin{lemma}\label{lem:large-disks}
In the thick case, for every $M<\infty$ there are finitely many pairwise disjoint disks $D_i\Subset U_0$ and compact disks $Q_i\Subset D_i$ such that, with $Q=\bigcup_iQ_i$,
\[
 \alpha_{U_0}(Q)>M.
\]
Every $f^n$ is injective on $\bigcup_iD_i$, including between distinct disks, and their images at different times are disjoint.
\end{lemma}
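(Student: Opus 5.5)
We will lift to the limiting covering surface. Since the tail is thick, $\Gamma=\bigcup_n\Gamma_n$ is a discrete Fuchsian group, so $X=\D/\Gamma$ is a hyperbolic Riemann surface, and the projections $\D/\Gamma_n\cong U_n\to X$ are holomorphic coverings. Concretely, the maps $f^n:U_0\to U_n$ factor through $U_0\to X$. Write $q:\D\to X$ for the quotient and $\pi_0=p_0:\D\to U_0$. Covering maps preserve hyperbolic area, and $U_0\to X$ is a covering of degree $[\Gamma:\Gamma_0]$. We plan to build $Q$ inside a fundamental region of $\Gamma$, rather than $\Gamma_0$, so that it projects injectively to $X$ and hence to every $U_n$.

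\textbf{Step 1 (large area in $X$).} We first claim that $X$ has infinite hyperbolic area. If $\alpha(X)<\infty$, then $\Gamma$ is a lattice. Since $\Gamma_0\subset\Gamma$ and $\alpha(U_0)=[\Gamma:\Gamma_0]\,\alpha(X)$, this forces either infinite index or finite area of $U_0$. Finite area of $U_0$ is impossible, because $U_0$ is a Fatou component of a meromorphic map, hence infinitely connected, or simply or doubly connected of infinite area. More directly, one argues as follows. If $\alpha(X)<\infty$ then $\alpha(U_n)=[\Gamma:\Gamma_n]\alpha(X)$, and the indices are non-increasing integers, so they stabilize. Then $f:U_n\to U_{n+1}$ is eventually a degree-one covering, that is, eventually univalent. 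Moreover the $U_n$ have equal finite area and finitely many punctures/cusps. A finite-area hyperbolic domain in $\sphere$ is a finitely punctured sphere, whose punctures would be isolated Julia points. This is impossible, so $\alpha(X)=\infty$.

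\textbf{Step 2 (choosing disks).} Since $X$ has infinite area, we choose finitely many pairwise disjoint embedded disks $\tilde D_i\subset X$ with compact subdisks $\tilde Q_i\Subset\tilde D_i$ and $\sum\alpha_X(\tilde Q_i)>M$. To do this, exhaust $X$ by compact subsurfaces and triangulate a large compact piece into small geodesically convex disks. Take each $\tilde D_i$ to be an open cell with slightly enlarged neighbourhood, and the $\tilde Q_i$ as slightly shrunk closed cells; the small loss of area is controlled by the choice of shrinking. Each $\tilde D_i$ is simply connected, so it lifts homeomorphically to $U_0$ via any branch of the covering $U_0\to X$; call the lifts $D_i,Q_i$. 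Because $U_0\to X$ is a local isometry, $\alpha_{U_0}(Q)=\sum\alpha_X(\tilde Q_i)>M$.

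\textbf{Step 3 (injectivity).} The map $f^n|_{\bigcup D_i}$ equals the composite $\bigcup D_i\to\bigcup\tilde D_i\hookrightarrow X$ lifted to $U_n$. This composite is injective because the $\tilde D_i$ are disjoint and embedded in $X$, and $U_n\to X$ factors it. Hence $f^n$ is injective on $\bigcup_iD_i$, including between distinct disks. Images at different times lie in the distinct components $U_n\ne U_m$, which are disjoint since the domain is wandering.

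The main obstacle is Step 1, namely ruling out finite area of $X$ rigorously. The key point is that $[\Gamma:\Gamma_n]$ decreases and stabilizes, which makes the tail eventually univalent with domains of bounded finite area, contradicting the structure of Fatou components. We expect to use the Ferreira--Rempe eventual-isometry statement cited above for this step.
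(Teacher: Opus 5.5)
Your Steps 2 and 3 match the paper's proof. You pass to $X=\D/\Gamma$ with compatible coverings $\pi_n:U_n\to X$ satisfying $\pi_n f^n=\pi_0$, and choose disjoint coordinate disks in $X$ of total area greater than $M$. You then lift one copy of each through $\pi_0$ and read off joint injectivity from $\pi_n f^n=\pi_0$. To get $D_i\Subset U_0$, lift slightly larger disjoint disk neighbourhoods, as the paper does.

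The gap is in Step 1. The monotonicity $[\Gamma:\Gamma_{n+1}]\le[\Gamma:\Gamma_n]$ forces stabilization only if some index is finite. In that case nothing needs proving, since $\alpha(U_n)=[\Gamma:\Gamma_n]\,\alpha(X)<\infty$ already contradicts the fact that Fatou components have infinite area. The case you actually have to exclude is $\alpha(X)<\infty$ with $[\Gamma:\Gamma_n]=\infty$ for every $n$. Then every $U_n$ has infinite area, which is perfectly consistent with $U_n$ being a Fatou component. Your first remark ("infinite index or finite area of $U_0$") leaves exactly this case open, and your "non-increasing integers" argument does not reach it. The Ferreira--Rempe eventual-isometry statement concerns hyperbolic derivatives along orbits and says nothing about it either.

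The missing idea is finite generation. A complete hyperbolic surface of finite area is a compact surface with finitely many punctures, so $\Gamma\cong\pi_1(X)$ is finitely generated. Since $\Gamma=\bigcup_n\Gamma_n$ is an increasing union, all generators lie in a single $\Gamma_N$. Hence $\Gamma=\Gamma_N$, and $U_N\cong X$ has finite area. A finite-area planar surface is conformally a finitely punctured sphere. Its univalent inclusion into $\sphere$ extends across the punctures to a M\"obius map, so $\sphere\setminus U_N$ is finite, contradicting the fact that it contains the infinite Julia set.

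Separately, your parenthetical claim that Fatou components of meromorphic maps are simply, doubly or infinitely connected is false in general: finite connectivities greater than two occur. The only fact you need is that a Fatou component has no isolated boundary points and therefore has infinite hyperbolic area.
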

\begin{proof}
Let $V=\D/\Gamma$.
Subgroup inclusion gives compatible coverings $\pi_n:U_n\to V$ with $\pi_{n+1}f=\pi_n$.
The complete area of $V$ is infinite.
Otherwise its finite-area Fuchsian group would be finitely generated, so $\Gamma=\Gamma_N$ for some $N$.
Then $V=U_N$ would be a finite-area planar hyperbolic surface, hence a sphere with finitely many punctures.
Its univalent inclusion into $\sphere$ extends across those punctures to a M\"obius map.
This would give a finite complement, whereas the complement contains the infinite perfect Julia set.

Choose finitely many disjoint compact coordinate disks in $V$ of total area greater than $M$, with slightly larger disjoint disk neighborhoods.
Lift one copy of each neighborhood through $\pi_0$.
Covering invariance gives the stated area.
Equality of two image points under $f^n$ would, after projection by $\pi_n$, identify points in the selected disjoint coordinate disks.
Thus $f^n$ is jointly injective on the selected lifts.
Different times lie in distinct Fatou components.
\end{proof}

\subsection{Positive area and limiting metrics}
\begin{lemma}\label{lem:finite-loss}
Let $X\subset\sphere$ be hyperbolic and omit a fixed triple, let $E\subset X$ be finite, and put $Y=X\setminus E$.
Suppose $V=f^{-1}(Y)\subset X$ and $f:V\to Y$ is a covering.
Let $D_i$ be finitely many disks in a Fatou component such that every $f^n$ is injective on $\bigcup_iD_i$, and all these images, at distinct times, are disjoint subsets of $Y$.
If compact disks $Q_i\Subset D_i$ satisfy $\alpha_X(Q)>|E|$, where $Q=\bigcup_iQ_i$, then:
\begin{enumerate}
\item $\alpha_X(f^n(Q))\ge\alpha_X(Q)-|E|>0$ for every $n$;
\item on at least one $D_i$, the pullback densities $(f^n)^*d\alpha_X$
converge smoothly on compact subsets to a positive density;
\item at some $z\in Q$, writing $z_n=f^n(z)$, the quantities
\[
 \epsilon_n=\frac{\rho_Y(z_n)^2}{\rho_X(z_n)^2}-1,
 \qquad
 \delta_n=\frac{\rho_V(z_n)^2}{\rho_X(z_n)^2}-1
\]
satisfy $\sum_n(\epsilon_n+\delta_n)<\infty$.
\end{enumerate}
\end{lemma}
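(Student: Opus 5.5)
The plan is to compare the pullbacks $(f^n)^*\dd\alpha_X$ with $\dd\alpha_X$ on $Q$ through a telescoping identity. Each step of that identity loses area only through the two defects $\epsilon$ (from removing $E$) and $\delta$ (from restricting $X$ to $V$). Write $Q_n=f^n(Q)$. Since $Q_{n+1}\subset Y$, we get $Q_n\subset f^{-1}(Y)=V$. Because $f:V\to Y$ is a covering, it preserves area: $f^*\dd\alpha_Y=\dd\alpha_V$, and this holds for sets since $f^{n+1}$ is injective on $Q$. Put
\[
 d_n=\int_{Q_n}(\dd\alpha_V-\dd\alpha_X)\ge0,
 \qquad
 e_n=\int_{Q_n}(\dd\alpha_Y-\dd\alpha_X)\ge0 .
\]
Then $\alpha_X(Q_{n+1})=\alpha_Y(Q_{n+1})-e_{n+1}=\alpha_V(Q_n)-e_{n+1}=\alpha_X(Q_n)+d_n-e_{n+1}$, and therefore
\[
 \alpha_X(Q_n)=\alpha_X(Q)+\sum_{k<n}d_k-\sum_{k=1}^{n}e_k .
\]
The sets $Q_k$ are pairwise disjoint subsets of $Y=X\setminus E$, so Lemma~\ref{core:puncture} gives $\sum_k e_k\le|E|$. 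This proves (1).

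For (2) and (3) I would pass to the pointwise version. Set $g_n=(f^n)^*\dd\alpha_X/\dd\alpha_X$ on $\bigcup_iD_i$. The chain rule together with $f^*\rho_Y=\rho_V$ gives
\[
 g_{n+1}(z)=g_n(z)\,\frac{1+\delta_n(z)}{1+\epsilon_{n+1}(z)},
\]
where $\epsilon_k,\delta_k$ are evaluated at $z_k$ as in the statement. Two bounds control this sequence. First, Schwarz--Pick for $f^n:U_0\to X$ gives $g_n\le\rho_{U_0}^2/\rho_X^2$, which is bounded on each compact $Q_i$. Second, since the images $Q_k$ are disjoint, $\sum_k\int_Q g_k\,\epsilon_{k+1}\,\dd\alpha_X=\sum_k e_{k+1}\le|E|$. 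Hence $\sum_k g_k\epsilon_{k+1}<\infty$ almost everywhere on $Q$.

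For (2) I would argue by normal families. The maps $f^n|_{D_i}$ take values in the hyperbolic set $X$, which omits a fixed triple, so they form a normal family, and the densities $(f^n)^*\rho_X^2$ converge smoothly along subsequences. Mass conservation $\int_Q g_n\,\dd\alpha_X\ge\alpha_X(Q)-|E|>0$ excludes constant limits on at least one $D_i$, so the limits there have positive density off a discrete set. To upgrade subsequential convergence to convergence of the whole sequence, I would use monotonicity. By the recursion, $G_n=g_n\prod_{k\le n}(1+\epsilon_k)$ equals $g_0\prod_{k<n}(1+\delta_k)$, so it is nondecreasing in $n$. At points where $\sum\epsilon_k<\infty$, the product $\prod(1+\epsilon_k)$ converges, so $g_n$ converges pointwise, and $G_n$ stays bounded because $g_n$ does.

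This reduces (3) to finding one $z\in Q$ with $\sum_k\epsilon_k(z)<\infty$ and $g_n(z)$ bounded below. At such a point, boundedness of $G_n=g_0\prod_{k<n}(1+\delta_k)$ gives $\sum\delta_k<\infty$. Bounded below together with the almost-everywhere finiteness of $\sum g_k\epsilon_{k+1}$ gives $\sum\epsilon_k<\infty$. I expect this step to be the main obstacle: producing a positive-measure set where $\liminf_n g_n>0$, rather than only $\limsup_n g_n>0$, without circularity. The first attempt will be reverse Fatou. From $g_n\le C$ and $\int_Q g_n\ge c>0$ we get $\int_Q\limsup_n g_n\ge c$. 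On the set $\{\limsup_n g_n>0\}$, the almost-everywhere summability of $g_k\epsilon_{k+1}$ combined with monotonicity of $G_n$ should force $\epsilon_k\to0$ along the relevant times. I would then combine this with locally uniform subsequential convergence from (2) to a positive density, which gives a uniform lower bound along a subsequence on an open set. The monotone quantity $G_n$ then propagates that bound to all $n$. If this propagation fails, the fallback is to sum $\log(1+\delta_k)-\log(1+\epsilon_{k+1})$ against $\dd\alpha_X$ on $Q$ and use Jensen's inequality with the two integral bounds to show the set where both series diverge has measure zero.
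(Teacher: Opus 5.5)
Your part (1) is correct. It is the paper's telescoping written as an exact identity: $\alpha_X(Q_{n+1})=\alpha_X(Q_n)+d_n-e_{n+1}$, with $d_n\ge0$ and $\sum_ke_k\le|E|$, is the paper's inequality $\sigma_{n+1}+t_{n+1}\ge\sigma_n$ integrated over $Q$. Two small repairs are needed. First, apply Schwarz--Pick to $f^n|_{D_i}:D_i\to X$, since $f^n(U_0)\subset X$ is not a hypothesis. Second, the correct identity is $\int_Qg_{k+1}\epsilon_{k+1}\,d\alpha_X=e_{k+1}$, because $(f^{k+1})^*(d\alpha_Y-d\alpha_X)=g_{k+1}\epsilon_{k+1}\,d\alpha_X$. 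With $g_k$ in its place the summability is unjustified, since $\epsilon_{k+1}$ is unbounded near $E$.

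The genuine gap is the step you flag yourself. Your multiplicative bookkeeping with $G_n$ gives convergence of $g_n$ only where $\sum_k\epsilon_k<\infty$, and producing such points is exactly what (3) asks for. The reverse-Fatou, propagation and Jensen ideas are left as hopes, so neither full convergence in (2) nor the point in (3) is obtained. The missing idea is to read the recursion additively:
\[
 g_{n+1}-g_n=g_n\delta_n-g_{n+1}\epsilon_{n+1}\ge-g_{n+1}\epsilon_{n+1}.
\]
By disjointness and injectivity of the images, $\sum_n\int_Kg_{n+1}\epsilon_{n+1}\,d\alpha_X\le|E|$ for every compact $K\subset\bigcup_iD_i$. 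So the negative variation of $(g_n)$ is finite a.e. Since $0<g_n\le C_K$, the partial positive variations $\sum_{k<n}(g_{k+1}-g_k)_+=g_n-g_0+\sum_{k<n}(g_{k+1}-g_k)_-$ stay bounded as well. Hence $g_n\to g$ a.e. and in $L^1_{\rm loc}$, and
\[
\int_Qg\,d\alpha_X=\lim_n\alpha_X(Q_n)\ge\alpha_X(Q)-|E|>0.
\]
This is exactly the paper's argument: its $\sigma_n$ and $t_n=\sigma_n\epsilon_n$ are your $g_n$ and $g_n\epsilon_n$ times the density of $\alpha_X$.

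Part (3) is then immediate. Take $z\in Q$ with $g(z)>0$, $g_n(z)\to g(z)$ and $\sum_ng_n(z)\epsilon_n(z)<\infty$. Each $g_n(z)>0$ by injectivity, so $\inf_ng_n(z)>0$ and hence $\sum\epsilon_n(z)<\infty$. Boundedness of your monotone $G_n$ then gives $\sum\delta_n(z)<\infty$.

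Your argument for (2) also needs correcting: it cannot be run with spherical limits of $f^n$. A constant spherical limit at a point of $\partial X$ is compatible with a positive limiting density; in the applications every spherical limit is constant, e.g.\ $\infty$. Spherical convergence also gives no convergence of $(f^n)^*\rho_X^2$ there. As in the paper, fix $D_i$ with $\int_{Q_i}g>0$. Lift $f^n|_{D_i}$ through a universal cover of the relevant component of $X$ to maps $h_n:D_i\to\D$ vanishing at a base point. A constant limit of $h_n$ gives zero density, contradicting $g>0$ on a positive-measure part of $Q_i$. A nonconstant limit has $h'\ne0$ by Hurwitz. All subsequential limit densities agree with $g$ a.e., so the whole sequence converges smoothly to one positive density.
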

\begin{proof}
Put $\mu=\alpha_X$ and $\nu=\alpha_Y-\alpha_X$.
Lemma~\ref{core:puncture} gives $\nu(Y)\le |E|$, while covering invariance and $V\subset X$ give $f^*(\mu+\nu)\ge\mu$.
On each source disk write $(f^n)^*d\mu=\sigma_n\,dx\,dy$ and $(f^n)^*d\nu=t_n\,dx\,dy$.
Then
\begin{equation}\label{eq:area-compare}
 \sigma_{n+1}+t_{n+1}\ge\sigma_n,
 \qquad
 \sum_{n\ge0}\int_Qt_{n+1}
 =\sum_{n\ge0}\nu(f^{n+1}(Q))\le |E|.
\end{equation}
The same bound holds on arbitrary compact subsets of the union of the source disks.
Schwarz--Pick bounds $\sigma_n$ above by their complete area densities.
Summable negative variation and telescoping therefore give summable total variation on each compact subset.
Consequently $\sigma_n$ has an almost-everywhere and $L^1_{\rm loc}$ limit $\sigma$, and
\begin{equation}\label{eq:packetfloor}
 \int_Q\sigma\ge\alpha_X(Q)-|E|>0.
\end{equation}
The same telescoping inequality at each finite time proves the first claim.

On a disk where the limiting mass is positive, lift $f^n$ through a universal disk of its component of $X$, normalizing the lift $h_n$ to vanish at a fixed source point.
Its density is $(2/\pi)|h_n'|^2/(1-|h_n|^2)^2$.
A constant normal limit would be zero and would force the mass to vanish.
Every other limit takes values in $\D$ and has nonzero derivative by Hurwitz.
Its density agrees almost everywhere with the unique $L^1$ limit.
Normality and this uniqueness imply smooth convergence of the densities to a positive density on that disk.

Choose $z\in Q$ with $\sigma(z)>0$ and $\sum_nt_n(z)<\infty$.
At this point the exact identities are
\[
 \epsilon_n=t_n/\sigma_n,
 \qquad
 1+\delta_n=(\sigma_{n+1}/\sigma_n)(1+\epsilon_{n+1}).
\]
The first gives $\sum\epsilon_n<\infty$.
Taking logarithms in the second and telescoping gives $\sum\log(1+\delta_n)<\infty$.
Since $\delta_n\ge0$, this is equivalent to $\sum\delta_n<\infty$.
\end{proof}

\begin{proposition}\label{lem:carrier-discrete}\label{prop:carrier}
Suppose $A\subset\Fat(f)$ is closed in $\C$, $f(A)\subset A$, every Fatou component meeting $A$ is preperiodic, and $E=S_\C\setminus A$ is finite.
If a wandering tail is thick, there are a disk $D\Subset U_0$ and a finite forward invariant repelling marking $P$ such that, for $X=\C\setminus(A\cup P)$,
\begin{equation}\label{eq:derivativeone}
 f^n|_D\longrightarrow\infty,
 \qquad
 \frac{|f'(f^n(z))|\rho_X(f^{n+1}(z))}{\rho_X(f^n(z))}
 \longrightarrow1
\end{equation}
locally uniformly on $D$.
This applies in particular to an eventually simply connected wandering tail.
\end{proposition}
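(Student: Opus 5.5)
The plan is to run the finite-loss machinery of Lemma~\ref{lem:finite-loss} with $X=\C\setminus(A\cup P)$ and $Y=X\setminus E$, and then read both conclusions of \eqref{eq:derivativeone} off the positive limiting density.

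\proofstep{Step 1: the tail is eventually singular-free and leaves $A\cup E$}
The components of a wandering tail are pairwise distinct and not preperiodic. Hence no $U_n$ meets $A$. The finite set $E$ meets at most finitely many $U_n$, so after reindexing $U_n\cap(A\cup E)=\varnothing$. Since $S_\C\subset A\cup E$, the tail is singular-free in $\C$, and Lemma~\ref{q4:transitions} applies. Thickness is used only through Lemma~\ref{lem:large-disks}. If the tail is eventually simply connected, every $\Gamma_n$ is trivial, so $\Gamma$ is trivial and discrete; this gives the last sentence of the proposition.

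\proofstep{Step 2: setting up Lemma~\ref{lem:finite-loss}}
Take any finite forward invariant repelling marking $P=P_j$ with $|P|\ge3$, and put $X=\C\setminus(A\cup P)$ and $Y=X\setminus E$. The set $A\cup P\cup E$ is closed in $\C$, so $Y$ is open.
\begin{enumerate}
\item From $f(A)\subset A$ and $f(P)=P$ we get $f^{-1}(Y)\subset\C\setminus(A\cup P)=X$. Poles are excluded automatically, because $Y\subset\C$.
\item $Y\subset\C$ omits every finite singular value, since $S_\C\subset A\cup E$. Hence $f:V=f^{-1}(Y)\to Y$ is a covering.
\end{enumerate}
Lemma~\ref{lem:large-disks} with $M=|E|+1$ gives disks $D_i$ and compacta $Q_i$ with the required joint injectivity and disjointness. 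By the recovery statement \eqref{eq:recovery}, enlarging $j$ makes $\rho_X$ as close as we like to $\rho_{U_0}$ on the compact set $Q$. Hence $\alpha_X(Q)>|E|$ for large $j$, and we fix $P$ accordingly. Lemma~\ref{lem:finite-loss} then yields a disk $D=D_i$ on which the pullback densities $\sigma_n$ of $(f^n)^*\dd\alpha_X$ converge smoothly to a positive density. It also yields the uniform floor $\alpha_X(f^n(Q))\ge\alpha_X(Q)-|E|>0$.

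\proofstep{Step 3: the derivative ratio}
The quotient in \eqref{eq:derivativeone} is exactly $\sqrt{\sigma_{n+1}/\sigma_n}$ pointwise on $D$, by the chain rule for $\sigma_n=\rho_X(f^n)^2|(f^n)'|^2$. Smooth convergence of $\sigma_n$ to a positive limit therefore gives locally uniform convergence of this quotient to $1$. Alternatively, at the distinguished point one can use covering invariance, $\rho_V(z_n)=|f'(z_n)|\rho_Y(z_{n+1})$, which gives the ratio $\sqrt{(1+\delta_n)/(1+\epsilon_{n+1})}\to1$.

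\proofstep{Step 4: escape to infinity}
I expect this to be the main obstacle. Limit functions of iterates on a wandering domain are constants in $\Jul(f)\cup\{\infty\}$, so it suffices to exclude every finite constant limit $c$ along a subsequence $n_k$. Since $A$ is closed in $\C$ and $A\subset\Fat(f)$, a finite limit $c\in\Jul(f)$ cannot lie in $A$. So either $c\in X$ or $c\in P$.
\begin{enumerate}
\item If $c\in X$, then $\rho_X$ is bounded near $c$ while $(f^{n_k})'\to0$ on $D$. This forces $\sigma_{n_k}\to0$, contradicting positivity of the limit density.
\item If $c\in P$ is a puncture of $X$, the pointwise argument fails, because $\rho_X$ blows up like $1/(r\log(1/r))$. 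Instead I will use the area floor from Lemma~\ref{lem:finite-loss}(1). The $\alpha_X$-area of a punctured disk of radius $r$ about $c$ is comparable to $\int_0^r\dd t/(t\log^2(1/t))=1/\log(1/r)$, which tends to $0$. Since $f^{n_k}(Q)$ shrinks into such disks, $\alpha_X(f^{n_k}(Q))\to0$, contradicting the floor. (The same area argument also handles $c\in X$.)
\end{enumerate}
Hence every subsequential limit on $D$, after shrinking $D$ to a disk around $Q$, is $\infty$. By normality this gives $f^n|_D\to\infty$ locally uniformly.
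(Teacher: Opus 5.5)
Your proposal is correct and follows the paper's proof essentially step for step. You clear the tail of $A\cup E$, fix the marking via \eqref{eq:recovery} so that Lemma~\ref{lem:finite-loss} applies, read the derivative ratio as $\sqrt{\sigma_{n+1}/\sigma_n}$, and exclude finite constant limits by an area argument at a point of $\Jul(f)$ that is at worst a cusp of $X$. The only cosmetic difference is that you compare the vanishing cusp area of shrinking punctured disks with the floor $\alpha_X(f^n(Q))\ge\alpha_X(Q)-|E|$, whereas the paper puts infinitely many disjoint images of area bounded below into one fixed finite-area neighborhood; this is the same mechanism.
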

\begin{proof}
A wandering tail avoids $A$ and, after discarding finitely many components, avoids $E$.
Lemma~\ref{q4:transitions} supplies the covering restrictions.
Take the disks of Lemma~\ref{lem:large-disks} with intrinsic area greater than $|E|+2$, and use \eqref{eq:recovery} to choose $P$ with $\alpha_X(Q)>|E|+1$.
Forward invariance gives $f^{-1}(X\setminus E)\subset X$.
Lemma~\ref{lem:finite-loss} now supplies a positive smooth limiting density on one disk.
Ratios of consecutive densities give the derivative limit in \eqref{eq:derivativeone}.

All normal limits on a wandering component are constant: a nonconstant limit and Rouch\'e's theorem would put one fixed target disk in infinitely many disjoint forward components.
A finite constant limit belongs to $\Jul(f)$.
Near such a point $a$, the closed set $A\subset\Fat(f)$ is absent and the finite marking has at most the single point $a$.
A smaller neighborhood has finite $\alpha_X$-area, including when $a$ is a cusp.
A compact disk with positive limiting pullback mass would have infinitely many disjoint forward images in this neighborhood, each of area bounded below.
This is impossible.
Thus the only normal limit is infinity.
In the simply connected case all deck groups are trivial, so the same argument applies.
\end{proof}

\subsection{Summable metric defects}
\begin{lemma}\label{q4:summable-defects}
Let $X\subset\Chat$ be hyperbolic and omit a fixed triple.
Let $E\subset X$ be finite, put $Y=X\setminus E$, and suppose $V=f^{-1}(Y)\subset X$ and $f:V\to Y$ is a holomorphic covering.
Let $z_n=f^n(z_0)\in V\cap Y$, and set
\[
 \epsilon_n=\frac{\rho_Y(z_n)^2}{\rho_X(z_n)^2}-1,
 \qquad
 \delta_n=\frac{\rho_V(z_n)^2}{\rho_X(z_n)^2}-1.
\]
If $\sum_n(\epsilon_n+\delta_n)<\infty$, then for every $R<\infty$ and all sufficiently large $n$, the ball $B_X(z_n,R)$ lies in $\Fat(f)$.
In particular, if $z_n$ lies in the Fatou component $U_n$, then
\[
 d_X(z_n,X\setminus U_n)\longrightarrow\infty.
\]
\end{lemma}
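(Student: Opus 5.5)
The plan is to show that, for large $n$, $f$ acts on a hyperbolic $X$-ball around $z_n$ as an almost-isometry of $\rho_X$, with errors summable in $n$. Then all forward iterates of a fixed-radius ball around $z_n$ stay in $X$. Since $X$ omits a fixed triple, Montel's theorem then gives normality.

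\emph{Step 1: local comparison from one point.} Fix $m$. Let $\phi:\D\to X_m$ be a universal cover of the component $X_m$ of $X$ containing $z_m$, with $\phi(0)=z_m$. Let $\psi:\D\to V_m$ be the corresponding cover of the component of $V$ containing $z_m$, with $\psi(0)=z_m$. Lift the inclusion $V_m\hookrightarrow X_m$ to $H:\D\to\D$ with $H(0)=0$ and, after rotation, $H'(0)=a_m=\rho_X(z_m)/\rho_V(z_m)=(1+\delta_m)^{-1/2}\in(0,1]$.

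Estimate \eqref{q8new:near-id} gives $|H(\zeta)-\zeta|\le C_r(1-a_m)\le C_r\delta_m$ on $|\zeta|\le r$. For small $\delta_m$, Rouch\'e's theorem then shows that $H(\{|\zeta|<r\})$ contains $\{|\zeta|<r-C'_r\delta_m\}$. Projecting, $V$ contains the ball $B_X(z_m,R)$ for any $R$ with $\tanh R<r-C_r'\delta_m$. Moreover, on that ball $\rho_V\le(1+O_R(\delta_m))\rho_X$: the lift $H$ is $C^1$-close to the identity on compacta by Cauchy estimates. The same argument with $Y$ and $\epsilon_m$ shows that $B_X(z_m,R)\subset Y$ and $\rho_Y\le(1+O_R(\epsilon_m))\rho_X$ there.

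\emph{Step 2: propagation.} Since $f:V\to Y$ is a covering, it is a local isometry from $\rho_V$ to $\rho_Y$. By Step~1, on $B_X(z_m,R)$ it therefore multiplies $\rho_X$-lengths by a factor in $[1-\eta_m,1+\eta_m]$, with $\eta_m=O_R(\epsilon_{m+1}+\delta_m)$. Hence $f$ maps $B_X(z_m,R)$ into $B_X\bigl(z_{m+1},R(1+\eta_m)\bigr)$.

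Fix $R$ and take $n$ so large that $\prod_{m\ge n}(1+\eta_m)<2$. This is possible because the defects are summable, so $\sum_m\eta_m<\infty$. The constants depend on the radius, so I would run the estimates uniformly for radii up to $2R$. Induction on $k$ then shows that $f^k$ is defined on $B_X(z_n,R)$, since each intermediate ball lies in $V$, and that $f^k\bigl(B_X(z_n,R)\bigr)\subset B_X(z_{n+k},2R)\subset X$.

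\emph{Step 3: conclusion.} The family $\{f^k\}$ on $B_X(z_n,R)$ omits the fixed triple omitted by $X$, so it is normal by Montel's theorem, and $B_X(z_n,R)\subset\Fat(f)$. If $z_n\in U_n$, the ball is connected and contains $z_n$, so it lies in $U_n$. Hence $d_X(z_n,X\setminus U_n)\ge R$, and since $R$ is arbitrary this distance tends to infinity.

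The main obstacle is Step~1: passing from a density comparison at the single point $z_m$ to containment of a fixed-radius $X$-ball in $V$ and $Y$, together with uniform metric comparison on that ball. This needs the Rouch\'e argument and the Cauchy estimate for $H'$ to be quantitatively uniform in the radius, so that the errors stay linear in $\delta_m$ and $\epsilon_m$ and remain summable.
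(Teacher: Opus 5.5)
Your proof is correct. It uses the same ingredients as the paper: the near-identity estimate \eqref{q8new:near-id} for the lift of $V\hookrightarrow X$ at each orbit point, Rouch\'e, summability to keep every iterate in a region of fixed $X$-size, and then Montel. The bookkeeping, however, is genuinely different.

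The paper lifts $f$ itself. It chooses centered universal covers $\phi_n$ of $X$ at every $z_n$, with arguments normalized recursively. It lifts the inclusion to $a_n$ and $f v_n$ to $b_{n+1}$, and inverts $a_n$ on a smaller disk. This gives transition maps $G_n=b_{n+1}A_n$ with $f\phi_n=\phi_{n+1}G_n$ and $\|G_n-\mathrm{id}\|\le C(\delta_n+\epsilon_{n+1})$. The iterates on $\phi_N(\D_{r_0})=B_X(z_N,\operatorname{artanh}r_0)$ are then explicitly $\phi_{N+k}\circ G_{N+k-1}\circ\cdots\circ G_N$.

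You never lift $f$. You convert $\delta_m$ into the containment $B_X(z_m,2R)\subset V$, with $\rho_V\le(1+O(\delta_m))\rho_X$ on that ball. The pointwise inequality $|f'|\,\rho_X\circ f\le|f'|\,\rho_Y\circ f=\rho_V$ then gives an $X$-Lipschitz bound on balls and multiplicative control of the radii.

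Your version is more intrinsic and makes clear that only the upper bound matters. The $Y$-comparison in your Step~1 and the $\epsilon$-terms in $\eta_m$ are unnecessary: summability of $\delta_n$ alone suffices. The same is visible in the paper's proof through $|b_{n+1}(\zeta)|\le|\zeta|$.

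The paper's coordinate version buys an explicit two-sided near-identity normal form for $f$ along the orbit. This makes it immediate that every iterate is holomorphic and finite-valued. In your write-up you should say explicitly that each intermediate image lies in $V\subset\C$, so no $f^k$ takes the value $\infty$ or requires evaluating $f(\infty)$.
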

\begin{proof}
The disk estimate \eqref{q8new:near-id} gives, for $0<r<1$,
\begin{equation}\label{q4:near-identity-disk}
 \sup_{|\zeta|\le r}|h(\zeta)-\zeta|
 \le\frac{r(1+r)}{1-r}(1-a).
\end{equation}

Choose universal covers $\phi_n:\D\to X$ centered at $z_n$, with arguments chosen recursively.
Once $\phi_n$ is chosen, take a universal cover $v_n:\D\to V$ centered at $z_n$ whose lift $a_n$ through $\phi_n$ has positive derivative at zero.
Then $f v_n$ is a universal cover of the corresponding component of $Y$.
Choose the argument of $\phi_{n+1}$ so its lift $b_{n+1}$ also has positive derivative at zero.
Thus $a_n,b_{n+1}:\D\to\D$ fix zero and
\[
 \phi_n a_n=v_n,
 \qquad f\phi_n a_n=\phi_{n+1}b_{n+1},
\]
\[
 a_n'(0)=(1+\delta_n)^{-1/2},
 \qquad b_{n+1}'(0)=(1+\epsilon_{n+1})^{-1/2}.
\]
These lifts need not be globally injective.

Fix $0<r_0<r_1<r_2<1$.
Equation~\eqref{q4:near-identity-disk} gives
\[
 \|a_n-\mathrm{id}\|_{\overline{\D}_{r_2}}\le C\delta_n,
 \qquad
 \|b_{n+1}-\mathrm{id}\|_{\overline{\D}_{r_2}}
 \le C\epsilon_{n+1}.
\]
For all sufficiently large $n$, Rouch\'e's theorem on $|w|=r_2$ shows that $a_n(w)=\zeta$ has precisely one solution in $\D_{r_2}$, counting multiplicity, for every $\zeta\in\D_{r_1}$.
Its inverse $A_n:\D_{r_1}\to\D_{r_2}$ is therefore holomorphic and $\|A_n-\mathrm{id}\|_{\D_{r_1}}\le C\delta_n$.
Consequently $G_n=b_{n+1}A_n$ satisfies
\begin{equation}\label{q4:coordinate-increment}
 f\phi_n=\phi_{n+1}G_n\quad\hbox{on }\D_{r_1},
 \qquad
 \|G_n-\mathrm{id}\|_{\D_{r_1}}
 \le C(\delta_n+\epsilon_{n+1}).
\end{equation}
Also $\phi_n(\D_{r_1})\subset V$, since $\phi_n=v_n A_n$ there.

Choose $N$ so the sum of the right-hand errors for $n\ge N$ is less than $r_1-r_0$.
Every future composition of the $G_n$ is then defined on $\D_{r_0}$, with all intermediate images in $\D_{r_1}$.
Equation~\eqref{q4:coordinate-increment} implies that all iterates of $f$ are holomorphic on $\phi_N(\D_{r_0})$ and omit the fixed triple outside $X$.
Every intermediate image lies in $V\subset\C$, so no iterate takes the value infinity; a pole at any stage is also excluded by the existence of the next finite image.
Montel's theorem gives normality on $\phi_N(\D_{r_0})$; local biholomorphic inverse charts of $\phi_N$ justify passage from the disk to its image.

For curvature $-4$, the covering image $\phi_N(\D_{r_0})$ is precisely $B_X(z_N,\operatorname{artanh}r_0)$, by path lifting for the complete metric.
The argument holds for every sufficiently large $N$, and $r_0$ may be arbitrarily close to one.
Connectedness of each ball then places it in $U_N$ whenever $z_N\in U_N$.
\end{proof}

\begin{lemma}
\label{q11geo:remove-marks}
Let $A\subset\Chat$ be closed with at least three points, let $Z=\Chat\setminus A$, and let $P\subset Z$ be finite.
Put $X=Z\setminus P$.
Suppose $w_n\in W_n\subset X$, where $W_n$ are connected open sets, every spherical cluster point of $(w_n)$ belongs to $A$, and
\[
 d_X(w_n,X\setminus W_n)\longrightarrow\infty.
\]
Then $d_Z(w_n,Z\setminus W_n)\to\infty$.
The points $w_n$ and the sets $W_n$ need not belong to one fixed component of $Z$.
\end{lemma}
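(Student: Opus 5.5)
Argue by contradiction and localize near the finitely many punctures. Suppose that along a subsequence there are points $y_n\in Z\setminus W_n$ with $d_Z(w_n,y_n)\le R$. Only the punctures can make $d_X$ and $d_Z$ differ substantially, so the plan is to show that a bounded-length $Z$-path from $w_n$ to $y_n$ can be replaced by a $d_X$-path of bounded length. This uses the facts that $w_n$ tends to $A$, far from $P$, and that the metric cost of a puncture is local.

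\proofstep{Step 1: $w_n$ is far from $P$ in $d_Z$} Each point $p\in P$ has positive spherical distance from $A$. Since all spherical cluster points of $(w_n)$ lie in $A$, eventually $w_n$ avoids a fixed spherical neighborhood $N$ of $P$ with $\overline N\subset Z$. Moreover $d_Z(w_n,P)\to\infty$. Indeed, if $d_Z(w_n,p)\le R'$ along a subsequence, then $w_n$ would lie in the compact $d_Z$-ball $\overline{B_Z(p,R')}\subset Z$, which is compact by completeness of $\rho_Z$ on the component of $p$. That ball stays at positive spherical distance from $A$, a contradiction.

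\proofstep{Step 2: comparison of metrics away from $P$} Fix a small $r>0$ and set $K_r=\{z\in Z:d_Z(z,P)\ge r\}$. On $K_r$ there is a constant $c_r$ with $\rho_X\le c_r\rho_Z$. This follows from the Schwarz--Pick argument used in Lemma~\ref{core:holes}: a universal cover of $Z$ centered at $z$ maps the disk of radius $\tanh r$ into $X$. Hence a path in $Z$ avoiding the $d_Z$-$r$-neighborhood of $P$ has $X$-length at most $c_r$ times its $Z$-length.

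\proofstep{Step 3: building a short $X$-path} Take a $Z$-geodesic $\gamma$ from $w_n$ to $y_n$ of length at most $R$. By Step 1, for large $n$ every point of $\gamma$ has $d_Z(\cdot,P)>d_Z(w_n,P)-R\to\infty$, so $\gamma\subset K_r$. Note that $y_n$ lies in the same component of $Z$ as $w_n$, so the geodesic exists. It follows that $\gamma\subset X$ and that its $X$-length is at most $c_rR$. The geodesic $\gamma$ joins $w_n\in W_n$ to $y_n\notin W_n$. If $y_n\in X$, then $d_X(w_n,X\setminus W_n)\le c_rR$, which contradicts the hypothesis. If $y_n\in P$, this is already excluded by Step 1 for large $n$. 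In either case $\gamma$ leaves $W_n$ at some first point of $X\setminus W_n$ before $y_n$, which gives the same bound. Components of $Z$ play no role, since all arguments are within the component containing $w_n$.

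The main obstacle is Step 1. It needs $P$ to be a finite subset of $Z$ at positive spherical distance from $A$, together with properness of closed $d_Z$-balls. If uniformity is worrying, I would instead argue locally near each $p\in P$, using the finite-area cusp/puncture model. Completeness of $\rho_Z$ near $A$ suffices here, and it holds because each component of $Z$ carries its complete Poincar\'e metric.
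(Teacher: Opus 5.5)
Your proof is correct and follows essentially the same route as the paper: $d_Z(w_n,P)\to\infty$ by compactness of closed $d_Z$-balls about the marks, then a Schwarz--Pick density comparison on $Z$-balls about $w_n$, then transfer of short $Z$-paths into $X$. The only difference is that you use a fixed factor $\coth r$, whereas the paper observes that the factor is $\coth(\ell_n-R)\to1$; either is enough, since the hypothesis only needs the $X$-length bounded.
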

\begin{proof}
For every fixed $L<\infty$, the closed complete hyperbolic ball of radius $L$ about each $p\in P$ is compact in the component of $Z$ containing $p$.
The union of these finitely many balls is therefore compact in $Z$.
Since every spherical cluster point of $w_n$ lies in $A$, it follows that
\[
 \ell_n:=d_Z(w_n,P)\longrightarrow\infty.
\]
Distances to marks in other components are understood as infinity.
Fix $R<\infty$.
Each point of $B_Z(w_n,R)$ has $Z$-distance at least $\ell_n-R$ from $P$.
A centered universal-cover disk of any smaller radius avoids $P$, so Schwarz--Pick on that disk gives
\[
 1\le\frac{\rho_X}{\rho_Z}
 \le\coth(\ell_n-R)=1+o(1)
 \quad\hbox{on }B_Z(w_n,R),
\]
with the evident interpretation if $\ell_n=\infty$.
Every $Z$-path of length at most $R$ from $w_n$ thus lies in $X$ and has $X$-length at most $(1+o(1))R$.
The assumed separation in $X$ places the whole $Z$-ball inside $W_n$ for large $n$.
Since $R$ was arbitrary, the conclusion follows.
\end{proof}

For a finite $T\subset S$, let $\mathcal O_T$ be the set of all defined forward images of points of $S\setminus T$, including their zeroth images and any terminal value infinity.
Set
\begin{equation}\label{eq:deleted-postsingular}
 A_T=\overline{\mathcal O_T}^{\,\sphere}.
\end{equation}
Then $f(A_T\cap\C)\subset A_T$ and $S\setminus A_T\subset T$.
The set $T$ is omitted only as a set of starting singular generators: an orbit point of a generator in $T$ may still lie in $A_T$ if it is reached or approximated by orbits of retained generators.
A prime denotes the spherical derived set.

\subsection{Annular wandering tails}
\label{q11r:section}

The inverse continuation over a disk with one singular value follows the argument of Baker \cite[Theorem~2, proof]{Baker2002}; the exclusion of its exterior inverse side uses the periodic-point argument of \cite[Theorem~1,
proof]{Baker2002}.
We record both steps because the finite-deletion conclusion requires their iteration at every large time.

For the annular alternative in Section~\ref{sec:covering-loss}, let $(U_n)$ be a singular-free wandering tail of finite-modulus annuli with finite-degree covering transitions.
Keep $\mathcal O_T$ and $A_T$ as in \eqref{eq:deleted-postsingular}.

Choose an annular uniformization of $U_0$ and let $\gamma_0$ be any circle concentric in this coordinate.
Write $\gamma_n$ for the underlying curve of $f^n(\gamma_0)$.
Finite coverings of annuli have the form $\zeta\mapsto c\zeta^{\pm d}$ in annular coordinates.
Consequently every $\gamma_n$ is again a coordinate circle, in particular a Jordan curve, with its parametrization traversed according to the cumulative degree.
This choice includes the closed hyperbolic geodesic but is not restricted to it; it does not allow an arbitrary analytic essential curve.
All normal limits on a wandering Fatou component are constant.
Hence
\begin{equation}\label{q11r:shrink}
 \operatorname{diam}_{\#}\gamma_n\longrightarrow0.
\end{equation}
For all sufficiently large $n$, there is a unique Jordan side $B_n$ of $\gamma_n$ with small spherical diameter, and $\operatorname{diam}_{\#}B_n\to0$.
To see this last statement, place $\gamma_n$ in a spherical ball of radius $\operatorname{diam}_{\#}\gamma_n$.
The complementary large ball is connected, so one Jordan side lies in the small ball.

\begin{lemma}
\label{q11r:one-singular-disk}
Let $D\subset\sphere$ be a Jordan disk such that
$\partial D\cap S=\varnothing$ and $|D\cap S|\leq1$.
Every component of $f^{-1}(D)$ is simply connected.
\end{lemma}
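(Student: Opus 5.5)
Let $W$ be a component of $f^{-1}(D)$. We show that every Jordan curve $\sigma\subset W$ bounds a disk contained in $W$. The plan is to split according to $|D\cap S|$. If $D\cap S=\varnothing$, the inverse-singularity covering theorem (as in Lemma~\ref{q4:transitions}) makes $f:W\to D$ a covering. Since $D$ is simply connected, this covering is a homeomorphism, and $W$ is simply connected.

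Now suppose $D\cap S=\{s\}$. Because $\partial D\cap S=\varnothing$ and $S$ is closed, we can enlarge $D$ slightly to a Jordan disk $D^+\supset\overline D$ that still has $D^+\cap S=\{s\}$. Over the punctured disk $D^+\setminus\{s\}$, each component of the preimage is a covering of a punctured disk. Such a covering is either a finite-degree cover $\zeta\mapsto\zeta^k$ of a punctured disk, or a universal (logarithmic) cover by a half-plane. In both cases this is Baker's inverse continuation. Accordingly, the components $W'$ of $f^{-1}(D\setminus\{s\})$ lying in $W$ are of three kinds: a half-plane over a logarithmic singularity, a punctured disk whose puncture is a finite point mapping to $s$ (a critical point or a regular preimage), or a punctured disk whose puncture is $\infty$. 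We then need to show that $W$ is one such piece, filled in at its puncture if that puncture is finite. First we check that $W$ consists of a single piece. A point of $W$ over $s$ has a small neighborhood mapping onto a neighborhood of $s$ with branched degree $k$, so it lies in the closure of exactly one punctured piece. Conversely, two distinct pieces are disjoint open sets whose closures in $W$ could meet only over $s$, and that happens only at such a branch point. Connectedness of $W$ then forces a single piece. In the finite-puncture case $W$ is a topological disk, and in the logarithmic case $W$ is a half-plane-type simply connected region. Both are simply connected, provided the remaining case cannot occur.

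The main obstacle is that remaining case: a piece that is a punctured neighborhood of $\infty$, that is, $W'$ is an exterior region $\{|z|>R\}$ in a chart covering $D\setminus\{s\}$ with finite degree. Then $W$ would not be simply connected in $\C$. We rule this out as in Baker's periodic-point argument. If a neighborhood of $\infty$ minus a compact set maps as a finite covering onto $D\setminus\{s\}$, then $f$ omits every value of $\sphere\setminus\overline D$ near $\infty$ and has only finitely many preimages of points of $D$ there. Hence $\infty$ would be a removable singularity or a pole of $f$, since by the big Picard theorem a transcendental meromorphic function takes every value, with at most two exceptions, infinitely often near $\infty$. Such an $f$ would be rational, which contradicts transcendence.

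We must also make sure that bounded curves in $W$ cannot wind around a pole. At a pole $b\in W$ we have $s=\infty$, and a punctured neighborhood of $b$ is a finite-degree piece whose closure contains $b$. This is therefore the finite-puncture case, handled in $\sphere$ coordinates near $\infty$. So every piece is simply connected after its puncture is filled in, and $W$ is simply connected.
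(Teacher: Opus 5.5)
Your proposal is correct and follows essentially the paper's argument: discard the fiber over $s$, observe that the rest of $W$ is a single connected unbranched covering of $D\setminus\{s\}$ (universal or a finite power cover), fill in a finite puncture, and exclude a puncture at $\infty$ by transcendence, with your omitted-values (Picard) argument replacing the paper's observation that $f\to s$ near $\infty$. Two steps you leave implicit, which the paper states, are that a point of $W$ over $s$ rules out the universal case (a small loop about it is essential in the piece) and that there is at most one such point; also, your closing claim that a pole in $W$ forces $s=\infty$ is false when $\infty\in D\setminus S$, but that paragraph is not needed, since points of $W$ never obstruct simple connectivity.
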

\begin{proof}
If $D\cap S=\varnothing$, each inverse component covers the disk and is simply connected.
Otherwise write $D\cap S=\{s\}$, let $C$ be an inverse component, and put $C^*=C\setminus f^{-1}(s)$.
Removing a discrete set does not disconnect a plane domain.
Thus $C^*$ is connected, and the inverse covering theorem gives a connected unbranched covering
\[
 f:C^*\longrightarrow D\setminus\{s\}.
\]
The covering is either universal or a finite-degree punctured-disk cover.
If $C$ contains a point over $s$, a small loop about that point has nonzero winding in $C^*$, excluding the universal case.
There can be at most one such point: loops around two distinct removed finite points give two independent classes in first homology, whereas a punctured disk has cyclic first homology.
Filling the sole puncture then gives a disk.

Suppose instead there is no point of $C$ over $s$.
The universal case is already simply connected.
In the finite-degree case choose a univalent parametrization $j:\D^*\to C^*$ and a target coordinate $\phi$ centered at $s$, so that $\phi(f(j(\zeta)))=\zeta^d$.
Univalence excludes an essential singularity of $j$ at zero.
A finite removable value would give, by continuity, a point of the same inverse component over $s$, a contradiction.
If $j$ has a pole at zero, its image contains a full neighborhood of infinity and $f$ tends to $s$ throughout that neighborhood.
The function $f$ would therefore extend meromorphically at infinity, contrary to transcendence.
\end{proof}

\Needspace{6\baselineskip}
\begin{lemma}
\label{q11r:nonreset-step}
There is $N_0$ such that, for $n\geq N_0$, if $|B_{n+1}\cap S|\leq1$, then
\begin{equation}\label{q11r:propagate}
 B_n\subset\C,\qquad f(B_n)\subset B_{n+1}.
\end{equation}
\end{lemma}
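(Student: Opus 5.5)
The plan is to obtain \eqref{q11r:propagate} by pulling back a slightly enlarged copy of $B_{n+1}$ through Lemma~\ref{q11r:one-singular-disk}. Then I would use the periodic-point argument of \cite[Theorem~1, proof]{Baker2002} to rule out the possibility that the resulting simply connected preimage is the large side of $\gamma_n$.

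\emph{Setting up $N_0$.} I fix once and for all a finite repelling marking $P$ with $f(P)=P$ and $|P|\ge3$. Since $P$ is a union of periodic orbits, $f$ is a bijection of $P$. Let $\eta>0$ be the minimal spherical distance between distinct points of $P$. By \eqref{q11r:shrink} and $\operatorname{diam}_{\#}B_n\to0$, I choose $N_0$ so that for $n\ge N_0$ the Jordan side $B_n$ is defined, the ball $\Delta_n$ of radius $\operatorname{diam}_{\#}\gamma_n$ containing $B_n$ has diameter less than $\eta$, and $\operatorname{diam}_{\#}B_{n+1}<\eta/2$. In particular each $\Delta_n$ contains at most one point of $P$.

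\emph{Pullback.} Fix $n\ge N_0$ with $|B_{n+1}\cap S|\le1$. Since $\gamma_{n+1}\subset U_{n+1}$ and $U_{n+1}\cap S=\varnothing$, I can thicken $B_{n+1}$ across $\gamma_{n+1}$ by a thin collar inside $U_{n+1}$. This gives a Jordan disk $D'\supset\overline{B_{n+1}}$ with $\partial D'\subset U_{n+1}$, $D'\cap S=B_{n+1}\cap S$, and $\operatorname{diam}_{\#}D'<\eta$. Because $f(\gamma_n)=\gamma_{n+1}\subset D'$ and $\gamma_n$ is connected, $\gamma_n$ lies in a single component $C$ of $f^{-1}(D')$. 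By Lemma~\ref{q11r:one-singular-disk}, $C$ is simply connected. Since $C$ is a subset of the domain $\C$ of $f$, the bounded Jordan side $W$ of $\gamma_n$ in $\C$ lies in $C$, and hence $f(W)\subset D'$.

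\emph{Identifying the side.} I claim $W=B_n$. Otherwise $W$ is the other side, so $\sphere\setminus W\subset\Delta_n$ contains at most one point of $P$. Then $W$ contains two distinct points of $P$, and their images are distinct points of $P$ lying in $D'$. This contradicts $\operatorname{diam}_{\#}D'<\eta$. Therefore $B_n=W\subset\C$ and $f(B_n)\subset D'$. Letting the collar shrink gives $f(B_n)\subset\overline{B_{n+1}}$, and the open mapping theorem then yields $f(B_n)\subset B_{n+1}$. Note that $N_0$ depends only on the diameter bounds, so it is uniform in $n$.

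I expect the main obstacle to be exactly this side-identification step, together with the bookkeeping when infinity lies on the large side. The pullback only produces the side of $\gamma_n$ contained in $\C$. One has to check that the marking argument genuinely excludes this side being the large one, uniformly in $n$, and that the collar can be chosen within $U_{n+1}$ without admitting new singular values.
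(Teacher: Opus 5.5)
Your proof is correct and follows essentially the paper's route. You pull back through Lemma~\ref{q11r:one-singular-disk} to get a simply connected preimage component filling a Jordan side of $\gamma_n$, then exclude the large side because it would contain two points with distinct images that cannot both fit in the shrinking target. The differences are minor: the paper pulls back $B_{n+1}$ itself and identifies the component containing a collar side of $\gamma_n$ with a whole Jordan side via connectedness of its complement, so $f(B_n)\subset B_{n+1}$ is immediate, and it uses three regular points with distinct images rather than your periodic marking, whereas your thickened disk $D'$ places $\gamma_n$ directly in the component at the cost of the final shrinking-collar and open-mapping step.
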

\begin{proof}
An annular collar of $\gamma_n$ has one full collar side mapping into $B_{n+1}$.
Let $C_n$ be the inverse component containing that side.
Since $\gamma_n$ maps into $\partial B_{n+1}$, the domain $C_n$ is
disjoint from $\gamma_n$ and has this whole curve in its boundary.
Lemma~\ref{q11r:one-singular-disk} makes $C_n$ simply connected.
It follows that $C_n$ is the entire Jordan side containing its collar: otherwise an omitted point in that side would be separated from the opposite closed side by the full collar, disconnecting the spherical complement of $C_n$.

Fix three finite regular points $a_1,a_2,a_3$ with pairwise distinct images under $f$.
For large $n$, the disk $B_n$ contains at most one of these points, by its shrinking diameter.
None lies on $\gamma_n$ after finitely many indices, since the $U_n$ are pairwise disjoint.
If $C_n$ were the large side, it would contain at least two of them, and their two distinct images would lie in the shrinking disk $B_{n+1}$.
This is impossible for large $n$.
Hence $C_n=B_n$.
As inverse components are subsets of $\C$, this also proves that $\infty\notin B_n$.
\end{proof}

\begin{theorem}
\label{q11r:reset-propagation}
Call $n$ a reset time if $|B_n\cap S|\geq2$.
There are infinitely many reset times.
For every finite $T\subset S$, there exist $N_T$ and points
\begin{equation}\label{q11r:witnesses}
 q_n\in B_n\cap \mathcal O_T\qquad(n\geq N_T).
\end{equation}
Consequently every constant normal limit $a$ on $U_0$ belongs to $A_T$, for every finite $T\subset S$.
Moreover at least one such limit belongs to $S'$, and thus to $(S\setminus T)'$ for every finite $T$.
\end{theorem}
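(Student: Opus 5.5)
The plan is to exploit the dichotomy in Lemma~\ref{q11r:nonreset-step}: at every large non-reset time the small side $B_n$ is carried by $f$ into $B_{n+1}$. A reset time is one where $B_{n+1}$ contains at least two singular values. The argument has four steps.

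\emph{Step 1: infinitely many reset times.} Suppose instead that there is no reset time after some $N\ge N_0$. Then Lemma~\ref{q11r:nonreset-step} gives $f^k(B_N)\subset B_{N+k}\subset\C$ for all $k$. Since $\operatorname{diam}_\#B_{N+k}\to0$ and the $B_{N+k}$ eventually lie in a small region, the family $\{f^k|_{B_N}\}$ omits three values, and Montel's theorem puts $B_N$ in the Fatou set. But $B_N$ is a Jordan side of $\gamma_N\subset U_N$, and by the nonexistence of punctured-disk components and the finite modulus of $U_N$, $B_N$ contains a boundary component of $U_N$, hence Julia points. This is a contradiction.

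\emph{Step 2: witnesses.} Fix a finite $T$. Since $\operatorname{diam}_\#B_n\to0$ and the sets $B_n$ for distinct large $n$ intersect only in controlled ways (each contains a complementary component of the annulus $U_n$), a fixed point of $T$ can lie in $B_n$ only along a set of times at which it is near the limit. I will argue as follows. At a reset time $m$, $|B_m\cap S|\ge2$. I need to show that for all large reset times $B_m$ contains a point of $S\setminus T$. My first attempt is a counting argument. The sets $B_m$ at distinct reset times nest or are disjoint, and if $B_m\cap S\subset T$ for infinitely many $m$, then two distinct points $t,t'\in T$ both lie in shrinking disks of diameter $\to0$, which is impossible. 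Care is needed because $|B_m\cap S|\ge2$ may be realized by a point of $T$ together with one of $S\setminus T$; that case is fine for our purpose. So for large reset times $m\ge M_T$ pick $q_m\in B_m\cap(S\setminus T)\subset\mathcal O_T$. For $n$ between consecutive reset times $m<n<m'$, Lemma~\ref{q11r:nonreset-step} gives $f^{n-m}(B_m)\subset B_n$, so $q_n=f^{n-m}(q_m)$ is defined, finite, and lies in $B_n\cap\mathcal O_T$. This yields \eqref{q11r:witnesses} with $N_T=\max(M_T,N_0)$.

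\emph{Step 3: limits.} If $f^{n_k}\to a$ on $U_0$, then $\gamma_{n_k}\to a$, so $B_{n_k}$ shrinks to $a$. Hence $q_{n_k}\to a$ and $a\in A_T$.

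\emph{Step 4: a limit in $S'$.} Take reset times $m_k\to\infty$ and pass to a subsequence along which $f^{m_k}|_{U_0}$ converges to a constant $a$, which exists by normality and the constancy of all limits. Each $B_{m_k}$ contains two distinct points of $S$, and both tend to $a$. If $a$ were isolated in $S$, then for large $k$ the shrinking disk near $a$ could contain at most the single point $a$ of $S$, contradicting the existence of two distinct points. So $a\in S'=(S\setminus T)'$.

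The main obstacle is Step 1. It requires checking that $B_N$ actually meets $\Jul(f)$ and that the iterates stay finite, which Lemma~\ref{q11r:nonreset-step} guarantees through $B_n\subset\C$. A secondary difficulty is the uniformity in Step 2, namely that points of $T$ cannot fill the small disks at infinitely many reset times.
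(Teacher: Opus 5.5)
Your argument is correct and is essentially the paper's proof: finitely many resets would give $f^k(B_N)\subset B_{N+k}$ with shrinking diameters, hence $B_N\subset\Fat(f)$, which is impossible because $B_N$ is a side of the essential curve $\gamma_N$. You derive the contradiction from the complementary component of $U_N$ lying inside $B_N$, the paper from maximality of $U_N$; the witnesses, the limits in $A_T$, and the limit in $S'$ are obtained exactly as in the paper. Two small repairs are needed. First, deduce normality of $\{f^k|_{B_N}\}$ directly from $\operatorname{diam}_\#B_{N+k}\to0$ by equicontinuity, since the small sides need not stay in one region and so need not omit a common triple of values. Second, take $N_T$ to be a reset time at least $\max(M_T,N_0)$, so that every later nonreset time is preceded by an admissible reset.
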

\begin{proof}
If there were only finitely many reset times, Lemma~\ref{q11r:nonreset-step} would give $f(B_n)\subset B_{n+1}$ for all sufficiently large $n$.
Every iterate on a fixed late $B_N$ would then be defined, with $f^k(B_N)\subset B_{N+k}$.
Their image diameters tend to zero, so the iterates form a normal family on $B_N$: any sequence has a subsequence of constant spherical limiting values, and the shrinking image diameter makes the convergence uniform on $B_N$.
Thus $B_N\subset \Fat(f)$.
Its collar intersects $U_N$, so maximality of the Fatou component gives $B_N\subset U_N$, filling the essential curve $\gamma_N$ in an annulus.
This contradiction proves the existence of infinitely many resets.

Since $T$ is finite and the diameters of $B_n$ tend to zero, every sufficiently late $B_n$ contains at most one point of $T$.
Start at a reset time $N_T$ later than this threshold and $N_0+1$.
At any reset time $n\geq N_T$, choose
\[
 q_n\in (S\setminus T)\cap B_n,
\]
which is possible because $B_n$ contains at least two points of $S$.
At a nonreset time $n>N_T$, define $q_n=f(q_{n-1})$.
This is legitimate: Lemma~\ref{q11r:nonreset-step} gives $B_{n-1}\subset\C$ and $f(B_{n-1})\subset B_n$, so $q_{n-1}$ is finite and $q_n\in B_n$.
In particular, if a previous witness equals infinity, the next time must be a reset and the construction chooses a new initial generator; it never attempts to evaluate $f(\infty)$.
The induction proves \eqref{q11r:witnesses}.

If $f^{n_j}$ converges locally uniformly on $U_0$ to a constant $a$, then $\gamma_{n_j}$ and $B_{n_j}$ converge spherically to $a$.
The points $q_{n_j}\in \mathcal O_T\cap B_{n_j}$ consequently tend to $a$, proving $a\in A_T$.

Finally choose an infinite sequence of reset times and a normal subsequence with constant limit $a$.
Each corresponding $B_n$ contains two distinct points of $S$, both tending to $a$; thus $a\in S'$.
Removing finitely many points does not change a derived set, so $S'=(S\setminus T)'$ for every finite $T$.
\end{proof}

\subsection{A round-ring estimate}
\label{q11e:section}

\begin{lemma}
\label{q11e:round-ring}
Let $H_j$ be bounded plane annuli separating $0$ from $\infty$, with bounded and unbounded complementary continua $K_j,L_j$.
Set
\[
 R_j=\max_{w\in K_j}|w|,
 \qquad r_j=\min_{w\in L_j\cap\C}|w|.
\]
If $\operatorname{mod}H_j\to\infty$, then $r_j/R_j\to\infty$.
Consequently $\{R_j<|w|<r_j\}\subset H_j$ for large $j$.
The same assertion holds with any varying finite center in the bounded complementary component in place of zero.
\end{lemma}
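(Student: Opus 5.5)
The plan is to derive the ring estimate from Teichmüller's extremal problem. This is the classical fact that, among annuli separating $\{0,w_1\}$ from $\{w_2,\infty\}$ with $|w_1|=R$ and $|w_2|=r$, the modulus is bounded above by an explicit quantity $\Lambda(r/R)$ that depends only on the ratio. Normalize by the rotation and dilation $w\mapsto w/w_1$, where $w_1\in K_j$ has $|w_1|=R_j$. After this step $H_j$ separates $\{0,1\}$ from $\{w_2/w_1,\infty\}$, and $w_2\in L_j$ has $|w_2|=r_j$. Modulus is conformally invariant, so $\operatorname{mod}H_j\le\operatorname{mod}$ of the extremal Teichmüller ring $\C\setminus([-1,0]\cup[P,\infty))$ with $P=r_j/R_j$. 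The modulus of that ring is a continuous increasing function of $P$ that is finite for each finite $P$. If $r_j/R_j$ had a bounded subsequence, the moduli along it would therefore be bounded, contradicting $\operatorname{mod}H_j\to\infty$. This proves $r_j/R_j\to\infty$.

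A more elementary argument avoids citing Teichmüller's theorem. Suppose $r_j/R_j\le C$. Then $K_j$ is a continuum joining $0$ to the circle $|w|=R_j$, and $L_j$ is a continuum joining $|w|=r_j$ to $\infty$. Estimate the extremal length of the curve family separating $K_j$ from $L_j$, or equivalently bound the modulus by the reciprocal of the extremal length of the family joining $K_j$ to $L_j$. Every round circle $|w|=t$ with $R_j<t<r_j$ meets neither continuum only when... In fact such a circle may meet both, which is why I would first try the Teichmüller bound. Alternatively, the Grötzsch--Teichmüller comparison $\operatorname{mod}H\le \frac1{2\pi}\log(16(r/R+1))$ gives the estimate outright and is the version I would cite. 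I expect the only real care needed to be in the normalization conventions for modulus: whether the paper uses $\frac1{2\pi}\log$ or $\log$ affects only constants.

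For the consequence, fix $j$ large with $r_j>R_j$. A point $w$ with $R_j<|w|<r_j$ cannot lie in $K_j$, since $|w|>R_j=\max_{K_j}|\cdot|$. It cannot lie in $L_j\cap\C$ either, since $|w|<r_j$. Because $\sphere\setminus H_j=K_j\cup L_j$, the point lies in $H_j$.

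For a varying finite center $c_j$ in the bounded complementary component, apply the translation $w\mapsto w-c_j$. This preserves moduli and carries $K_j,L_j$ to the complementary continua of the translated annulus, which now separates $0$ from $\infty$. Define $R_j,r_j$ relative to $c_j$ and rerun the argument verbatim. The Teichmüller bound is uniform in the configuration, so the variation of $c_j$ causes no difficulty.
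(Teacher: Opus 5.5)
Your proof is correct, but it obtains the key bound by a different route from the paper. You cite Teichmüller's modulus theorem for rings separating $\{0,w_1\}$ from $\{w_2,\infty\}$, or the Grötzsch--Teichmüller estimate $\operatorname{mod}H_j\le\frac1{2\pi}\log 16(P+1)$ with $P=r_j/R_j$. This gives an explicit and sharp dependence on $r_j/R_j$, but it rests on a nontrivial classical extremal theorem.

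The paper's argument is self-contained. Assuming $r_j/R_j\le M$ along a subsequence, it normalizes by a similarity so that $K_j$ contains $0$ and $-1$, and $L_j$ contains $\infty$ and a point of modulus at most $M$. Then both continua have spherical diameter at least a fixed $\eta>0$. A closed curve of spherical length $\ell<\pi$ lies in a ball of radius $\ell/2$ about one of its points, so one whole side of it, and hence one of the continua, lies in that ball. It follows that every essential Jordan curve in $H_j$ has spherical length at least $\eta$. Testing the extremal length of the separating family with the spherical metric of total area $4\pi$ then gives $\operatorname{mod}H_j\le 4\pi/\eta^2$. Only boundedness of the modulus is needed, so this crude bound suffices and avoids any citation; your version buys an optimal quantitative bound instead.

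The inclusion $\{R_j<|w|<r_j\}\subset H_j$ and the translation to a varying center are handled exactly as in the paper.

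Three minor points:
\begin{enumerate}
\item Normalizing by $w\mapsto w/w_1$ puts $1$ rather than $-1$ in $K_j$. This is harmless, since Teichmüller's bound depends only on $|w_2|/|w_1|$; alternatively use $w\mapsto -w/w_1$.
\item Your abandoned ``elementary'' aside should be discarded as written. By the definitions of $R_j$ and $r_j$, a circle $|w|=t$ with $R_j<t<r_j$ meets neither continuum.
\item The modulus equals the extremal length of the joining family (the reciprocal of that of the separating family), not the reciprocal of the joining family's extremal length.
\end{enumerate}
The real obstruction to using round circles alone is that an upper bound on the modulus needs a length lower bound for \emph{all} essential curves. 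That is exactly what the paper's spherical-length estimate supplies.
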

\begin{proof}
If the ratio were bounded by $M$ along a subsequence, choose an inner point of modulus $R_j>0$ and normalize by a similarity so that the inner continuum contains $0,-1$.
The outer continuum then contains $\infty$ and a point of modulus at most $M$.
Both continua have a uniformly positive lower bound $\eta$ for their round spherical diameters.

Every essential Jordan curve in the normalized ring has spherical length at least $\eta$.
Indeed, a curve of length $\ell<\pi$ lies in a spherical ball of radius $\ell/2$ about one of its points, by using the shorter subarc to every other point.
One of its two sides lies in that ball, so the continuum on that side has diameter at most $\ell$.
For $\ell\ge\pi$ the claim is automatic.
Testing extremal length of essential closed curves with the round spherical metric, of total area $4\pi$, gives
\[
 \operatorname{mod}H_j\le4\pi/\eta^2,
\]
because that extremal length is the reciprocal of ring modulus.
This contradicts the hypothesis.
The round-ring inclusion follows from the definitions of $R_j,r_j$, and translation permits varying centers.
\end{proof}

\section{Hyperbolic Area Proofs of No-Wandering Theorems}\label{sec:q9}

The two parts of Theorem~\ref{thm:A-q9} correspond to Bergweiler's historical Theorems~11 and~12, respectively.
We prove the rational case first, and then treat each transcendental class under its exact stated assumptions.

\subsection{Rational maps}\label{sec:rational}

The degree-at-most-one case is elementary: a constant map has a single Fatou component, and a M\"obius map is conjugate to a multiplication or translation, with invariant Fatou components.
For the degree-at-least-two case, we give the proof.

For a rational map $R$, write $\operatorname{CV}(R)$ for its critical values on $\sphere$.
All domains and closures in this section are spherical.

\begin{proof}[Proof of Theorem~\ref{core:sullivan}\textup{(i)}]
Suppose that $R$ has a wandering Fatou component.
Baker's reduction \cite[Lemma~5.34]{McMullen} and the finiteness of $\operatorname{CV}(R)$ give a simply connected tail $(V_n)_{n\ge0}$ avoiding $\operatorname{CV}(R)$.
Complete invariance of the Fatou set and properness of $R$ show that $R:V_n\to V_{n+1}$ is a covering, hence a conformal isomorphism.
Thus $R:\Omega\to\Omega\setminus V_0$ is a conformal isomorphism, where $\Omega=\bigsqcup_{n\ge0}V_n$.

Choose finite unions $Q_j\subset\Jul(R)$ of repelling periodic orbits such that $Q_j\subseteq Q_{j+1}$, $|Q_1|\ge3$, and $\bigcup_jQ_j$ is dense in $\Jul(R)$ \cite[Theorem~4]{Bergweiler}.
Set $A_j=\sphere\setminus Q_j$ and $B_j=A_j\setminus\operatorname{CV}(R)$, and define measures by
\[
 \dd\beta_j=\dd\alpha_{A_j}\quad\text{on }A_j,\qquad
 \dd\delta_j=\dd\alpha_{B_j}-\dd\alpha_{A_j}\ge0
 \quad\text{on }B_j.
\]
Gauss--Bonnet gives
\begin{equation}\label{eq:rational-cost}
 \delta_j(B_j)=|\operatorname{CV}(R)\setminus Q_j|
 \le2\deg R-2.
\end{equation}
The periodic-orbit construction gives $R(Q_j)=Q_j$, hence $R^{-1}(B_j)\subset A_j$.
Since $B_j$ avoids $\operatorname{CV}(R)$, $R:R^{-1}(B_j)\to B_j$ is an unramified covering.
Covering invariance and domain monotonicity from Section~\ref{sec:area} give, respectively,
\[
 R^*(\dd\beta_j+\dd\delta_j)
 =\dd\alpha_{R^{-1}(B_j)}\ge\dd\beta_j
 \quad\text{on }R^{-1}(B_j).
\]
Since $\Omega\subset\Fat(R)$ avoids $\operatorname{CV}(R)$ and $Q_j\subset\Jul(R)$, we have
\[
 R(\Omega)=\Omega\setminus V_0\subset\Omega\subset B_j.
\]
Thus $\Omega\subset R^{-1}(B_j)$.
Integrating over $\Omega$ and changing variables under $R:\Omega\to\Omega\setminus V_0$ yields
\begin{align*}
 \beta_j(V_0)+\beta_j(\Omega\setminus V_0)
 &=\beta_j(\Omega)\\
 &\le\beta_j(\Omega\setminus V_0)+\delta_j(\Omega\setminus V_0).
\end{align*}
As $\beta_j(A_j)=|Q_j|-2<\infty$, we may subtract the finite term $\beta_j(\Omega\setminus V_0)$.
Nonnegativity of $\delta_j$ and \eqref{eq:rational-cost} then give
\[
 \beta_j(V_0)\le\delta_j(\Omega\setminus V_0)
 \le\delta_j(B_j)\le2\deg R-2.
\]
On the other hand, Lemma~\ref{core:kernel} and monotone convergence give
\[
 \beta_j(V_0)\uparrow\alpha_{V_0}(V_0)=\infty,
\]
a contradiction.
\end{proof}

\Needspace{5\baselineskip}
\subsection{Finite inverse singular sets and thin tails}
\subsubsection{Finite singular sets}\label{sec:finite-type}
\begin{theorem}\label{thm:finite-type}
A transcendental meromorphic function with finitely many inverse singular values has no wandering Fatou component.
\end{theorem}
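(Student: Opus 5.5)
Suppose $f$ is transcendental meromorphic with $S=S(f)$ finite, and that $U_0$ is wandering with forward components $U_n$. The plan is to obtain a contradiction from the area framework of Section~\ref{sec:area-framework}, splitting into the thick and thin cases of Section~\ref{sec:covering-loss}.

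\textbf{Setup.} Since $S$ is finite and the $U_n$ are pairwise disjoint, after discarding finitely many indices the tail avoids $S$. By Lemma~\ref{q4:transitions} all transitions $f:U_n\to U_{n+1}$ are then coverings, so the thick/thin dichotomy applies.

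\textbf{Thin tails.} Here the tail is eventually a sequence of finite-modulus annuli with finite-degree coverings, and Theorem~\ref{q11r:reset-propagation} applies. It gives infinitely many reset times $n$, at which the small Jordan side $B_n$ satisfies $|B_n\cap S|\ge2$. Because $\operatorname{diam}_\#B_n\to0$ and $S$ is finite, eventually $B_n$ contains at most one point of $S$. Hence there are only finitely many resets, a contradiction. Equivalently, the last assertion of that theorem produces a limit in $S'=\varnothing$.

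\textbf{Thick tails, via Proposition~\ref{prop:carrier}.} Take $A=\varnothing$ there, so $E=S_\C$ is finite and the hypotheses hold trivially. With $X=\C\setminus P$, the proposition gives a disk $D\Subset U_0$ on which $f^n\to\infty$ and the metric ratio in \eqref{eq:derivativeone} tends to $1$.

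\textbf{Thick tails, main estimate.} To reach a contradiction I would go back to Lemma~\ref{lem:finite-loss} with $Y=X\setminus E$ and $V=f^{-1}(Y)\subset X$. Here $f:V\to Y$ is a covering, because $Y$ avoids $S_\C$ and the marking is forward invariant. The one point to watch is whether $\infty$ belongs to $S$; if so, adjoin it to the omitted set, taking $X=\sphere\setminus(P\cup\{\infty\})$. Lemma~\ref{lem:finite-loss} then yields a point $z$ with $\sum(\epsilon_n+\delta_n)<\infty$. Lemma~\ref{q4:summable-defects} then gives $d_X(z_n,X\setminus U_n)\to\infty$, so each $U_n$ eventually contains $X$-balls of arbitrarily large radius about $z_n$.

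\textbf{Thick tails, contradiction.} Push the marks away with Lemma~\ref{q11geo:remove-marks}. Take $A=P_\infty\cup\{\infty\}$, where $P_\infty=\overline{\bigcup_j P_j}=\Jul(f)$ plus $\infty$, and note that $z_n\to\infty$ has its cluster point in $A$. I expect the obstacle here: the separation must hold in a hyperbolic domain whose area near $\infty$ is finite, so that it clashes with the disjoint image packets of area bounded below from Lemma~\ref{lem:finite-loss}(1).

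\textbf{Thick tails, the step I would try first.} Use a finite three-point marking $P$ and the cusp at $\infty$. Near the puncture $\infty$ of $X$, a neighborhood has finite $\alpha_X$-area. The packets $f^n(Q)$ are pairwise disjoint, each has $\alpha_X$-area at least $\alpha_X(Q)-|E|>0$, and since $f^n\to\infty$ uniformly on $Q\subset D$ they eventually all lie in that finite-area cusp neighborhood. This is impossible. This reuses exactly the final paragraph of the proof of Proposition~\ref{prop:carrier}, now with $a=\infty$, which is legitimate because $\infty\notin P$ is a single puncture of $X$. Verifying that $\infty$ is a genuine puncture of $X$ (and not in the interior of a larger omitted set) is the step I would check most carefully.
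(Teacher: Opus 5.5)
Your proof is correct. The thin case is exactly the paper's argument: Theorem~\ref{q11r:reset-propagation} gives infinitely many reset times, which is impossible for finite $S$. In the thick case your final step uses the same finite-loss mechanism as the paper, but it takes a long detour.

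\textbf{How the paper handles the thick case.} The paper never invokes Proposition~\ref{prop:carrier}, escape to infinity, Lemma~\ref{q4:summable-defects} or Lemma~\ref{q11geo:remove-marks}. It proceeds in four short steps:
\begin{enumerate}
\item Take $Q$ from Lemma~\ref{lem:large-disks} with $\alpha_{U_0}(Q)>|S|+2$.
\item Use \eqref{eq:recovery} to fix a marking with $\alpha_{X_j}(Q)>|S|+1$, where $X_j=\C\setminus P_j$.
\item The first assertion of Lemma~\ref{lem:finite-loss} gives $\alpha_{X_j}(f^n(Q))>1$ for all $n$.
\item Stop: $X_j$ has finite total area $|P_j|-1$, and the sets $f^n(Q)$ are pairwise disjoint.
\end{enumerate}

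\textbf{What is redundant in your version.} Because the whole surface $X$ already has finite area, you never need to know where the packets go. So the cusp localization is unnecessary, and the ``obstacle'' you anticipate does not exist. Your concerns about $\infty$ are also automatic. The point $\infty$ is an isolated point of $\sphere\setminus(\C\setminus P)$. The set $Y=X\setminus S_\C$ avoids all of $S$ whether or not $\infty\in S$.

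\textbf{Drop the paragraph with $A=\Jul(f)\cup\{\infty\}$.} It does not fit Lemma~\ref{q11geo:remove-marks}. Your marks lie in $A$ rather than in $Z=\sphere\setminus A$. Moreover, separation inside $Z=\Fat(f)$ is vacuous.

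\textbf{If you keep the escape route.} Proposition~\ref{prop:carrier} gives escape only on one disk $D$, while $Q$ is a union of several disks. This is harmless, since every normal limit on a wandering component is constant. Escape on $D$ therefore forces $f^n\to\infty$ locally uniformly on all of $U_0$, in particular uniformly on $Q$.
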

\begin{proof}
A hypothetical wandering orbit eventually avoids the finite singular set, so its full transitions are coverings.
If the tail is thin, the annular reduction in Section~\ref{sec:covering-loss} and Theorem~\ref{q11r:reset-propagation} give infinitely many shrinking Jordan sides containing two distinct singular values.
This is impossible for a finite set.

In the thick case use Lemma~\ref{lem:large-disks} to choose $Q$ with $\alpha_{U_0}(Q)>|S|+2$, and choose a finite marking $P_j$ with $\alpha_{X_j}(Q)>|S|+1$, where $X_j=\C\setminus P_j$.
Set $Y_j=X_j\setminus S_\C$.
Its inverse is contained in $X_j$ and covers $Y_j$.
Lemma~\ref{lem:finite-loss} gives $\alpha_{X_j}(f^n(Q))>1$ for all $n$.
The sets $f^n(Q)$ are pairwise disjoint, whereas $X_j$ has finite area.
This contradiction proves the theorem.
\end{proof}

\subsubsection{A criterion excluding thin tails}
\begin{lemma}\label{lem:bounded-basin-fixed}
Let $f$ be meromorphic on $\C$, and let $B$ be a bounded immediate
basin of an attracting fixed point $\zeta$. Then $\partial B$ contains a
finite fixed point of $f$ that is not attracting.
\end{lemma}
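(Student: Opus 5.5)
Since $B$ is bounded and $f$ is meromorphic, $\overline B$ is a compact subset of $\C$. We may assume $\overline B$ contains no poles: a pole on $\partial B$ would force $f$ to be unbounded near a point of $\partial B$. That is impossible, because $f(\partial B)\subset\partial B$ is bounded: $f(\overline B)\subset\overline B$ by continuity and invariance of $B$, and poles in the closure of a bounded invariant set cannot occur. Hence $f$ is holomorphic on a neighborhood of $\overline B$ and maps $\overline B$ into itself. Moreover $f(\partial B)\subset\partial B$, since $\partial B\subset\Jul(f)$ and $B$ is a full invariant component.

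The plan is to count fixed points by the argument principle, or equivalently by a Lefschetz/index argument, on a slightly enlarged domain. First, $B$ is simply connected. A bounded invariant Fatou component of a meromorphic map whose closure avoids poles is simply connected by the maximum principle: if $\gamma\subset B$ is a Jordan curve, the iterates are uniformly bounded on $\gamma$ by $\sup_{\overline B}|z|$, hence bounded on its interior, so the interior lies in $\Fat(f)$ and therefore in $B$. Let $\phi:\D\to B$ be a Riemann map with $\phi(0)=\zeta$ and put $g=\phi^{-1}f\phi:\D\to\D$. This is a proper self-map of finite degree $d\ge1$: properness follows from $f(\partial B)\subset\partial B$, and the degree is finite because $f$ is holomorphic near the compact set $\overline B$, so each fiber in $\overline B$ is finite. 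If $d=1$ then $g$ is an automorphism fixing $0$, hence a rotation, which contradicts attraction. So $d\ge2$, and $g$ is a Blaschke product with attracting fixed point $0$.

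Next, we locate the fixed point. Consider the holomorphic function $h(z)=f(z)-z$ near $\overline B$. Take Jordan curves $\Gamma_\epsilon$ slightly outside $B$, for instance images of a thin neighborhood; this requires care since $\partial B$ need not be locally connected. Then $f(\Gamma_\epsilon)$ winds $d$ times around $\Gamma_\epsilon$, so $h$ has $d$ zeros counted with multiplicity inside $\Gamma_\epsilon$. Only one of these, $\zeta$, lies in $B$, because a second fixed point in $B$ is impossible by Schwarz. Since $d\ge2$, there are zeros of $h$ in the thin region between $B$ and $\Gamma_\epsilon$. Letting $\epsilon\to0$ and using compactness, these zeros accumulate at a fixed point $p\in\partial B$. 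Being in the Julia set, $p$ is not attracting: an attracting fixed point lies in the interior of its basin, which is Fatou.

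The main obstacle is the winding-number step when $\partial B$ is not locally connected. To avoid it, I would instead exhaust $B$ by the curves $\phi(\{|w|=r\})$ for $r\to1$. On the circle $|w|=r$ close to $1$, $g$ behaves like a degree-$d$ Blaschke product, so the curve $g(\{|w|=r\})$ lies near $|w|=1$ and winds $d$ times around every point of $\{|w|\le r\}$. It would then remain to show that $f(z)-z$ has winding $d$ along $\phi(\{|w|=r\})$, transferred via the homotopy $tf(z)+(1-t)z$, which fails to vanish only if a fixed point lies near the curve. Any such near-boundary fixed points would themselves accumulate on $\partial B$, giving the claim in either case. If neither happens, the winding count gives $d\ge2$ fixed points inside $B$, which contradicts uniqueness.
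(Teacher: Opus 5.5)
Your proposal has a genuine gap; two steps fail.

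\textbf{The simple-connectivity claim is false.} You assert that $B$ is simply connected, and this does not hold in the generality of the lemma. The maximum-principle argument needs $f^n$ to be holomorphic inside $\gamma$. Pole-freeness of $\overline B$ says nothing about the bounded complementary components of $B$, which may contain poles and prepoles. A concrete example is $f(z)=z^2+\varepsilon/(z-\tfrac12)$ with $\varepsilon>0$ small:
\begin{itemize}
\item the domain $D(0,0.63)\setminus\overline{D(\tfrac12,0.05)}$ is mapped into $D(0,0.4)$, so it lies in the immediate basin of the attracting fixed point near $0$;
\item this basin is bounded, since orbits with $|z|\ge1.1$ escape;
\item yet the basin contains the circle $|z-\tfrac12|=0.1$ around the pole $\tfrac12$.
\end{itemize}
So the Riemann map and the Blaschke model are not available in general. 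This already breaks both of your counting schemes.

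\textbf{The counting step fails even when $B$ is simply connected.} Since $g(0)=0$ and $g$ is not a rotation, Schwarz's lemma gives $|g(w)|<|w|$ on $|w|=r$. Hence $g(\{|w|=r\})$ lies strictly inside $\{|w|<r\}$. It winds $d$ times about $0$, but not about the points of $\{|w|\le r\}$ near the circle. The winding number of $f(z)-z$ along $\phi(\{|w|=r\})$ counts the fixed points of $f$ in $\phi(\D_r)$, i.e.\ those of $g$ in $\D_r$, so it equals $1$ for every $r$. No fixed point approaches these curves, and no contradiction with uniqueness arises. You have conflated the degree of the image curve about $0$ with the index of $f(z)-z$, which is what actually counts fixed points. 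The exterior curves $\Gamma_\epsilon$ would need $f(\Gamma_\epsilon)$ to surround $\Gamma_\epsilon$ with degree $d$. Nothing in the hypotheses controls $f$ just outside $\overline B$: points there may lie in other components of $f^{-1}(B)$ and map into $B$. In the multiply connected case, poles in the holes also enter the argument principle.

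\textbf{The missing idea: a backward orbit.} What you need is a mechanism that forces a fixed point onto $\partial B$ without boundary regularity or simple connectivity. The paper's construction is as follows.
\begin{itemize}
\item Choose $z_0,z_1\in B\setminus\{\zeta\}$ with $f(z_1)=z_0$, and an arc joining them that avoids the postcritical set. This set is available because $B$ contains finitely many critical points, whose orbits accumulate only at $\zeta$.
\item On a simply connected neighborhood of the arc, continue univalent branches $h_n$ of $f^{-n}$ with $f\circ h_{n+1}=h_n$ and $h_{n+1}(z_0)=h_n(z_1)$.
\item Since $f^n\to\zeta$ locally uniformly on $B$ while $f^n(h_n(z_0))=z_0\ne\zeta$, every limit of $(h_n)$ is constant.
\item Hence $z_n=h_n(z_0)$ satisfies $f(z_{n+1})=z_n$ and $|z_{n+1}-z_n|\to0$, and it accumulates only on $\partial B$.
\item Each cluster point is therefore a fixed point lying in $\Jul(f)$, so it is not attracting.
\end{itemize}
This argument works for any connectivity of $B$. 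If you want to keep your Blaschke model in the simply connected case, the right substitute for the winding count is a landing argument. Take a $g$-invariant arc ending at a boundary fixed point of $g$: its fundamental pieces have bounded hyperbolic length and shrink in the Euclidean sense, so it lands at a fixed point of $f$.
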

\begin{proof}
The map is holomorphic on a neighborhood of $\overline B$: a pole on $\overline B$ would contradict $f(B)\subset B$ and boundedness of $B$.
The restriction $f:B\to B$ is proper, since a boundary point whose image belongs to $B$ is itself in the attracting basin and hence, by maximality, cannot be a boundary point.
Its degree is finite and at least two; a one-sheeted proper self-map would be an automorphism fixing $\zeta$, which cannot have an attracting multiplier.

There are finitely many critical points in $B$, and their forward orbits accumulate only at $\zeta$.
Choose $z_0,z_1\in B\setminus\{\zeta\}$ with $f(z_1)=z_0$, and a simple arc $\gamma$ joining them, avoiding the closure of the forward critical orbits.
The choices can also avoid all forward critical images.
Take a simply connected neighborhood $V\Subset B$ of $\gamma$ disjoint from this closed set.
Proper covering theory successively constructs univalent inverse branches $h_n:V\to B$ of $f^n$ such that $h_0=\operatorname{id}$,
\[
 f\circ h_{n+1}=h_n,
 \qquad h_{n+1}(z_0)=h_n(z_1).
\]
Indeed, no $h_n(V)$ contains a critical value of $f$, since its image under $f^n$ would be a forward critical image in $V$.

The maps $h_n$ are bounded and form a normal family.
A nonconstant subsequential limit has image in $B$, by Hurwitz's theorem applied to each omitted point.
This is impossible: on a small compact neighborhood of a limit point in $B$, the iterates $f^n$ converge uniformly to $\zeta$, whereas $f^n(h_n(z_0))=z_0\ne\zeta$.
Thus every subsequential limit is constant, and
\[
 |h_n(z_1)-h_n(z_0)|\longrightarrow0.
\]
Put $z_n=h_n(z_0)$. Every cluster point of $z_n$ lies in $\partial B$,
by the same attraction argument, and $f(z_{n+1})=z_n$ with $|z_{n+1}-z_n|\to0$.
A cluster point $p$ therefore satisfies $f(p)=p$.
It cannot be attracting, since an attracting fixed point is an interior point of its immediate basin.
\end{proof}

\begin{theorem}\label{thm:thin-pole-gap}
Let $f:\C\to\sphere$ be transcendental meromorphic.
Assume the following.
\begin{enumerate}
\item The finite inverse singular values form a locally finite subset of
$\C$.
\item Outside a finite set $E_s$, every finite inverse singular value is
an attracting fixed point.
\item Every non-attracting finite fixed point belongs to a finite set $E_f$.
\item There are $C>1$ and $r_0>0$ such that each round annulus
$\{r<|z|<Cr\}$, $r>r_0$, contains a pole or a finite fixed point of $f$.
\end{enumerate}
Then $f$ has no thin singular-free wandering tail.
No hypothesis that the tail escapes is required.
\end{theorem}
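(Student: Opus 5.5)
The plan is to combine the annular reduction of Section~\ref{sec:covering-loss} with Theorem~\ref{q11r:reset-propagation}, and then show that every late reset time is impossible. Suppose there is a thin singular-free wandering tail. After reindexing, the $U_n$ are annuli of finite modulus with $\operatorname{mod}U_n\to\infty$ by \eqref{eq:annular-degree}. The coordinate circles $\gamma_n$ have shrinking small Jordan sides $B_n$, and there are infinitely many reset times $n$ with $|B_n\cap S|\ge2$. I will derive a contradiction at a sufficiently late reset time.

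\emph{Step 1: reset sides cluster only at infinity.} Fix a reset time $n$. Then $B_n$ contains two distinct points of $S$, and $\operatorname{diam}_\#B_n\to0$. The spherical set $S$ has at most one point, $\infty$, outside $S_\C$. So for large $n$, $B_n$ contains at least one finite singular value, and usually two. Suppose a subsequence of reset sides accumulated at a finite point $a$. Then two distinct finite singular values would lie in arbitrarily small neighborhoods of $a$; the case where one of the two points is $\infty$ is excluded because $a$ is finite. This contradicts hypothesis~(1). Hence $B_n\to\infty$ spherically along reset times. In particular, for large reset times $B_n$ avoids the finite sets $E_s$ and $E_f$ together with a fixed neighborhood of them.

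\emph{Step 2: excluding $\infty\in B_n$.} Suppose $\infty\in B_n$ for infinitely many reset times. The other Jordan side of $\gamma_n$ is then the large side and contains $0$. Apply Lemma~\ref{q11e:round-ring} to the annuli $U_n$, with center $0$ in the bounded complementary component. This gives round rings $\{R_n<|w|<r_n\}\subset U_n$ with $r_n/R_n\to\infty$. Moreover $R_n\to\infty$, since $\gamma_n\subset U_n$ lies near $\infty$ and $U_n$ is essential around $0$. So for large $n$ the ring $\{R_n<|w|<CR_n\}$ with $R_n>r_0$ lies in the Fatou set. It contains no pole. It also contains no finite fixed point, because a fixed point in a wandering component is impossible. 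This contradicts hypothesis~(4). The main care needed here is to check that $0$ really lies in the bounded complementary component for large $n$ and that the inner radius $R_n$ tends to infinity. Both follow from $\operatorname{diam}_\#B_n\to0$ with $B_n\to\infty$.

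\emph{Step 3: the bounded basin.} Now take a late reset time with $B_n\subset\C$ bounded. By Step~1, $B_n$ contains a finite singular value $s\notin E_s$, which by hypothesis~(2) is an attracting fixed point. Its immediate basin $W$ is a Fatou component different from $U_n$, since $U_n$ is wandering. So $W$ is connected and disjoint from $\gamma_n\subset U_n$. Since $s\in W\cap B_n$, we get $W\subset B_n$, so $W$ is bounded. Lemma~\ref{lem:bounded-basin-fixed} then yields a non-attracting finite fixed point in $\partial W\subset\overline{B_n}$. By hypothesis~(3) this point lies in the finite set $E_f$. But $\overline{B_n}$ is eventually disjoint from $E_f$ by Step~1, a contradiction.

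Hence no thin singular-free wandering tail exists. Escape of the tail was never assumed: the sides $B_n$ are located near $\infty$ only along reset times, and this follows from local finiteness of the singular values. I expect Step~2 to be the main obstacle, namely converting the modulus growth into a round annulus of fixed ratio $C$ at large radius, correctly centered.
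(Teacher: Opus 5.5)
Your proposal is correct and follows the paper's proof. Your Step~2 is the paper's argument that $B_n$ is eventually bounded, via Lemma~\ref{q11e:round-ring} and hypothesis~(4), and your Steps~1 and~3 are its argument that late sides contain at most one singular value (local finiteness when the limit is finite, Lemma~\ref{lem:bounded-basin-fixed} together with $E_f$ when the limit is $\infty$). The differences are cosmetic: you cite Theorem~\ref{q11r:reset-propagation} for infinitely many resets instead of rerunning its Lemma~\ref{q11r:nonreset-step} and normality argument, and you apply Lemma~\ref{q11e:round-ring} to $U_n$ itself rather than to the bounded images of a compact subannulus, which is harmless because that lemma's proof never uses boundedness of the annulus (and $0\notin U_n$ for large $n$ by disjointness).
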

\begin{proof}
The covering-group classification reduces such a tail to annuli $U_n$ with finite-degree covering transitions.
Write $D_n$ for the cumulative degree.
Thinness implies $D_n\to\infty$: otherwise the transitions would eventually have degree one, so the increasing deck groups would stabilize and their union would be discrete.

Choose a relatively compact concentric subannulus $A_0\Subset U_0$ in an annular uniformization, and choose a concentric essential analytic Jordan curve $\gamma_0\subset A_0$.
Set $A_n=f^n(A_0)$ and let $\gamma_n$ be the underlying Jordan curve of $f^n(\gamma_0)$.
Power-map uniformizations of annular coverings give
\[
 \operatorname{mod}A_n=D_n\operatorname{mod}A_0\longrightarrow\infty.
\]
Normality on $U_0$ and pairwise disjointness of the $U_n$ imply that every subsequential normal limit of $f^n|_{U_0}$ is constant.
Consequently $\operatorname{diam}_{\#}\gamma_n\to0$.
Let $B_n$ be the complementary Jordan disk of $\gamma_n$ with small spherical diameter.
Then $\operatorname{diam}_{\#}B_n\to0$.

We first show that $B_n$ is bounded for all sufficiently large $n$.
Otherwise choose indices $n_j$ with $\infty\in B_{n_j}$.
Then $\gamma_{n_j}\to\infty$ uniformly, and normality implies $f^{n_j}|_{U_0}\to\infty$ locally uniformly.
In particular $A_{n_j}$ avoids disks of radii tending to infinity.
Its core curve surrounds every fixed disk, so its bounded complementary continuum contains such disks.
Apply Lemma~\ref{q11e:round-ring}.
We obtain round annuli $\{r_j<|z|<R_j\}\subset A_{n_j}$ with $r_j\to\infty$ and $R_j/r_j\to\infty$.
A wandering Fatou component contains neither poles nor fixed points, so these annuli contradict the fourth hypothesis.
Thus $B_n$ is eventually the bounded Euclidean side of $\gamma_n$.

We next show that $B_n$ eventually contains at most one inverse singular value.
Suppose this fails and pass to a subsequence along which $f^n|_{U_0}\to a\in\sphere$.
Then $B_n$ shrinks spherically to $a$.
If $a$ is finite, local finiteness of the singular set gives the contradiction immediately.
If $a=\infty$, then $B_n$ avoids the finite set $E_s\cup E_f$ for large $n$.
Were it to contain a singular value $s$, this value would be an attracting fixed point.
Its immediate basin is connected, disjoint from the wandering curve $\gamma_n$, and contains $s$; hence it is contained in the bounded disk $B_n$.
Lemma~\ref{lem:bounded-basin-fixed} provides a non-attracting fixed point on its boundary.
That point lies in $B_n$: it cannot lie on $\gamma_n\subset U_n$, since a wandering Fatou component contains no fixed point.
This contradicts $B_n\cap E_f=\varnothing$.
Thus along an infinity subsequence $B_n$ actually contains no singular value.
We have established $|B_n\cap S(f)|\le1$ for all sufficiently large $n$.

Lemma~\ref{q11r:nonreset-step} now gives $f(B_n)\subset B_{n+1}$ for all large $n$.
As in the proof of Theorem~\ref{q11r:reset-propagation}, normality fills the essential curve in $U_n$, a contradiction.
\end{proof}

\subsection{Exponential perturbations of the identity}\label{sec:F}
We give a proof through the common area criterion.
\begin{theorem}\label{thm:F}
If $R\not\equiv0$ is rational and $Q$ is a nonconstant polynomial, then $f(z)=z+R(z)e^{Q(z)}$ has no wandering Fatou component.
\end{theorem}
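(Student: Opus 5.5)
The plan is to put $f$ into the setting of Proposition~\ref{prop:carrier} for thick tails, and into the setting of the annular argument (Theorem~\ref{q11r:reset-propagation} or Theorem~\ref{thm:thin-pole-gap}) for thin tails. This needs a closed forward-invariant set $A\subset\Fat(f)$ that contains all but finitely many finite singular values and whose Fatou components are preperiodic. The source of $A$ is the explicit near-identity structure of $f$. Write $g=Re^{Q}$, so $f=z+g$ and $f'=1+(R'/R+Q')g$.

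First I locate the singular values. Since $f$ has finite order $\deg Q$ and $f-z$ has only finitely many zeros, the Denjoy--Carleman--Ahlfors theorem leaves only finitely many finite asymptotic values. At a critical point $c$ of large modulus,
\[
 g(c)=-\frac1{R'(c)/R(c)+Q'(c)}=-\frac{1+o(1)}{Q'(c)},
\]
so $f(c)-c\to0$. Moreover
\[
 Q(f(c))=Q(c)+Q'(c)g(c)+O(|Q''||g|^2)=Q(c)-1+o(1),
\]
which gives $|g(f(c))|\approx e^{-1}|g(c)|$. I will iterate this estimate. On the region $W=\{|z|>r_1,\ |g(z)|<\eta/|Q'(z)|\}$ one has $f(z)=z+g(z)$ and $Q(f(z))=Q(z)+Q'(z)g(z)+\dots$, with $|Q'g|<\eta$. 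I want $\Rea Q$ to decrease along orbits there. This is not automatic, since $\arg(Q'g)$ is uncontrolled. The remedy is to shrink to the subregions (near the rays where $\Rea Q\to-\infty$) on which $g$ is so small that $\sum|g(f^k(z))|$ converges geometrically. I then check that $f(c)$ lands in such a subregion after one or two steps. The key estimate to prove is the factor $e^{-1}$ at every subsequent step.

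Granting this, all but finitely many critical values lie in the forward-invariant union $A_0$ of these absorbing regions. These regions consist of finitely many invariant Baker domains, one for each direction in which $\Rea Q\to-\infty$. I take $A$ to be the closure in $\C$ of $A_0$ together with the forward orbits of the critical values; $A$ is closed in $\C$, forward invariant and contained in $\Fat(f)$. The set $E=S_\C\setminus A$ is then finite, consisting of the asymptotic values and finitely many small critical values. For the thin case, critical values are locally finite in $\C$. A shrinking Jordan side $B_n$ tending to $\infty$ cannot contain two singular values. Indeed, such values lie in an invariant Baker domain, which is unbounded and disjoint from the curve $\gamma_n$, and therefore cannot sit inside the bounded disk $B_n$. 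Its complement side is excluded by Lemma~\ref{q11r:nonreset-step}. So reset times are finite, and the argument of Theorem~\ref{q11r:reset-propagation} yields a contradiction.

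In the thick case, Proposition~\ref{prop:carrier} gives a disk $D$ on which $f^n\to\infty$ and \eqref{eq:derivativeone} holds. The main obstacle is to contradict \eqref{eq:derivativeone}. The orbit $z_n$ avoids the Baker domains, so it stays where $|Q'g|\gtrsim\eta$. There are two cases.
\begin{itemize}
\item If $|g(z_n)|$ is large along a subsequence, then $|f'(z_n)|$ is large. The plane domain $X$ contains half-planes inside the Baker domains near $z_n$, which bounds the ratio $\rho_X(z_{n+1})/\rho_X(z_n)$ below. This contradicts the limit $1$ in \eqref{eq:derivativeone}.
\item If $|g(z_n)|$ stays bounded, then $z_{n+1}-z_n=O(1)$ and $\rho_X(z_n)\ge c/\dist(z_n,A\cup P)$. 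The transversal oscillation of $\arg e^{Q}$ at scale $1/|Q'(z_n)|\to0$ then produces nearby points of $A$. Comparing $|f'-1|\approx|Q'g|\ge\eta$ with density ratios tending to $1$ should contradict \eqref{eq:derivativeone}.
\end{itemize}
I expect this last comparison, namely producing a uniform discrepancy in \eqref{eq:derivativeone} from the size of $|Q'g|$, to be the step needing the most care.
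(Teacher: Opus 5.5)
Your proposal has genuine gaps. The most serious is in the thick case, which is the heart of the theorem.

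\textbf{Thick case.} Put $H=Q+\Log(R'+RQ')$, so that $f'=1+e^{H}$ and $Q'g=e^{H}(1+o(1))$.
\begin{enumerate}
\item \emph{The deep region is not excluded.} On the deep region $\{|e^H|<\eta\}$ the function $W=e^{-H}$ satisfies $W\circ f=W-1+o(1)$, cf.\ \eqref{eq:translation}. The part of this region lying in the absorbing set is only a sector $|\arg W-\pi|\le\theta$. An orbit starting with $W$ large and close to the positive real axis stays deep and outside $A$ for as many steps as you like. So knowing that the wandering orbit avoids $A$ and its components does not give $|Q'g|\gtrsim\eta$. Your case analysis never excludes an orbit that is eventually deep. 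That needs a separate argument: the $W$-translation eventually carries such an orbit into the absorbing sector, i.e.\ into $A$ (Lemma~\ref{lem:deep}).
\item \emph{The comparison on the non-deep part gives no discrepancy.} Comparing $|f'-1|$ with density ratios cannot produce a fixed gap from $1$. The bound $|f'-1|\ge\eta$ does not keep $|1+e^H|$ away from $1$, and neither of your bullets controls $\rho_X(z_{n+1})/\rho_X(z_n)$.
\end{enumerate}
The missing idea is to read the one-step derivative through the covering identity
\[
 \frac{|f'(z)|\rho_X(f(z))}{\rho_X(z)}
 =\frac{\rho_V(z)/\rho_X(z)}{\rho_Y(f(z))/\rho_X(f(z))},
 \qquad Y=X\setminus E,\quad V=f^{-1}(Y),
\]
and to prove a preimage net. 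Every large $z\in X$ with $\Rea H(z)\ge-M$ lies within bounded $X$-distance of $X\cap f^{-1}(A)\subset X\setminus V$ (Proposition~\ref{prop:net}). This comes from Rouch\'e targets at the displacement scale when $\Rea H$ is large, and from the model $t\mapsto t+e^t$ on a bounded band. Then \eqref{eq:relative-lower} gives a uniform factor $\Lambda>1$ there (Lemma~\ref{lem:expansion}). So \eqref{eq:derivativeone} forces the orbit to be eventually deep, which the translation argument excludes.

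\textbf{Carrier.} Your construction of $A$ rests on a false estimate. Where $e^H$ is bounded, $|g(f(z))/g(z)|=e^{\Rea e^{H(z)}}(1+o(1))$. So the factor $e^{-1}$ occurs only at the critical point itself, where $e^H=-1$. Afterwards $x=\Rea H$ evolves by $x_{n+1}\approx x_n-e^{x_n}$, so $e^{\Rea H}$ decays only like $1/n$ along the critical orbit, and there is no geometric decay to iterate. Shrinking to where $|g|$ is smaller only pushes the ratio closer to $1$. It also does nothing about the sign of $\Rea(Q'g)\approx\Rea e^H$, which oscillates with $\Ima H$ on the scale $1/|Q'|$.

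The natural invariant set is the comb $\{\Rea H\le a,\ \dist(\Ima H,(2\Z+1)\pi)\le\theta\}$. Its closed half-strips are mapped strictly inside themselves by $t\mapsto t+e^t$ up to the error in Lemma~\ref{lem:coordinate} (Proposition~\ref{prop:traps}). This gives closedness and $A\subset\Fat(f)$ directly, which taking closures of open absorbing regions does not. Your claim that only finitely many components meet $A$ is neither justified nor needed.

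\textbf{Thin case.} Lemma~\ref{q11r:nonreset-step} cannot exclude small sides containing $\infty$, because it assumes $|B_{n+1}\cap S|\le1$. Combined with your bounded-side argument, it only shows that once $\infty\in B_n$ this persists for all later $n$. That is exactly the case of escaping annuli surrounding the origin, and it needs a further input. One option is Lemma~\ref{q11e:round-ring} together with the fact that every large annulus $\{r<|z|<2r\}$ contains a large critical point lying in $A$. Another is Stallard's theorem, as in Lemma~\ref{lem:topologyF}.
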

The remaining cases in $\F$ are rational maps or the identity.

\subsubsection{The logarithmic critical coordinate}
Throughout this section let
\[
 d=\deg Q\ge1,\quad B=R'+RQ',\quad \Delta=Re^Q.
\]
The symbol $\Delta$ here is a holomorphic displacement, not a Laplacian.
Outside a disk, $R$, $B$, and $Q'$ have no zeros or poles.
The rational function $B$ is not identically zero: otherwise $Re^Q$ would be constant.
Define branch-independent functions
\begin{equation}\label{eq:globalcoords}
 u(z)=\Rea Q(z)+\log|B(z)|,
 \qquad W(z)=\frac{e^{-Q(z)}}{B(z)}.
\end{equation}
On any exterior simply connected chart choose
\[
 H(z)=Q(z)+\Log B(z).
\]
Its imaginary part is defined modulo $2\pi$, while $\Rea H=u$ and $e^{-H}=W$ are global.

\begin{lemma}\label{lem:coordinate}
For every fixed $L$, uniformly as $|z|\to\infty$ with $u(z)\le L$,
\begin{equation}\label{eq:normalform}
 H(f(z))-H(z)=e^{H(z)}(1+\varepsilon(z)),
 \qquad |\varepsilon(z)|\le C_L|z|^{-d}.
\end{equation}
The logarithms in this formula are continued over the short displacement from $z$ to $f(z)$.
Also,
\begin{equation}\label{eq:translation}
 W(f(z))=W(z)-1+O\bigl(|z|^{-d}+|W(z)|^{-1}\bigr)
\end{equation}
uniformly in the region $u(z)\le0$.
Exterior critical points satisfy $e^{H(c)}=-1$ exactly, and
\[
 H(f(c))=H(c)-1+O(|c|^{-d}).
\]
\end{lemma}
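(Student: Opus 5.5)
The plan is a direct Taylor expansion. Everything is driven by the size of the displacement $\Delta=Re^Q=f(z)-z$ in the region $u\le L$. Outside a large disk we have $R(z)\sim c z^m$, $B(z)\sim R(z)Q'(z)\sim c d\,a z^{m+d-1}$ and $Q'(z)\sim da z^{d-1}$, where $a$ is the leading coefficient of $Q$ and $c$ that of $R$. Hence
\[
 \Delta=\frac{R}{B}\,e^{H}=\frac{e^{H}}{Q'}\bigl(1+O(|z|^{-d})\bigr),
\]
since $R'/(RQ')=O(|z|^{-d})$. On $u\le L$ this gives $|e^H|\le e^L$, so $|\Delta|\le C_L|z|^{1-d}$. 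In particular $f(z)$ lies in a disk about $z$ of radius $o(|z|)$, and the logarithms in \eqref{eq:normalform} can be continued along the short segment $[z,f(z)]$, on which $Q'$, $B$ and $R$ have no zeros.

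Next I would expand $H$ to second order: $H(f(z))-H(z)=H'(z)\Delta+O\bigl(\sup|H''|\,|\Delta|^2\bigr)$. Here $H'=Q'+B'/B=Q'(1+O(|z|^{-d}))$ and $H''=O(|z|^{d-2})$ on the segment, because $|z|$ is essentially constant along it. Combining with the formula for $\Delta$, the first-order term is $e^H(1+O(|z|^{-d}))$. The second-order term is $O(|z|^{d-2}|\Delta|^2)=O(|e^H|^2|z|^{-d})$, and on $u\le L$ this is at most $e^L|e^H|\,O(|z|^{-d})$. This yields \eqref{eq:normalform} with $|\varepsilon|\le C_L|z|^{-d}$.

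For \eqref{eq:translation} I would write $W(f(z))=e^{-H(f(z))}=W(z)\exp\bigl(-e^H(1+\varepsilon)\bigr)$ and put $\eta=e^H=1/W$, which satisfies $|\eta|\le1$ when $u\le0$. Expanding gives
\[
 W(z)\bigl(1-\eta(1+\varepsilon)+O(|\eta|^2)\bigr)=W-1-\varepsilon+O(|\eta|)=W-1+O\bigl(|z|^{-d}+|W|^{-1}\bigr).
\]

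For the critical points, $f'=1+(R'+RQ')e^Q=1+Be^Q$, so an exterior critical point satisfies $B(c)e^{Q(c)}=-1$, which is exactly $e^{H(c)}=-1$. Then $u(c)=0$, and \eqref{eq:normalform} with $L=0$ gives $H(f(c))=H(c)-1+O(|c|^{-d})$.

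The main obstacle is the uniformity of the second-order remainder. One has to check that the segment $[z,f(z)]$ stays in the exterior chart and avoids the zeros of $B$. It also has to satisfy $|w|\asymp|z|$ along it, so that the bounds on $H''$ and $B'/B$ transfer from $z$. Since the displacement is $O(|z|^{1-d})$, this holds for $|z|$ large. When $d=1$ the displacement is only $O(1)$, but it is still $o(|z|)$, so the same estimates apply.
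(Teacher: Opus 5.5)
Your proposal is correct and follows essentially the same route as the paper: the displacement bound $\Delta=(R/B)e^{H}=O_L(|z|^{1-d})$, then a second-order Taylor expansion of $H$ using $H'R/B=1+O(|z|^{-d})$ and $H''=O(|z|^{d-2})$, then exponentiation for the estimate on $W$, and finally the identity $f'=1+e^{H}$ for the critical points. For $d=1$ the paper records the sharper bound $H''=O(|z|^{-2})$, but your cruder bound already gives a remainder $O(|e^{H}|^{2}|z|^{-d})$, which is all that is needed.
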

\begin{proof}
Rational asymptotics give
\[
 H'=Q'+O(|z|^{-1}),\quad
 \frac RB=\frac1{Q'+R'/R},\quad
 H'\frac RB=1+O(|z|^{-d}).
\]
The displacement is $\Delta=(R/B)e^H=O_L(|z|^{1-d})=o(|z|)$.
Taylor's formula, with $H''=O(|z|^{d-2})$ (or $O(|z|^{-2})$ if $d=1$), therefore gives
\[
 H(z+\Delta)-H(z)
 =e^H\bigl(1+O(|z|^{-d})+O(|z|^{-d}e^u)\bigr),
\]
which proves \eqref{eq:normalform}.
Exponentiating its negative and multiplying by $W$ proves \eqref{eq:translation}.
Finally, $f'=1+Be^Q=1+e^H$, proving the critical-point assertions.
\end{proof}

\begin{lemma}\label{lem:inverse}
Fix $A_0\in\R$.
For sufficiently large $T$ the domain
\[
 D_T=\{t:\Rea t<A_0,\ |t|>T\}
\]
has $d$ injective inverse charts $\psi_j$ for choices $H_j$ of $H$.
They satisfy $H_j\circ\psi_j=\mathrm{id}$ and, uniformly there,
\[
 |\psi_j(t)|\asymp |t|^{1/d},\quad
 \psi_j'(t)=\frac{1+O(|\psi_j(t)|^{-d})}{Q'(\psi_j(t))}.
\]
Every sufficiently large $z$ with $u(z)\le A_0-1$ belongs to one of these charts.
In addition, any fixed-size $t$-disk based at a sufficiently large point has a local inverse, after choosing a local logarithm.
\end{lemma}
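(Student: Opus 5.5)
We construct the inverse charts from the leading behavior $H(z)=Q(z)+\Log B(z)$, with $Q(z)=qz^d(1+O(1/z))$ and $\Log B(z)=O(\log|z|)$. Outside a large disk, choose $d$ sectors $\Sigma_j$ on which a branch of $Q^{1/d}$ is univalent. On each, $\zeta=Q(z)^{1/d}$ is a conformal coordinate with $\zeta\sim q^{1/d}z$. In this coordinate $H=\zeta^d+\Log B$, and the perturbation $\Log B=O(\log|\zeta|)$ is small compared with $|\zeta|^{d}$. Its derivative relative to $d\zeta^{d-1}$ is $O(|\zeta|^{-d}\log|\zeta|)$.

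The main step is to invert $t=H(z)$ on $D_T$. The set $D_T$ is a large disk's complement intersected with a left half-plane. It is simply connected: it is the complement of the closed set $\{\Rea t\ge A_0\}\cup\overline{\D}_T$, which is connected and contains $\infty$. Hence the $d$ branches $t\mapsto t^{1/d}$ are well defined on $D_T$, and they land in $d$ disjoint sectors of angle slightly exceeding $\pi/d$ around the directions where $\Rea\zeta^d<0$.

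For each such branch we solve $\zeta^d+\Log B(\zeta)=t$ by Rouch\'e's theorem, or equivalently by a contraction in the variable $\zeta=t^{1/d}(1+\eta)$. Writing $\zeta^d=t-\Log B$, the map
\[
 \zeta\mapsto (t-\Log B(z(\zeta)))^{1/d}
\]
is a contraction on the disk $|\zeta-t^{1/d}|\le |t|^{1/d-1}\log|t|$ once $|t|>T$ is large. This gives a unique solution $\psi_j(t)$ in that disk, depending holomorphically on $t$, with $|\psi_j(t)|\asymp|t|^{1/d}$.

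Injectivity of $\psi_j$ is automatic, since $H_j\circ\psi_j=\mathrm{id}$. The branch $H_j$ is the continuous choice of $\Log B$ along $\psi_j(D_T)$, which is simply connected because it is the homeomorphic image of $D_T$. Differentiating the identity gives $\psi_j'=1/H'(\psi_j)$. Combined with $H'=Q'+B'/B=Q'(1+O(|z|^{-d}))$, since $B'/B=O(1/z)$ and $Q'\asymp z^{d-1}$, this yields the stated formula for $\psi_j'$.

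For the covering statement, take $z$ large with $u(z)\le A_0-1$, that is, $\Rea H(z)\le A_0-1$. Then $|H(z)|\asymp|z|^d>T$, so $t=H(z)$ lies in $D_T$. The point $z$ lies in a sector where $\zeta^d$ has real part at most $A_0-1+O(\log|z|)$. Uniqueness in the contraction disk therefore identifies $z=\psi_j(t)$ for the $j$ whose $d$-th-root branch matches the direction of $Q(z)^{1/d}$. The $O(\log)$ slack places the direction of $Q(z)^{1/d}$ within $O(|z|^{-d}\log|z|)$ of a branch image. The margin of $1$ between $u\le A_0-1$ and $\Rea t<A_0$ absorbs boundary effects near $\Rea t=A_0$.

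The final assertion concerns a fixed-size disk $\{|t-t_0|<r\}$ with $|t_0|$ large. For this we repeat the same contraction argument, using a local branch of $t^{1/d}$ near $t_0$ and a local logarithm of $B$, without needing the half-plane condition. The step I expect to need the most care is making the contraction constants uniform, together with the branch-matching that shows every $z$ with $u(z)\le A_0-1$ lies in some chart. Both rest on the uniform relative smallness of $\Log B$ and $B'/B$ against $Q$ and $Q'$.
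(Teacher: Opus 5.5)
Your proposal is correct and is essentially the paper's argument written in the coordinate $\zeta=Q^{1/d}$ instead of $w=Q$. The paper takes inverse branches $p_j$ of $Q$ on a slit exterior domain and solves $w+\Log B(p_j(w))=t$ by Rouch\'e in the disk $|w-t|<K\log|t|$; under $w=\zeta^d$ this is your contraction disk, and your covering and derivative steps are the same as the paper's.

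One fix is needed: the radius must be $K|t|^{1/d-1}\log|t|$ with a large constant $K$, as in the paper. When $R(z)\sim r_0z^m$, one has $|\Log B|\approx\frac{|m+d-1|}{d}\log|t|$, so with constant $1$ your map need not send the disk into itself.
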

\begin{proof}
Take the $d$ inverse branches $p_j$ of $Q$ on a slit exterior domain $\C\setminus(\overline{D(0,T_0)}\cup[T_0,\infty))$.
On the sectors relevant to $D_T$, choose logarithms of $B\circ p_j$.
Writing $\ell_j(w)=\Log B(p_j(w))$ gives
\[
 \ell_j(w)=O(\log|w|),\qquad \ell_j'(w)=O(|w|^{-1}).
\]
For $t\in D_T$, the disk $|w-t|<K\log|t|$ stays away from the slit when $T$ is large.
Rouch\'e's theorem, with fixed sufficiently large $K$, gives one solution of $w+\ell_j(w)=t$ in that disk.
Its uniqueness follows also from the derivative bound on the disk.
These solutions agree on overlaps and depend holomorphically on $t$.
Composing with $p_j$ gives $\psi_j$.
The identity $H_j\psi_j=t$ proves injectivity.
Differentiation gives the stated estimates.
Conversely, $u(z)\le A_0-1$ implies $\Rea Q(z)\le A_0-1+O(\log|z|)$.
Since $|Q(z)|\asymp |z|^d$ in the exterior, either $\Rea Q(z)$ is negative or $|\Ima Q(z)|\asymp |z|^d$.
Thus large $Q(z)$ lies away from the positive slit and $z$ is one of the roots just constructed.
The local assertion follows from the same argument, or from $H(z+v/Q'(z))=H(z)+v+O_C(|z|^{-d})$ for bounded $v$.
\end{proof}

\subsubsection{A closed invariant carrier}
Put
\[
 a=\log(5/4),\qquad \theta=\pi/3.
\]
Use Lemma~\ref{lem:inverse} with $A_0=a+2$.
Increase $T$ so that the error in \eqref{eq:normalform} throughout the inverse charts is less than $\epsilon$, where $0<\epsilon<\min(1/4,\theta/4)$.
For $\omega_k=(2k+1)\pi$, choose
\[
 a_k=\begin{cases}a,&|\omega_k|\ge T+2,\\-2T-4,&|\omega_k|<T+2,\end{cases}
 \qquad
 C_k=\{x+i(\omega_k+v):x\le a_k,\ |v|\le\theta\}.
\]
Every $C_k$ lies strictly inside the chart domain.
Set
\begin{equation}\label{eq:carrierdef}
 A=\bigcup_{j=1}^d\bigcup_{k\in\Z}\psi_j(C_k).
\end{equation}

\begin{proposition}\label{prop:traps}
The set $A$ is closed in $\C$, is contained in $\Fat(f)$, and satisfies $f(A)\subset A$.
Every Fatou component meeting $A$ is invariant.
Outside a sufficiently large disk, membership in $A$ is equivalent to
\begin{equation}\label{eq:comb}
 u(z)\le a,\qquad
 \dist\bigl(\Ima H(z),(2\Z+1)\pi\bigr)\le\theta.
\end{equation}
Furthermore, $S_{\C}(f)\setminus A$ is finite.
\end{proposition}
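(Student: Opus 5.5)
We work entirely in the coordinate $t=H(z)$ of Lemma~\ref{lem:inverse}, where Lemma~\ref{lem:coordinate} says that $f$ acts as $t\mapsto t+e^{t}(1+\varepsilon)$ with $|\varepsilon|<\epsilon$.
The proof has five steps.

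\emph{Step 1: invariance of a single strip.}
Fix $t=x+i(\omega_k+v)\in C_k$, so $x\le a_k$ and $|v|\le\theta$.
Then $e^{t}=-e^{x}e^{iv}$.
Since $\theta=\pi/3$, we get $\Rea e^{iv}\ge 1/2$, and with the error bound
\[
 \Rea\bigl(e^{t}(1+\varepsilon)\bigr)\le -e^{x}\bigl(\tfrac12-\epsilon\bigr)<0 .
\]
So the real part strictly decreases, and $x$ stays $\le a_k$.
For the imaginary part, the new value is
\[
 v'=v-e^{x}\sin v+O(\epsilon e^{x}).
\]
Since $e^{x}\le 5/4$ and $|\sin v|\ge (2/\pi)|v|$ on $|v|\le\theta$, the term $-e^{x}\sin v$ pulls $v$ toward zero without overshooting.
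The precise check is that $|v-e^x\sin v|\le\theta-\epsilon e^x$, which follows from $e^x\sin\theta>\epsilon e^x$ together with the choice $\epsilon<\theta/4$; near $v=0$ the map $v\mapsto v-e^x\sin v$ has derivative $1-e^x\cos v\in(-1/4,1)$.
Hence $t\mapsto H(f(\psi_j(t)))$ maps $C_k$ into $C_k$.
The continuation convention for logarithms in \eqref{eq:normalform} guarantees that the orbit stays in the same chart $\psi_j$, because the displacement is short.
This gives $f(A)\subset A$.

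\emph{Step 2: $A$ lies in the Fatou set.}
Along any orbit in $A$, $\Rea t$ decreases by at least $ce^{x}$ per step.
More usefully, the interiors of $C_k$ are mapped strictly inside $C_k$.
Thus $f(\psi_j(\Int C_k))$ omits the three-point set formed by any three points outside $\bigcup C_k$'s charts; for instance, the repelling periodic points are not in $A$, since $A$ is forward invariant and orbits in $A$ move monotonically.
Montel's theorem then puts $\psi_j(\Int C_k)$ in $\Fat(f)$.
To reach the boundary, enlarge the construction: repeat Step 1 with slightly bigger parameters $a+\eta$ and $\theta+\eta$, which still satisfy the inequalities.
Then $A$ lies in the interior of an invariant set, so $A\subset\Fat(f)$.
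Each connected piece $\psi_j(C_k)$ is mapped into itself, so the Fatou component containing it is invariant.
Components meeting $A$ only contain these pieces, hence are invariant.

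\emph{Step 3: closedness and the characterization \eqref{eq:comb}.}
Closedness in $\C$ holds because the pieces are closed and locally finite: only finitely many $C_k$ meet a bounded $t$-region, and $|\psi_j(t)|\asymp|t|^{1/d}\to\infty$.
For \eqref{eq:comb}, every large $z$ with $u\le a$ lies in a chart by Lemma~\ref{lem:inverse}.
The imaginary-part condition then places $H_j(z)$ in some $C_k$ with $a_k=a$, because $|\omega_k|$ is large once $|z|$ is large.

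\emph{Step 4: the singular values.}
Finite critical values are $f(c)$ with $e^{H(c)}=-1$, that is, $t=i\omega_k$ exactly.
Lemma~\ref{lem:coordinate} gives $H(f(c))=i\omega_k-1+O(|c|^{-d})$.
This point has real part about $-1<a$ and imaginary part within $\theta$ of $\omega_k$, so it lies in $C_k$ when $c$ is large.
Large critical values therefore lie in $A$.
Asymptotic values of $z+Re^{Q}$ are finitely many (the limits along the directions where $\Rea Q\to-\infty$ are infinity, so there are none finite, or at most finitely many from $R$'s behaviour).
Only finitely many critical points lie in any disk.
Hence $S_\C\setminus A$ is finite.

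\emph{Main obstacle.}
The delicate step is the uniform angular estimate in Step 1 near $|v|=\theta$ when $x$ is close to $a$.
It must hold with the error $\epsilon$ and must not push the orbit out of the chart region $D_T$.
I would handle it by the explicit inequality $\sin\theta-\epsilon>0$ and $e^{a}\le5/4$ stated above.
For the small $|\omega_k|$ strips I would use the very negative $a_k$, which makes $e^{x}$ negligible so that $t$ barely moves.
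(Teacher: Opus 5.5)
Your Steps 1--3 follow the paper's route: the model map $t\mapsto t+e^t$ on the strips, Montel's theorem, and local finiteness of the strips. Enlarging $a$ and $\theta$ slightly, so that each closed strip sits inside an open invariant strip, is a valid substitute for the paper's remark that boundary points have neighborhoods mapped into the interior.

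\textbf{The genuine gap: asymptotic values in Step~4.} To get $S_\C(f)\setminus A$ finite you must show that, apart from finitely many, the finite asymptotic values lie in $A$; in fact there are none. Your parenthetical only looks at radial directions where $\Rea Q\to-\infty$. An asymptotic path is an arbitrary curve to $\infty$. The dangerous paths run near the $2d$ rays where the leading term of $Q$ is purely imaginary. There $|R(z)e^{Q(z)}|$ can be comparable to $|z|$, and no directional consideration excludes cancellation in $z+R(z)e^{Q(z)}$.

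The missing argument, which is the paper's, runs as follows.
\begin{enumerate}
\item Suppose $f(z(s))\to b\in\C$ along a curve. Then $R(z)e^{Q(z)}=-z(1+o(1))$.
\item Writing $R(z)=r_0z^m(1+O(1/z))$ gives $\Rea Q(z)=(1-m)\log|z|+O(1)$. This confines $z(s)$ to shrinking sectors about those $2d$ rays, so by continuity $\arg z(s)$ converges to one of them.
\item The first relation then forces $e^{i\Ima Q(z(s))}$ to converge to a point of the unit circle.
\item But $\Ima Q(z(s))$ is continuous with $|\Ima Q|\asymp|z|^d\to\infty$. This is impossible: a continuous real function whose exponential eventually stays in a short arc stays in a single bounded lift of that arc.
\end{enumerate}
Only after this does $S_\C(f)\setminus A$ come out finite. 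You also need that $A$ is closed in $\C$, so that finite limit points of the large critical values stay in $A$; you leave this implicit.

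\textbf{Two smaller corrections.}
\begin{enumerate}
\item In Step~1, the derivative range $(-1/4,1)$ only gives $|v-\alpha\sin v|\le|v|$ with $\alpha=e^x$. That leaves no room for the error $\epsilon\alpha$. You need $1-\alpha\cos v\in[1-\alpha,1-\alpha/2]$, which follows from $\cos v\ge1/2$ and $\alpha\le5/4$. This gives $|v-\alpha\sin v|\le(1-\alpha/2)|v|$, hence $|v_{\rm new}|\le\theta-(\theta/2-\epsilon)\alpha<\theta$.
\item In Step~3, large $|z|$ does not make $|\omega_k|$ large: $H(z)$ may have bounded imaginary part and very negative real part. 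The correct reason concerns the finitely many cut-off indices. A point with $u\le a$, $|\Ima H|\le T+2+\theta$ and $|H|$ large automatically has $\Rea H\le-2T-4$, so it lies in the cut-off strip anyway.
\end{enumerate}
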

\begin{proof}
Consider $t=x+i(\omega_k+v)$, $|v|\le\theta$, and $\alpha=e^x\le5/4$.
For the model map $t\mapsto t+e^t$ the new coordinates are
\[
 x-\alpha\cos v,\qquad v-\alpha\sin v.
\]
The inequalities $\cos v\ge1/2$ and $|1-\alpha\cos v|\le1-\alpha/2$, valid for $0<\alpha\le5/4$, give
\[
 x_{\rm new}\le x-\alpha/2,\qquad
 |v_{\rm new}|\le(1-\alpha/2)|v|.
\]
The perturbation in \eqref{eq:normalform} has modulus at most $\epsilon\alpha$.
Thus the actual image satisfies
\begin{equation}\label{eq:stricttraps}
 x_{\rm new}\le x-(1/2-\epsilon)\alpha<x,\qquad
 |v_{\rm new}|\le\theta-(\theta/2-\epsilon)\alpha<\theta.
\end{equation}
The short displacement stays on the same inverse sheet; hence $f(\psi_j(C_k))\subset\psi_j(\Int C_k)$.

Each interior strip is mapped into itself, and the iterates there omit three fixed sphere points.
Montel's theorem gives normality.
By the strict inequalities every boundary point has a neighborhood mapped into the interior strip, so the whole closed strip lies in $\Fat(f)$.
It is connected and its image meets itself; the full Fatou component containing it is therefore invariant.
The charts are proper at infinity, and the collection of strips is locally finite.
This proves closedness of $A$.
If a strip index is among the finitely many cut off at $-2T-4$, all points satisfying $u\le a$ but omitted by that cutoff have bounded $H$ and hence bounded $z$.
This proves \eqref{eq:comb}.

Every sufficiently large critical point has $H(c)=\omega_k i$ for a large index $|k|$, so it belongs to $A$ and its critical value belongs to $A$.
Moreover,
\[
 f(c)=c-\frac{R(c)}{B(c)}=c+O(|c|^{1-d})\sim c.
\]
Thus the finite critical values have no finite accumulation other than values arising in a compact set, of which there are finitely many.
It remains to exclude finite asymptotic values.

Suppose $z=z(s)\to\infty$ continuously and $f(z(s))\to b\in\C$.
Writing $R(z)=r_0z^m(1+O(1/z))$ gives
\[
 R(z)e^{Q(z)}=-z(1+o(1)),\qquad
 \Rea Q(z)=(1-m)\log|z|+O(1).
\]
The last identity confines $z$ to shrinking sectors about the $2d$ rays where the leading term of $Q$ is purely imaginary.
Continuity selects one ray eventually, so $\arg z$ converges.
The first identity then implies that $e^{i\Ima Q(z)}$ converges to a point of the unit circle, whereas $|\Ima Q(z)|\asymp |z|^d\to\infty$.
A continuous real function whose exponential eventually stays in one proper arc must stay in a single bounded lift of that arc.
This is a contradiction.
There is no finite asymptotic value, completing the proof.
\end{proof}

\begin{lemma}\label{lem:topologyF}
A hypothetical wandering orbit for $f$ has an eventually simply connected tail, on which all transition maps are biholomorphic.
\end{lemma}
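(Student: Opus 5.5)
The plan is to first show that a wandering tail of $f$ is eventually singular-free, then rule out the thin alternative with Theorem~\ref{thm:thin-pole-gap}, and finally show that a thick tail is eventually simply connected with univalent transitions.

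\emph{Singular-freeness.} By Proposition~\ref{prop:traps}, every Fatou component meeting the carrier $A$ is invariant. Hence a wandering tail $(U_n)$ never meets $A$. The set $S_\C\setminus A$ is finite, so after discarding finitely many indices the tail avoids $S_\C$. Since $f$ is entire, infinity is not a finite singular value, and Lemma~\ref{q4:transitions} makes every transition $f:U_n\to U_{n+1}$ a covering. The tail is then either thick or thin in the sense of Section~\ref{sec:covering-loss}.

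\emph{Excluding a thin tail.} I would verify the hypotheses of Theorem~\ref{thm:thin-pole-gap}, although they do not fit perfectly. Fixed points of $f$ are the zeros of $R e^Q$, that is, the finitely many zeros of $R$, and $f$ has no poles outside the poles of $R$. So hypothesis (4) fails as stated, and the theorem cannot be applied directly.

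Instead I would repeat its argument using the carrier. Let $B_n$ be the small Jordan side. If $B_n$ contains infinity along a subsequence, Lemma~\ref{q11e:round-ring} gives huge round annuli inside $U_{n_j}$. Any round annulus $\{r<|z|<Cr\}$ with $r$ large meets the comb \eqref{eq:comb}, since the curves $\Ima H\in(2\Z+1)\pi$ with $u\le a$ cross every large circle. This is where the rays with $\Rea Q\to-\infty$ enter. It contradicts $U_n\cap A=\varnothing$. Hence $B_n$ is eventually bounded.

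Next, singular values in $B_n$ lie in $A$, and every component of $A$ lies in an invariant Fatou component. Such a component cannot fit inside the shrinking $B_n$, because it is unbounded along the strip. Since $\gamma_n\subset U_n$ is disjoint from the strip component, that component cannot enter $B_n$ at all. Therefore $B_n\cap S=\varnothing$ eventually. Lemma~\ref{q11r:nonreset-step} and the normality argument of Theorem~\ref{q11r:reset-propagation} then fill $\gamma_N$ inside the Fatou set, a contradiction.

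\emph{Simple connectivity in the thick case.} Suppose some late $U_n$ is multiply connected. Take an essential Jordan curve $\gamma\subset U_n$, and let $D$ be a bounded complementary side containing Julia points. I would show that $D$ contains a point of $A$. Iterates remain bounded on $\gamma$ by normality, because the tail avoids $A$ and all finite limits lie in the Julia set. The maximum principle then gives boundedness on $D$, which contradicts the presence of Julia points in $D$ unless some preimage structure intervenes. The expected route is to obtain boundedness of $f^n$ on $\gamma$ by showing that $f^n(\gamma)$ eventually lies in a bounded region. Alternatively, a Baker-type argument would force $f^n\to\infty$ on $\gamma$ with $f^n(\gamma)$ surrounding larger and larger disks, and those disks would then meet the unbounded strips of $A$, whose components lie in invariant Fatou components different from $U_n$, producing a contradiction with connectivity.

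Once the tail is simply connected and avoids $S_\C$, each covering between simply connected domains is biholomorphic. I expect the main obstacle to be this multiply connected exclusion, specifically controlling $f^n(\gamma)$ through the comb geometry of \eqref{eq:comb}.
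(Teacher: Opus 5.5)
\textbf{Gap: the thick multiply connected case.} This case carries the whole content of ``eventually simply connected.'' A thick tail may consist of multiply connected, even infinitely connected, components, and your proposal does not exclude this. Both routes you sketch break down.

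First, normality does not make $f^n$ bounded on an essential curve $\gamma$. Limit functions on a wandering component are constants that may equal $\infty$, and Theorem~\ref{q8r2:headline} even forces an $\infty$-subsequence.

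Second, your premise that $f$ is entire is false in general. In $\F$ the function $R$ is rational, so $f=z+Re^{Q}$ has poles at the poles of $R$. Then $f^n$ may have poles in the bounded complementary side $D$ of $\gamma$, so the maximum principle is unavailable. Likewise, Baker's theorem (multiply connected Fatou components escape, with images surrounding ever larger disks) is a result about entire functions, and there is no meromorphic analogue you can invoke here. When $R$ is a polynomial your second route can be completed: a comb component $\psi_j(C_k)$ with small $|\omega_k|$ is connected, unbounded and meets a fixed compact set, so it must cross $f^n(\gamma)$ once that curve surrounds the compact set. Even then, Baker's theorem is an external input.

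The paper closes this step by citing Stallard's Theorem~3.4 \cite{Stallard}. That theorem excludes, for this class, a wandering orbit all of whose components are multiply connected, and its proof comes before the quasiconformal part of that paper. A covering $U_n\to U_{n+1}$ is injective on fundamental groups, so a single multiply connected component in the singular-free tail makes every later one multiply connected. This treats thin and thick tails at once, and coverings between simply connected domains are then biholomorphic.

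\textbf{What your thin-tail argument does establish.} It is sound and genuinely different from the paper's route. A comb component is connected, unbounded and meets a fixed compact set, so it meets every round annulus $\{r_j<|z|<R_j\}$ with $r_j\to\infty$; this contradicts $U_n\cap A=\varnothing$. A bounded $B_n$ cannot contain a point of $A$, since the comb component through that point would be trapped inside $B_n$.

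One correction: you obtain only $|B_n\cap S|\le1$, not $B_n\cap S=\varnothing$, because the finite residual set $S_\C\setminus A$ may meet $B_n$. The weaker bound is all that Lemma~\ref{q11r:nonreset-step} requires.

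Since Proposition~\ref{prop:carrier} uses only thickness, your thin exclusion would already be enough for the proof of Theorem~\ref{thm:F}, which would then give the lemma vacuously. As you have structured the argument, however, it does not prove the simple-connectivity assertion.
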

\begin{proof}
Proposition~\ref{prop:traps} makes the tail singular-free.
The restrictions are coverings by the argument in Proposition~\ref{prop:carrier}.
Stallard's theorem \cite[Theorem~3.4]{Stallard} excludes a wandering orbit all of whose components are multiply connected.
Its proof is completed before the quasiconformal section of that paper and uses normality, path lifting, and hyperbolic distance estimates.
If one component of the singular-free tail were multiply connected, injectivity of the induced map on fundamental groups would make every subsequent component multiply connected.
This contradicts that theorem.
Thus all components of the tail are simply connected, and their coverings are biholomorphisms.
\end{proof}

\subsubsection{A preimage estimate away from the deep negative region}
Let $X=\C\setminus A$ and $\Sigma=X\cap f^{-1}(A)$.
We estimate the distance to $\Sigma$ on each region $u\ge-M$.

\begin{proposition}\label{prop:net}
For each fixed $M>0$ there are $C_M,R_M<\infty$ such that
\begin{equation}\label{eq:net}
 d_X(z,\Sigma)\le C_M
 \quad\text{if }z\in X,\ |z|\ge R_M,\ u(z)\ge-M.
\end{equation}
The paths proving this estimate stay outside every prescribed compact set as their starting points tend to infinity.
\end{proposition}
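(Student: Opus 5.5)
The plan is to work in the logarithmic critical coordinate $t=H(z)$ of Lemma~\ref{lem:inverse}. In this coordinate $A$ is, outside a compact set, the union of the half-strips $C_k$ by \eqref{eq:comb}, and $f$ is close to the model map $g(t)=t+e^t$ wherever $u\le L$ by \eqref{eq:normalform}. Two things are needed.

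\emph{Metric comparison.} On $\{\Rea t\ge -M\}$, bounded $t$-lengths should give bounded $d_X$-lengths, provided the paths stay a fixed $t$-distance away from $A$ except near their endpoints. I would get this from the standard estimate $\rho_X\le 1/\dist(\cdot,\partial X)$ together with the chart estimate $\psi_j'\asymp 1/Q'(\psi_j)$ of Lemma~\ref{lem:inverse}. That estimate makes a $t$-disk of fixed radius correspond, after Koebe distortion, to a round $z$-disk of radius $\asymp |z|^{1-d}$. Also, the $z$-distance from such a disk to $A$ is comparable to the $t$-distance times $|Q'|^{-1}$.

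\emph{A fixed point of $\Sigma$ in each cell.} The key is an explicit model point lying in $\Sigma$ in each period cell. Put $v^*=\theta+\eta$ with small $\eta>0$, and choose $x^*$ with $e^{x^*}=v^*/\sin v^*$. Since $\theta/\sin\theta\approx1.21<5/4$, we get $x^*<a$ for small $\eta$. Then $t^*_k=x^*+i(\omega_k+v^*)$ lies outside every strip, while
\[
 g(t^*_k)=x^*-e^{x^*}\cos v^*+i\omega_k
\]
lies in the interior of $C_k$ with a definite margin. By \eqref{eq:normalform} with error at most $\epsilon\alpha$, the true image $H(f(\psi_j(t^*_k)))$ stays in the interior for large $|k|$. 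So $\psi_j(t^*_k)\in\Sigma$. The same margin argument shows that a whole $t$-disk of fixed radius about $t^*_k$ lies in $\Sigma$. In addition, the strict inequalities \eqref{eq:stricttraps} put a collar of $\partial A\cap\{\Rea t\le a\}$ inside $\Sigma$, which handles starting points very close to $A$.

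\emph{Case $-M\le u(z)\le L$, for a fixed large $L$.} Here $t=H(z)$ lies within $t$-distance $\le M+L+2\pi$ of some $t^*_k$ with $|k|$ large. I would join them by a polygonal path in $\{\Rea t\ge -M-1\}$ that stays at $t$-distance $\ge\eta/2$ from the strips, except in an initial segment that first leaves the $\Sigma$-collar if $z$ is close to $A$. By the metric comparison this path has bounded $d_X$-length, uniformly in $z$. It also stays near $z$ in the $z$-plane, so it escapes every compact set as $z\to\infty$.

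\emph{Case $u(z)\ge L$.} This is the step I expect to be the main obstacle, because \eqref{eq:normalform} is no longer available. I would argue directly with $f(z')=z'+(R/B)(z')e^{H(z')}$ on the $t$-disk $|t'-t|\le 2\pi$, using the local inverse from Lemma~\ref{lem:inverse}. On this disk $e^{t'}$ winds once around an annulus of moduli $\asymp e^{\Rea t}$. So the displacement $\Delta=(R/B)e^{H}$ sweeps a full round annulus about $z$, of radius $\asymp |z|^{1-d}e^{u}$, while $z'$ itself moves only by $O(|z|^{1-d})$. Rouch\'e's theorem then lets $f(z')$ hit every point of a sub-annulus about $z$.

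It remains to show that this sub-annulus contains an interior point of $A$. The set $A$ is a union of $2d$ asymptotic comb regions $\{u\le a\}$ whose teeth have $z$-spacing $\asymp |w|^{1-d}$ near $w$. For $u(z)\ge L$ with $L$ large, the annulus has radius much larger than this spacing, and in fact comparable to $|z|$ or larger once $e^u\gtrsim|z|^{d}$. A circle that long must cross the region $\{u\le a\}$: $u$ is close to $\Rea Q$ plus a logarithm, and $\Rea Q$ takes very negative values on every circle of radius $\gtrsim|z|$ near $z$, as well as on circles around $z$ enclosing a zero sector of $\Rea Q$. Crossing that region, it meets some tooth interior. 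I would then pick the preimage point $z'\notin A$, automatically so since $\Rea t'\ge L-2\pi>a$. The metric comparison again bounds $d_X(z,z')$, because the $t$-distance to $A$ is large here, so the density in $t$-coordinates is at most $O(1)$. The delicate point is making the intermediate regime uniform, where $e^u$ is neither $\ll|z|^d$ nor $\gg|z|^d$. If the crossing argument is awkward there, I would first try replacing $L$ by a slowly growing threshold, extending the first case through a Taylor expansion of order $O(e^{2u}|z|^{-d})$, until the annulus radius exceeds the tooth spacing.
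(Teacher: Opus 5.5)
\textbf{What works.} Your treatment of the band $-M\le u\le L$ is sound and is essentially the paper's. The model point $t^*_k=x^*+i(\omega_k+v^*)$ with $e^{x^*}\sin v^*=v^*$ is a valid substitute for the paper's $t_*=a+\delta+i(2k+1)\pi$. The collar/clearance dichotomy is the same. One caveat: the collar width is uniform only on $\Rea t\ge -M-1$, because the margins in \eqref{eq:stricttraps} are proportional to $e^{\Rea t}$. Your Rouch\'e step for $u\ge L$ is also the paper's \eqref{eq:scaledmap}: a bounded $t$-disk about $H(z)$ is mapped over an annulus $\{|w-z|\asymp|\Delta(z)|\}$, and its preimage point is at bounded $d_X$-distance.

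\textbf{The gap.} It is the step you flag yourself. You never establish, uniformly in $z$, a point of $\Int A$ inside that annulus, and your crossing argument does not supply one.
\begin{itemize}
\item For $1\ll e^{u}\ll|z|^d$ the relevant circles have radius $\ll|z|$. They lie in a thin angular neighbourhood of $\arg z$ and enclose no sector, so ``$\Rea Q$ is very negative'' is not the mechanism there.
\item Your assertion that $\Rea Q$ is very negative on every circle of radius $\gtrsim|z|$ about $z$ is false as stated. A circle of radius $c|z|$, with $c$ small, about a point deep in a sector where the leading term of $Q$ has positive real part stays inside that sector. So the radius must be tied to the actual size of $|\Delta(z)|$.
\item Your fallback does not reach the flagged regime. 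The normal-form argument works only while the absolute error $O(e^{2u}|z|^{-d})$ is small, so it stops near $e^u\approx|z|^{d/2}$, far below $e^u\asymp|z|^d$.
\item With a growing threshold, your bounded-band path must run horizontally from $\Rea t\approx L$ down to $\Rea t<a$. There the density is $\asymp 1/(\Rea t-a)$, so the path has $d_X$-length of order $\log L$, which destroys the uniform bound $C_M$.
\end{itemize}

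\textbf{The missing idea.} It is the paper's two-regime target construction \eqref{eq:targetscale}, together with a Rouch\'e radius adapted to it.
\begin{itemize}
\item If $|\Delta(z)|\le\eta|z|$, use the linearization of $H$. On $D(z,2|\Delta(z)|)$ one has $H(\xi)-H(z)=Q'(z)(\xi-z)+O\bigl((\eta+|z|^{-d})|Q'(z)||\xi-z|\bigr)$, and $|Q'(z)\Delta(z)|\approx e^{u}$. Rouch\'e then gives $b_z$ with $H(b_z)=H(z)-e^{u}+iv$, where $|v|\le\pi$ puts $\Ima H(b_z)$ at an odd multiple of $\pi$. Then $\Rea H(b_z)=u-e^{u}<a$, and \eqref{eq:comb} gives $b_z\in\Int A$ with $|b_z-z|\asymp|\Delta(z)|$.
\item If $|\Delta(z)|>\eta|z|$, this covers your ``intermediate regime'' and everything beyond it. Do not look for a tooth near $z$ at all. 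Take $b_z$ on the middle curve $\psi_1(-s+i\pi)$ of one fixed strip, with $Kr_*\le|b_z|\le 2Kr_*$ and $r_*=\max(|z|,|\Delta(z)|)$. The triangle inequality gives $K-1\le|b_z-z|/|\Delta(z)|\le(2K+1)/\eta$.
\end{itemize}
These bounds are fixed constants. You then choose the $v$-disk radius in \eqref{eq:scaledmap} (your $t$-disk radius) large enough that $e^v$ covers this whole annulus, instead of fixing it at $2\pi$. It is precisely the fixed ratio of your $2\pi$-disk annulus that makes the intermediate regime look delicate.
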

\begin{proof}
We first treat sufficiently large positive $u$, with constants independent of $u$.
Then we treat a bounded band.

\proofstep{Targets at the displacement scale}
There are constants $c_0,C_0>0$ and $L_0$ such that if $u(z)=x\ge L_0$ and $|z|$ is large, one can choose $b_z\in\Int A$ satisfying
\begin{equation}\label{eq:targetscale}
 c_0\le\left|\frac{b_z-z}{\Delta(z)}\right|\le C_0.
\end{equation}
Fix a sufficiently small $\eta>0$.
If $|\Delta(z)|\le\eta|z|$, a logarithm $H$ is analytic on $D(z,3|\Delta(z)|)$.
Rational and polynomial estimates give, for $|\xi-z|\le2|\Delta(z)|$,
\[
 H(\xi)-H(z)=Q'(z)(\xi-z)+
 O\bigl((\eta+|z|^{-d})|Q'(z)|\,|\xi-z|\bigr).
\]
Also $|Q'(z)\Delta(z)|=e^x(1+O(|z|^{-d}))$.
Choose a real $v$ with $|v|\le\pi$ so that $\Ima(H(z)+iv)\in(2\Z+1)\pi$, and prescribe
\[
 H(b_z)=H(z)-e^x+iv.
\]
Rouch\'e's theorem on the circle $|\xi-z|=2|\Delta(z)|$ gives this solution; the same displayed estimate gives $|b_z-z|\asymp|\Delta(z)|$.
For $x\ge L_0$ large, its real coordinate $x-e^x$ is below $a$.
Since $|b_z|\asymp|z|\to\infty$, \eqref{eq:comb} puts it in $\Int A$.

If $|\Delta(z)|>\eta|z|$, let $r_* =\max(|z|,|\Delta(z)|)$.
A middle curve of a fixed strip, parametrized as $\psi_1(-s+i\pi)$ for large $s$, contains points with modulus comparable to every sufficiently large prescribed radius.
Choose $b_z$ on that curve with $K r_*\le |b_z|\le2K r_*$, for one fixed $K>3$.
The triangle inequality proves \eqref{eq:targetscale}, with constants independent of $z$.
This proves the target assertion.

\proofstep{Large positive band}
For each fixed $C$, uniformly on $|v|\le C$,
\begin{equation}\label{eq:scaledmap}
 \frac{f(z+v/Q'(z))-z}{\Delta(z)}
 =e^v\bigl(1+O_C(|z|^{-d})\bigr)
   +\frac{v}{Q'(z)\Delta(z)}.
\end{equation}
Every complex number $\beta$ in the compact annulus $c_0\le|\beta|\le C_0$ has a logarithm $v_\beta$ in one fixed disk.
Choose $C$ so that these logarithms, together with circles of radius $1/4$ about them, lie in $|v|<C$.
On these circles $|e^v-\beta|$ has a positive lower bound independent of $\beta$.
Since $|Q'\Delta|\asymp e^x$, choose $L\ge L_0$ sufficiently large, and then $|z|$ large, so that \eqref{eq:scaledmap} and Rouch\'e's theorem give a point
\[
 w=z+v_z/Q'(z),\qquad |v_z|<C,\qquad f(w)=b_z.
\]
Increase $L$ also so that $L>C+4+a$.
On $|v|\le C+2$ one has $H(z+v/Q')=H(z)+v+o(1)$, so this whole disk in the $z$-plane avoids $A$.
Its radius is $(C+2)/|Q'(z)|$.
The straight segment to $w$ has length at most $C/|Q'(z)|$ and stays at distance at least $2/|Q'(z)|$ from the disk boundary.
Domain monotonicity bounds its $X$-hyperbolic length by $C/2$.
Since $w\notin A$ and $f(w)\in A$, its endpoint is in $\Sigma$.

\proofstep{The bounded band $-M\le u\le L$}
In a bounded $H$-window centered at any sufficiently large $z$, the set $A$ is exactly the inverse image of the strip comb
\[
 A_* =\{x+iy:x\le a,\ \dist(y,(2\Z+1)\pi)\le\theta\}.
\]
The map in this window is $t\mapsto t+e^t+o(1)$ uniformly, by Lemma~\ref{lem:coordinate}; local inverse charts exist on any prescribed bounded enlargement of the window.
The boundary of $A_*$ in $-M-1\le x\le a$ maps strictly inside $A_*$, with a positive margin depending only on $M$, by the inequalities used in \eqref{eq:stricttraps}.
Therefore a fixed exterior collar of this boundary maps into $A$ for all sufficiently large $z$.
Points in that collar already belong to $\Sigma$.

For the other points, choose a small $\delta>0$ such that $a+\delta-e^{a+\delta}<a$.
In the $t$-plane join the starting point horizontally to $x=L+2$, vertically by at most $2\pi$ to an odd multiple of $\pi$, and horizontally back to
\[
 t_* =a+\delta+i(2k+1)\pi.
\]
This path has uniformly bounded length and stays at a fixed positive distance from $A_*$.
Indeed, on its first segment a starting point with $x\le a$ is in a complementary horizontal gap; if $x>a$ it is already to the right of the comb.
The vertical segment is to the right, and the last segment has $x\ge a+\delta$.
At its endpoint $t_*+e^{t_*}$ lies in $\Int A_*$, so the actual endpoint maps into $A$ when $|z|$ is sufficiently large.
The inverse image of a fixed-radius disk about every path point lies in $X$.
Comparing its metric with that of this disk bounds the hyperbolic length of the lifted path.
Its endpoint belongs to $\Sigma$.

Both constructions use $z$-displacements $O_{M,L,C}(1/|Q'(z)|)=o(|z|)$.
They consequently avoid every fixed compact set as $z\to\infty$.
\end{proof}

\subsubsection{Hyperbolic expansion outside the deep negative region}
We use two relative-metric inequalities.
If $V\subset X$ are hyperbolic surfaces and $z\in V$, then
\begin{align}
 d_X(z,X\setminus V)\le C
 &\ \Longrightarrow\ \frac{\rho_V(z)}{\rho_X(z)}\ge
 L(C):=\frac1{\sinh(2C)\log\coth C}>1,\label{eq:relative-lower}\\
 d_X(z,X\setminus V)\ge D
 &\ \Longrightarrow\ \frac{\rho_V(z)}{\rho_X(z)}\le\coth D.
 \label{eq:relative-upper}
\end{align}
For the upper bound lift a radius-$D$ ball to a universal disk; the lift contains the Euclidean disk of radius $\tanh D$.
For the lower bound lift a path to one omitted point and compare with the disk punctured there.
The formula $\rho_{\D\setminus\{0\}}(w)=(2|w|\log(1/|w|))^{-1}$ gives \eqref{eq:relative-lower}.
These estimates are also discussed in \cite[Proposition~3.4]{MBRG}.

If $T$ is a finite subset of $X$, properness of the complete hyperbolic metric implies $d_X(z,T)\to\infty$ as $|z|\to\infty$ in a component meeting $T$.
Thus
\begin{equation}\label{eq:compactpuncture}
 \frac{\rho_{X\setminus T}(z)}{\rho_X(z)}\longrightarrow1.
\end{equation}
The assertion is uniform across components: only finitely many meet $T$, and on the others the ratio is one.

\begin{lemma}\label{lem:expansion}
Let $P$ be a finite forward invariant repelling marking, put $X_P=\C\setminus(A\cup P)$, $Y_P=X_P\setminus E$, and $V_P=f^{-1}(Y_P)$, where $E=S_{\C}(f)\setminus A$.
For each $M>0$ there are $R<\infty$ and $\Lambda>1$ such that
\begin{equation}\label{eq:expansion}
 \frac{|f'(z)|\rho_{X_P}(f(z))}{\rho_{X_P}(z)}\ge\Lambda
\end{equation}
whenever $z\in V_P$, $u(z)\ge-M$, and $|z|,|f(z)|\ge R$.
\end{lemma}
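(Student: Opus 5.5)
The plan is to factor the expansion ratio through the covering $f:V_P\to Y_P$. Covering invariance gives $|f'(z)|\rho_{Y_P}(f(z))=\rho_{V_P}(z)$. Hence
\[
 \frac{|f'(z)|\rho_{X_P}(f(z))}{\rho_{X_P}(z)}
 =\frac{\rho_{V_P}(z)}{\rho_{X_P}(z)}\cdot
   \frac{\rho_{X_P}(f(z))}{\rho_{Y_P}(f(z))}.
\]
It therefore suffices to bound the first factor below by some $L>1$ and the second factor below by a quantity tending to one as $|f(z)|\to\infty$. Then any $\Lambda\in(1,L)$ works once $R$ is large.

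\emph{Second factor.} We have $Y_P=X_P\setminus E$, and $E$ is finite by Proposition~\ref{prop:traps}. The marking $P$ avoids $A$, and $E$ avoids $A\cup P$, up to finitely many points that are simply discarded. Applying \eqref{eq:compactpuncture} with $X=X_P$ and $T=E$ gives $\rho_{Y_P}/\rho_{X_P}\to1$ as $|f(z)|\to\infty$, uniformly across components.

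\emph{First factor.} This is the main step. We have $V_P\subset X_P$, and by \eqref{eq:relative-lower} it is enough to show $d_{X_P}(z,X_P\setminus V_P)\le C$ for a constant $C=C(M)$. Proposition~\ref{prop:net} provides a point of $\Sigma=X\cap f^{-1}(A)$ within $d_X$-distance $C_M$ of $z$, where $X=\C\setminus A$. Points of $\Sigma$ lie outside $V_P$, since they map into $A$ and not into $Y_P$. Two issues remain.
\begin{enumerate}
\item The proposition measures distance in $X$, while we need distance in $X_P=X\setminus P$.
\item The target point might be one of the finitely many points of $P$.
\end{enumerate}
For the first issue, the paths in Proposition~\ref{prop:net} leave every compact set as $|z|\to\infty$. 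For $R$ large they stay far in $d_X$ from the finite set $P$, so as in Lemma~\ref{q11geo:remove-marks} the ratio $\rho_{X_P}/\rho_X$ along the path is $1+o(1)$. The $X_P$-length is then at most $2C_M$, say. For the second issue, the endpoint is far from $P$ for the same reason, so it lies in $X_P\setminus V_P$. This yields the bound with $C=2C_M$ and hence $L=L(2C_M)>1$.

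The main obstacle is the uniformity in this first-factor estimate: Proposition~\ref{prop:net} must be applied to $z\in V_P$ with $u(z)\ge-M$ and $|z|\ge R_M$, and the paths must be controlled away from $P$ uniformly in $z$. I would verify this using the explicit $o(|z|)$ displacement bound stated at the end of that proposition's proof: the path points have modulus comparable to $|z|$, and $P$ is compact in $\C$. The threshold $R$ is then the maximum of $R_M$, the threshold making the path-metric comparison with $P$ effective, and the threshold from \eqref{eq:compactpuncture} ensuring the second factor exceeds $\Lambda/L$.
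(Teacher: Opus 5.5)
Your proposal is correct and follows essentially the same route as the paper: it uses the same covering-invariance factorization, transfers the net estimate of Proposition~\ref{prop:net} from $X$ to $X_P$ via \eqref{eq:compactpuncture} to get $d_{X_P}(z,X_P\setminus V_P)\le 2C_M$ and hence the numerator bound from \eqref{eq:relative-lower}, and applies the finite-puncture comparison to the denominator. The only difference is cosmetic: the paper excludes endpoints in $P$ by noting $\Sigma\cap P=\varnothing$, since a repelling periodic point cannot map into $A\subset\Fat(f)$, whereas you use the escape of the paths, and both arguments work.
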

\begin{proof}
The paths in Proposition~\ref{prop:net} avoid a growing compact set.
Equation~\eqref{eq:compactpuncture} for the finite set $P$ therefore bounds their $X_P$-length by, say, twice their $X$-length.
Also $\Sigma\cap P=\varnothing$: a repelling periodic point cannot map into the Fatou set.
Hence $d_{X_P}(z,X_P\setminus V_P)\le2C_M$.
Covering invariance gives the exact identity
\[
 \frac{|f'(z)|\rho_{X_P}(f(z))}{\rho_{X_P}(z)}
 =\frac{\rho_{V_P}(z)/\rho_{X_P}(z)}
 {\rho_{Y_P}(f(z))/\rho_{X_P}(f(z))}.
\]
Its numerator is at least $L(2C_M)>1$ by \eqref{eq:relative-lower}; its denominator tends uniformly to one as $|f(z)|\to\infty$, by \eqref{eq:compactpuncture} for $E\cap X_P$.
This proves \eqref{eq:expansion}.
\end{proof}

\subsubsection{Deep negative translation and completion for \texorpdfstring{$\F$}{class F}}
\begin{lemma}\label{lem:deep}
There is $M_0>0$ such that an orbit $z_n\to\infty$ cannot remain in $X=\C\setminus A$ while satisfying $u(z_n)<-M_0$ for all sufficiently large $n$.
\end{lemma}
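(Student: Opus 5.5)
We argue by contradiction, using the translation normal form \eqref{eq:translation}: in the deep negative region the coordinate $W=e^{-Q}/B$ is large, and $f$ acts there approximately as $W\mapsto W-1$. Suppose $z_n\to\infty$, $z_n\in X$, and $u(z_n)<-M_0$ for all $n\ge N$. Since $|W|=e^{-u}$, we have $|W(z_n)|>e^{M_0}$. Then \eqref{eq:translation} gives
\[
W(z_{n+1})=W(z_n)-1+\eta_n,
\qquad
|\eta_n|\le C\bigl(|z_n|^{-d}+e^{-M_0}\bigr).
\]
Choosing $M_0$ large and $N$ larger makes $|\eta_n|\le 1/4$ for $n\ge N$. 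Consequently
\[
\Rea W(z_{n+k})\le \Rea W(z_n)-\tfrac34 k,
\qquad
|\Ima W(z_{n+k})-\Ima W(z_n)|\le k/4 .
\]
So the sequence $W(z_n)$ moves leftward linearly, inside a sector of half-angle at most $\arctan(1/3)$ about the negative real direction.

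The contradiction will come from the fact that $|W|$ is large throughout the orbit, while $\Rea W$ runs through every large negative value. Since $|W(z_n)|>e^{M_0}$ for all $n$, the drift eventually forces $\Rea W(z_n)<-e^{M_0}$, with $|\Ima W|$ controlled relative to $|\Rea W|$. Write $W=e^{-H}$. The condition $\Rea W\ll0$ with $\arg W$ near $\pi$ means that $-\Ima H$ lies near $\pi \pmod{2\pi}$, that is, $\dist(\Ima H(z_n),(2\Z+1)\pi)$ is small. This is exactly the angular condition in \eqref{eq:comb}, provided the sector angle is below $\theta=\pi/3$. That holds once $n$ is large, because the imaginary drift is at most $k/4$ against a real drift of at least $3k/4$.

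The main obstacle is the radial condition $u\le a$ in \eqref{eq:comb}. It is satisfied automatically, since $u(z_n)<-M_0<a$. Then \eqref{eq:comb} places $z_n$ in $A$ for large $n$ (using $|z_n|\to\infty$), which contradicts $z_n\in X$.

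The delicate point is controlling the argument of $W(z_n)$. An orbit starting with $\arg W$ near $0$, that is, with $\Rea W$ large and positive, must first cross the imaginary axis. During that crossing $|\Ima W|$ could be comparable to $|W|$, while the imaginary drift of $k/4$ accumulates. To handle this, compare $k$ with the starting size $|W(z_N)|$. The real part decreases by at least $3k/4$ while $|\Ima W|$ changes by at most $k/4$. After $k\approx 4|W(z_N)|$ steps we get $\Rea W\le -2|W(z_N)|$ and $|\Ima W|\le 2|W(z_N)|$. This bounds $|\arg W-\pi|$ by $\pi/4<\theta$, and the ratio only improves from then on. The bound $|\eta_n|\le1/4$ requires $|W(z_n)|$ large at every step, which the standing assumption $u(z_n)<-M_0$ supplies. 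So I expect the argument to close with $M_0$ chosen so that $Ce^{-M_0}<1/8$, and $N$ chosen so that $C|z_n|^{-d}<1/8$ for $n\ge N$.
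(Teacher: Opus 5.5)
Your proposal is correct and is essentially the paper's proof: \eqref{eq:translation}, together with $|W(z_n)|=e^{-u(z_n)}>e^{M_0}$, gives a near-unit leftward translation of $W_n$, so $W_n$ eventually lies in a sector about the negative real axis narrower than $\theta$. Via $W=e^{-H}$ and $u(z_n)<a$, this yields the comb condition \eqref{eq:comb} and forces $z_n\in A$. Your explicit treatment of the initial transient (after about $4|W(z_N)|$ steps the argument stays within $\pi/4$ of $\pi$) simply makes precise what the paper summarizes as ``eventually.''
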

\begin{proof}
Choose $M_0$ large.
Formula~\eqref{eq:translation} then implies, along such an orbit and after increasing the starting index,
\[
 W(z_{n+1})=W(z_n)-1+\epsilon_n,\qquad |\epsilon_n|\le1/10.
\]
Writing $W_n=W(z_n)$ gives
\[
 \Rea W_n\le\Rea W_N-\tfrac9{10}(n-N),\qquad
 |\Ima W_n|\le|\Ima W_N|+\tfrac1{10}(n-N).
\]
Consequently $|W_n|\to\infty$, and eventually $W_n$ lies in a sector strictly narrower than $\theta$ about the negative real axis.
Since $W_n=e^{-H(z_n)}$, this means that $u(z_n)=-\log|W_n|\le a$ and $\dist(\Ima H(z_n),(2\Z+1)\pi)<\theta$.
For large $n$, \eqref{eq:comb} gives $z_n\in A$, a contradiction.
\end{proof}

\begin{proof}[Proof of Theorem~\ref{thm:F}]
Suppose there is a wandering component.
Propositions~\ref{prop:traps} and \ref{prop:carrier}, together with Lemma~\ref{lem:topologyF}, give an escaping tail and one fixed marked background $X_P$ on which the one-step derivatives tend to one.
Use $M=M_0$ from Lemma~\ref{lem:deep}.
Lemma~\ref{lem:expansion} shows that $u(f^n(z))\ge-M_0$ can occur only finitely often, since both consecutive orbit points tend to infinity.
Therefore $u(f^n(z))<-M_0$ eventually.
This contradicts Lemma~\ref{lem:deep}.
No wandering component exists.
\end{proof}

\subsection{Finite order in the second Riccati class}
\begin{lemma}\label{lem:R2-finite-order}
Every meromorphic function satisfying
\[
 f'=r(z)(f-z)(f-\tau),\qquad r\in\C(z),\quad \tau\in\C,
\]
has finite order.
\end{lemma}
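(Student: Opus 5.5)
The plan is to linearize the Riccati equation and read off the order from the growth of solutions of a linear differential equation with rational coefficients. First I would dispose of degenerate cases. If $f\equiv z$ or $f\equiv\tau$ is a solution the claim is trivial. If $r\equiv0$, then $f$ is constant. So I assume $r\not\equiv0$ and $f\ne z,\tau$. The equation $f'=r(f-z)(f-\tau)=r f^2-r(z+\tau)f+r\tau z$ is a Riccati equation with rational coefficients $a_2=r$, $a_1=-r(z+\tau)$, $a_0=r\tau z$.

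The standard substitution is $f=-w'/(r w)$. It turns the equation into the second-order linear equation
\[
 w''-\Bigl(\frac{r'}{r}+a_1\Bigr)w'+a_0 r\,w=0 .
\]
All of its coefficients are rational. Since $f$ is meromorphic on $\C$, the function $w=\exp\bigl(-\int rf\bigr)$ needs checking. At a pole of $f$, the Riccati equation forces a simple pole of $f$ with residue $-1/r$ wherever $r$ is finite and nonzero, so $rf$ has residue $-1$ there. Hence $w$ is locally single-valued, with a simple zero. At the finitely many zeros and poles of $r$, $w$ may acquire a branch point or pole, but only a finite algebraic or logarithmic-type ramification. To avoid it I would multiply by a suitable rational or power factor. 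Alternatively, I would work on the complement of a disk, where $r$ has no zeros or poles and $w$ is single-valued up to a factor $z^\mu$.

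The key input is the classical fact that solutions of linear ODEs with rational (more generally polynomial) coefficients have finite order. I would prove this by a Wiman--Valiron or Herold-type estimate, or directly by a Gronwall argument along rays. For large $|z|$ the coefficients are $O(|z|^K)$. Writing the system for $(w,w')$ as $Y'=M(z)Y$ with $\|M\|\le C|z|^K$ and integrating along the radius gives
\[
 \|Y(z)\|\le \|Y(z_0)\|\exp\bigl(C|z|^{K+1}\bigr).
\]
So $w$ and $w'$ grow at most like $\exp(C|z|^{K+1})$ outside a disk. Here $w$ is entire after removing the finitely many exceptional points by a rational factor, or multivalued only by a finite monodromy factor.

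Finally I would transfer the bound to $f$. We have $f=-w'/(rw)$ with $w,w'$ of finite order, so $T(r,f)\le T(r,w')+T(r,w)+O(\log r)$. The first main theorem and the standard bound $T(r,g)\le\log^+M(r,g)$ for entire $g$ then give finite order for $f$.

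The main obstacle is the monodromy of $w$ around the zeros and poles of $r$, together with the claim about the residue at poles of $f$ lying at zeros or poles of $r$. I would first try multiplying $w$ by $\prod(z-c_i)^{m_i}$, after checking from a local Laurent/Frobenius analysis of the Riccati equation that the local exponents are integers. If that fails, I would avoid global single-valuedness and instead estimate $N(r,f)$ and $m(r,f)$ directly. The poles of $f$ are zeros of a finite-order function on a cut exterior domain, and $m(r,f)$ is controlled by the lemma on the logarithmic derivative applied to $w$ on sectors.
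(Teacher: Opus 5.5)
There is a genuine gap. Your overall strategy (linearize, apply Gronwall on rays, transfer the bound to $f$) is viable, but the step the final estimate rests on is false. The function $w=\exp(-\int rf)$ cannot in general be made entire by a rational or power factor, because the singularities at the poles of $r$ are not mild ramification.

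Suppose $r$ has a pole of order $j$ at $z_0$. Then $f$ cannot have a pole there, since the term $rf^2$ would dominate. So $f'$ is holomorphic at $z_0$, and $(f-z)(f-\tau)$ must vanish to order $j$. Hence $f(z_0)\in\{z_0,\tau\}$, and $rf$ can have a pole of order $j$ at $z_0$ whose residue need not be an integer.

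For example, $\tau=0$, $r=(z-2)^{-2}$, $f=4/(4-z)$ solves the equation, and
\[
 rf=\frac1{z-2}+\frac2{(z-2)^2}+\frac1{4-z},
 \qquad
 w=\frac{4-z}{z-2}\,e^{2/(z-2)} .
\]
So $w$ has an essential singularity at $z=2$. With $\tau=3$ and $r=1/(2(z-1))$, the transcendental solutions $f=3+(z-1)/(1+Ce^{z/2})$, $C\ne0,-e^{-1/2}$, satisfy $\operatorname{Res}_{z=1}(rf)=3/2$. So your planned Frobenius check for integer exponents also fails.

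Consequently $w$ is not meromorphic in $\C$, and the inequality $T(r,f)\le T(r,w')+T(r,w)+O(\log r)$ is not available. Your exterior fallback $w=z^\mu W$ on $|z|>R_0$ is correct as far as it goes. However, turning the Gronwall bound for $W$ into a bound for $T(r,f)$ would need a Jensen formula on annuli and a logarithmic-derivative lemma for functions holomorphic only on an exterior domain or on sectors. The proposal does not supply either.

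The repair within your framework is short. Let $q$ be the sum of the full principal parts of $rf$ at the poles of $r$. Then $rf-q$ has only simple poles with integer residues:
\begin{itemize}
\item $-1$ at a pole of $f$ where $r$ is finite and nonzero;
\item $-(j+1)$ at a pole of $f$ lying at a zero of $r$ of order $j$.
\end{itemize}
So $v=\exp(-\int(rf-q))$ is entire, $f=(q-v'/v)/r$, and $v$ satisfies a second-order linear equation with rational coefficients. Your Gronwall and Nevanlinna steps then apply verbatim. This is how the paper handles $\RR_1$.

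For the present lemma the paper's proof is shorter still. Since $\tau$ is a constant solution, $k=1/(f-\tau)$ satisfies the first-order linear equation $k'=r(z)(z-\tau)k-r(z)$. A pole of $k$ at a point where $r$ is holomorphic is impossible, because the pole of $k'$ would be one order too high. So $k$ has finitely many poles. Scalar Gronwall along rays gives $|k|\le C_1\exp(C_2|z|^{M+1})$. After subtracting finitely many principal parts, $k$ is a finite-order entire function plus a rational function, so $f=\tau+1/k$ has finite order. Using the known constant solution avoids the second-order equation, the residue bookkeeping, and all monodromy questions.
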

\begin{proof}
The claim is immediate for constant or rational functions.
Otherwise $f\not\equiv\tau$.
Set $k=1/(f-\tau)$.
Differentiation gives
\begin{equation}\label{rr:struct-7}
 k'=r(z)(z-\tau)k-r(z).
\end{equation}
At a point where $r$ is holomorphic, $k$ has no pole: the derivative of a pole would have order one larger than either term on the right.
Thus $k$ has only finitely many poles.
Choose $R$ enclosing those poles and all poles of $r$.
For some $C>0$ and integer $M\ge0$, $|r(z)(z-\tau)|+|r(z)|\le C(1+|z|)^M$ when $|z|\ge R$.
The integral form of \eqref{rr:struct-7} along every radial ray and Gronwall's inequality, with a uniform bound on the initial circle, give $|k(z)|\le C_1\exp(C_2|z|^{M+1})$ outside that circle.
After subtraction of its finitely many principal parts, $k$ is an entire function of finite order plus a rational function.
Hence $k$ and $f=\tau+1/k$ have finite Nevanlinna order.
\end{proof}

\subsection{Linear differential equations and the classes \texorpdfstring{$\mathcal N,\mathcal R_2$}{N and R2}}\label{sec:linear}
\subsubsection{A rational differential equation and its fixed roots}\label{nr:sec-ode}

We treat the two remaining first-order classes by a common local model.
The first step is an elementary consequence of finite order.

\begin{lemma}\label{nr:polynomial-exponent}
Suppose that $f$ is meromorphic of finite order and
\[
 f'=r e^p(f-z),
\]
where $r$ is rational and $p$ is polynomial.
If $r\not\equiv0$, then $p$ is constant.
\end{lemma}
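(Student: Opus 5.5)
The idea is to write $e^p$ as a ratio of two meromorphic functions of finite order, and then compare growth rates. If $f-z\equiv0$ then $f'=1$ and $0=re^p\cdot0$, which is impossible; so $f-z\not\equiv0$. If $f$ is rational, then $re^p=f'/(f-z)$ is rational, and since $r\not\equiv0$ this forces $p$ to be constant. So I will assume $f$ is transcendental and rewrite the equation as
\[
 e^{p}=\frac{f'}{r\,(f-z)}.
\]
The right-hand side is meromorphic of finite order: $f$ has finite order, and Nevanlinna theory gives $\rho(f')=\rho(f)$, while $f-z$ and $r$ change the order by nothing. Hence $e^p$ has finite order, but this alone is not enough, since $e^p$ has order $\deg p$ whatever the degree. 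I therefore need a sharper comparison.

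The sharper comparison will use characteristic functions. By the lemma on the logarithmic derivative,
\[
 m\bigl(\rho,f'/(f-z)\bigr)\le m\bigl(\rho,(f-z)'/(f-z)\bigr)+m\bigl(\rho,1/(f-z)\bigr)+O(1)
\]
is not directly small, so I will instead use the identity $f'=(f-z)'+1$:
\[
 e^p=\frac{1}{r}\Bigl(\frac{(f-z)'}{f-z}+\frac1{f-z}\Bigr).
\]
The proximity function of the logarithmic-derivative term is $O(\log\rho)$ because $f$ has finite order. Since $e^p$ has no poles, $T(\rho,e^p)=m(\rho,e^p)\le m(\rho,1/(f-z))+O(\log\rho)\le T(\rho,f)+O(\log\rho)$. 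This bound is still compatible with $\deg p\ge1$, so I will look for a genuine contradiction elsewhere.

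The contradiction will come from a fixed-point count. Suppose $\deg p=d\ge1$. At a zero $z_0$ of $f-z$ away from the poles and zeros of $r$, the equation gives $f'(z_0)=0$, so $(f-z)'(z_0)=-1\ne0$. Hence fixed points are simple, and at each one the right side of the displayed identity has residue-type behaviour $1/(f-z)+(f-z)'/(f-z)$ with principal part $(1-1)/(z-z_0)=0$. That is consistent, so nothing is gained at fixed points. At the poles of $f$, however, the left side $f'$ has a pole of order $k+1$ while the right side $r e^p f$ has a pole of order $k$; this forces $f$ to have only finitely many poles, located at the poles of $r$. So $f=g/P$ with $g$ entire of finite order and $P$ a polynomial. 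With $h=f-z$, the function $h$ has finitely many poles and satisfies the linear equation $h'=re^ph-1$. Solving by variation of constants gives
\[
 h=E\Bigl(c-\int E^{-1}\Bigr),\qquad E=\exp\Bigl(\int re^p\Bigr).
\]
Here $\int re^p$ grows like $e^{p}$ in the sectors where $\Rea p\to+\infty$, so $E$ has infinite order when $d\ge1$. I expect the main obstacle to be showing that $h$ itself then has infinite order, that is, that the bracket cannot cancel the growth of $E$ in every such sector. I would try first a Wiman--Valiron or Phragm\'en--Lindel\"of sector argument: in a sector where $\Rea p\to+\infty$, the integral $\int E^{-1}$ converges to a constant $c_j$, and $h\sim E\,(c-c_j)$. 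The constants $c_j$ attached to different sectors cannot all equal $c$, because differences of these Stokes constants are nonzero integrals of $E^{-1}$ along loops separating the sectors. Hence $h$ grows doubly exponentially in some sector, which contradicts the finite order of $f$. This contradiction shows that $p$ must be constant.
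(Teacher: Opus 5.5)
Your preliminary steps are sound: excluding $f\equiv z$ and rational $f$, and the pole-order argument showing that $h=f-z$ has only finitely many poles, which is also the paper's first step. The Nevanlinna and fixed-point paragraphs contribute nothing. The real gap is the final step, which you yourself flag as the main obstacle; as written it is both inaccurate and unproved.

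In a sector where $\Rea p\to+\infty$ one has $\Phi=\int re^p\approx (r/p')e^p$, and its argument, roughly $\Ima p+\arg(r/p')$, sweeps through all values. So $E^{-1}=e^{-\Phi}$ is doubly exponentially \emph{large} on infinitely many sub-strips of that sector. Consequently $\int E^{-1}$ does not converge there, and $h\sim E(c-c_j)$ is false throughout the sector. Convergence holds only separately in each of the infinitely many subregions where $\Rea\Phi\to+\infty$, and each subregion carries its own constant.

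Taken literally, your argument already fails for $d=1$. There is then a single sector, hence a single constant $c_1$, and nothing prevents $c=c_1$. The whole content of the lemma is to exclude that $h$ is the recessive solution in every subregion simultaneously. Your sentence that the ``differences of these Stokes constants are nonzero'' asserts exactly what has to be proved.

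For instance, for $r\equiv1$ and $p(z)=z$ the subregions are the strips $|\Ima z-2\pi k|<\pi/2$, $k\in\Z$. Substituting $w=e^{s}$ gives $c_{k+1}-c_k=\oint e^{-w}w^{-1}\,dw=2\pi i$. So $c$ can match one strip's constant but not all of them, and only then does infinite order follow. In general you would need an asymptotic evaluation of these jumps, with control near the sector boundaries, and the proposal does not supply it.

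The paper sidesteps Stokes constants entirely by using finite order as an \emph{input}:
\begin{enumerate}
\item With $g=f-z$ and $G=Dg$ entire of finite order, the Hadamard factorization gives $|g'/g(te^{i\theta})|\le C_\theta t^M$ on almost every ray.
\item On a good ray where $\Rea p\to+\infty$, the identity $g=1/(re^p-g'/g)$ forces exponential decay of $g$.
\item On rays where $\Rea p\to-\infty$, radial variation of constants gives $g=-z+C_\theta+o(1)$.
\item Thus $G$ is polynomially bounded on almost every ray. Phragm\'en--Lindel\"of on finitely many sectors of opening less than $\pi/\kappa$, with $\kappa>\rho(G)$, makes $g$ rational.
\item Then $re^p=(g'+1)/g$ is rational, contradicting $\deg p\ge1$.
\end{enumerate}

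If you want to keep your route, note that ``all constants equal $c$'' would make $h$ polynomially bounded and hence rational, giving the same contradiction. Establishing that bound uniformly across the transition zones is, however, essentially the paper's argument.
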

\begin{proof}
Put $g=f-z$.
Then $g'=re^pg-1$.
A pole of $g$ at an ordinary point of $r$ would give different pole orders on the two sides.
Thus $g$ has finitely many poles.
Choose a polynomial $D$ clearing them, and put $G=Dg$.

We record a pointwise logarithmic-derivative estimate.
For almost every $\theta$, there are $C_\theta,R_\theta,M<\infty$ such that
\begin{equation}\label{nr:ray-derivative}
 |g'/g(te^{i\theta})|\le C_\theta t^M\qquad(t>R_\theta).
\end{equation}
Indeed, choose integers $q,K$ larger than the order of $G$ and use a Hadamard factorization
\[
 G(z)=z^\ell e^{B(z)}\prod_j E_q(z/a_j).
\]
The angular projections of the disks $D(a_j,|a_j|^{-K})$, for sufficiently large zeros, have summable lengths.
Almost every ray therefore meets only finitely many of them.
Outside these disks,
\[
 \frac{G'}G=\frac\ell z+B'(z)
       +\sum_j\frac{z^q}{a_j^q(z-a_j)}.
\]
For $|a_j|\le2|z|$, use $|z-a_j|\ge(2|z|)^{-K}$ and $\sum_j|a_j|^{-q}<\infty$; for $|a_j|>2|z|$, use $|z-a_j|\ge|a_j|/2$.
Both sums are bounded by a power of $|z|$.
The finitely many initial zeros and the rational function $D'/D$ do not affect the conclusion.
This proves \eqref{nr:ray-derivative}.

Suppose $\deg p>0$.
On a good ray where the leading term of $p$ has positive real part, the identity
\[
 g=\frac1{re^p-g'/g}
\]
shows that $g$ decays exponentially.
On a ray where that real part is negative, the coefficient $re^p$ decays exponentially.
Radial variation of constants gives
\[
 g(te^{i\theta})=-te^{i\theta}+C_\theta+o(1).
\]
For clarity, the latter estimate also follows by applying Gronwall to $|g(te^{i\theta})|/(1+t)$ and then integrating $(g(z)+z)'=re^pg$ along the ray.
Thus $G=O(t^{\deg D+1})$ on almost every ray; the finitely many Stokes rays may be discarded.

Choose $\kappa$ larger than the order of $G$ and finitely many good rays whose successive angular gaps are all less than $\pi/\kappa$.
Apply Phragm\'en--Lindel\"of on each exterior sector to $G(z)/z^{\deg D+1}$, using its two radial boundaries and its inner circular boundary.
It is bounded throughout each sector.
Hence $G$ is polynomial and $g$ is rational.
But $(g'+1)/g=re^p$ cannot then have a nonconstant polynomial exponent.
This contradiction proves the lemma.
\end{proof}

Consider now a finite-order transcendental meromorphic function $y$ satisfying
\begin{equation}\label{nr:linear-ode}
 y'=ay-1,\qquad a\text{ rational}.
\end{equation}
Fix $\tau\in\C$, put $Z=z-\tau$, and consider either of the maps
\begin{equation}\label{nr:two-maps}
 f_0(z)=z+y(z),\qquad
 f_1(z)=z+\frac{y(z)}{1-y(z)/Z}
       =\tau+\frac{Z^2}{Z-y(z)}.
\end{equation}
Removable values in these formulas are understood by meromorphic continuation.
The following estimates apply to both maps.

The function $y$ has finitely many poles, by the pole-order argument just used.
If $a=O(1/z)$, radial Gronwall gives a polynomial growth bound for $y$, uniformly in the argument: on an exterior ray,
\[
 |\partial_t y(te^{i\theta})|\le C|y(te^{i\theta})|/t+1,
\]
and the initial values on a fixed circle are uniformly bounded.
Clearing the poles would make $y$ a polynomial, contrary to the assumption.
Consequently
\begin{equation}\label{nr:a-asymptotics}
 a(z)=\alpha z^m(1+O(z^{-1})),\quad \alpha\ne0,\quad m\ge0.
\end{equation}
Write $d=m+1$ and let $P$ be a polynomial primitive of the polynomial part of $a$.
Then
\begin{equation}\label{nr:P-asymptotics}
 P'=a+O(z^{-1}),\qquad P(z)\sim\alpha z^d/d,
 \qquad za(z)\sim dP(z).
\end{equation}

\begin{lemma}\label{nr:local-model}
Put $a_0=a(z)$, $L_z=|za_0|$, $v=a_0y(z)$ and $V_z(\xi)=a_0y(z+\xi/a_0)$.
On each fixed disk $|\xi|\le K$,
\begin{align}
 V_z(\xi)&=1+(v-1)e^\xi+O_K(L_z^{-1})
       &&\text{if $v$ is bounded},\label{nr:bounded-model}\\
 \frac{y(z+\xi/a_0)}{y(z)}
   &=e^\xi\bigl(1+O_K(L_z^{-1})\bigr)+O_K(|v|^{-1})
       &&\text{if $|v|\longrightarrow\infty$}.
       \label{nr:large-model}
\end{align}
For bounded $v$, both maps in \eqref{nr:two-maps} satisfy
\begin{equation}\label{nr:map-model}
 a_0[f_j(z+\xi/a_0)-z]
   =\xi+1+(v-1)e^\xi+O_K(L_z^{-1}),\qquad j=0,1.
\end{equation}
\end{lemma}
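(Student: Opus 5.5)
The plan is to rescale the linear equation \eqref{nr:linear-ode} at the point $z$ and treat the result as a small perturbation of a constant-coefficient equation on the disk $|\xi|\le K$. Put $Y(\xi)=a_0\,y(z+\xi/a_0)=V_z(\xi)$. Differentiating and using \eqref{nr:linear-ode} gives
\[
 Y'(\xi)=\frac{a(z+\xi/a_0)}{a_0}\,Y(\xi)-1,\qquad Y(0)=v .
\]
From \eqref{nr:a-asymptotics}, $|a_0|\asymp|z|^m$, so the displacement $\xi/a_0$ has size $O_K(|z|^{-m})$. The rational asymptotics then give
\[
 \frac{a(z+\xi/a_0)}{a_0}=1+\frac{a'(z)}{a_0^2}\xi+\dots=1+O_K\bigl(1/|za_0|\bigr)=1+O_K(L_z^{-1}),
\]
because $a'/a\sim m/z$; when $m=0$ the correction is $O(z^{-2})$, which is even better. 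We also need that $y$ has no pole in this small disk. This holds because $y$ has finitely many poles and $z$ is large, and we take $L_z$ large throughout, since the estimates are only asserted asymptotically.

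\emph{Bounded $v$.} The comparison solution $Y_*(\xi)=1+(v-1)e^\xi$ solves $Y_*'=Y_*-1$ with the same initial value. The difference $D=Y-Y_*$ satisfies $D'=D+\varepsilon(\xi)Y$ with $D(0)=0$ and $\varepsilon=O_K(L_z^{-1})$. Gronwall along radial segments in the $\xi$-disk first shows that $Y$ is bounded by $O_K(1+|v|)$, and then gives $|D|\le O_K(L_z^{-1})(1+|v|)$. This is \eqref{nr:bounded-model}.

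\emph{$|v|\to\infty$.} Normalize by setting $\tilde Y=Y/v$. Then $\tilde Y'=(1+\varepsilon)\tilde Y-1/v$ with $\tilde Y(0)=1$. Comparing this with $e^{\xi+\int_0^\xi\varepsilon}$ and bounding the inhomogeneous term through variation of constants gives
\[
 \tilde Y=e^\xi\bigl(1+O_K(L_z^{-1})\bigr)+O_K(|v|^{-1}),
\]
which is \eqref{nr:large-model}.

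\emph{The map model \eqref{nr:map-model}.} For $f_0$ we have directly
\[
 a_0\bigl[f_0(z+\xi/a_0)-z\bigr]=\xi+V_z(\xi).
\]
For $f_1$ write
\[
 f_1(w)-w=\frac{y(w)}{1-y(w)/(w-\tau)},\qquad w=z+\xi/a_0.
\]
With $v$ bounded, $y(w)=V_z(\xi)/a_0=O_K(|a_0|^{-1})$ and $|w-\tau|\asymp|z|$, so $y(w)/(w-\tau)=O_K(1/|za_0|)=O_K(L_z^{-1})$. Expanding the denominator to first order therefore changes the expression only by $O_K(L_z^{-1})$ after multiplying by $a_0$. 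Substituting \eqref{nr:bounded-model} then gives the stated formula for both $j=0,1$.

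The main obstacle is keeping the error uniform. The estimate $a(z+\xi/a_0)/a_0-1=O(L_z^{-1})$ has to hold uniformly on the disk and must not lose a factor of $|v|$, so in the large-$v$ case the normalization by $v$ is essential. A secondary point is checking that the meromorphic continuation of $f_1$ causes no trouble; near the relevant points this holds because the denominator $1-y/Z$ is $1+O(L_z^{-1})$.
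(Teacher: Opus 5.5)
Your proposal is correct and follows essentially the paper's route. The paper also rescales to $V_z'=b_zV_z-1$ with $b_z=1+O_K(L_z^{-1})$ and reads off both asymptotic models from the explicit variation-of-constants formula $V_z=E_z\bigl(v-\int_0^\xi E_z^{-1}\bigr)$; your Gronwall comparison in the bounded case is an equivalent way to do this, and the map model follows from $y(w)/(w-\tau)=O_K(L_z^{-1})$ exactly as you argue.
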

\begin{proof}
The exact equation is
\[
 V_z'=b_zV_z-1,\qquad
 b_z(\xi)=\frac{a(z+\xi/a_0)}{a_0}=1+O_K(L_z^{-1}).
\]
With $E_z(\xi)=\exp\int_0^\xi b_z(s)\,ds$ its solution is
\[
 V_z(\xi)=E_z(\xi)
       \left(v-\int_0^\xi E_z(s)^{-1}\,ds\right).
\]
This proves the first two assertions.
When $v$ is bounded, $y(z+\xi/a_0)/(z+\xi/a_0-\tau)=O_K(L_z^{-1})$, which proves the last one.
\end{proof}

\begin{lemma}\label{nr:root-carrier}
For either map $f_j$ there is a closed forward invariant set $A\subset F(f_j)$, formed by small disks about all sufficiently large zeros of $y$, such that each component meeting $A$ is invariant and $S_\C(f_j)\setminus A$ is finite.
\end{lemma}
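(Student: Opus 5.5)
The plan is to study the fixed points of $f_j$ that come from zeros of $y$. Every zero $\zeta$ of $y$ is a fixed point of both maps, since $y(\zeta)=0$ gives $f_0(\zeta)=f_1(\zeta)=\zeta$. From $y'=ay-1$ we get $y'(\zeta)=-1$, hence
\[
 f_0'(\zeta)=1+y'(\zeta)=0 .
\]
For $f_1=\tau+Z^2/(Z-y)$, a short computation at $y=0$ also gives $f_1'(\zeta)=1+y'(\zeta)=0$. So every zero of $y$ is a superattracting fixed point. The task is therefore to find disks around these points, with radius uniform in $\zeta$, and to account for all singular values.

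\emph{Invariant disks.} Apply the local model of Lemma~\ref{nr:local-model} at $z=\zeta$, where $v=0$. Then \eqref{nr:map-model} gives
\[
 a_0[f_j(\zeta+\xi/a_0)-\zeta]=\xi+1-e^\xi+O_K(L_\zeta^{-1}),
\]
and the main term $\xi+1-e^\xi=-\xi^2/2+O(\xi^3)$. Fix a small $r>0$ with $|\xi+1-e^\xi|\le r/2$ on $|\xi|\le r$. For $|\zeta|$ large, $L_\zeta\to\infty$ by \eqref{nr:a-asymptotics}, so the closed disk $\{|\xi|\le r\}$ maps strictly into itself. Let $\Delta_\zeta=\zeta+\{|\xi|\le r\}/a(\zeta)$ and let $A$ be the union of the $\Delta_\zeta$ over zeros with $|\zeta|\ge R$.

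Since $|a(\zeta)|\to\infty$ when $m\ge1$ (and $|a|\to|\alpha|$ when $m=0$), these disks have bounded size. The zeros are separated in the $\xi$-scale: by \eqref{nr:bounded-model} around a zero, $V=1-e^\xi+o(1)$, whose only zero near the origin is the simple one at $\xi=0$. So the disks are pairwise disjoint and locally finite, and $A$ is closed. Each disk maps into itself, so by Montel's theorem it lies in the Fatou set, and the Fatou component containing it is invariant. This gives $f_j(A)\subset A$.

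\emph{Singular values.} The step I expect to be the main obstacle is showing that $S_\C(f_j)\setminus A$ is finite.

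\emph{Critical points.} For $f_0$, $f_0'=1+y'=ay$. So the critical points are the zeros of $y$, plus finitely many zeros of $a$; the zeros of $y$ are fixed and lie in $A$ once large. For $f_1$, differentiate $\tau+Z^2/(Z-y)$ and substitute $y'=ay-1$. The numerator becomes a factor involving $y$ times
\[
 Z^2a-2Z+y
\]
up to sign. It vanishes either at zeros of $y$, or where
\[
 y=2Z-Z^2a \qquad(\text{or a similar relation}).
\]
At the latter points $|v|=|ay|\sim L_z^2$ is large. Using \eqref{nr:large-model}, I would show that $f_1$ at such points equals $\tau+Z^2/(Z-y)$, which is $O(1/(za))$-close to $\tau$. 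These critical values therefore accumulate only at $\tau$. If that accumulation cannot be excluded, the alternative is to put $\tau$ itself into the carrier: $f_1(\tau)=\tau$ with zero derivative when $y(\tau)\ne0$. This step needs a careful check.

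\emph{Asymptotic values.} Along a curve to infinity with $f_j\to b$ finite, first consider the case where $v$ stays bounded. Then \eqref{nr:map-model} forces $a_0(f_j(z)-z)=1+(v-1)+o(1)=v+o(1)$, so $f_j-z\to v/a_0$. This tends to $0$ if $m\ge1$, giving $f_j\sim z\to\infty$, which is impossible; for $m=0$ the same argument works. If instead $|v|\to\infty$, then $f_0=z+y$ with $|y|\gg|z|$ gives $f_0\to\infty$. For $f_1$, the value tends to $\tau$, which needs the same treatment as above. Finally, the finitely many remaining zeros of $y$, zeros of $a$, and poles contribute only finitely many singular values.
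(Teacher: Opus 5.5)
Your trapping-disk construction is essentially the paper's and is fine, but the singular-value half has two genuine gaps. The first is that $f_1'$ is miscomputed. From $f_1-\tau=Z^2/(Z-y)$ and $y'=ay-1$ one gets
\[
 f_1'=\frac{Zy\,(aZ-2)}{(Z-y)^2}=r_1\,(f_1-z)(f_1-\tau),\qquad r_1=\frac{a-2/Z}{Z}.
\]
So the factor multiplying $y$ is the rational function $Z(aZ-2)$, not $Z^2a-2Z+y$. There is no family of critical points along $y\approx 2Z-Z^2a$, and no accumulation of critical values at $\tau$. Apart from finitely many points, the critical points of $f_1$ are exactly the zeros of $y$. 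Your fallback of adding $\tau$ to the carrier is therefore unnecessary. It would also not fit the statement, which requires $A$ to consist of disks about large zeros of $y$. The single possible value $\tau$ simply goes into the finite residual set, as in the paper.

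The second and more serious gap is that your exclusion of finite asymptotic values fails exactly where it matters. The step ``$|v|\to\infty$, hence $|y|\gg|z|$'' is false, since $v=ay$ can be large while $|y|\asymp|z|$. Consider $y(z)=-z+b+o(1)$ along a curve: then $|v|\sim L_z\to\infty$ and $f_0\to b$, and this is precisely the case to be ruled out. For $f_1$, a finite value $b\ne\tau$ corresponds to $y\approx Z-Z^2/(b-\tau)$, and you do not treat $f_1$ at all. The split into bounded $v$ and $|v|\to\infty$ along a curve would also need a subsequence argument.

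The missing idea, used in the paper, is to argue with inverse branches rather than along the curve. An inverse branch of $f_0$ satisfies $dz/dw=1/(a(z)(w-z))$. An inverse branch of $f_1$ over a disk avoiding $\tau$ satisfies $dz/dw=1/(r_1(z)(w-z)(w-\tau))$. For $z$ near a far-out point and $w$ in a fixed small value disk, these right-hand sides are $O(|z|^{-m-1})$ and $O(|z|^{-m})$ respectively, and their $z$-derivatives are small. Picard iteration then gives an inverse branch on the whole value disk with uniformly bounded displacement. Uniqueness of continuation traps the tail of a hypothetical asymptotic curve in this bounded image, which is a contradiction. Only $\tau$ survives, for $f_1$.
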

\begin{proof}
At a large zero $c$ of $y$, \eqref{nr:map-model} becomes
\[
 a(c)[f_j(c+\xi/a(c))-c]=\xi+1-e^\xi+O_K(|ca(c)|^{-1}).
\]
Both sides have value and first derivative zero at $\xi=0$.
Cauchy estimates, or the exact differential equation, therefore give a uniform quadratic bound near zero.
Choose $\epsilon>0$ sufficiently small.
For all large roots $c$, the disk of radius $2\epsilon/|a(c)|$ is mapped into the disk of radius $\epsilon/(2|a(c)|)$ about $c$.
Thus
\begin{equation}\label{nr:carrier-disks}
 A_c=\overline D(c,\epsilon/|a(c)|),\qquad A=\bigcup_c A_c
\end{equation}
is contained in the attracting basins of the fixed points $c$.
The disks are locally finite, so $A$ is closed.
They are uniformly separated in the local scale: otherwise \eqref{nr:bounded-model}, at a root, would contradict the separation of the zeros of $1-e^\xi$.
Reducing $\epsilon$ if necessary makes them disjoint.
The doubled disks provide a uniform exterior collar mapped into $A$.

For $f_0$ one has $f_0'=ay$; hence all but finitely many critical values are these fixed roots.
There are no finite asymptotic values.
Indeed, on a bounded value disk its inverse branches satisfy the exact equation
\begin{equation}\label{nr:inverse0}
 \frac{dz}{dw}=\frac1{a(z)(w-z)}.
\end{equation}
On a product of a fixed bounded value disk and $D(z_0,1)$, with $|z_0|\to\infty$, the right side and its $z$ derivative tend uniformly to zero.
Picard iteration gives a holomorphic inverse on that whole value disk, whose image remains in $D(z_0,1/2)$.
If an asymptotic curve tended to a finite value, take a sufficiently late starting point and a fixed disk containing its entire image tail.
Uniqueness of continuation makes the tail stay in this bounded inverse image, a contradiction.

For $f_1$, direct differentiation gives
\begin{equation}\label{nr:inverse1-identity}
 f_1'=r_1(f_1-z)(f_1-\tau),\qquad
 r_1=\frac{a-2/Z}{Z}.
\end{equation}
At an ordinary point, the nonconstant solution $f_1$ cannot assume $\tau$, by uniqueness for this differential equation.
Thus all but finitely many critical values again come from fixed roots.
An inverse branch away from the value $\tau$ satisfies
\[
 \frac{dz}{dw}=\frac1{r_1(z)(w-z)(w-\tau)}.
\]
On a fixed value disk whose closure avoids $\tau$, the right side is $O(|z|^{-m})$ and its $z$ derivative is $O(|z|^{-m-1})$.
Choose the value radius sufficiently small.
The same Picard argument gives inverse branches of uniformly bounded image displacement and excludes a finite asymptotic value other than $\tau$.
Put this one possible value and all finite coefficient exceptions into the residual set.
The critical values outside that set form the discrete root set, so there is no further finite accumulation of singular values.
\end{proof}

\begin{lemma}\label{nr:radial-roots}
The zeros of $y$ contain a sequence $c_n\to\infty$ such that $|c_{n+1}|/|c_n|\to1$.
In particular, for every $\lambda>1$ all sufficiently large open annuli $r<|z|<\lambda r$ contain a fixed point in $A$.
\end{lemma}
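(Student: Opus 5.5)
We need zeros of $y$ with radial ratios tending to $1$, where $y$ is a finite-order transcendental solution of $y'=ay-1$ with $a(z)=\alpha z^m(1+O(z^{-1}))$. The plan is to describe $y$ explicitly by variation of constants and then locate its zeros through the local model of Lemma~\ref{nr:local-model}.

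\emph{Step 1: explicit form.} Put $E=\exp(\int a)$; this is well defined outside a disk up to a power $z^\kappa$ coming from residues of $a$. Variation of constants gives, on each exterior sector,
\[
 y(z)=E(z)\Bigl(C-\int^z E(s)^{-1}\,ds\Bigr).
\]
The sectors are cut out by the $2d$ Stokes directions, where $\Rea(\alpha z^d/d)=0$, with $d=m+1$ and $P$ the primitive from \eqref{nr:P-asymptotics}. Two regimes appear:
\begin{itemize}
\item where $\Rea P>0$, the particular solution $\int_z^\infty E^{-1}$ is asymptotic to $1/a$, so $y=1/a+c_kE\,(1+o(1))$ with $E\approx e^{P}$ huge;
\item where $\Rea P<0$, one has $y\sim 1/a$ (equivalently $v=a_0y\to1$).
\end{itemize}
Since $y$ is transcendental, some Stokes constant $c_k$ is nonzero. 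Otherwise $y-1/a$ would be small in every sector, and the Phragm\'en--Lindel\"of argument used in Lemma~\ref{nr:polynomial-exponent} would force $y$ to be rational.

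\emph{Step 2: the zeros.} Pick a sector where $\Rea P>0$ and $c_k\ne0$, adjacent across a Stokes ray $\theta_0$ to a region where $y\sim1/a$. Near that ray, $y\approx 1/a+c\,e^{P(z)}z^{\kappa'}$, and the zeros are the solutions of
\[
 P(z)+\kappa'\log z+\log(c\,a(z))=\log(-1)+2\pi i n .
\]
Here $P(z)$ runs along a vertical line with real part $O(\log|z|)$, so the zeros form a chain $c_n$ with $P(c_n)\approx 2\pi i n+O(\log n)$. This gives $|c_n|\approx(2\pi n\,d/|\alpha|)^{1/d}$, hence $|c_{n+1}|/|c_n|\to1$.

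\emph{Step 3: making Step 2 rigorous.} I would not rely on the informal approximation. Instead, in each unit window of the $P$-coordinate along the ray, I would apply Rouch\'e's theorem, using either the asymptotic two-term expansion or the local model \eqref{nr:bounded-model}: near a point where $v$ is bounded and $v\ne1$, $V_z(\xi)=1+(v-1)e^\xi+O(L_z^{-1})$, and $1+(v-1)e^\xi$ has zeros spaced $2\pi i$ apart in $\xi$. One first locates a single point on the transition curve where $v$ is bounded and $v\ne1$, by continuity of $v$ between the regions where $v\to1$ and $|v|\to\infty$ (on a suitable curve $v$ passes through values of moderate size). Then iterating the model in steps of $\xi=2\pi i$ produces a chain of zeros, each step in $z$ of size $\approx 2\pi/|a|=o(|z|)$. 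This is exactly $|c_{n+1}|/|c_n|\to1$, and the chain continues to infinity because the model applies uniformly at every large base point.

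\emph{Step 4: the consequence.} For large $r$ the chain takes radial steps of relative size $o(1)$ and is unbounded, so it must enter every annulus $\{r<|z|<\lambda r\}$. Each $c_n$ is a fixed point of $f_j$, since $y(c_n)=0$, and lies in $A$ by \eqref{nr:carrier-disks}.

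\emph{Main obstacle.} The hardest step is establishing the nonvanishing Stokes constant and the continuation of the zero chain uniformly, i.e.\ controlling the accumulated drift of $v$ along the transition curve. I would first attempt this through the explicit integral representation, with careful error terms from \eqref{nr:P-asymptotics}.
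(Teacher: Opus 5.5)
Your Step~3 contains the paper's core mechanism. You renormalize at a zero $c$, where $v=0$, so that \eqref{nr:bounded-model} reads $1-e^\xi+O(L_c^{-1})$. Rouch\'e near $\xi=\pm2\pi i$ then gives neighboring zeros with $P(c_\pm)-P(c)=\pm2\pi i+O(|P(c)|^{-1})$.

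The gap is your sentence ``the chain continues to infinity because the model applies uniformly at every large base point,'' which is not a proof. Uniformity only lets you take one more step from a zero that is already beyond the threshold. It does not show that the next zero stays beyond it, nor that $|c_n|\to\infty$. You need $|c_n|\to\infty$ even for $|c_{n+1}|/|c_n|\to1$, because the step $O(1/|a(c_n)|)$ is small relative to $|c_n|$ only as $|c_n|\to\infty$. Direction genuinely matters: if $\operatorname{Im}P(c_0)<0$ and you keep stepping by $+2\pi i$, then $P(c_n)$ heads back toward the bounded region where the model is unavailable.

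The missing ingredient is the paper's fixed-sign induction. Pick $\sigma\in\{1,-1\}$ with $\sigma\operatorname{Im}P(c_0)\ge0$ and always take the $\sigma$-successor. Beyond a threshold the step errors are at most $\eta_0<2\pi/\sqrt2$, and one shows $|P(c_n)|\ge|P(c_0)|/\sqrt2+(2\pi/\sqrt2-\eta_0)n$. Starting beyond $\sqrt2$ times the threshold, this closes the induction and proves escape. Bounded $P$-increments then give $|c_{n+1}|/|c_n|\to1$.

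The ``accumulated drift of $v$'' you single out as the main obstacle does not occur. After renormalizing at each zero, $v=0$ exactly, and the only quantity to control is $P(c_n)$.

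Your Step~1 is likewise only sketched: sectorial asymptotics, a nonzero Stokes constant, and then a point with $|v|=3$ found by continuity. It is also much heavier than necessary, since all you need is one arbitrarily large zero. The paper obtains infinitely many zeros directly. If $y$ had finitely many zeros, finite order and the finite pole set would give $y=Re^Q$ with $R$ rational and $Q$ polynomial. Substituting into $y'=ay-1$ yields $[R'+(Q'-a)R]e^Q=-1$ with a nonzero rational bracket, which forces $Q$ constant and $y$ rational. With that starting point and the fixed-sign induction, you can discard Steps~1--2 entirely. Your Step~4, taking the first index with $|c_n|>r$, then goes through as written.
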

\begin{proof}
If $y$ had finitely many zeros, finite-order factorization and its finite pole set would give $y=R e^Q$, with $R$ rational and $Q$ polynomial.
Substitution into \eqref{nr:linear-ode} gives
\[
 [R'+(Q'-a)R]e^Q=-1.
\]
The bracket is a nonzero rational function, so $Q$ is constant and $y$ is rational, a contradiction.
Thus zeros tend to infinity.

At any sufficiently large root $c$, apply Rouch\'e to \eqref{nr:bounded-model} on fixed small circles about $\xi=\pm2\pi i$.
There are roots
\[
 c_\pm=c+\frac{\pm2\pi i+O(|ca(c)|^{-1})}{a(c)}.
\]
Using \eqref{nr:P-asymptotics} gives
\begin{equation}\label{nr:root-step}
 P(c_\pm)-P(c)=\pm2\pi i+O(|P(c)|^{-1}).
\end{equation}
Choose an arbitrarily large initial root and one fixed sign $\sigma$ with $\sigma\operatorname{Im}P(c_0)\ge0$.
Recursively take that successor.
Put $Z_n=P(c_n)$ and $T=2\pi$.
Above a fixed threshold, the step error is at most $\eta_0<T/\sqrt2$.
As long as earlier steps stay above it,
\[
 Z_n=Z_0+\sigma iTn+E_n,\quad |E_n|\le\eta_0n,
 \qquad
 |Z_n|\ge\frac{|Z_0|}{\sqrt2}
             +\left(\frac T{\sqrt2}-\eta_0\right)n.
\]
The last inequality follows from $|Z_0+\sigma iTn|\ge(|Z_0|+Tn)/\sqrt2$.
Starting beyond $\sqrt2$ times the threshold proves by induction that all steps exist in the exterior and escape.
Their $P$ increments are bounded, hence $|c_{n+1}|/|c_n|\to1$.
For any large $r$, take the first index with $|c_n|>r$.
Far enough along the chain the successive ratio is less than $\sqrt\lambda$, so $r<|c_n|<\sqrt\lambda r<\lambda r$.
This proves the open-annulus assertion without any monotonicity assumption on $|c_n|$.
\end{proof}

\subsubsection{The preimage mesh for the rational differential equation}
\label{nr:sec-mesh}

The next construction supplies targets even when the displacement is much larger than the local differential-equation scale.

\begin{lemma}\label{nr:target-scale}
There exist $c_0,C_0,V_0,R_0>0$ such that if $|z|\ge R_0$ and $|a(z)y(z)|\ge V_0$, a sufficiently large zero $c$ of $y$ satisfies
\begin{equation}\label{nr:target-annulus}
 c_0\le\left|\frac{c-z}{y(z)}\right|\le C_0.
\end{equation}
\end{lemma}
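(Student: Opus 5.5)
The plan is to produce the required zero in two regimes, split as in the proof of Proposition~\ref{prop:net}: first find \emph{some} large zero of $y$ near the appropriate scale, then walk along the chain of zeros from Lemma~\ref{nr:radial-roots} until the Euclidean distance to $z$ first reaches the scale $|y(z)|$. Write $a_0=a(z)$, $v=a_0y(z)$ and $L_z=|za_0|$. Fix a small $\eta>0$ and distinguish the cases $|y(z)|\le\eta|z|$ and $|y(z)|>\eta|z|$; the latter is the analogue of the case $|\Delta(z)|>\eta|z|$ in Proposition~\ref{prop:net}.

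\emph{Case $|y(z)|>\eta|z|$.} Put $r_*=\max(|z|,|y(z)|)$. The chain of Lemma~\ref{nr:radial-roots} consists of zeros with successive modulus ratios tending to one, so for a fixed $K>3$ it contains a zero $c$ with $Kr_*\le|c|\le2Kr_*$, provided $r_*$ is large. The triangle inequality then gives $(K-2)r_*\le|c-z|\le(2K+1)r_*$. Since $|y(z)|\le r_*\le\eta^{-1}|y(z)|$, this yields \eqref{nr:target-annulus} with constants depending only on $K$ and $\eta$.

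\emph{Case $|y(z)|\le\eta|z|$.} This gives $V_0\le|v|\le\eta L_z$. Use the exact solution formula from the proof of Lemma~\ref{nr:local-model},
\[
 V_z(\xi)=E_z(\xi)\Bigl(v-\int_0^\xi E_z(s)^{-1}\,ds\Bigr),
\]
but on the larger disk $|\xi|\le 2\log|v|+K$ instead of a fixed disk. There $b_z=1+O(\log|v|/L_z)$, and this error is small because $\log|v|\ll|v|\le\eta L_z$. Hence $V_z(\xi)=1+(v-1)e^{\xi}(1+o(1))+o(1)$ on that disk. Taking $\xi$ near $-\log v+O(1)$ puts us in the bounded-$v$ regime of \eqref{nr:bounded-model}, and Rouch\'e's theorem as in Lemma~\ref{nr:radial-roots} gives a zero $c_0$ with
\[
 |c_0-z|\lesssim \frac{\log|v|}{|a_0|}=o\bigl(|y(z)|\bigr).
\]
From $c_0$, follow the chain of Lemma~\ref{nr:radial-roots}. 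Each step changes $P$ by $\pm2\pi i+o(1)$ by \eqref{nr:root-step}, so each Euclidean step has length about $2\pi/|a|$; this is comparable to $1/|a_0|$ as long as the displacement from $z$ stays $O(|y(z)|)\le O(\eta|z|)$. On the region $|\zeta-z|\le 3|y(z)|$, the function $P$ is nearly affine, with $P'\approx a_0$ up to a factor $1+O(\eta)$. Consequently $\operatorname{Im}P$ is monotone along the chain, and the Euclidean distance $|c_n-z|$ increases essentially linearly in $n$. Stop at the first $n$ with $|c_n-z|\ge|y(z)|$. Because the step size $\asymp 1/|a_0|$ is $\ll|v|/|a_0|=|y(z)|$ once $V_0$ is large, this stopping zero satisfies $|y(z)|\le|c_n-z|\le 2|y(z)|$.

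The main obstacle is the first step of the second case. Lemma~\ref{nr:local-model} is stated only on fixed disks, and iterating \eqref{nr:large-model} $\log|v|$ times would accumulate errors. I would therefore work directly with the exact integral formula on a disk of radius comparable to $\log|v|$, using $\log|v|=o(L_z)$ to control $b_z$. A secondary point is to ensure the chain walk stays inside a region where $a$ and $P'$ are controlled; this follows from \eqref{nr:a-asymptotics} and \eqref{nr:P-asymptotics} once the total displacement is at most $3\eta|z|$ and $\eta$ is chosen small.
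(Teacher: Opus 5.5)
Your proposal is correct. Your first case is the paper's own argument; the paper takes a zero with $4s\le|c|\le8s$, $s=\max(|z|,|y(z)|)$. In the regime $V_0\le|v|\le\eta L_z$ you take a genuinely different route.

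The paper passes to the primitive coordinate $t=\int_z^w a(\zeta)\,d\zeta$, in which the equation is exactly $Y'=Y-b(t)$ with $|b'|\le C/(|a_0|L_z)$. Integration by parts, with the integral taken along a vertical-then-horizontal path, shows that the error term $a_0e^t\int_0^te^{-s}b'(s)\,ds$ is $O(1/L_z)$ even when $|t|\asymp|v|$. A single Rouch\'e argument at the far phase $t_0=-\operatorname{Log}(v-1)+(2k+1)\pi i$, with $|v|\le\operatorname{Im}t_0\le2|v|$, then yields the zero directly at distance $\asymp|y(z)|$.

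You instead work in the normalized coordinate $\xi=a_0(w-z)$. There the phase error $\int_0^\xi(b_z-1)=O(|\xi|^2/L_z)$ could be as large as $\eta|v|$ at the far scale, so you correctly restrict to the logarithmic disk $|\xi|\lesssim\log|v|$. On that disk it gives a zero $c_0$ with $|c_0-z|=O(\log|v|/|a_0|)=o(|y(z)|)$. You then cover the remaining distance with roughly $|v|/2\pi$ same-sign steps of the root chain from Lemma~\ref{nr:radial-roots}. Each step changes $P$ by $\pm2\pi i+O(1/L_z)$, so the cumulative drift is $O(|v|/L_z)=O(\eta)$, which is exactly why the walk stays controlled. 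The one-step overshoot $O(1/|a_0|)\ll|y(z)|$ then gives $|y(z)|\le|c-z|\le2|y(z)|$ at the first exit time. Stopping at that first exit is also what keeps every chain point inside $|\zeta-z|\le3\eta|z|$, where $a$ and $P'$ stay within a factor $1+O(\eta)$ of $a_0$.

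The paper's route is a single analytic estimate with no induction over $\asymp|v|$ steps. Yours reuses existing machinery (the local solution formula and the root chain) and gives explicit constants in this regime, at the cost of that step-by-step bookkeeping.
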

\begin{proof}
Fix a sufficiently small $\eta>0$.
If $|y(z)|\ge\eta|z|$, put $s=\max(|z|,|y(z)|)$ and choose a root with $4s\le|c|\le8s$ by Lemma~\ref{nr:radial-roots}.
The triangle inequality proves the claim.

It remains to treat
\begin{equation}\label{nr:small-target-range}
 V_0\le|v|=|a_0y(z)|\le\eta L_z,
 \qquad a_0=a(z),\quad L_z=|za_0|.
\end{equation}
Choose a small fixed $\delta>0$.
On $D(z,\delta|z|)$, $|a(w)/a_0-1|\le C\delta+o(1)<1/4$.
The primitive
\[
 T_z(w)=\int_z^w a(\zeta)\,d\zeta
\]
is univalent there: its difference quotient divided by $a_0$ stays within $1/4$ of 1.
Rouch\'e on the boundary shows that its image contains $|t|<cL_z$ for a fixed $c>0$.
On a smaller such disk its inverse $w_z(t)$ satisfies
\begin{equation}\label{nr:primitive-estimates}
 |w_z(t)-z|\asymp |t|/|a_0|,
 \qquad b(t)=\frac1{a(w_z(t))},
 \qquad |b'(t)|\le\frac C{|a_0|L_z}.
\end{equation}
Here $b'=-a'/a^3$.

In this coordinate $Y(t)=y(w_z(t))$ solves $Y'=Y-b(t)$.
Integration by parts gives the exact formula
\begin{equation}\label{nr:exact-primitive}
 a_0Y(t)=e^t(v-1)+a_0b(t)
       -a_0e^t\int_0^t e^{-s}b'(s)\,ds.
\end{equation}
Choose a logarithm of $v-1$ with imaginary part in $[-\pi,\pi]$ and an integer $k$ such that
\begin{equation}\label{nr:far-phase}
 t_0=-\operatorname{Log}(v-1)+(2k+1)\pi i,
 \qquad |v|\le\operatorname{Im}t_0\le2|v|.
\end{equation}
This is possible for large $V_0$.
Fix a small $r_*>0$ and put $t=t_0+\xi$, $|\xi|\le r_*$.
In \eqref{nr:exact-primitive} integrate first vertically from $0$ to $i\operatorname{Im}t$, then horizontally to $t$.
Both segments lie in the primitive chart if $\eta$ is sufficiently small.
On the vertical segment, the length $O(|v|)$ is cancelled by $|e^t|=O(|v|^{-1})$.
On the horizontal segment,
\[
 |e^t|\int_{\operatorname{Re}t}^0 e^{-x}\,dx=O(1).
\]
Consequently
\[
 \left|a_0e^t\int_0^t e^{-s}b'(s)\,ds\right|\le C/L_z.
\]
Also $a_0b(t)=1+O(|t|/L_z)=1+O(\eta)+O(L_z^{-1})$.
Since $e^{t_0}(v-1)=-1$, we obtain
\[
 a_0y(w_z(t_0+\xi))=1-e^\xi+O(\eta)+O(L_z^{-1})
 \qquad(|\xi|\le r_*).
\]
Choose $\eta$ small and then $R_0,V_0$ large.
Rouch\'e gives a root $c=w_z(t_0+\xi_*)$, $|\xi_*|<r_*$.
Equations \eqref{nr:primitive-estimates}--\eqref{nr:far-phase} imply $|c-z|\asymp |v|/|a_0|=|y(z)|$.
Moreover $|c-z|\le C\eta|z|$, so the root is large.
This completes the proof.
\end{proof}

Let $P_0$ be a fixed finite forward invariant repelling marking containing at least three points, and put
\[
 X=\C\setminus(A\cup P_0),\qquad
 \Sigma_j=X\cap f_j^{-1}(A).
\]
The marking is unrelated to the polynomial $P$ above.

\begin{proposition}\label{nr:mesh}
Suppose that $z_n,f_j(z_n)\in X$, $|z_n|,|f_j(z_n)|\to\infty$, and $z_n\notin f_j^{-1}(A)$.
If
\[
 d_X(z_n,\Sigma_j)\longrightarrow\infty,
\]
then $a(z_n)y(z_n)\to1$.
The paths used to contradict this distance divergence avoid every fixed compact subset of the plane.
\end{proposition}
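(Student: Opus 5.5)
We argue by contraposition, in the way Proposition~\ref{prop:net} was proved for the class $\F$. Suppose $a(z_n)y(z_n)\not\to1$. After passing to a subsequence, $v_n=a(z_n)y(z_n)$ either tends to infinity or converges to some finite $v_*\ne1$. In each case we build, for all large $n$, a path from $z_n$ to a point of $\Sigma_j$ whose $X$-hyperbolic length is bounded independently of $n$. This contradicts $d_X(z_n,\Sigma_j)\to\infty$. Throughout we use $L_{z_n}=|z_na(z_n)|\to\infty$, which holds because $m\ge0$ in \eqref{nr:a-asymptotics}.

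\emph{Bounded case, $v_n\to v_*\ne1$.} We work in the local coordinate $\xi$ with $w=z_n+\xi/a(z_n)$. By \eqref{nr:map-model}, the rescaled map $\xi\mapsto a_0[f_j(w)-z_n]$ converges uniformly on fixed disks to the model $g_{v_*}(\xi)=\xi+1+(v_*-1)e^\xi$. In the same coordinate, \eqref{nr:bounded-model} shows that the carrier $A$ near $z_n$ is, up to $o(1)$, a union of small disks about the roots of $1+(v_*-1)e^\xi$. These roots are $\xi_k=-\operatorname{Log}(v_*-1)+2\pi ik$. The disks have radius $\epsilon$ in the $\xi$ scale, since $a(c)/a(z_n)\to1$.

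Because $v_*\ne1$, the model $g_{v_*}$ is nonconstant. Since $v_*\ne1$ also gives $g_{v_*}(\xi_k)=\xi_k$, the model map takes values near any chosen $\xi_k$. By Rouch\'e we find a point $\xi^*$ at bounded distance from $0$ with $g_{v_*}(\xi^*)=\xi_k$, where $\xi^*$ stays a definite distance from the root disks; we may take $\xi^*$ off the disk if we instead aim at an interior point of the disk. For large $n$, Rouch\'e gives an actual preimage $w_n$ near $z_n+\xi^*/a(z_n)$ with $f_j(w_n)\in\Int A$. We join $z_n$ to $w_n$ by a path in the $\xi$-plane of bounded length that keeps a fixed positive distance from the root disks and from $\xi^*$'s disk. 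Comparing with a fixed-radius disk around each path point, exactly as in the bounded-band step of Proposition~\ref{prop:net}, bounds the $X$-length. If $w_n$ lies in $A$ itself, we instead stop at the first exit point of the collar from Lemma~\ref{nr:root-carrier}; collar points already lie in $\Sigma_j$. The marking $P_0$ is finite, so it is avoided for large $n$ by \eqref{eq:compactpuncture}.

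\emph{Large case, $|v_n|\to\infty$.} This mirrors the large positive band of Proposition~\ref{prop:net}. Lemma~\ref{nr:target-scale} supplies a root $c_n\in\Int A$ with $c_0\le|c_n-z_n|/|y(z_n)|\le C_0$. For $f_0$, equation \eqref{nr:large-model} gives
\[
 \frac{f_0(z_n+\xi/a_0)-z_n}{y(z_n)}=e^\xi\bigl(1+o(1)\bigr)+\frac{\xi}{v_n},
\]
uniformly for $|\xi|\le C$. For $f_1$ we get the analogous expression with the correction $y/Z$. This correction is harmless when $|y(z_n)|\le\eta|z_n|$. In the remaining regime $|y(z_n)|>\eta|z_n|$ we handle $f_1$ separately: $f_1=\tau+Z^2/(Z-y)$ is then bounded relative to $|z_n|$, and we pick the target root at the appropriate modulus using Lemma~\ref{nr:radial-roots}.

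Since the target ratio lies in a fixed compact annulus, it has a logarithm in a fixed disk. Rouch\'e then produces a point $w_n=z_n+\xi_n/a_0$ with $|\xi_n|<C$ and $f_j(w_n)=c_n$. The disk $|\xi|\le C+2$ avoids $A$, because $|V_{z_n}|\asymp|v_n|e^{\Rea\xi}$ is large there, whereas $A$ lives where $|ay|$ is small. Hence the straight segment to $w_n$ has $X$-length at most $C/2$. All displacements are $O(1/|a(z_n)|)=o(|z_n|)$, which yields the claim that the paths avoid every fixed compact set.

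\emph{Main obstacle.} The hardest step is the large case for $f_1$ when $|y(z_n)|$ is comparable to or larger than $|z_n|$. Here the local model degenerates and we must match the target choice from Lemma~\ref{nr:target-scale} against the pole at $y=Z$. We would first try to reduce it to the far-field choice $4s\le|c|\le8s$, solving $f_1(w)=c$ on a disk of radius comparable to $|y|/|v_n|$.
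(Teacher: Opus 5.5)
Your bounded case is workable but more roundabout than needed. The paper joins $z_n$ by a bounded path to a collar point of a nearby root, such as $c+3\epsilon/(2a(c))$, which already lies in $\Sigma_j$; the clearance of $z_n$ from the doubled carrier disks comes from $z_n\notin f_j^{-1}(A)$. Note also that the zeros of $1+(v_*-1)e^\xi$ are $\operatorname{Log}[-1/(v_*-1)]+2\pi ik$, so you dropped a $\pi i$. Your large case for $f_0$ matches the paper.

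\textbf{The gap.} The genuine gap is the large case for $f_1$ when $q=y(z_n)/(z_n-\tau)$ is not small. You leave this case open, and the remedy you sketch fails. Put $Z=z_n-\tau$ and $a_0=a(z_n)$. On a window $w=z_n+\xi/a_0$ with $|\xi|\le K$, \eqref{nr:large-model} gives $y(w)=qZ(e^\xi+o(1))$. Hence, for $|q|\to\infty$,
\[
 f_1(w)-\tau=\frac{(w-\tau)^2}{(w-\tau)-y(w)}=-\frac Zq\,e^{-\xi}\bigl(1+o(1)\bigr).
\]
So every bounded window is mapped near modulus $|Z/q|\asymp|f_1(z_n)-\tau|$. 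This can be far below $s=\max(|z_n|,|y(z_n)|)\asymp|qZ|$.
\begin{itemize}
\item A far-field root with $4s\le|c|\le8s$ can only be hit near the pole $e^\xi\approx1/q$, at normalized distance about $\log|q|\to\infty$. Equivalently, $\beta=(1-Z/(c-\tau))/q\approx1/q$ has no logarithm in a fixed disk.
\item Your claim that $f_1(z_n)$ is bounded relative to $|z_n|$ once $|y(z_n)|>\eta|z_n|$ is false near $q=1$, since $|f_1(z_n)-\tau|=|Z|/|1-q|$.
\item Even with a well-placed target, an additive $o(1)$ Rouch\'e comparison for $(f_1(w)-z_n)/y(z_n)$ is too coarse. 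These quotients equal $-1/q+(f_1(w)-\tau)/(qZ)$, so they all lie within $O(|q|^{-2})$ of $-1/q$.
\end{itemize}

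\textbf{What is missing.} You need the exact reformulation: $f_1(w)=c$ is equivalent to
\[
 y(w)=(w-\tau)\Bigl(1-\frac{w-\tau}{c-\tau}\Bigr).
\]
After dividing by $y(z_n)$, choose the target root so that $\beta=(1-Z/(c-\tau))/q$ lies in a fixed compact annulus:
\begin{itemize}
\item if $|q|\le\eta$, use Lemma~\ref{nr:target-scale};
\item if $\eta\le|q|\le2$, take $K|Z|\le|c-\tau|\le2K|Z|$;
\item if $|q|>2$, take $|Z/q|/K\le|c-\tau|\le2|Z/q|/K$.
\end{itemize}
In the last regime, Lemma~\ref{nr:radial-roots} supplies such a root, and it is a large carrier root, only because $|Z/q|\asymp|f_1(z_n)-\tau|\to\infty$. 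This is exactly where the hypothesis $|f_j(z_n)|\to\infty$ is needed; your proposal never uses it.

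You then have to check that the right side divided by $y(z_n)$ equals $\beta+O(|v_n|^{-1}+|a_0Z|^{-1})$ uniformly on the window. This is the paper's exact variation formula, and the paper notes it is essential when $q$ is large. With it, Rouch\'e compares $e^\xi(1+o(1))$ with a nearly constant value in a compact annulus and gives a solution with $|\xi|$ bounded. The solution is a genuine finite preimage because $(w-\tau)-y(w)=(w-\tau)^2/(c-\tau)\ne0$. Your window-avoidance and length estimate then finish the argument as in the $f_0$ case.
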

\begin{proof}
Write $v=a(z)y(z)$.
We exclude a bounded subsequence with $|v-1|$ bounded away from zero and then an unbounded subsequence.

In the bounded case, \eqref{nr:bounded-model} gives, on any fixed enlarged normalized window, the root lattice
\[
 \xi_k=\operatorname{Log}[-1/(v-1)]+2\pi i k
\]
up to errors tending to zero.
Its spacing is uniform, and a representative root is uniformly bounded.
The carrier disks converge in that window to small disks about these lattice points.
Since the starting point lies outside $f_j^{-1}(A)$, it lies outside the doubled carrier disks.
For a nearby actual root $c$, the point
\[
 w=c+\frac{3\epsilon}{2a(c)}
\]
is outside $A$, with uniform local clearance, and lies in $f_j^{-1}(A)$ by the collar property in Lemma~\ref{nr:root-carrier}.
Join $z$ to $w$ in a fixed enlarged normalized window by a bounded-length path avoiding the carrier disks with fixed clearance.
One may take a polygonal path and detour around each enlarged disk; there are uniformly finitely many and they are uniformly separated.
Comparing with small Euclidean disks along the path bounds its $X$-hyperbolic length uniformly.

Suppose next that $|v|\to\infty$.
For $f_0$, choose the target root $c$ from Lemma~\ref{nr:target-scale}.
Then $\beta=(c-z)/y(z)$ lies in a fixed compact annulus, and \eqref{nr:large-model} gives
\[
 \frac{f_0(z+\xi/a(z))-z}{y(z)}=e^\xi+o(1)
\]
uniformly on fixed disks.
Rouch\'e on small circles about bounded logarithms of $\beta$ produces a point
\begin{equation}\label{nr:near-preimage}
 w=z+O(1/a(z)),\qquad f_0(w)=c\in A.
\end{equation}

For $f_1$, put $q=y(z)/Z$ and write $C=c-\tau$ for the target root.
The equation $f_1(w)=c$ is exactly
\begin{equation}\label{nr:reciprocal-target}
 y(w)=(w-\tau)\left(1-\frac{w-\tau}{C}\right).
\end{equation}
We choose $c$ so that
\begin{equation}\label{nr:beta-reciprocal}
 \beta=\frac{1-Z/C}{q}
\end{equation}
lies in one fixed compact annulus.
Fix $\eta>0$ below the small-displacement threshold used in the proof of Lemma~\ref{nr:target-scale}, and fix $K>4$.
If $|q|\le\eta$, that construction gives $|c-z|\asymp|y(z)|$ and $|c-z|\le C_1\eta|Z|$.
Thus $C\asymp Z$ and $\beta=(Z/C)(c-z)/y(z)$ has the required bounds.
If $\eta\le|q|\le2$, choose a root with $K|Z|\le|C|\le2K|Z|$, using Lemma~\ref{nr:radial-roots}; then
\[
 \frac{1-1/K}{2}\le|\beta|\le\frac{1+1/K}{\eta}.
\]
The same annulus property holds for the shifted root moduli $|c-\tau|$.
Finally, if $|q|>2$, put $T=|Z/q|$.
Since
\[
 |f_1(z)-\tau|=|Z/(1-q)|\asymp T,
\]
the escape assumption gives $T\to\infty$.
Choose a root with $T/K\le|C|\le2T/K$.
It satisfies
\[
 K/2-1/2\le|\beta|\le K+1/2.
\]
All these target roots tend to infinity.

It remains to check the variation of the right side of \eqref{nr:reciprocal-target} across the local window; this is essential when $q$ is large.
Put $a_0=a(z)$ and $w=z+\xi/a_0$.
Dividing that side by $y(z)$ gives a function $B(\xi)$ with the exact expansion
\begin{equation}\label{nr:target-variation}
 B(\xi)-\beta
   =\frac\xi v(-1+2\beta q)-\frac{\xi^2}{a_0Cv}.
\end{equation}
Use $q/v=1/(a_0Z)$ and $Z/C=1-\beta q$.
Uniformly on bounded windows,
\[
 B(\xi)=\beta+O\left(\frac1{|v|}+\frac1{|a_0Z|}
              +\frac1{|a_0Z||v|}+\frac1{|a_0Z|^2}\right)=\beta+o(1).
\]
The left side of \eqref{nr:reciprocal-target}, divided by $y(z)$, tends to $e^\xi$.
Rouch\'e again supplies \eqref{nr:near-preimage}, now for $f_1$.
It is an actual finite preimage: at a solution of \eqref{nr:reciprocal-target},
\[
 (w-\tau)-y(w)=(w-\tau)^2/C\ne0
\]
for all sufficiently large starting points.

For either map, every fixed normalized disk about $z$ avoids $A$ when $|v|\to\infty$.
Indeed, \eqref{nr:large-model} excludes zeros in each such disk.
If a carrier disk met it, its center would lie in a fixed larger normalized disk: distant noncomparable centers have radii $O(|c|^{-m})$ and cannot reach it, and comparable centers satisfy $|a(c)|\asymp|a(z)|$.
This is a contradiction.
The same disks avoid $P_0$.
If $|a(z)(w-z)|\le K_0$, choose a fixed $K_1>K_0$ such that the disk of normalized radius $K_1$ lies in $X$.
Its intrinsic distance gives
\[
 d_X(z,w)\le\operatorname{arctanh}(K_0/K_1).
\]
Taking the starting points sufficiently far out and then $K_1$ arbitrarily large shows that this distance even tends to zero.
In particular it is bounded, contradicting the hypothesis.
All constructed paths have Euclidean displacement $O(1/|a(z)|)=o(|z|)$ and avoid compact sets.
\end{proof}

\begin{lemma}\label{nr:algebraic-channel}
For either map $f_j$, an escaping orbit $z_{n+1}=f_j(z_n)$ satisfying $a(z_n)y(z_n)\to1$ eventually lies in an invariant Fatou component.
\end{lemma}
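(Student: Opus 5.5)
The plan is to read the hypothesis $a(z_n)y(z_n)\to1$ as a statement in the local normalized coordinate of Lemma~\ref{nr:local-model}, and to show that once $v_n=a(z_n)y(z_n)$ is close to $1$ the orbit falls into an explicit absorbing region that lies in the Fatou set. By \eqref{nr:map-model} with $v=v_n$, the one-step map in the window $w=z+\xi/a(z)$ reads
\[
 a(z)\bigl[f_j(z+\xi/a(z))-z\bigr]=\xi+1+(v-1)e^\xi+O_K(L_z^{-1}).
\]
At $\xi=0$, one step therefore moves by approximately $1/a(z_n)$, plus an error $(v_n-1)/a(z_n)+O(L_{z_n}^{-1}/|a(z_n)|)$. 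In the primitive coordinate $P$ of \eqref{nr:P-asymptotics} this gives
\[
 P(z_{n+1})=P(z_n)+1+o(1).
\]
So the orbit behaves like the translation $t\mapsto t+1$ in $P$, and $P(z_n)$ eventually lies in a thin sector about the positive real direction, since $\Rea$ grows linearly and $\Ima$ grows sublinearly.

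Next I would quantify the defect $v-1$ along the orbit. In the variable $t$ of \eqref{nr:exact-primitive} (with $T_z$, $b$ and the estimates \eqref{nr:primitive-estimates}), the quantity $u=a_0Y-1$ obeys $Y'=Y-b$, so approximately $u(t+1)\approx e\,u(t)$. This would contradict $v_n\to1$ unless the main term $e^t(v-1)$ is cancelled. The cancellation shows that $(v_n-1)$ is controlled by the inhomogeneous term $a_0b(t)-1=O(|t|/L_z)$, i.e.\ by the drift of $a$. I would make this precise as a recursion $|v_{n+1}-1|\le e\,|v_n-1|+C/L_{z_n}$ together with a lower bound forcing $|v_n-1|\lesssim 1/L_{z_n}$ whenever $v_n\to1$. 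The point is that any component of size $\gg 1/L$ in the homogeneous direction would be amplified geometrically and would destroy the convergence.

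With that decay in hand, define the region
\[
 \Omega=\{z: |z|>R,\ |a(z)y(z)-1|<\kappa/L_z,\ P(z)\in\text{sector}\}.
\]
Alternatively, and more robustly, take $\Omega$ to be a disk of normalized radius $\delta$ about each late orbit point, in the window where $V_z(\xi)=1+O(\kappa/L_z)$. I would check, via \eqref{nr:map-model} and the differential equation, that $f_j$ maps the normalized disk $D(z,\delta/|a(z)|)$ into $D(f_j(z),\delta/|a(f_j(z))|)$. The map there is $\xi\mapsto\xi+1+O(\kappa/L)e^\xi$, essentially a translation, and the ratio $a(f_j(z))/a(z)=1+O(1/L_z)$ changes the radius only summably, because $\sum 1/L_{z_n}<\infty$: $L_{z_n}\asymp|P(z_n)|\gtrsim n$, with a power-type gain when $d\ge1$. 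If summability fails at $d=1$, I would shrink radii by a slowly decaying factor instead. The iterates on these disks then omit the carrier $A$ and the finite marking $P_0$, so they are normal by Montel, and the union of the disks lies in the Fatou set. Consecutive disks overlap, since the step length $1/|a|$ is comparable to the radius once the translation is rescaled. After enlarging the normalized radius beyond $1$ the union is connected, so it lies in a single Fatou component. That component is mapped into itself, hence it is invariant.

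The main obstacle is the middle step: converting the bare hypothesis $v_n\to1$ into a quantitative rate strong enough to build a forward invariant chain of disks. The exponential amplification of $v-1$ is what should force this rate, but the comparison between consecutive windows, where $a_0$ changes and the error term is $O(L^{-1})$, has to be tracked carefully. I would first try to work entirely in the global primitive coordinate $t=P(z)$, where the equation becomes $Y'=Y-1/a$ with slowly varying $1/a$, and argue with the explicit solution along the real direction.
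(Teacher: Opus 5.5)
\textbf{Steps 1 and 2 are sound; the genuine gap is in Step 3.} Step 2 is not where the difficulty lies. Evaluate \eqref{nr:bounded-model} at $\xi_n=a(z_n)(z_{n+1}-z_n)=v_n+O(L_{z_n}^{-1})$ and use $a(z_{n+1})/a(z_n)=1+O(L_{z_n}^{-1})$. This gives $v_{n+1}-1=e^{\xi_n}(v_n-1)+O(L_{z_n}^{-1})$ with $e^{\xi_n}\to e$. Since $L_{z_{n+1}}/L_{z_n}\to1$, this forces $|v_n-1|=O(L_{z_n}^{-1})$.

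In Step 3, however, no forward invariant open set is ever produced.
\begin{itemize}
\item The summability you invoke is false for every $d$, not just $d=1$. By \eqref{nr:P-asymptotics}, $L_z=|za(z)|\sim d|P(z)|$, and $P(z_n)/n\to1$, so $L_{z_n}\sim dn$ and $\sum_n L_{z_n}^{-1}$ diverges logarithmically.
\item With only $O(L^{-1})$ information on windows of fixed normalized size, the normalized one-step derivative is known only up to $1+O(L_{z_n}^{-1})$. The change of scale $a(z_{n+1})/a(z_n)$ contributes another factor $1+O(L_{z_n}^{-1})$. So $f_j(D_n)\subset D_{n+1}$ requires normalized radii growing like $\prod_{k\le n}(1+C/k)\asymp n^{C'}$. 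Such radii leave every fixed window on which \eqref{nr:bounded-model} is available.
\item Shrinking the radii goes in the wrong direction, since the obstruction is expansion.
\item Your other candidate region, $\{|ay-1|<\kappa/L_z\}$, fails for the same reason in another form. Your own recursion multiplies the defect by roughly $e$ per step, so its smallness is not forward invariant under these bounds.
\end{itemize}
The underlying issue is this. For $m\ge1$ the factors $f_j'$ and $a(z_{n+1})/a(z_n)$ each differ from $1$ by exactly order $L_{z_n}^{-1}$, through $a'/a^2$. They cancel only at second order, and estimates on bounded windows cannot detect that cancellation.

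\textbf{The missing idea is to determine $y$ on a whole sector rather than along the orbit.} The paper proceeds as follows.
\begin{itemize}
\item It takes a primitive $A_0$ of $a$ on a sector about the positive real $P$ direction and puts $E=e^{A_0}$.
\item It forms the subdominant solution $y_*(z)=E(z)\int_z^\infty E(\zeta)^{-1}\,d\zeta$, with $y_*=a^{-1}(1+O(|P|^{-1}))$ uniformly on closed subsectors.
\item Then $y-y_*=CE$ on the sector. $C=0$ because $E(z_n)$ grows exponentially while $y(z_n)$ and $y_*(z_n)$ are both asymptotic to $1/a(z_n)$.
\end{itemize}
This is the continuous counterpart of your amplification argument, but its conclusion holds on an open sector, not only at orbit points. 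It gives $P(f_j(z))=P(z)+1+O(|P(z)|^{-1})$ uniformly there. The cone $\{\operatorname{Re}w>R,\ |\operatorname{Im}w|<\gamma\operatorname{Re}w\}$ in the $P$ plane is then mapped into itself, because it widens linearly while the imaginary increment is $O(|P|^{-1})$. No summability is needed: iterates escape locally uniformly, and connectedness yields an invariant Fatou component that the orbit enters.

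Your closing suggestion to work globally in the primitive coordinate is exactly this route. It is, however, precisely the step that still has to be carried out. Alternatively, you would have to push your window recursion to second order in $L_{z_n}^{-1}$ to exhibit the cancellation explicitly.
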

\begin{proof}
One has $y(z_n)=(1+o(1))/a(z_n)$ and, for $j=1$, $y(z_n)/(z_n-\tau)\to0$.
Thus
\[
 z_{n+1}-z_n=(1+o(1))/a(z_n),\qquad
 P(z_{n+1})-P(z_n)=1+o(1).
\]
It follows that $P(z_n)/n\to1$.
Eventually the orbit lies in one branch of a positive-real sector for $P$.
The branch cannot switch: successive $z_n$ differ by $o(|z_n|)$, while the finitely many branches over a narrow positive-real cone have disjoint angular ranges.

On a slightly larger such sector choose a primitive $A_0$ of $a$ and put $E=e^{A_0}$.
Since $A_0=P+O(\log z)$, the integral
\begin{equation}\label{nr:canonical-particular}
 y_*(z)=E(z)\int_z^\infty E(\zeta)^{-1}\,d\zeta
\end{equation}
converges along the positive-real direction of the $P$ coordinate and defines a holomorphic solution of $y_*'=ay_*-1$.
Uniformly on closed subsectors,
\begin{equation}\label{nr:particular-estimate}
 y_*(z)=\frac1{a(z)}\left(1+O(|P(z)|^{-1})\right).
\end{equation}
Here is a direct estimate.
Parametrize the contour by $P(\zeta)=P(z)+t$, $t\ge0$.
The exponential quotient is bounded by $C e^{-t}(1+t/|P(z)|)^C$, and $d\zeta/dt=1/P'(\zeta)$.
For $t\le|P(z)|/2$, its integrand equals $e^{-t}/a(z)$ with relative error $O((1+t)^C/|P(z)|)$; the remaining exponentially decaying tail satisfies the same integrated bound.
This proves \eqref{nr:particular-estimate}.

The difference $y-y_*$ equals $CE$ on the connected sector.
Along $z_n$, $E(z_n)$ grows exponentially because $P(z_n)/n\to1$, whereas both $y(z_n)$ and $y_*(z_n)$ are asymptotic to $1/a(z_n)$.
Therefore $C=0$.
Substitution into either formula in \eqref{nr:two-maps} gives throughout this sector
\begin{equation}\label{nr:cone-translation}
 P(f_j(z))=P(z)+1+O(|P(z)|^{-1}).
\end{equation}
For fixed small $\gamma>0$ and sufficiently large $R$, the region corresponding to
\[
 \{w:\operatorname{Re}w>R,\quad
          |\operatorname{Im}w|<\gamma\operatorname{Re}w\}
\]
is mapped into itself: the real increment is at least $1/2$ and the absolute imaginary increment is less than $\gamma/2$.
Iterates escape locally uniformly there.
The inverse branch is preserved because $f_j(z)-z=O(1/a(z))=o(|z|)$, whereas distinct branches over this cone are separated by a fixed positive angle.
The region is connected, so its full Fatou component is invariant.
Since $P(z_n)/n\to1$, the orbit eventually enters this region.
\end{proof}

\subsubsection{Completion for the classes \texorpdfstring{$\mathcal N$ and $\mathcal R_2$}{N and R2}}
\label{nr:sec-completion}

\begin{theorem}\label{nr:no-wandering}
Let $f$ be a meromorphic function of finite order satisfying either
\[
 f'=r e^p(f-z)
 \qquad\text{or}\qquad
 f'=r(f-z)(f-\tau),
\]
where $r$ is rational, $p$ is polynomial, and $\tau\in\C$.
Then $f$ has no wandering Fatou components.
\end{theorem}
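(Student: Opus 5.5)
The proof reduces both equations to the common model \eqref{nr:two-maps} and then copies the argument used for the class $\F$. In the first case, if $r\equiv0$ then $f$ is constant or rational, which is excluded or elementary. Otherwise Lemma~\ref{nr:polynomial-exponent} makes $e^p$ constant, so $y=f-z$ satisfies $y'=ay-1$ with $a=re^p$ rational; then $f=f_0$. In the second case put $Z=z-\tau$ and $y=Z(f-z)/(f-\tau)$. This is the inverse of the substitution defining $f_1$, so $f=f_1$. Differentiating with the Riccati equation should give $y'=ay-1$, where $a$ is rational and determined by $r$ through \eqref{nr:inverse1-identity}, namely $r=(a-2/Z)/Z$. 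Finite order of $y$ follows from finite order of $f$, using Lemma~\ref{lem:R2-finite-order} in the second case. Transcendence of $y$ follows from that of $f$. Hence Lemmas~\ref{nr:local-model}--\ref{nr:mesh} all apply.

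Now suppose $U$ is wandering. Lemma~\ref{nr:root-carrier} supplies a closed forward invariant carrier $A\subset\Fat(f_j)$. Every component meeting $A$ is invariant, and $E=S_\C\setminus A$ is finite. So a wandering tail eventually avoids $A\cup E$, and by Lemma~\ref{q4:transitions} its transitions are coverings.

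The thin case is excluded by Theorem~\ref{thm:thin-pole-gap}. I check its four hypotheses as follows.
\begin{enumerate}
\item The finite singular values outside a finite set are the discrete set of large roots of $y$, so they are locally finite.
\item These roots are attracting fixed points, since $f_j'(c)=1+O(\cdot)$ fails only in the model; more precisely, Lemma~\ref{nr:root-carrier} puts them in the interiors of their attracting disks.
\item Fixed points of $f_j$ are exactly the zeros of $y$. For $f_1$ one also has $\tau$ and the removable points. All but finitely many of these are attracting roots.
\item The annulus condition is Lemma~\ref{nr:radial-roots}.
\end{enumerate}

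In the thick case, Proposition~\ref{prop:carrier} gives a disk $D\Subset U_0$ and a marking $P$ such that $f^n\to\infty$ on $D$ and the one-step derivative ratio \eqref{eq:derivativeone} tends to one for $X=\C\setminus(A\cup P)$. I would then prove an expansion lemma analogous to Lemma~\ref{lem:expansion}. Suppose $z_n=f^n(z)$ satisfies $d_X(z_n,\Sigma_j)\le C$ infinitely often. Then \eqref{eq:relative-lower} gives $\rho_V/\rho_X\ge L(C)>1$, while \eqref{eq:compactpuncture} for the finite puncture set $E$ drives the denominator to one. This contradicts the derivative limit. Here I enlarge $P_0$ to $P$ and remove marks using \eqref{eq:compactpuncture}, since the mesh paths avoid compact sets. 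Hence $d_X(z_n,\Sigma_j)\to\infty$.

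Points $z_n$ in $f^{-1}(A)$ are excluded because $U_{n+1}$ avoids $A$. Proposition~\ref{nr:mesh} therefore yields $a(z_n)y(z_n)\to1$. Lemma~\ref{nr:algebraic-channel} then places the orbit eventually in an invariant Fatou component, which contradicts wandering.

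The main obstacle is the exact matching of hypotheses, especially the reduction of the Riccati case to $f_1$ with a rational $a$. The formula $r=(a-2/Z)/Z$ must be inverted to get $a=Zr+2/Z$. One must also check that $y$ is genuinely transcendental and not identically zero, and handle the degenerate case $f\equiv\tau$ or $f$ rational separately.

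A second delicate point is the verification of hypothesis (ii) of Theorem~\ref{thm:thin-pole-gap}, that the singular values are attracting fixed points. For $f_1$, the possible asymptotic value $\tau$ and the critical values from $f_1'$ vanish away from roots only at coefficient exceptions, and these must be placed in $E_s$. Similarly, fixed points of $f_1$ at poles of $y$ or at $Z=0$ must be placed in $E_f$.
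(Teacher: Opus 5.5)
Your proposal is correct and follows the paper's proof essentially step for step. It uses the same reductions to $f_0$ and $f_1$ (with $a=rZ+2/Z$), the thin case via Theorem~\ref{thm:thin-pole-gap} with Lemmas~\ref{nr:root-carrier} and~\ref{nr:radial-roots}, and the thick case via Proposition~\ref{prop:carrier}, the relative-metric inequality, Proposition~\ref{nr:mesh} and Lemma~\ref{nr:algebraic-channel}. Two small remarks. First, your justification in (ii) is garbled: the large roots are in fact superattracting, since $f_0'=ay$ vanishes there and likewise $f_1'(c)=0$. Second, no enlargement or removal of marks is needed, because Proposition~\ref{nr:mesh} can be applied directly with the marking supplied by Proposition~\ref{prop:carrier}.
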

\begin{proof}
Constant and rational maps are already covered by the rational case.
For the first equation, Lemma~\ref{nr:polynomial-exponent} makes $p$ constant.
Absorb it into $r$, set $y=f-z$, and obtain \eqref{nr:linear-ode} and $f=f_0$.

For the second equation put
\[
 Z=z-\tau,\qquad k=\frac1{f-\tau},\qquad
 y=Z-Z^2k.
\]
Direct differentiation gives
\[
 k'=rZk-r,\qquad y'=ay-1,\qquad a=rZ+2/Z,
\]
and $f=f_1$ in \eqref{nr:two-maps}.
The function $y$ has finite order; its poles are confined to the poles of the rational coefficient $a$.
If $y$ is rational, so is $f$.
Otherwise all the preceding lemmas apply.

Choose the root carrier $A$ from Lemma~\ref{nr:root-carrier}.
Its construction also shows that finite inverse singular values form a locally finite set and, except for finitely many, are attracting fixed points.
Every finite fixed point outside the finite coefficient exceptions is a zero of $y$ and is superattracting.
Thus the first three hypotheses of Theorem~\ref{thm:thin-pole-gap} hold.
The radial fixed-point obstruction in Lemma~\ref{nr:radial-roots}, together with Theorem~\ref{thm:thin-pole-gap}, excludes a thin wandering tail.
Apply Lemma~\ref{lem:carrier-discrete} to the remaining discrete tail.
Its conclusion gives local uniform escape on a nonempty open disk and, for a fixed finite repelling marking, convergence of the one-step $X$-hyperbolic derivatives to one along every orbit starting in that disk.

Put $E=(S_\C(f)\setminus A)\cap X$, $Y=X\setminus E$, and $V=f^{-1}(Y)\subset X$.
On the tail, covering invariance gives
\[
 \frac{|f'(z)|\rho_X(f(z))}{\rho_X(z)}
 =\frac{\rho_V(z)}{\rho_X(z)}
       \frac{\rho_X(f(z))}{\rho_Y(f(z))}.
\]
The second factor tends to one because $E$ is finite and $f(z)$ escapes.
The relative hyperbolic metric inequality implies that $d_X(z,X\setminus V)\to\infty$, and hence $d_X(z,\Sigma_j)\to\infty$.
Proposition~\ref{nr:mesh} forces $a(z_n)y(z_n)\to1$ along any orbit in the tail.
By Lemma~\ref{nr:algebraic-channel}, this orbit enters an invariant Fatou component, contradicting wandering.
\end{proof}

\subsection{The Riccati classes}\label{sec:riccati}
\subsubsection{The first Riccati equation and its pole gaps}\label{rr:structure}

Suppose a nonconstant meromorphic function on the plane satisfies

\[
 f'=r(z)(f-z)^2,\qquad r\in\C(z),\quad r\not\equiv0.
\]

Put $h=1/(z-f)$.
Then, as an identity of meromorphic functions,

\begin{equation}\label{rr:struct-1}
h'+h^2=r.
\end{equation}

At a finite point where $r$ is holomorphic, any pole of $h$ is simple and has residue $1$.
Indeed, a pole of order at least two makes the highest pole of $h^2$ impossible to cancel in \eqref{rr:struct-1}; for a simple pole of residue $a$, its order-two coefficient is $a^2-a$,
so $a=1$.

Let $q$ be the rational function obtained by summing the full principal parts of $h$ at the finitely many poles of $r$.
Then $h-q$ has only simple poles, all with residue one.
Consequently there is an entire function $v$, unique up to a nonzero constant, for which

\begin{equation}\label{rr:struct-2}
h-q=\frac{v'}v,\qquad
 f=z-\frac{v}{v'+qv}.
\end{equation}

For completeness, on the complement of the poles define $v=\exp\int(h-q)\,dz$.
Integrals over closed curves lie in $2\pi i\Z$, so the exponential is single-valued.
It extends with a simple zero at each ordinary pole of $h$, and extends nonvanishingly across the exceptional points because their full principal parts have been removed.

\paragraph{The regular-singular case is rational}

If $r(z)=O(z^{-2})$ at infinity, every single-valued meromorphic solution $h$ of \eqref{rr:struct-1} is rational.
Here is the ODE reason, including the monodromy point.
On an exterior universal cover let $u=\exp\int h$, so $u''=ru$.
Infinity is a regular singular point.
Analytic continuation once around infinity multiplies $u$ by a nonzero scalar because $u'/u=h$ is single-valued.
Thus $u$ spans an eigenline of local monodromy.
The Frobenius forms of solutions on a monodromy eigenline have a power factor times a convergent Laurent series at infinity, without a nontrivial logarithmic summand; in a resonant case the eigenline without logarithmic terms is selected,
or all solutions have no logarithm when the monodromy is scalar.
Therefore $u'/u$ is meromorphic at infinity.
A meromorphic function on the sphere is rational.

It follows that a transcendental R1 function necessarily has

\begin{equation}\label{rr:struct-3}
r(z)=az^d(1+O(z^{-1})),\qquad a\ne0,\quad d\ge-1.
\end{equation}

\paragraph{Uniform fixed-point normal form}

Let $c_j\to\infty$ be fixed points of a transcendental R1 function.
For all large $j$, choose $s_j$ with $s_j^2=r(c_j)$, and set

\[
 F_j(t)=s_j\bigl(f(c_j+t/s_j)-c_j\bigr).
\]

Then

\begin{equation}\label{rr:struct-4}
F_j'(t)=\frac{r(c_j+t/s_j)}{r(c_j)}(F_j(t)-t)^2,
 \qquad F_j(0)=0.
\end{equation}

Since $|c_js_j|\to\infty$, the coefficient in \eqref{rr:struct-4} tends locally uniformly to one.
Holomorphic dependence for an analytic ODE gives, on every closed disk $|t|\le b<\pi/2$,

\begin{equation}\label{rr:struct-5}
F_j(t)\longrightarrow T_1(t):=t-\tanh t.
\end{equation}

The use of ODE dependence here can be made on one disk at a time: the limiting solution is holomorphic on a neighborhood of the disk and bounded there, so the usual integral-equation perturbation argument continues the nearby solutions across that disk and gives uniform convergence.
No approximation across a pole of the limiting solution is claimed.

All these fixed points are cubic superattracting points.
More precisely,

\[
 F_j'(0)=F_j''(0)=0,\qquad F_j'''(0)=2,
\]

and $T_1(t)=t^3/3+O(t^5)$.
Consequently there exists one $b>0$, independent of all sufficiently large fixed points, such that

\begin{equation}\label{rr:struct-6}
f\left(\overline{D(c_j,b/|s_j|)}\right)
 \subset D(c_j,b/(2|s_j|)).
\end{equation}

Thus uniform invariant trapping disks are available at every exterior fixed point, at the intrinsic ODE scale $|r(c_j)|^{-1/2}$.

\subsubsection{The second Riccati equation}

Suppose

\[
 f'=r(z)(f-z)(f-\tau),\qquad r\in\C(z),\quad
 \tau\in\C,
\]

and $f\not\equiv\tau$.
Put $k=1/(f-\tau)$.
By Lemma~\ref{lem:R2-finite-order}, its exact linear equation is \eqref{rr:struct-7}.

At a point where $r$ is holomorphic, a solution of \eqref{rr:struct-7} cannot have a pole: the derivative would have a pole one order higher than either term on the right.
Hence $k$ has only finitely many poles, all among the poles of $r$.
Equivalently, a nonconstant $f$ takes the value $\tau$ only at this finite exceptional set.
This also follows from uniqueness for the original ODE, since the constant function $\tau$ is a solution.

\paragraph{The regular-singular case is rational}

If $r(z)=O(z^{-2})$, then outside a fixed disk

\[
 |r(z)(z-\tau)|\le C/|z|,\qquad |r(z)|\le C/|z|^2.
\]

On every radial ray, the integral form of \eqref{rr:struct-7} and Gronwall's inequality give $|k(z)|\le C'(1+|z|)^M$, with constants uniform in the angle (use the maximum of $|k|$ on the initial circle).
There are no exterior poles.
Subtracting the finitely many finite principal parts produces an entire function of polynomial growth, hence a polynomial.
Thus $k$, and therefore $f$, is rational.

A transcendental R2 function therefore also satisfies \eqref{rr:struct-3}.

\paragraph{Uniform fixed-point normal form}

Let $c_j\to\infty$ be fixed points and set $a_j=r(c_j)(c_j-\tau)$.
For all large $j$, $a_j\ne0$, $|a_jc_j|\to\infty$, and

\[
 F_j(t)=a_j\bigl(f(c_j+t/a_j)-c_j\bigr)
\]

satisfies

\[
 F_j'=A_j(t)(F_j-t)+B_j(t)(F_j-t)^2,\quad F_j(0)=0,
\]

where

\[
 A_j(t)=\frac{r(c_j+t/a_j)(c_j+t/a_j-\tau)}{a_j}\to1,
 \qquad B_j(t)=\frac{r(c_j+t/a_j)}{a_j^2}\to0
\]

locally uniformly.
Therefore

\begin{equation}\label{rr:struct-9}
F_j(t)\longrightarrow T_2(t):=t+1-e^t
\end{equation}

locally uniformly on every fixed disk, by the same bounded-domain ODE argument as above.
In particular $F_j'(0)=0$, $F_j''(0)=-1$; these are quadratic superattracting fixed points.
There exists a uniform small $b>0$ such that

\begin{equation}\label{rr:struct-10}
f\left(\overline{D(c_j,b/|a_j|)}\right)
 \subset D(c_j,b/(2|a_j|)).
\end{equation}

The natural exterior trapping scale is thus $|r(c_j)(c_j-\tau)|^{-1}$.

\subsubsection{Finite order and finite asymptotic values}

The second class has finite order by Lemma~\ref{lem:R2-finite-order}.
For the first class this follows directly, without a nonlinear ODE growth theorem, as follows.

In R1, the entire function $v$ in \eqref{rr:struct-2} satisfies

\begin{equation}\label{rr:struct-11}
v''+2qv'+(q'+q^2-r)v=0.
\end{equation}

Outside a fixed disk all coefficients are rational without poles and bounded in modulus by $C(1+|z|)^M$, for a finite $M\ge0$.
Apply Gronwall to the first-order system for $(v,v')$ on each radial ray.
The initial data on the initial circle have a uniform bound, so

\[
 |v(z)|+|v'(z)|\le C_1\exp(C_2|z|^{M+1}).
\]

Thus $v$ is an entire function of finite order, and \eqref{rr:struct-2} expresses $f$ as a quotient of finite-order entire functions after clearing the rational denominator of $q$.
Standard elementary inequalities for the Nevanlinna characteristic of a quotient give finite order of $f$.

For R2, Lemma~\ref{lem:R2-finite-order} gives the general order bound.
In the transcendental case \eqref{rr:struct-3}, one may take the sharper bound

\[
 |k(z)|\le C_1\exp(C_2|z|^{d+2}),\qquad \rho(k)\le d+2.
\]

To obtain this estimate uniformly in the angle, use $|k|+1$ in the scalar Gronwall inequality, and bound the homogeneous coefficient by $C|z|^{d+1}$ and the inhomogeneous coefficient by $C|z|^d$.
Subtracting the finite principal parts reduces the growth assertion to an entire function.
The more precise bound is useful for this structural description; finite order of $f$ was established before applying the common linear-ODE no-wandering argument.

\paragraph{A local inverse-ODE lemma}

Let $A(w,y)$ be holomorphic near $(a,0)$, with $A(w,0)=0$.
There cannot be a nonconstant solution curve of

\[
 \frac{dy}{dw}=A(w,y)
\]

which remains in that neighborhood, has $y\ne0$, and tends to $(a,0)$, even when its projection to the $w$-plane is a nonrectifiable or infinitely winding path.

Here is a proof that avoids an assumption on the length of that path.
The holomorphic initial-value solutions $y=\phi(w,\eta)$, $\phi(a,\eta)=\eta$, exist on a sufficiently small product
neighborhood. Because $\partial_\eta\phi(a,0)=1$, the holomorphic
inverse-function theorem gives a first integral $\Psi(w,y)$, characterized by $\Psi(w,\phi(w,\eta))=\eta$.
Uniqueness and $A(w,0)=0$ give

\[
 \Psi(w,0)=0,\qquad \{\Psi=0\}=\{y=0\}
\]

on a smaller product neighborhood.
Along a solution curve $\Psi$ is constant.
A solution curve tending to $(a,0)$ therefore has $\Psi=0$ and is contained in $y=0$, a contradiction.

\paragraph{R1 has no finite asymptotic value}

Suppose, towards a contradiction, that $z\to\infty$ along a continuous curve and $f(z)\to a\in\C$.
Write $w=f(z)$, $y=1/z$.
The equation for the inverse graph is

\begin{equation}\label{rr:struct-12}
\frac{dy}{dw}=-\frac{y^4}{r(1/y)(wy-1)^2}.
\end{equation}

By \eqref{rr:struct-3}, the right-hand side extends holomorphically near every $(a,0)$, vanishes on $y=0$, and is $O(y^{d+4})$.
Along a sufficiently late part of the proposed asymptotic curve, $f'\ne0$, so \eqref{rr:struct-12} applies.
The inverse-ODE lemma gives a contradiction.

\paragraph{R2 has no finite asymptotic value other than possibly \texorpdfstring{$\tau$}{tau}}

For a hypothetical finite asymptotic value $a\ne\tau$, the inverse graph instead satisfies

\begin{equation}\label{rr:struct-13}
\frac{dy}{dw}=\frac{y^3}{r(1/y)(1-wy)(w-\tau)}.
\end{equation}

Again the right-hand side extends holomorphically near $(a,0)$, vanishes on $y=0$, and is $O(y^{d+3})$.
The same lemma rules out this curve.
No assertion excluding $\tau$ is made.

\paragraph{A finite-residual singular carrier follows}

For either class, choose a disk containing all zeros and poles of $r$, the point $\tau$ when relevant, and all finite exceptional points.
Outside this disk, every critical point is a fixed point.
For R2 this uses the previously proved fact that a nonconstant $f$ cannot take $\tau$ at an ordinary point.
Inside a compact set there are only finitely many critical points of a nonconstant meromorphic function (including critical poles when singular values are considered on the sphere).
Thus the critical values outside a finite set are precisely the exterior fixed points.
Their set has no finite accumulation point.

Equations \eqref{rr:struct-12} and \eqref{rr:struct-13} identify all possible finite asymptotic values.
The usual covering characterization of inverse singular values as the closure of critical and asymptotic values now implies that all finite inverse singular values, except finitely many,
are exterior superattracting fixed points.

Take the closed trapping disks \eqref{rr:struct-6} or \eqref{rr:struct-10} at all sufficiently large fixed points and let $A$ be their union.
Their radii are $o(|c_j|)$, so this union is locally finite and closed in the plane.
Every disk is contained in the basin of its center, and is mapped strictly into itself; disks belonging to distinct fixed points cannot overlap because their iterates have different limits.
Hence

\begin{equation}\label{rr:struct-14}
A\subset \Fat(f),\qquad f(A)\subset A,\qquad
 S_{\C}(f)\setminus A\text{ is finite}.
\end{equation}

Every Fatou component meeting $A$ is an invariant immediate attracting basin.
Thus both Riccati equations admit the closed singular carrier required by Proposition~\ref{lem:carrier-discrete}.

\subsubsection{Pole gaps from local equations}

\begin{proposition}\label{rr:pole-gap} For a transcendental meromorphic solution of either R equation,
and for every $\epsilon>0$, there is $R_\epsilon$ such that every round annulus

\[
 \{R<|z|<(1+\epsilon)R\},\qquad R>R_\epsilon,
\]

contains a pole of $f$.

\end{proposition}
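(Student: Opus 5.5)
Both R equations will be handled through their linear reductions: poles of $f$ become zeros of a finite-order solution of a linear ODE with rational coefficients. Then we run a radial-chain argument of the type in Lemma~\ref{nr:radial-roots}.

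\emph{R2 case.} Poles of $f$ are exactly the zeros of $k=1/(f-\tau)$, and $k$ solves \eqref{rr:struct-7}, namely $k'=bk-r$ with $b=r(z)(z-\tau)$. The transcendental case satisfies \eqref{rr:struct-3}, so $b(z)=az^{d+1}(1+O(z^{-1}))$ with $d+1\ge0$ and $|zb(z)|\to\infty$. Put $y=k/r$; then $y$ solves an equation of the shape $y'=\tilde a y-1$, with $\tilde a=b-r'/r$ rational and $\tilde a\sim b$. Lemmas~\ref{nr:local-model}, \ref{nr:radial-roots} and the estimate \eqref{nr:P-asymptotics} used only \eqref{nr:linear-ode}, finite order (Lemma~\ref{lem:R2-finite-order}), and the transcendence of $y$. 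So they apply directly and give a chain of zeros $c_n\to\infty$ with $|c_{n+1}|/|c_n|\to1$. Zeros of $y$ away from the finitely many zeros and poles of $r$ are poles of $f$. Given $\epsilon$, the first chain element beyond $R$ then lies in $\{R<|z|<(1+\epsilon)R\}$ for large $R$.

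\emph{R1 case.} By \eqref{rr:struct-2}, exterior poles of $f$ are the exterior zeros of $v'+qv$, where $v$ is the finite-order entire solution of \eqref{rr:struct-11}. Equivalently, $h=1/(z-f)$ satisfies $h'+h^2=r$, and poles of $f$ are zeros of $h$ (and conversely, away from exceptional points). I will build the chain directly from the Riccati local model. Take a large zero $c$ of $h$, or a point $c$ where $h$ is of intrinsic size, and scale by $s$ with $s^2=r(c)$. Then $H(t)=h(c+t/s)/s$ satisfies $H'+H^2=1+o(1)$ on bounded windows. The limiting solutions are $\tanh(t+t_0)$, or $\coth$, or $\pm1$. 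The zeros of a $\tanh$ solution form a lattice with spacing $\pi i$.

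Rouch\'e on fixed circles then gives the successor zeros
\[
c_\pm=c+\frac{\pm\pi i+o(1)}{s}.
\]
Integrating $\sqrt r$ gives a primitive $\Pi\sim \frac{2\sqrt a}{d+2}z^{(d+2)/2}$ on sectors, with $\Pi(c_\pm)-\Pi(c)=\pm\pi i+o(1)$. The induction of Lemma~\ref{nr:radial-roots} then yields an escaping chain with bounded $\Pi$-increments, hence with $|c_{n+1}|/|c_n|\to1$. The branch of $\sqrt r$ is followed continuously; the increments are $o(|c|)$, so the branch is stable.

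\emph{Main obstacle.} In R1 we need to start the chain: we need infinitely many zeros of $h$ at which the normalized solution is near $\tanh$ rather than near the degenerate limits $\pm1$. I would first show that $h$ has infinitely many zeros. Otherwise $f$ has finitely many poles, and then, as in Lemma~\ref{nr:radial-roots}, $v'+qv=Re^Q$. Substituting into \eqref{rr:struct-11} should force $v$, and hence $f$, to be rational or of the form excluded in the regular-singular analysis.

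Near a zero $c$ the normalized data are $H(0)=0$, and the limit solution is then exactly $\tanh t$, which is nondegenerate. Consequently the chain construction only needs one large zero, and zeros tend to infinity. The remaining care is uniformity of the ODE convergence on the windows $|t|\le 2\pi$, which contain poles of $\tanh$ at $t=\pm\pi i/2$. To handle this I would pass to $1/H$ near those poles, as in the disk-by-disk remark after \eqref{rr:struct-5}, or work with the linear equation $u''=ru$, where all solutions are entire and the convergence is uniform. The case $d=-1$, where $|c\,s|\sim|c|^{1/2}\to\infty$, still works.
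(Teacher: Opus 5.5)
Your proposal is correct and uses the same mechanism as the paper. A linear local model at a large pole produces two neighbouring poles at normalized displacement $\pm\pi i$ (R1) or $\pm2\pi i$ (R2). A chain with a fixed sign then escapes in a polynomial coordinate with bounded increments, which gives $|c_{n+1}|/|c_n|\to1$.

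You organize two steps differently, and both are legitimate economies.
\begin{itemize}
\item For R2, your substitution $y=k/r$ turns \eqref{rr:struct-7} into $y'=\tilde a y-1$ with $\tilde a=r(z)(z-\tau)-r'/r$. Its exterior zeros are exactly the poles of $f$, so Lemma~\ref{nr:radial-roots} applies verbatim. The paper instead repeats that argument with $K_p\to1-e^t$ and the coordinate $\frac{a}{d+2}p^{d+2}$. Your route shows that the R2 pole gap is literally an instance of the root chain for $\mathcal N$.
\item For R1, you look for large zeros of $h$ directly. The paper first produces large fixed points (zeros of $v$) and then extracts a pole from the zero of $U_t$ near $t=\pi i/2$.
\end{itemize}

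The factorization step you left as ``should force'' closes at once. By \eqref{rr:struct-11}, $w=v'+qv$ satisfies $w'=-qw+rv$. If $f$ has finitely many poles, then $w$ has finitely many zeros, is of finite order, and has finitely many poles, so $w=Re^Q$. This gives $v=(R'+RQ'+qR)e^Q/r$, hence $h=w/v=rR/(R'+RQ'+qR)$, and $f=z-1/h$ is rational, a contradiction.

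One technical point in the R1 chain: use the paper's monomial $P_0(w)=\frac{2\sqrt a}{d+2}w^{d+2}$ on the double cover $z=w^2$, not an exact primitive of $\sqrt r$. For even $d$ that primitive can contain a $\log z$ term. A threshold on $|\Pi|$ then controls $|z|$ only after an additional winding estimate along the chain. By contrast, $P_0$ is single-valued, and its derivative divided by $2w\sqrt{r(w^2)}$ tends to one.
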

\begin{proof}
We first show that the function has arbitrarily large poles.

\paragraph{Step 1: sufficiently large poles exist}

For R1, if the entire finite-order function $v$ in \eqref{rr:struct-2} had only finitely many zeros, its Hadamard factorization would have the form $v=P e^Q$, with polynomials $P,Q$.
Then $h=v'/v+q$ and $f=z-1/h$ would be rational, a contradiction.
Thus there are arbitrarily large ordinary fixed points $c$.

At such a point choose $s^2=r(c)$.
On the local $t$-disk, solve

\[
 U_{tt}=\frac{r(c+t/s)}{r(c)}U,
 \qquad U(0)=0,\quad U_t(0)=1.
\]

The fixed-point identity gives $s(f(c+t/s)-c)=t-U/U_t$.
On every fixed compact $t$-set, the coefficient tends uniformly to one and hence $U\to\sinh t$, $U_t\to\cosh t$.
These are convergence statements about the linear ODE solutions, so they remain valid at the eventual poles of the quotient.
Rouch\'e's theorem near $t=\pi i/2$ gives a simple zero of $U_t$, at which $U\ne0$.
This is a pole $p=c+(\pi i/2+o(1))/s$ of $f$, and $|p|\to\infty$.

For R2, if $k$ had only finitely many zeros, its finite order and finitely many poles would give $k=R e^Q$ with rational $R\ne0$ and polynomial $Q$.
Equation \eqref{rr:struct-7} would imply

\[
 \left(\frac{R'}R+Q'-r(z)(z-\tau)\right)R e^Q=-r.
\]

The rational factor in parentheses is not identically zero because $r\ne0$.
Therefore $e^Q$ would be rational and $Q$ constant, implying that $f$ is rational.
This contradiction proves that $k$ has arbitrarily large zeros, i.e. $f$ has arbitrarily large poles.

\paragraph{Step 2: each large pole has two controlled neighbors}

For R1, let $p$ be any sufficiently large pole of $f$, and choose $s^2=r(p)$.
The local function $u$ with $u'/u=h$ is nonzero at $p$, and $u'(p)=0$.
Normalize as above with $U(0)=1$, $U_t(0)=0$.
Then $U\to\cosh t$, $U_t\to\sinh t$ on fixed compact sets.
Its two simple zeros near $\pm\pi i$ give poles

\begin{equation}\label{rr:struct-15}
p_\pm=p+\frac{\pm\pi i+o(1)}{\sqrt{r(p)}}.
\end{equation}

The error is uniform as $p\to\infty$.
The new poles are distinct from $p$, and are genuine because a nonzero solution of a regular second-order linear ODE cannot have its value and derivative vanish at the same point.

For R2, at a sufficiently large pole $p$ one has $k(p)=0$ and $k'(p)=-r(p)\ne0$.
Set $a_p=r(p)(p-\tau)$ and

\[
 K_p(t)=\frac{a_p}{r(p)}k(p+t/a_p).
\]

Equation \eqref{rr:struct-7} gives $K_p'\to K_p-1$, with $K_p(0)=0$, uniformly in its coefficients on fixed disks.
Thus $K_p\to1-e^t$.
The simple zeros near $\pm2\pi i$ give poles

\begin{equation}\label{rr:struct-16}
p_\pm=p+\frac{\pm2\pi i+o(1)}{r(p)(p-\tau)}.
\end{equation}

\paragraph{Step 3: a pole chain escapes and has vanishing relative gaps}

For R2, write $r(z)\sim az^d$, $d\ge-1$, and put

\[
 Z=P_0(p):=\frac{a}{d+2}p^{d+2}.
\]

Taylor's theorem and \eqref{rr:struct-16} give

\begin{equation}\label{rr:struct-17}
P_0(p_\pm)-P_0(p)=\pm2\pi i+o(1).
\end{equation}

For R1, use the exterior double cover $p=w^2$.
Choose the holomorphic square root of $r(w^2)$ on a sufficiently large exterior annulus with asymptotic value $\sqrt a\,w^d$, and set

\[
 Z=P_0(w):=\frac{2\sqrt a}{d+2}w^{d+2}.
\]

Lift the two neighboring poles in \eqref{rr:struct-15} to the points $w_\pm$ near $w$.
Then

\begin{equation}\label{rr:struct-18}
w_\pm-w=\frac{\pm\pi i+o(1)}{2w\sqrt{r(w^2)}},
 \qquad P_0(w_\pm)-P_0(w)=\pm\pi i+o(1).
\end{equation}

The square root exists because the exterior winding number of $r(w^2)$ is even.
The polynomial $P_0$, rather than an exact primitive, is sufficient: its derivative divided by $2w\sqrt{r(w^2)}$ tends to one.
This avoids a logarithmic monodromy issue for a primitive.

Both cases now have the same form.
A sufficiently large pole coordinate $Z$ has pole successors $Z_\pm=Z\pm iT+e_\pm$, where $T=\pi$ or $2\pi$ and $e_\pm\to0$ as $|Z|\to\infty$.
Choose a threshold beyond which $|e_\pm|\le\eta<T/\sqrt2$, a starting pole with $|Z_0|$ larger than $\sqrt2$ times that threshold, and a fixed sign $\sigma\in\{1,-1\}$ with $\sigma\operatorname{Im}Z_0\ge0$.
Inductively take the successor of sign $\sigma$.
As long as the chain is outside the threshold,

\[
 Z_n=Z_0+\sigma iTn+E_n,\qquad |E_n|\le\eta n.
\]

Since $\sigma\operatorname{Im}Z_0\ge0$,

\[
 |Z_0+\sigma iTn|\ge\sqrt{|Z_0|^2+T^2n^2}
 \ge\frac{|Z_0|+Tn}{\sqrt2}.
\]

Therefore

\begin{equation}\label{rr:struct-19}
|Z_n|\ge\frac{|Z_0|}{\sqrt2}
       +\left(\frac T{\sqrt2}-\eta\right)n.
\end{equation}

This estimate closes the induction, keeps the full chain outside the threshold, and proves $|Z_n|\to\infty$.
The increments are bounded, so $|Z_{n+1}|/|Z_n|\to1$.
Since $P_0$ is a monomial, in either case the projected poles satisfy

\begin{equation}\label{rr:struct-20}
|p_n|\to\infty,\qquad |p_{n+1}|/|p_n|\to1.
\end{equation}

Given a large radius $R$, choose the first index with $|p_n|>R$.
Then $|p_{n-1}|\le R$, and \eqref{rr:struct-20} implies $|p_n|<(1+\epsilon)R$ for all sufficiently large $R$.
\end{proof}

\subsection{Exterior expansion and capture for the first Riccati equation}\label{rr:exterior}

Write \[ r(z)=az^d(1+O(z^{-1})),\quad a\ne0,\quad d\ge-1, \qquad s(z)^2=r(z).
\] All square roots below are taken on the indicated local or sectorial chart.
The unordered set of exceptional normalized displacements is $\{1,-1\}$, so the first assertion is independent of the choice of square root.

\subsubsection{Choose the trapping disks with a collar}

At an exterior fixed point $c$, normalized local dynamics converges uniformly to \[ T(t)=t-\tanh t.
\] Choose a sufficiently small fixed $b>0$.
Because $T(t)=O(t^3)$, after discarding finitely many fixed points we have the uniform strict inclusions \begin{equation}\label{rr:geo-1} f\bigl(\overline{D(c,2b/|s(c)|)}\bigr) \subset D(c,b/(2|s(c)|)).
\end{equation} Let \[
A=\bigcup_c\overline{D(c,b/|s(c)|)}, \qquad A^+=\bigcup_cD(c,2b/|s(c)|), \] where the union runs over all sufficiently large fixed points.
The R1 singular-value calculation makes $S_{\C}(f)\setminus A$ finite.
Fix any finite repelling marking $P$ containing at least three points, and put \[ X=\C\setminus(A\cup P),\quad E=(S_{\C}(f)\setminus A)\cap X, \quad Y=X\setminus E, \quad V=f^{-1}(Y)\subset X.
\] If $f(z)\in X$ and $|z|$ is sufficiently large, \eqref{rr:geo-1} implies $z\notin A^+$.
This uniform collar exclusion is needed in the metric argument; merely knowing $z\notin A$ would not suffice.

\subsubsection{A bounded hyperbolic distance to a pole}

\begin{lemma}\label{rr:pole-net} Fix $\epsilon>0$. There are constants $L,C,R$ such that, whenever
\[
 |z|>R,\quad z,f(z)\in X,\quad
 \operatorname{dist}_{\#}\bigl(s(z)(f(z)-z),\{1,-1\}\bigr)\ge\epsilon,
\]
one can join $z$ to a pole $p\in X\setminus V$ of $f$ by a path in $X$ of hyperbolic length at most $C$.
The path is contained in $D(z,L/|s(z)|)$.
In particular, its Euclidean distance from the origin tends to infinity with $|z|$.

\end{lemma}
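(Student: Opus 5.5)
We work at the intrinsic ODE scale $1/|s(z)|$ around $z$, the analogue of Proposition~\ref{nr:mesh} for the linear classes. Put $s_0=s(z)$, $w=z+t/s_0$, and $F_z(t)=s_0(f(z+t/s_0)-z)$. From $f'=r(f-z)^2$ we get
\[
 F_z'(t)=\frac{r(z+t/s_0)}{r(z)}\bigl(F_z(t)-t\bigr)^2,
\]
with coefficient tending to one uniformly on fixed $t$-disks, since $|zs(z)|\to\infty$ by \eqref{rr:struct-3}. Writing $G_z=F_z-t$, the limiting equation is $G'=G^2-1$. Its solutions are $G=-\tanh(t+t_0)$, or the constants $\pm1$ (and, after the substitution $G=1/H$, the pole-containing solutions $-\coth(t+t_0)$). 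The initial value $G_z(0)=s_0(f(z)-z)$ stays spherically at least $\epsilon$ from the exceptional set $\{1,-1\}$. By compactness of $\sphere\setminus(\epsilon\text{-neighborhood of }\{\pm1\})$, the limiting solution is a nondegenerate translate $-\tanh(t+t_0)$ or $-\coth(t+t_0)$, with $t_0$ in a compact subset of a fixed period strip.

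The first step is to turn this into uniform convergence. Linearize as in Step~1 of Proposition~\ref{rr:pole-gap}: choose $U$ with $U_{tt}=\frac{r(z+t/s_0)}{r(z)}U$ and $G_z=-U_t/U$. The normalized pair $(U,U_t)$ converges on compact $t$-sets to a combination of $\cosh$ and $\sinh$. The limit is determined by a point of $\mathbb P^1$ bounded away from the two exceptional eigenlines. This gives convergence of $F_z$ on a disk $|t|\le L$, uniformly over all admissible $z$, with $L$ a fixed radius exceeding the period $\pi$ plus the compact range of $t_0$. The limiting solution then has a pole $t_*$ in $|t|<L/2$, namely a zero of $\cosh(t+t_0)$ or $\sinh(t+t_0)$. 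Rouch\'e applied to $U$ gives a genuine pole $p=z+t_p/s_0$ of $f$ with $t_p\to t_*$; the pole is genuine because $U$ and $U_t$ cannot vanish together. Since $f(p)=\infty\notin Y$, we have $p\notin V$, as required.

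The second step, which I expect to be the main obstacle, is bounding the hyperbolic length inside $X$. In the window $D(z,L/|s_0|)$ the set $A$ consists of the trapping disks $\overline{D(c,b/|s(c)|)}$ about fixed points. Only fixed points at comparable scale can enter: a distant center has radius $o(|c|)$, and comparable centers have $|s(c)|\asymp|s_0|$. In normalized coordinates these fixed points converge to zeros of the limit of $G$, namely $t\in -t_0+\pi i\Z$, which are separated from the poles $-t_0+\pi i(\Z+\tfrac12)$. There are uniformly finitely many such disks, each of normalized radius about $b$.

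The hypothesis $f(z)\in X$ together with the collar \eqref{rr:geo-1} places $z$ outside $A^+$, so $t=0$ has clearance about $b$ from every normalized carrier disk. The pole $t_p$ has clearance about $\pi/2-b$, because $f(p)=\infty$. We must also check that $p\notin A$; this holds because the trapping disks map into $\C$. The marking $P$ is finite, so for large $|z|$ it misses the window.

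We then join $0$ to $t_p$ by a polygonal path of bounded length that detours around the finitely many enlarged disks with fixed clearance $\kappa$. Cover the path by boundedly many disks of normalized radius $\kappa$ contained in $X$. Monotonicity $\rho_X\le\rho_{\text{disk}}$ bounds the $X$-length by a constant $C$ independent of $z$. The path lies in $D(z,L/|s_0|)$, and $L/|s_0|=o(|z|)$ because $d\ge-1$ gives $|zs_0|\to\infty$. This yields the final escape assertion.
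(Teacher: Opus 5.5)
Your proposal follows the paper's proof essentially step for step: normalization at the scale $1/|s(z)|$, the linear equation $U_{tt}=\frac{r(z+t/s(z))}{r(z)}U$ with projective initial data kept away from the two eigenlines by the hypothesis on $s(z)(f(z)-z)$, Rouch\'e to produce a pole at bounded normalized distance, separation of the pole lattice from the fixed-point lattice, the collar exclusion $z\notin A^+$, a detouring path of bounded length, and the disk comparison $\rho_X\le 1/\operatorname{dist}(\cdot,\C\setminus X)$. The one slip is in your linearization, which should be $G_z=F_z-t=-U/U_t$ (the paper's $s(z)(f(z+t/s(z))-z)=t-U/U_t$), not $-U_t/U$: the latter solves $G'=G^2-c$ rather than the actual equation $G'=cG^2-1$ when the coefficient $c$ is nonconstant, so poles of $f$ are zeros of $U_t$ and Rouch\'e should be applied to $U_t$ (zeros of $U$ give the fixed points), after which the rest of your argument goes through unchanged.
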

\begin{proof} Put $D=s(z)(f(z)-z)$. The normalized local solutions are most conveniently described by
the linear equation.
In the coordinate $t=s(z)(\zeta-z)$, choose the projective initial data
\[
 (U(0),U_t(0))=(D,-1)
\]
for finite $D$, with $(1,0)$ at $D=\infty$, normalizing their Euclidean norm when necessary.
Then
\begin{equation}\label{rr:geo-2}
U_{tt}=\frac{r(z+t/s(z))}{r(z)}U,
 \qquad s(z)(f(z+t/s(z))-z)=t-U/U_t.
\end{equation}
The coefficient converges uniformly to one on every fixed disk.
For normalized initial data in a compact subset of the projective line, the linear solutions and their derivatives converge uniformly on every fixed disk to the solutions with coefficient one.
These limiting solutions have normalized displacement
\begin{equation}\label{rr:geo-3}
D(t)=\frac{D-\tanh t}{1-D\tanh t}.
\end{equation}
When $D$ stays in a compact subset of $\sphere\setminus\{1,-1\}$, a pole of \eqref{rr:geo-3} can be selected at uniformly bounded distance from zero: it satisfies
\[
 e^{2t}=\frac{D+1}{D-1}.
\]
The quotient on the right is bounded above and away from zero on this compact set.
A logarithm with imaginary part in $[-\pi,\pi]$ selects a bounded $t$.
These poles are simple.
The zeros of \eqref{rr:geo-3}, which correspond to fixed points, form the translate by $\pi i/2$ of this pole lattice, and each of the two lattices has spacing $\pi i$.
Thus poles and zeros have a fixed positive separation.

Uniform convergence of the linear solutions, Rouch\'e's theorem, and compactness of the initial-data set now give the following facts on one fixed sufficiently large coordinate disk:
there is a pole of the original $f$; the relevant fixed-point centers converge, with their multiplicities, to the finitely many simple zeros of the limiting $U$; their separations are bounded below;
and the selected pole remains a definite distance from all of these centers.
Use a slightly larger coordinate disk for these convergence statements so that zeros on a proposed path's boundary cause no issue.

The disks defining $A$ have coordinate radii $b(1+o(1))$ in this fixed local window, because $s(c)/s(z)\to1$ for $|c-z|=O(1/|s(z)|)$.
No remotely centered trapping disk can meet the window: its radius is $o(|c|)$, which first forces $|c|/|z|\to1$, after which the same local scale comparison applies.
The collar exclusion $z\notin A^+$ keeps the coordinate origin at a definite distance from every trapping disk.

Choose $b$ smaller than one tenth of the limiting root separation.
In the coordinate plane connect zero to the selected pole by a path of bounded length, detouring along circles of radius, say, $3b/2$ around the finitely many root disks it encounters.
Uniform separation and a uniform bound on their number give a bound independent of $z,D$, and the path has a fixed positive clearance from the disks of radius $b(1+o(1))$.
The finite marking $P$ is absent from this window for large $|z|$.
After returning to the original plane, the path has length $O(1/|s(z)|)$ and clearance $\ge c/|s(z)|$ from $A\cup P$.
The elementary disk comparison
\[
 \rho_X(\zeta)\le\frac1{\operatorname{dist}(\zeta,\C\setminus X)}
\]
therefore bounds its hyperbolic length uniformly.
\end{proof}
\paragraph{Expansion consequence.}

The selected pole lies in $X\setminus V$.
The relative hyperbolic-metric estimate at bounded distance from a removed point gives \begin{equation}\label{rr:geo-4} \frac{\rho_V(z)}{\rho_X(z)}\ge1+\delta_\epsilon>1.
\end{equation}
For $f(z)\in Y=X\setminus E$, and hence $z\in V$, covering invariance gives the exact identity
\begin{equation}\label{rr:geo-5}
\frac{|f'(z)|\rho_X(f(z))}{\rho_X(z)}
 =\frac{\rho_V(z)/\rho_X(z)}{\rho_Y(f(z))/\rho_X(f(z))}.
\end{equation}
Along an escaping orbit this condition holds eventually because $E$ is finite.
For the finite set $E$, the denominator tends to one as $|f(z)|\to\infty$, by the finite-puncture comparison.
This comparison can also be seen directly: the distance in $X$ from an escaping point to a fixed finite set tends to infinity, by comparison with the complete metric on $\C\setminus P$.
If that distance exceeds $R$, restrict a universal covering of $X$ to its centered hyperbolic disk of radius $R$.
Its image avoids $E$, so Schwarz--Pick bounds $\rho_{X\setminus E}/\rho_X$ at the center by the ratio of the centered disk metrics, which tends to one as $R\to\infty$.
The lower bound is one by monotonicity.
Thus an escaping orbit $z_n$ with one-step derivatives in \eqref{rr:geo-5} tending to one satisfies
\begin{equation}\label{rr:geo-6}
\operatorname{dist}_{\#}\bigl(s(z_n)(f(z_n)-z_n),\{1,-1\}\bigr)
 \longrightarrow0.
\end{equation}

The relative lower bound \eqref{rr:geo-4} is equally elementary: lift $X$ to the unit disk centered above $z$, lift the path to the missing pole, and compare the lifted component of $V$ with the disk minus that lifted point.
At bounded hyperbolic distance, the density of a once-punctured disk exceeds the disk density by a uniform factor greater than one.
Thus neither metric estimate is a new dynamical assumption about $f$.

\subsubsection{Sign locking and entry into one sector}

Continue $s$ along the short orbit segments.
Put $D_n=s(z_n)(z_{n+1}-z_n)$.
From \eqref{rr:geo-6}, $D_n$ is close to one of $1,-1$, and \[ |z_{n+1}-z_n|=O(1/|s(z_n)|)=o(|z_n|).
\] In the normalized local equation \eqref{rr:geo-3}, the constant solutions $D=1$ and $D=-1$ remain constant.
Applying \eqref{rr:geo-2} on a fixed disk containing $t=D_n$ therefore shows that $D_{n+1}$ is close to the same sign as $D_n$; also $s(z_{n+1})/s(z_n)\to1$ under this continuation.
Consequently one sign $\sigma\in\{1,-1\}$ is fixed for all sufficiently large $n$, and
\begin{equation}\label{rr:geo-7}
D_n\to\sigma.
\end{equation}

Lift the short orbit segments to $z=w^2$, and choose the sign of $\sqrt a$ so that the exterior branch $s(w^2)\sim\sqrt a\,w^d$ agrees with the one continued along the orbit.
In the leading monomial coordinate \[ P_0(w)=\frac{2\sqrt a}{d+2}w^{d+2} \] one has \begin{equation}\label{rr:geo-8} P_0(w_{n+1})-P_0(w_n)=D_n+o(1)\to\sigma, \qquad P_0(w_n)/n\to\sigma.
\end{equation} Because $w_{n+1}/w_n\to1$, the orbit eventually remains
near one of the finitely many inverse rays of this monomial; it cannot switch between these separated rays by such small relative steps.
On the corresponding sector in the original $z$-plane choose a primitive \[ P(z)=\int s(z)\,dz, \qquad P(z)\sim P_0(w).
\] After narrowing the sector, $P$ has a holomorphic inverse on a sufficiently narrow cone about the ray $\sigma\R_+$.
This follows directly by using $z^{(d+2)/2}$ as a preliminary coordinate: the derivative of $P$ with respect to that coordinate tends uniformly to a nonzero constant on a slightly wider sector,
giving univalence on a convex narrow cone; Rouch\'e's theorem supplies the inverse on an inner cone.
Equation \eqref{rr:geo-8} puts the orbit eventually in every fixed inner cone of this type.

\subsubsection{An elementary one-cone WKB lemma}

On this simply connected sector choose a solution $u$ of $u''=ru$ whose initial quotient agrees with $u'/u=1/(z-f)$.
The map $z-u/u'$ solves the same Riccati equation as the actual $f$.
Local uniqueness and meromorphic continuation identify them throughout the sector, including across the isolated zeros of $u$.
Thus this choice does not assume that $1/(z-f)$ is pole-free.
Write
\[
 u(z)=s(z)^{-1/2}v(P(z)).
\] A direct calculation gives \begin{equation}\label{rr:geo-9}
v_{PP}=(1+\eta(P))v,
 \quad
 \eta=\frac{s''}{2s^3}-\frac{3(s')^2}{4s^4}
 =O(P^{-2}).
\end{equation} Set $Q=\sigma P$; the same form of equation holds, with
$\eta(\sigma Q)=O(Q^{-2})$, on a right cone $\operatorname{Re}Q>R$,
$|\operatorname{Im}Q|<\beta\operatorname{Re}Q$.

There are independent solutions, uniformly on each narrower cone,
\begin{equation}\label{rr:geo-10}
v_-(Q)=e^{-Q}(1+O(Q^{-1})),\qquad
 v_+(Q)=e^Q(1+O(Q^{-1})),
\end{equation} whose logarithmic derivatives tend respectively to
$-1,+1$.

Here is a direct construction. Seek $v_-=e^{-Q}w(Q)$. The Volterra
equation is \begin{equation}\label{rr:geo-11}
w(Q)=1+\frac12\int_0^\infty(1-e^{-2t})
       \eta(\sigma(Q+t))w(Q+t)\,dt.
\end{equation} Horizontal rays stay in the cone and
$\int_0^\infty |\eta(\sigma(Q+t))|dt\le C/\operatorname{Re}Q$. For
large $R$, the integral operator is a contraction on bounded
holomorphic functions. Its fixed point has
$w=1+O(1/\operatorname{Re}Q)$, and Cauchy's estimate on narrower cones
gives $w'=O(Q^{-2})$. Differentiating the convergent integral
equation, or the corresponding integral with kernel $\sinh(t-Q)$,
verifies \eqref{rr:geo-9}.

On a still smaller right cone $v_-$ is nonzero. Reduction of order
gives \[
 v_+(Q)=2v_-(Q)\int_{Q_*}^Q v_-(t)^{-2}\,dt.
\] The integral is path-independent there. For its asymptotic estimate
take the straight segment from a fixed interior $Q_*$ to $Q$, in a
narrower convex cone. Its slope is uniformly bounded.  Writing
$v_-^{-2}=e^{2t}w(t)^{-2}$ and using $(w^{-2})'=O(t^{-2})$,
integration by parts gives
\[
 \int_{Q_*}^Qv_-(t)^{-2}\,dt
 =\tfrac12e^{2Q}w(Q)^{-2}
   +O\bigl(e^{2\operatorname{Re}Q}/|Q|^2\bigr)+O(1).
\]
The $O(1)$ term comes from the fixed lower endpoint.
This proves the second asymptotic in \eqref{rr:geo-10} and its differentiated version.
The additive constant in the primitive only adds a multiple of $v_-$ and does not change the stated growing asymptotic.
The Wronskian is nonzero by the reduction-of-order formula.
This establishes \eqref{rr:geo-10}.

\subsubsection{The orbit selects the subdominant solution and enters a Baker component}

Write $v=A v_++B v_-$ in this one cone.
If $A\ne0$, then $v_Q/v\to1$ uniformly on narrower right cones.
Since \[ \frac{u'}u=-\frac{s'}{2s}+\sigma s\frac{v_Q}v, \qquad s(f-z)=-\frac{s}{u'/u}, \] this would imply $D_n\to-\sigma$, contradicting \eqref{rr:geo-7}.
Therefore $A=0$: the actual solution is the subdominant one throughout this cone.

Consequently, uniformly on a narrower cone, \[ \frac{u'}u=-\sigma s\bigl(1+O(Q^{-1})\bigr),\qquad f(z)-z=\frac{\sigma}{s(z)}\bigl(1+O(Q^{-1})\bigr).
\] Taylor expansion of $Q(z)=\sigma P(z)$, with $Q'=\sigma s$, gives
\begin{equation}\label{rr:geo-12}
Q(f(z))=Q(z)+1+O(Q(z)^{-1}).
\end{equation} Choose a sufficiently large $R_1$ and a fixed narrow
cone \[ \mathcal C=\{\operatorname{Re}Q>R_1, |\operatorname{Im}Q|<\beta_1(\operatorname{Re}Q-R_1)\} \] contained in the region where \eqref{rr:geo-12} holds.
The translation estimate is first established in an inner cone whose directions lie strictly inside a wider cone on which the inverse coordinate exists.
Since $f(z)-z=O(1/s(z))=o(|z|)$, its image stays in that wider chart; the angular margin in the $Q$ coordinate is comparable to $|Q|$, whereas the displacement is bounded.
If the error bound is $C/|Q|$, choose $R_1$ so large that $\beta_1(1-C/R_1)>C/R_1$.
Then \eqref{rr:geo-12} maps $\mathcal C$ into itself and increases the real part by at least $1/2$.
The inverse image of $\mathcal C$ under the chosen branch of $Q$ is therefore an open forward invariant set on which iterates tend locally uniformly to infinity.
It belongs to an invariant Baker Fatou component.

The orbit from \eqref{rr:geo-8} eventually lies in this cone.
Hence an escaping orbit in a wandering Fatou component cannot have the derivative-one property \eqref{rr:geo-5}.

\Needspace{6\baselineskip}
\begin{theorem}\label{rr:R1-no-wandering}
Every meromorphic function satisfying
\[
 f'(z)=r(z)(f(z)-z)^2,\qquad r\in\C(z),
\]
has no wandering Fatou component.
\end{theorem}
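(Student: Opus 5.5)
The proof assembles the pieces of Sections~\ref{rr:structure}--\ref{rr:exterior} in the same way as in the proof of Theorem~\ref{nr:no-wandering}.

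\emph{Reduction to the transcendental case.} If $r\equiv0$ then $f'\equiv0$ and $f$ is constant. If $f$ is rational, part~(i) of Theorem~\ref{thm:A-q9} applies, together with the elementary degree-at-most-one classification. If $r=O(z^{-2})$, the regular-singular argument forces $f$ to be rational. So we may assume that $f$ is transcendental and that \eqref{rr:struct-3} holds.

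\emph{Hypotheses of Proposition~\ref{lem:carrier-discrete} and exclusion of thin tails.} Suppose some component $U_0$ wanders. The finite-residual singular carrier of Section~\ref{sec:riccati}, namely \eqref{rr:struct-14}, gives a closed set $A$ with the required properties. Every Fatou component meeting $A$ is an invariant attracting basin, and $E=S_\C(f)\setminus A$ is finite.

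To exclude a thin tail, we verify the hypotheses of Theorem~\ref{thm:thin-pole-gap}.
\begin{enumerate}
\item Finite inverse singular values are locally finite. This holds because they consist of the exterior fixed points together with a finite set.
\item Outside a finite set, every singular value is a (superattracting) fixed point.
\item Every exterior fixed point is a zero of $h^{-1}$. By the normal form \eqref{rr:struct-5} it is superattracting, so the non-attracting fixed points form a finite set.
\item Every annulus $\{r<|z|<(1+\epsilon)r\}$ with $r$ large contains a pole, by Proposition~\ref{rr:pole-gap}.
\end{enumerate}
Hence the tail is thick.

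\emph{Thick case.} Proposition~\ref{lem:carrier-discrete} supplies a disk $D\Subset U_0$ and a finite repelling marking $P$ such that $f^n\to\infty$ on $D$. It also gives \eqref{eq:derivativeone} for $X=\C\setminus(A\cup P)$. Take $z\in D$ and $z_n=f^n(z)$. Eventually $z_n\in V$ and $z_{n+1}\in Y$, since $E$ is finite and the orbit escapes. The identity \eqref{rr:geo-5}, whose denominator tends to one, then gives $\rho_V(z_n)/\rho_X(z_n)\to1$.

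This contradicts \eqref{rr:geo-4} unless the hypothesis of Lemma~\ref{rr:pole-net} fails for all large $n$. Note that $z_n,z_{n+1}\in X$, because the wandering tail avoids $A\cup P$. So \eqref{rr:geo-6} holds. The sign-locking argument then gives \eqref{rr:geo-7} and \eqref{rr:geo-8}, which place the orbit eventually in the one-cone region. There the WKB analysis forces the subdominant solution, and \eqref{rr:geo-12} shows that the orbit enters an invariant Baker component. This contradicts the assumption that $U_0$ wanders.

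\emph{Main obstacle.} The main step is already done in Section~\ref{rr:exterior}: converting derivative-one along the orbit into capture by an invariant cone. The remaining point to check carefully is the bookkeeping. The orbit must eventually satisfy $f(z_n)\in Y$ and lie outside $A^+$, so that Lemma~\ref{rr:pole-net} and the collar exclusion apply. Both follow from $z_{n+1}\in X$ and \eqref{rr:geo-1}.
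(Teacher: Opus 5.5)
Your proposal is correct and follows the paper's proof essentially step for step: you reduce to the transcendental case, verify the hypotheses of Theorem~\ref{thm:thin-pole-gap} (with Proposition~\ref{rr:pole-gap} supplying the annular pole obstruction) to exclude thin tails, and in the thick case feed the derivative-one conclusion of Proposition~\ref{lem:carrier-discrete} into the expansion-and-capture argument of Section~\ref{rr:exterior}. Two cosmetic points: take $A$ from the start to be the collared union of Section~\ref{rr:exterior} (it also satisfies \eqref{rr:struct-14}), so that the carrier in Proposition~\ref{lem:carrier-discrete}, the set $A^+$, and Lemma~\ref{rr:pole-net} all refer to the same set; and hypothesis (3) follows directly from $f'(c)=r(c)(f(c)-c)^2=0$ at every fixed point that is not a pole of $r$, so you do not need the asymptotic normal form.
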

\begin{proof}
The rational case follows from Theorem~\ref{core:sullivan}\textup{(i)} and the elementary degree-at-most-one cases discussed in Section~\ref{sec:q9}, so assume that $f$ is transcendental.
Section~\ref{rr:structure} gives a closed forward invariant union $A$ of attracting trapping disks, with finitely many residual finite singular values.
The finite singular set is locally finite; outside finitely many exceptions its elements are superattracting fixed points.
All non-attracting finite fixed points are confined to the poles of $r$, hence are finite in number.
Proposition~\ref{rr:pole-gap} verifies the annular obstruction required by Theorem~\ref{thm:thin-pole-gap}.
That theorem excludes the thin covering-group case for a hypothetical wandering tail.

In the discrete covering-group case, Lemma~\ref{lem:carrier-discrete} gives an escaping orbit in the wandering component and a finite repelling marking for which the one-step hyperbolic derivatives tend to one.
The marking selected in that lemma already contains a fixed triple.
The expansion and capture argument of Section~\ref{rr:exterior} puts this orbit in an invariant Baker component, contradicting wandering.
This excludes both cases.
\end{proof}

\subsection{Completion of the meromorphic no-wandering theorem}
\begin{proof}[Proof of Theorem~\ref{thm:main}\textup{(ii)}]
Theorem~\ref{thm:finite-type} treats $\SSS$, Theorem~\ref{thm:F} treats $\F$, Theorem~\ref{nr:no-wandering} treats $\N$ and, using Lemma~\ref{lem:R2-finite-order} to supply its finite-order hypothesis,
all of $\RR_2$, and Theorem~\ref{rr:R1-no-wandering} treats $\RR_1$.
\end{proof}

\subsection{An entire-function supplement: compact singular sets}\label{sec:transcendental}

\begin{proof}[Proof of Theorem~\ref{core:singular-accumulation}]
Since $f\in\mathcal B$, every Fatou component is simply connected \cite[Proposition~3, p.~993]{EL}.
The compact set $S(f)'$ meets only finitely many Fatou components.
Using Riemann maps in these components, choose finite unions of pairwise disjoint closed smooth Jordan disks
\[
 S(f)'\subset\operatorname{Int}K,\qquad
 K\Subset\operatorname{Int}N\Subset\Fat(f).
\]
If $S(f)'=\varnothing$, take $K=N=\varnothing$.
The set $E=S(f)\setminus\operatorname{Int}K$ is finite; otherwise compactness would force $S(f)'$ to meet $\C\setminus\operatorname{Int}K$.

Suppose a wandering component exists.
Since $N\cup E$ meets only finitely many Fatou components, we may discard finitely many iterates and obtain pairwise distinct components $U_n$ satisfying
\[
 f(U_n)\subset U_{n+1},\qquad
 U_n\cap(N\cup E)=\varnothing\quad(n\ge0).
\]
Each $U_n$ is a component of $f^{-1}(U_{n+1})$, by complete invariance of the Fatou set.
Since $U_{n+1}$ avoids $S(f)$, the restriction $f:U_n\to U_{n+1}$ is a covering \cite[Section~3.2]{Sixsmith}, hence a conformal isomorphism.
Thus, for $W=\bigsqcup_{n\ge0}U_n$, $f:W\to W\setminus U_0$ is a conformal isomorphism.

Choose finite unions $P_j\subset\Jul(f)$ of repelling periodic orbits such that $P_j\subseteq P_{j+1}$, $|P_1|\ge3$, and $\bigcup_jP_j$ is dense in $\Jul(f)$ \cite[Theorem~4]{Bergweiler}.
Set $X_j=\C\setminus P_j$ and $Y_j=X_j\setminus S(f)$, and define measures by
\[
 \dd\mu_j=\dd\alpha_{X_j}\quad\text{on }X_j,\qquad
 \dd\nu_j=\dd\alpha_{Y_j}-\dd\alpha_{X_j}\ge0
 \quad\text{on }Y_j.
\]
Since $W$ avoids $N\cup E$ and $S(f)\subset K\cup E$, domain monotonicity gives
\begin{align}\label{eq:trans-cost}
 \nu_j(W)
 &\le\int_W(\dd\alpha_{X_j\setminus K}-\dd\alpha_{X_j})\notag\\
 &\quad+\int_W(\dd\alpha_{X_j\setminus(K\cup E)}
                         -\dd\alpha_{X_j\setminus K})
 \le C+|E|.
\end{align}
Here Lemma~\ref{core:holes}, with punctures $P_j\cup\{\infty\}$, bounds the first integral uniformly in $j$, because a fixed neighborhood of $N\Subset\Fat(f)$ avoids all these punctures.
Lemma~\ref{core:puncture}, applied to $E\cap(X_j\setminus K)$, bounds the second.

The periodic-orbit construction gives $f(P_j)=P_j$, hence $f^{-1}(Y_j)\subset X_j$.
Since $Y_j$ avoids $S(f)$, $f:f^{-1}(Y_j)\to Y_j$ is a covering.
Covering invariance and domain monotonicity from Section~\ref{sec:area} give, respectively,
\[
 f^*(\dd\mu_j+\dd\nu_j)
 =\dd\alpha_{f^{-1}(Y_j)}\ge\dd\mu_j
 \quad\text{on }f^{-1}(Y_j).
\]
Since $W\subset\Fat(f)\setminus S(f)$ and $P_j\subset\Jul(f)$, we have $f(W)=W\setminus U_0\subset W\subset Y_j$, so $W\subset f^{-1}(Y_j)$.
Integrating over $W$ and changing variables under $f:W\to W\setminus U_0$ yields
\begin{align*}
 \mu_j(U_0)+\mu_j(W\setminus U_0)
 &=\mu_j(W)\\
 &\le\mu_j(W\setminus U_0)+\nu_j(W\setminus U_0).
\end{align*}
As $\mu_j(X_j)=|P_j|-1<\infty$, we may subtract the finite term $\mu_j(W\setminus U_0)$.
Nonnegativity of $\nu_j$ and \eqref{eq:trans-cost} then give
\[
 \mu_j(U_0)\le\nu_j(W\setminus U_0)\le\nu_j(W)\le C+|E|.
\]
But Lemma~\ref{core:kernel}, with punctures $P_j\cup\{\infty\}$, and monotone convergence give
\[
 \mu_j(U_0)\uparrow\alpha_{U_0}(U_0)=\infty,
\]
a contradiction.
\end{proof}

\begin{corollary}[Eremenko--Lyubich; Goldberg--Keen]\label{core:el}
A transcendental entire function with finitely many singular values has no wandering Fatou components \cite[Theorem~3]{EL} and \cite[Theorem~4.2]{GoldbergKeen}.
\end{corollary}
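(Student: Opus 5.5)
The corollary is the special case of Theorem~\ref{core:singular-accumulation} in which $S(f)$ is finite. The plan is to check that theorem's hypotheses and then invoke it, so that no quasiconformal input enters.

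Let $f$ be transcendental entire with finitely many singular values. By definition, $S(f)$ is the closure in $\C$ of the finite critical and asymptotic values. A finite subset of $\C$ is closed, so $S(f)$ equals that finite set and is compact. A finite set has no accumulation points, so its derived set is empty: $S(f)'=\varnothing$. The inclusion $S(f)'\subset\Fat(f)$ therefore holds vacuously.

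Theorem~\ref{core:singular-accumulation} now applies and shows that $f$ has no wandering Fatou components. In its proof one takes $K=N=\varnothing$ in this case, so the hole estimate of Lemma~\ref{core:holes} is not needed. The constant $C$ vanishes, and the cost bound reduces to $\nu_j(W)\le|E|=|S(f)|$, which comes from Lemma~\ref{core:puncture} alone.

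Alternatively, one can argue through Theorem~\ref{thm:finite-type}. An entire function is meromorphic, and its spherical inverse-singular set consists of $S(f)$ together with possibly $\infty$, so that set is finite. The step that needs care is whether the phrase ``finitely many singular values'' also covers asymptotic values: it should mean that the finite set of critical and asymptotic values is finite, which is exactly the class $\SSS$. Either way there is no real obstacle, and the proof is a direct specialization.
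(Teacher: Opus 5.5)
Your proposal is correct and matches the paper's proof, which likewise observes that a finite singular set is compact with empty derived set and then invokes Theorem~\ref{core:singular-accumulation}; in that case the proof does take $K=N=\varnothing$. Your alternative route through Theorem~\ref{thm:finite-type} is also valid.
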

\begin{proof}
A finite singular set has no accumulation points, so Theorem~\ref{core:singular-accumulation} applies.
\end{proof}

The hypothesis also permits infinitely many singular values in a rotation domain, as the following example shows.

\begin{samepage}
\begin{example}\label{example:sine}
Let $\lambda=e^{2\pi i\theta}$, where $\theta$ is Diophantine irrational, and define the entire function $h_\lambda:\C\to\C$ by
\[
 h_\lambda(z)=\lambda\frac{\sin^2z}{z},\qquad h_\lambda(0)=0.
\]
Then $h_\lambda\in\mathcal B\setminus\mathcal S$, its only finite asymptotic value is zero, and
\begin{equation}\label{eq:sine-singular}
 S(h_\lambda)=\{0\}\cup
 \left\{\frac{4\lambda x}{1+4x^2}:
 x\in\R\setminus\{0\},\ \tan x=2x\right\},
 \qquad S(h_\lambda)'=\{0\}.
\end{equation}
The origin is a Siegel fixed point \cite{Siegel}.
Hence $h_\lambda$ has no wandering Fatou components by Theorem~\ref{core:singular-accumulation}.
\end{example}
\end{samepage}

The derivative vanishes at the nonzero zeros of $\sin z$ and at the nonzero solutions of $2z\cos z=\sin z$.
Every solution of the latter equation is real.
Indeed, for $u(t)=\sin(zt)$ the equation is the Robin condition $u'(1)=u(1)/2$, and
\[
 z^2\int_0^1|u|^2\,dt
 =\int_0^1|u'|^2\,dt-\tfrac12|u(1)|^2>0
\]
by $u(0)=0$ and Cauchy--Schwarz.
Its nonzero critical values are $4\lambda x/(1+4x^2)$ and tend to zero as $|x|\to\infty$.
There is no nonzero finite asymptotic value: along a path on which $h_\lambda$ tends to such a value, the imaginary part must tend to $+\infty$ or $-\infty$, since $\sin z$ is bounded on horizontal strips.
In the respective half-planes, $h_\lambda(z)=-\lambda e^{\mp2iz}(1+o(1))/(4z)$.
The modulus of the limit forces $2|\operatorname{Im}z|-\log|z|=O(1)$, so $|\operatorname{Re}z|\to\infty$ and its sign is eventually fixed.
A continuous logarithm then forces $\mp2\operatorname{Re}z-\arg z$ to remain bounded, a contradiction.
Along the positive real axis $h_\lambda(z)\to0$, proving the asymptotic-value assertion.

The function-theoretic precedent is the classical example $\sin z/z\in\mathcal B\setminus\mathcal S$ \cite[Section~4.2]{Sixsmith}.
The maps in Example~\ref{example:sine} are neither geometrically finite \cite[Proposition~5.3]{ARS}, postcritically separated \cite[Lemma~2.6]{PS}, nor topologically hyperbolic \cite[Definition~1.2]{BFJK}.
They do not satisfy the uniformly escaping singular-set hypothesis of \cite[Theorem~1.2]{MBRG}, and no affine conjugate $g$ satisfies the real entire hypothesis $g(\R)\cup S(g)\subset\R$ of \cite[Theorem~1.3]{MBRG}.

\section{Wandering orbits: proof of Theorem~\ref{q8r2:headline}}\label{sec:q8}
We now prove the meromorphic wandering-orbit assertion stated in Theorem~\ref{q8r2:headline}.
Prochorov, Rempe and Waterman \cite[Theorem~1.2]{PRW} recently proved the corresponding conclusion for transcendental entire functions, using a hyperbolic-area argument inspired by the first version of the present paper.
The wandering-component assertion in their local meromorphic theorem \cite[Theorem~1.3(1)]{PRW} assumes simple connectivity of the relevant trapped components.
Here we prove the meromorphic conclusion without a restriction on the connectivity of the Fatou components or on the inverse singular values.
The proof uses local proper charts and a truncation estimate, together with an injectivity-radius argument for multiply connected wandering tails.

\subsection{A local area estimate}\label{q8loc:section}
For a related uniform pullback-area estimate using local proper charts and a subharmonic cutoff, see \cite[Section~3]{PRW}.
The following localization estimate is used only on bounded windows.
Its bound is uniform over arbitrary inner boundaries and finite Julia markings.

\begin{lemma}\label{lem:local-truncation}
Fix a finite set $P_0\subset\C$ of at least three distinct points and put $\Omega_0=\C\setminus P_0$.
Fix bounded open sets $V_0\Subset V_1\Subset V_2$, which need not be connected, and a smooth cutoff $\chi\in C_c^\infty(V_1)$, with $0\le\chi\le1$ and $\chi=1$ on $V_0$.
Let

\[
 \delta_0=\inf\{d_{\Omega_0}(z,w):z\in\Omega_0\cap\overline{V_1},\
 w\in\Omega_0\cap\partial V_2,\ z,w\text{ in the same component}\}.
\]

Empty infima are interpreted as $+\infty$.
We have $\delta_0>0$: every such path has a subpath in the bounded region $\overline{V_2}$ of Euclidean length at least $\operatorname{dist}(\overline{V_1},\C\setminus V_2)>0$; the background metric has a positive lower bound on this bounded region away from its finitely many punctures,
and tends to infinity at the punctures.

For a hyperbolic open set $\Omega\subset\Omega_0$, put $Y=\Omega\cap V_2$, taking metrics componentwise.
Then

\begin{equation}\label{q8loc:eq-1}
\int_{\Omega\cap V_0}(d\alpha_Y-d\alpha_\Omega)
 \le C(V_0,V_1,V_2;\Omega_0,\chi)
 :=\frac{\log\coth\delta_0}{2\pi}\,\|\Delta\chi\|_{L^1(\C)}.
\end{equation}

The bound is independent of the inner boundaries of $\Omega$.
\end{lemma}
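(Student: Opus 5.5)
The approach follows the subharmonic-cutoff argument of Lemma~\ref{core:holes}, with the background $\Omega$ in place of $G_j$ and the complement of $V_2$ in place of the compact holes. Set $u=\log(\rho_Y/\rho_\Omega)$ on $Y$, componentwise. Domain monotonicity gives $u\ge0$. The curvature equation for curvature $-4$ gives
\[
 \Delta u=4(\rho_Y^2-\rho_\Omega^2)\ge0 .
\]
With the normalization of Definition~\ref{def:area}, the integrand satisfies $d\alpha_Y-d\alpha_\Omega=\frac{1}{2\pi}\Delta u\,dx\,dy$ on $Y$. Since $\chi=1$ on $V_0$ and $\chi\ge0$, it therefore suffices to show
\[
 \int_{Y}\chi\,\Delta u\,dx\,dy\le \log\coth\delta_0\,\|\Delta\chi\|_{L^1}.
\]
The natural route is to integrate by parts, moving $\Delta$ onto $\chi$, and then bound $u$ pointwise on $\operatorname{supp}\chi\subset V_1$.

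The pointwise bound comes from Schwarz--Pick, exactly as in Lemma~\ref{core:holes}. Fix $z\in\Omega\cap\overline{V_1}$. Any $w\in\Omega\setminus Y$ in the same component of $\Omega$ can be reached only by a path that meets $\partial V_2$. That crossing point lies in $\Omega\subset\Omega_0$, and $d_\Omega\ge d_{\Omega_0}$, so $d_\Omega(z,\Omega\setminus Y)\ge\delta_0$. Now take a universal cover $\phi:\D\to\Omega$ with $\phi(0)=z$. It maps $\{|\zeta|<\tanh\delta_0\}$ into $Y$, so $\rho_Y(z)\le\rho_\Omega(z)\coth\delta_0$; this is \eqref{eq:relative-upper}. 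Hence $0\le u\le\log\coth\delta_0$ on $\Omega\cap\overline{V_1}$. This bound does not depend on the inner boundaries of $\Omega$, which gives the uniformity claimed in the statement. If $\delta_0=\infty$, then $u=0$ there and both sides vanish.

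The main obstacle is justifying the integration by parts. The function $u$ is only defined on $Y$, and the boundary $\partial\Omega\cap V_1$ (the inner boundaries, including punctures) is arbitrary. The plan is to argue as in Lemma~\ref{core:holes}: $u$ is bounded above near every point of $\partial\Omega\cap V_1$, so one extends it by the upper-semicontinuous regularization $\limsup$ to a bounded nonnegative subharmonic function $\tilde u$ on $V_1$. Its distributional Laplacian $\Delta\tilde u$ is then a positive measure whose restriction to $Y$ is $\Delta u$. This gives
\[
 \int_Y\chi\,\Delta u\le\langle\Delta\tilde u,\chi\rangle=\int\tilde u\,\Delta\chi\le\sup|\tilde u|\,\|\Delta\chi\|_{L^1}.
\]
The delicate point is that the extension across arbitrary closed sets must actually be subharmonic. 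I would first try the removability theorem for subharmonic functions bounded above across closed sets of zero capacity. If the inner boundary has positive capacity, that theorem does not apply directly. In that case I would instead approximate $\Omega$ from outside by $\Omega_0\setminus F_j$, with $F_j$ finite and increasing and dense in $\partial\Omega$; the bounded-extension argument works for these punctured domains. The estimate then passes to the limit by Lemma~\ref{core:kernel} and Fatou's lemma, as in Lemma~\ref{core:puncture}. Finally, the bound $\log\coth\delta_0$ persists through this approximation, because each approximating domain is still contained in $\Omega_0$.
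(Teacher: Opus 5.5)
Your proof is correct in its final form, but it justifies the integration by parts by a different mechanism than the paper.

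Both arguments share the set-up: $u=\log(\rho_Y/\rho_\Omega)$, $\Delta u=4(\rho_Y^2-\rho_\Omega^2)$, and the covering-disk bound $0\le u\le\log\coth\delta_0$ on $\Omega\cap\overline{V_1}$.

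\emph{The paper's route.} It shows that $u(z)\to0$ as $z\to a$ for every $a\in\partial\Omega\cap V_2$. Take $b,c\in P_0\setminus\{a\}$. The metric of $\Omega$ dominates that of $\sphere\setminus\{a,b,c\}$, whose cusp at $a$ forces $d_\Omega(z,\Omega\setminus V_2)\to\infty$. The same Schwarz--Pick bound with a growing radius then gives $u(z)\to0$. Extension by zero is therefore subharmonic directly: paste $\max\{u-\varepsilon,0\}$ with $0$ and let $\varepsilon\downarrow0$. No limiting procedure in $\Omega$ is needed, and the extension is continuous and vanishes off $\Omega$.

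\emph{Your route.} You replace $\Omega$ by $\Omega_j=\Omega_0\setminus F_j$, whose inner boundary in $V_1$ is finite. You then use classical removability of bounded subharmonic functions at isolated points, and pass to the limit by Fatou's lemma as in Lemma~\ref{core:puncture}. This avoids any boundary analysis of $u$. The price is kernel convergence for two families:
\begin{enumerate}
\item $\rho_{\Omega_j}\to\rho_\Omega$ is Lemma~\ref{core:kernel} applied to $F_j\cup P_0\cup\{\infty\}$.
\item $\rho_{Y_j}\to\rho_Y$ for the decreasing domains $Y_j=\Omega_j\cap V_2$ is not a finite-complement statement, so Lemma~\ref{core:kernel} does not cover it. Cite Hejhal's kernel theorem directly: the kernel at $z\in Y$ is the component of $Y$ containing $z$, because $\partial\Omega\subset\overline{\bigcup_jF_j}\cup P_0\cup\{\infty\}$.
\end{enumerate}

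Drop your first suggestion. A bounded subharmonic function on $Y$ does not in general extend subharmonically to $V_1$ by $\limsup$ regularization; it fails, for instance, when $V_1\setminus\Omega$ has interior. The polar-set removability detour is also unnecessary, since the approximation argument covers every case and should be presented as the proof.
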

\begin{proof}
On $Y$, write $w=\log(\rho_Y/\rho_\Omega)\ge0$.
The curvature equation gives

\begin{equation}\label{q8loc:eq-2}
\Delta w=4(\rho_Y^2-\rho_\Omega^2)\ge0.
\end{equation}

For $z\in\Omega\cap V_1$, a centered hyperbolic disk of radius $\delta_0$ in a universal covering of its component of $\Omega$ maps
into $V_2$. Indeed, any path reaching $\partial V_2$ would have
length at least $\delta_0$, by metric monotonicity and the definition of $\delta_0$.
Schwarz--Pick on this restricted covering disk therefore gives

\begin{equation}\label{q8loc:eq-3}
1\le\frac{\rho_Y(z)}{\rho_\Omega(z)}\le\coth\delta_0,
 \qquad 0\le w(z)\le M:=\log\coth\delta_0.
\end{equation}

Here a hyperbolic disk of radius $\delta_0$ corresponds to the Euclidean disk of radius $\tanh\delta_0$, explaining the factor $\coth\delta_0$.

We also need to justify extension across the possibly irregular inner
boundary. If $a\in\partial\Omega\cap V_2$, choose two distinct points
$b,c\in P_0\setminus\{a\}$.
The metric of every component of $\Omega$ dominates the metric on $\sphere\setminus\{a,b,c\}$.
As $z\to a$, the distance
in the latter metric from $z$ to $\partial V_2$ tends uniformly to
infinity, by its cusp at $a$.
Restricting the covering disk as above, now with a radius tending to infinity, shows $w(z)\to0$.
The same comparison works even if a sequence approaches $a$ through different components of $\Omega$.

Extend $w$ by zero on $V_2\setminus\Omega$.
The resulting $\widetilde w$ is subharmonic.
One direct justification is to extend $\max\{w-\varepsilon,0\}$ by zero locally.
At every inner boundary point this truncation vanishes in a neighborhood, by the preceding boundary limit; the subharmonic pasting lemma applies.
Now let $\varepsilon\downarrow0$.
The distributional Laplacian of $\widetilde w$ is a positive measure; on $Y$ it is the density in \eqref{q8loc:eq-2}, while any additional boundary contribution is nonnegative.
Hence

\[
 \begin{aligned}
 \int_{\Omega\cap V_0}(d\alpha_Y-d\alpha_\Omega)
 &\le\frac1{2\pi}\int\chi\,d(\Delta\widetilde w)\\
 &=\frac1{2\pi}\int\widetilde w\,\Delta\chi\,dA\\
 &\le\frac{M}{2\pi}\|\Delta\chi\|_{L^1}.
 \end{aligned}
\]

This proves \eqref{q8loc:eq-1}.
\end{proof}
\subsection{Bounded wandering orbits force diverging injectivity radii}
\label{q8r2:inj-section}

All hyperbolic metrics in this section have curvature $-4$.
Let $f$ be transcendental meromorphic, let $U_n$ be the distinct full forward Fatou components of a wandering domain $U_0$, and fix $z_0\in U_0$ with bounded orbit $z_n=f^n(z_0)$.
Put $K=\omega(z_0)\Subset\C$.
No assumption on singular values, or on the full restrictions being coverings, is made.

\begin{lemma}
\label{q8r2:cover-disks}
Choose any universal covers $\pi_n:\D\to U_n$ with $\pi_n(0)=z_n$.
For every fixed $0<r<1$,
\[
 \operatorname{diam}\pi_n(\overline{D_r})\longrightarrow0,
 \qquad
 \sup_{w\in\pi_n(\overline{D_r})}\operatorname{dist}(w,K)
 \longrightarrow0.
\]
The diameter and distance are Euclidean.
In particular, these images are uniformly bounded for all sufficiently large $n$.
\end{lemma}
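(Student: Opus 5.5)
We argue by contradiction through normal families, using only that all normal limits on a wandering component are constant. The first step is to fix $0<r<1$ and suppose that one of the two conclusions fails along a subsequence $n_k$. We may then assume that either $\operatorname{diam}\pi_{n_k}(\overline{D_r})\ge\eta>0$, or there are points $w_k\in\pi_{n_k}(\overline{D_r})$ with $\operatorname{dist}(w_k,K)\ge\eta$.

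The second step passes to limits. Since $\pi_n(\D)=U_n\subset\Fat(f)$, the maps $\pi_n$ omit the Julia set. The Julia set of a transcendental meromorphic map is infinite, so we fix three of its points. Montel's theorem then makes $\{\pi_n\}$ a normal family of maps $\D\to\sphere$. After passing to a further subsequence, $\pi_{n_k}\to\phi$ locally uniformly in the spherical metric, with $\phi(0)=\lim z_{n_k}\in K$. This point is finite because the orbit is bounded.

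The third step shows that $\phi$ is constant. Suppose instead that $\phi$ is nonconstant. Its image then contains a small spherical disk $\Delta$ about $\phi(\zeta_0)$ for some $\zeta_0$ near $0$. By Hurwitz's theorem, $\pi_{n_k}(D(\zeta_0,\epsilon))$ contains a fixed smaller disk $\Delta'$ for all large $k$. This puts $\Delta'$ inside infinitely many pairwise disjoint components $U_{n_k}$, which is impossible. Hence $\phi\equiv\phi(0)=:a\in K$.

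The final step concludes. Uniform convergence on $\overline{D_r}$ to the finite constant $a$ gives spherical, and hence Euclidean, convergence of $\pi_{n_k}(\overline{D_r})$ to $a$. The diameters therefore tend to zero, and every point $w$ of these images satisfies $\operatorname{dist}(w,K)\le|w-a|\to0$. Both alternatives are contradicted. Uniform boundedness for large $n$ follows from the distance statement and the compactness of $K$.

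The only delicate point is the use of Hurwitz's theorem with spherical values. We handle it by working near the finite value $\phi(\zeta_0)$: choose $\zeta_0$ close to $0$ with $\phi(\zeta_0)$ finite, and then apply Hurwitz's theorem to the locally uniformly convergent finite-valued maps $\pi_{n_k}-\phi(\zeta_0)$.
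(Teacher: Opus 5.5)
Your proof is correct and follows essentially the same route as the paper's: normality via Montel from three omitted Julia points, exclusion of a nonconstant limit because it would place a fixed target in infinitely many of the pairwise disjoint $U_n$, identification of the constant limit as $\lim z_{n_k}\in K$, and a contradiction along a subsequence. Your Hurwitz step needs even less than a fixed disk, since a single fixed value attained for infinitely many $k$ already contradicts the disjointness of the $U_{n_k}$.
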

\begin{proof}
All covers omit the same three finite Julia points, so they form a normal family of meromorphic maps to the sphere.
A nonconstant subsequential limit is impossible: choose a small source disk where that limit is finite and univalent, and apply Rouch\'e's theorem on its boundary.
One fixed target disk then belongs to $\pi_n(\D)\subset U_n$ along an infinite subsequence, contradicting pairwise disjointness of the $U_n$.
Every subsequential limit is consequently constant.
Evaluation at zero shows that its value lies in $K$, hence is finite.
Spherical convergence to a finite constant is Euclidean uniform convergence on compact source sets.
If either asserted conclusion failed, normality would provide a contradictory subsequence.
\end{proof}

\begin{lemma}
\label{q8r2:hull-fill}
There is a bounded open neighborhood $W$ of $K$ whose closure contains no pole of $f$.
Suppose $\gamma\subset U_n$ is a Jordan curve and, for every $j\ge0$, the polynomial hull
\[
 H_j:=\widehat{f^j(\gamma)}
\]
is contained in $W$.
Then the bounded side of $\gamma$ lies in $U_n$; in particular, $\gamma$ is null-homotopic in $U_n$.
\end{lemma}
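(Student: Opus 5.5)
First I construct $W$. Since $K=\omega(z_0)$ is a compact subset of $\C$ contained in $\Jul(f)$ (a wandering orbit accumulates only on the Julia set), I want to show that no pole of $f$ lies on $K$. Suppose a pole $p$ lay in $K$. Then $z_{n_k}\to p$ along some subsequence, so $|z_{n_k+1}|=|f(z_{n_k})|\to\infty$. This contradicts boundedness of the orbit. The poles form a discrete set, so some $\eta$-neighborhood of $K$ has closure free of poles, and I take $W$ to be that neighborhood. Then $f$ is holomorphic on a neighborhood of $\overline W$.

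Next, I fix a Jordan curve $\gamma\subset U_n$ with every hull $H_j=\widehat{f^j(\gamma)}\subset W$, and let $B$ denote the bounded side of $\gamma$, so $\overline B\subset H_0$. I claim that $f^j$ is holomorphic on a neighborhood of $\overline B$ and that $f^j(\overline B)\subset H_j$ for all $j$. I argue by induction. If $f^j(\overline B)\subset H_j\subset W$, then $f$ is holomorphic near $f^j(\overline B)$, so $f^{j+1}$ is holomorphic near $\overline B$. By the maximum principle on $B$, applied to polynomials $P$ in the form $|P\circ f^{j+1}|\le\max_{\gamma}|P\circ f^{j+1}|$, the set $f^{j+1}(\overline B)$ lies in the polynomial hull of $f^{j+1}(\gamma)$, which is $H_{j+1}$. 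Thus every iterate on $B$ is defined and takes values in the bounded set $W$. These iterates omit a neighborhood of $\infty$, so Montel's theorem makes $\{f^j\}$ normal on $B$, and hence $B\subset\Fat(f)$.

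Finally, $B$ is connected and its closure meets $\gamma\subset U_n$. Since $\gamma\cup B=\overline B$ is connected and contained in $\Fat(f)$, it lies in the single component $U_n$. Then $\gamma$ bounds the disk $\overline B\subset U_n$, so it is null-homotopic there.

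The step I expect to be the main obstacle is the inductive hull inclusion. I have to make sure the iterate $f^{j+1}$ is actually holomorphic on a neighborhood of all of $\overline B$, not only near $\gamma$, before the maximum principle can be applied. This is why the induction keeps $f^j(\overline B)\subset W$, which is pole-free by construction. A similar care point arises in the first paragraph: $K$ must avoid poles, which uses the boundedness of the orbit of the \emph{next} iterate, and this is where the hypothesis of a bounded orbit enters.
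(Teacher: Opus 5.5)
Your proof is correct and follows essentially the paper's route: $K$ avoids poles because the next iterate stays bounded, and the maximum principle for polynomials keeps every iterate of $\overline B$ inside the pole-free hulls $H_j\subset W$. Uniform boundedness then gives normality on $B$, and connectedness of $\overline B=B\cup\gamma$ places $B$ in $U_n$. The only cosmetic difference is that you apply the maximum principle to $P\circ f^{j+1}$ on $B$ directly, whereas the paper proves the hull-to-hull inclusion $f(H_j)\subset H_{j+1}$ and iterates it.
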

\begin{proof}
The compact set $K$ contains no pole: otherwise a subsequence $z_{n_k}$ converging to a pole would give $z_{n_k+1}\to\infty$, contrary to boundedness.
Since the poles are discrete, choose the bounded $W$ with pole-free closure.

For a compact plane set $E$, its polynomial hull is $E$ together with the bounded components of $\C\setminus E$.
If $g$ is holomorphic on a neighborhood of $\widehat E$, then
\[
 g(\widehat E)\subset\widehat{g(E)}.
\]
Indeed, for each polynomial $p$, the maximum principle on every bounded complementary component of $E$ gives $|p(g(w))|\le\max_E|p\circ g|$ for $w\in\widehat E$; the defining polynomial inequalities characterize the target hull.

Apply this with $g=f$ and $E=f^j(\gamma)$.
The hypothesis places $H_j$ in a pole-free neighborhood and gives $f(H_j)\subset H_{j+1}$.
Thus all iterates are holomorphic on the bounded side $B$ of $\gamma$ and take values in the fixed bounded set $W$.
They form a normal family there, so $B\subset \Fat(f)$.
A collar on the inner side of $\gamma$ lies in $U_n$, and $B$ is connected; maximality of the full Fatou component therefore gives $B\subset U_n$.
Since $\gamma$ is Jordan, this proves the final assertion.
\end{proof}

\begin{theorem}
\label{q8r2:inj-diverges}
For a bounded-orbit wandering component as above,
\[
 \operatorname{inj}_{U_n}(z_n)\longrightarrow\infty.
\]
Here simply connected components have infinite injectivity radius.
\end{theorem}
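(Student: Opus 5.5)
We argue by contradiction. Suppose that along a subsequence $n_k$ we have $\operatorname{inj}_{U_{n_k}}(z_{n_k})\le L<\infty$. Then there is an essential closed curve $\gamma_k\subset U_{n_k}$ through $z_{n_k}$ of hyperbolic length at most $2L$. Lifting through $\pi_{n_k}$, this curve is the image of a segment from $0$ to $g_k(0)$ for a nontrivial deck element $g_k$. The endpoint $g_k(0)$ lies in a fixed disk $\overline{D_r}$ with $r=\tanh(2L)$. Hence $\gamma_k\subset\pi_{n_k}(\overline{D_r})$. By Lemma~\ref{q8r2:cover-disks}, the diameter of $\gamma_k$ tends to zero and $\gamma_k$ lies within $o(1)$ of $K$. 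We may replace $\gamma_k$ by a Jordan curve that is still essential; for instance, we take a simple closed subloop of a shortest geodesic representative, which is essential whenever the curve is. This replacement stays inside the same small image set.

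The aim is to apply Lemma~\ref{q8r2:hull-fill} to $\gamma_k$ for large $k$, which contradicts essentiality. We must show that every forward hull $\widehat{f^j(\gamma_k)}$ lies in $W$. The natural mechanism is Schwarz--Pick. The map $f^j:U_{n_k}\to U_{n_k+j}$ does not increase hyperbolic length, so $f^j(\gamma_k)$ is a closed curve through $z_{n_k+j}$ of $U_{n_k+j}$-length at most $2L$. Lifting again, it lies in $\pi_{n_k+j}(\overline{D_r})$. By Lemma~\ref{q8r2:cover-disks} applied at time $n_k+j$, this set has small diameter and lies near $K$ as soon as $n_k+j$ exceeds a threshold $N$. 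The threshold depends only on $r$, because the lemma's convergence is uniform in the index once $n$ is large. Consequently, choosing $k$ with $n_k\ge N$, every curve $f^j(\gamma_k)$, $j\ge0$, has diameter less than $\varepsilon$ and lies in the $\varepsilon$-neighborhood of $K$.

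It remains to pass from the curves to their hulls. A compact set of diameter less than $\varepsilon$ has polynomial hull inside a closed disk of radius $\varepsilon$ about any of its points. So each hull $H_j$ lies in the $2\varepsilon$-neighborhood of $K$. Choosing $\varepsilon$ so that this neighborhood is contained in the set $W$ of Lemma~\ref{q8r2:hull-fill}, that lemma says $\gamma_k$ is null-homotopic in $U_{n_k}$. This contradicts the essentiality of $\gamma_k$. Hence $\operatorname{inj}_{U_n}(z_n)\to\infty$. Simply connected components are trivially included, since they have infinite injectivity radius.

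The main obstacle is the uniformity in the second step. We need Lemma~\ref{q8r2:cover-disks} to control all forward times at once, and the projected curve must be contained in $\pi_{n}(\overline{D_r})$ with the covers normalized at $z_n$. Both follow from the statement of that lemma, since it holds for any choice of centered covers and gives a single $N$ for fixed $r$. A secondary point is to check that the Jordan-curve replacement of $\gamma_k$ remains essential and remains inside the small set. We would handle this by taking $\gamma_k$ to be the projected closed geodesic in its free homotopy class when that class is hyperbolic. In the parabolic (cusp) case, we would instead use a short horocyclic loop, which is a simple closed curve.
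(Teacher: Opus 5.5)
Your overall architecture is the paper's: a short essential loop at $z_{n_k}$, Schwarz--Pick to bound all forward images, the time-uniform conclusion of Lemma~\ref{q8r2:cover-disks} for covers centred at the orbit points, hulls inside small disks, and Lemma~\ref{q8r2:hull-fill}. The gap is in how you produce the Jordan curve.

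The resolution you finally commit to, replacing the loop by the closed geodesic of its free homotopy class (or by a horocycle in the cusp case), destroys the location control that the rest of the argument needs.
\begin{itemize}
\item That geodesic does not pass through $z_{n_k}$, and it can lie at unbounded hyperbolic distance from it. A based loop of length at most $2L$ at a point deep in the collar of a closed geodesic of length $\lambda$ sits at distance of order $\log(1/\lambda)$ from the core, and nothing in your argument prevents $\lambda\to0$ along the subsequence.
\item So the claim that the replacement ``stays inside the same small image set'' fails. The curve need not lie in $\pi_{n_k}(\overline{D_r})$ for any fixed $r<1$. Its forward images are then no longer governed by Lemma~\ref{q8r2:cover-disks}, which concerns covers centred at $z_n$ with $r$ fixed.
\item The closed geodesic of an arbitrary essential class need not be simple.
\item The cusp case never occurs: by Lemma~\ref{q4:no-puncture-end}, Fatou components have no puncture ends, so the deck groups contain no parabolic elements.
\item Relatedly, your second paragraph asserts that $f^j(\gamma_k)$ passes through $z_{n_k+j}$. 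This is false for any replacement curve that misses the base point.
\end{itemize}

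The fix is the paper's route, which is close to your first suggestion. Keep the based loop $\ell_k$ at $z_{n_k}$, of length at most $2L$. Extract an essential Jordan subcycle $\gamma_k\subset\ell_k$ in one of these ways:
\begin{itemize}
\item Decompose at self-intersections. This writes the class of $\ell_k$ as a product of conjugates of simple subcycles, so one of them is essential.
\item For a geodesic based loop, every proper closed subloop is a geodesic arc with equal endpoints, hence essential.
\item The shortest based geodesic loop is itself simple.
\end{itemize}
Then apply Schwarz--Pick to the whole based loop rather than to $\gamma_k$:
$f^j(\gamma_k)\subset f^j(\ell_k)\subset\pi_{n_k+j}(\overline{D_r})$ for all $j\ge0$.
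With that change, the rest of your argument goes through as written.
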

\begin{proof}
Suppose otherwise.
For infinitely many $n$ there is a non-null-homotopic based loop $\ell_n$ at $z_n$ of hyperbolic length at most a fixed $L$.
Increase $L$ once if necessary, and approximate these loops by polygonal loops without changing their homotopy classes.
Decomposing a polygonal loop at its finitely many self-intersections expresses its homotopy class as a product of conjugates of simple polygonal cycles.
Hence $\ell_n$ contains an essential Jordan cycle $\gamma_n\subset U_n$.
Both the cycle and the paths connecting it to the basepoint are contained in $\ell_n$; only the uniform length bound for $\ell_n$ will be used below.

For every $j\ge0$, Schwarz--Pick for $f^j:U_n\to U_{n+j}$ gives
\[
 \operatorname{length}_{U_{n+j}}(f^j(\ell_n))\le L.
\]
Lift this parametrized based loop to $\D$ starting at zero under $\pi_{n+j}$.
Every point of the lifted path has hyperbolic distance at most $L$ from zero.
Choose $r<1$ with $\operatorname{arctanh}r>L$.
Then
\[
 f^j(\gamma_n)\subset f^j(\ell_n)
 \subset\pi_{n+j}(\overline{D_r})
 \qquad(j\ge0).
\]
By Lemma~\ref{q8r2:cover-disks}, the sets on the right have diameters tending to zero and distances to $K$ tending to zero, uniformly for all indices $n+j\ge n$ as $n\to\infty$.
Choose $\varepsilon>0$ so that the closed $2\varepsilon$-neighborhood of $K$ lies in $W$.
For large $n$, each cover image above has diameter below $\varepsilon$ and lies within $\varepsilon$ of $K$.
It lies in a single $2\varepsilon$-disk about a point of $K$, as does its Euclidean convex hull.
Since a polynomial hull is contained in the convex hull, every hull $\widehat{f^j(\gamma_n)}$, $j\ge0$, is contained in the fixed pole-free neighborhood $W$ from Lemma~\ref{q8r2:hull-fill}.
That lemma makes $\gamma_n$ null-homotopic in $U_n$, contradicting its choice.
\end{proof}

For every fixed $r<1$, sufficiently late covers $\pi_n$ are injective on a neighborhood of $\overline{\D_r}$.
Indeed, two points of a fixed covering disk in the same deck orbit would give a nontrivial deck displacement of the origin bounded by twice that disk's hyperbolic radius, contradicting Theorem~\ref{q8r2:inj-diverges}.
This conclusion does not assert eventual simple connectivity of the full components.

\subsection{Proof of the wandering-orbit theorem}
\label{q8new:local-chart-section}

\begin{proof}[Proof of Theorem~\ref{q8r2:headline}]
Keep the notation above.
The pole-free compact limit set $K$ satisfies $f(K)\subset K$.
For every $r<1$, Lemma~\ref{q8r2:cover-disks} gives
\begin{equation}\label{q8new:cover-shrink-input}
 \sup_{|\zeta|\le r}|\pi_n(\zeta)-z_n|\longrightarrow0,
\end{equation}
and Theorem~\ref{q8r2:inj-diverges} makes $\pi_n$ injective on a neighborhood of $\overline{\D}_r$ for all sufficiently large $n$.

For every $a\in K$, the local normal form of a nonconstant holomorphic function gives a relatively compact Jordan domain $D_a$ containing $a$ and a disk $B_a=D(f(a),R_a)$ such that
\[
 f:D_a\longrightarrow B_a
\]
is proper of finite degree.
The domain can be chosen within the fixed pole-free neighborhood.
More explicitly, in a local conformal coordinate $h_a$, one has $f=f(a)+h_a^{d_a}$, and concentric coordinate disks give the required restriction.
Its finite set $E_a$ of critical values contains at most the center $f(a)$.

Choose concentric target disks
\[
 B_a^0=D(f(a),R_a/4),\quad
 B_a^1=D(f(a),R_a/2),\quad
 B_a^2=D(f(a),3R_a/4),\quad B_a.
\]
Let $A_a$ be the component of $f^{-1}(B_a^0)\cap D_a$ containing $a$.
Then $a\in A_a\Subset D_a$ and $f(A_a)\subset B_a^0$.
Select finitely many of these source cores covering $K$, and index them by $i=1,\ldots,s$.
We henceforth write $D_i,A_i,B_i^0,B_i^1,B_i^2,B_i,E_i$.
All choices are fixed independently of the time index and of the Julia marking introduced later.
Put $E=\bigcup_i E_i$.

Since $E$ is finite and the $U_n$ are pairwise disjoint, discard finitely many indices so that $U_n\cap E=\varnothing$.
Also $z_n\in\bigcup_i A_i$ for every sufficiently large $n$.
Choose an index $i_n$ with $z_n\in A_{i_n}$.
The finite number of charts and the positive uniform source and target
margins $\operatorname{dist}(\overline{A_i},\partial D_i)>0$ and
$\operatorname{dist}(\overline{B_i^0},\partial B_i^1)>0$,
together with \eqref{q8new:cover-shrink-input}, imply that for each fixed $r<1$ and all sufficiently large $n$,
\begin{equation}\label{q8new:chart-margins}
 \pi_n(\overline{\D}_r)\subset D_{i_n},
 \qquad
 \pi_{n+1}(\overline{\D}_r)\subset B_{i_n}^1.
\end{equation}

We next compare these local coverings with the complete metrics of the full Fatou components.
Let $W_n$ be the component of $U_{n+1}\cap B_{i_n}$ containing $z_{n+1}$, and let $C_n$ be the component of $D_{i_n}\cap f^{-1}(W_n)$ containing $z_n$.
Properness of $f:D_{i_n}\to B_{i_n}$ and $W_n\cap E_{i_n}=\varnothing$ make $f:C_n\to W_n$ an unbranched finite-degree covering onto $W_n$.
Backward invariance of the Fatou set places $C_n$ inside $U_n$.
Write
\[
 \lambda_n=
 \frac{|f'(z_n)|\rho_{U_{n+1}}(z_{n+1})}{\rho_{U_n}(z_n)},
\]
so Schwarz--Pick gives $0\le\lambda_n\le1$.
Choose the arguments of the covers $\pi_n$ recursively so that the lifts $F_n:\D\to\D$ of $f\pi_n$ through $\pi_{n+1}$ satisfy
\[
 F_n(0)=0,\qquad F_n'(0)=\lambda_n>0,
 \qquad f\pi_n=\pi_{n+1}F_n.
\]
The positivity follows already from the local covering $C_n\to W_n$.
Rotating coordinates does not affect either geometric input.

Fix any $0<t<1$.
By \eqref{q8new:chart-margins}, for all large $n$ the map $\pi_{n+1}|_{\D_t}$ takes values in $W_n$.
Lift it through the covering $f:C_n\to W_n$ with base point $z_n$, and then lift that map through $\pi_n$.
This gives a holomorphic map $\beta_n:\D_t\to\D$ fixing zero and satisfying
\[
 F_n\beta_n=\mathrm{id}_{\D_t}.
\]
The identity follows from uniqueness of lifts through $\pi_{n+1}$.
Schwarz's lemma on $\D_t$ gives $|\beta_n'(0)|\le1/t$, whence $\lambda_n\ge t$.
Since $t<1$ was arbitrary,
\begin{equation}\label{q8new:lambda-to-one}
 \lambda_n\longrightarrow1.
\end{equation}
This lift uses only the proper local chart over $W_n$.

We now construct a disk with injective forward iterates and arbitrarily large initial intrinsic area.
Fix $0<r<r_1<r_2<1$.
Equations \eqref{q8new:near-id} and \eqref{q8new:lambda-to-one}, followed by Cauchy's estimate, show that for all sufficiently large $N$,
\[
 \sup_{n\ge N}\|F_n'-1\|_{\overline{\D}_{r_1}}<\tfrac12.
\]
Integration along line segments in the convex disk $\D_{r_1}$ shows that every such $F_n$ is injective there.
Schwarz's lemma gives $F_n(\D_{r_1})\subset\D_{r_1}$, so all compositions $F_{N,m}=F_{N+m-1}\cdots F_N$ are injective on $\D_{r_1}$ as well.
Increase $N$ so that $\pi_n$ is injective on a neighborhood of $\overline{\D}_{r_1}$ for all $n\ge N$, and so \eqref{q8new:chart-margins} holds with $r_1$.
Set
\[
 Q=\pi_N(\overline{\D}_r),\qquad Q_m=f^m(Q).
\]
Then every $f^m$ is injective on a neighborhood of $Q$, because
\[
 f^m\pi_N=\pi_{N+m}F_{N,m}
\]
and $F_{N,m}(\D_{r_1})\subset\D_{r_1}$.
The $Q_m$ are pairwise disjoint, being in distinct full Fatou components.
Moreover
\begin{equation}\label{q8new:packet-routing}
 Q_m\subset D_{i_{N+m}},\qquad
 Q_{m+1}\subset B_{i_{N+m}}^1.
\end{equation}
Covering invariance for $\pi_N$ gives the exact initial area
\begin{equation}\label{q8new:large-set-area}
 \alpha_{U_N}(Q)=\alpha_\D(\overline{\D}_r)
 =\frac{2r^2}{1-r^2}.
\end{equation}
Since the latter tends to infinity as $r\uparrow1$, this constructs disks with injective forward iterates of arbitrarily large initial intrinsic area.
The time $N$ is chosen after $r$, so no preservation of area during an uncontrolled initial segment is required.

The local coverings give a uniform upper bound for that area.
Fix a finite forward-invariant union $P_0$ of repelling cycles containing at least three points.
Choose increasing finite unions $P_j\supset P_0$ of complete repelling cycles with union dense in $\Jul(f)$, and put $X_j=\C\setminus P_j$.
For each chart define
\[
 Y_{j,i}=(X_j\setminus E_i)\cap B_i,
 \qquad
 \nu_{j,i}=\alpha_{Y_{j,i}}-\alpha_{X_j}
 \quad\hbox{on }Y_{j,i}.
\]
Put $Z_{j,i}=X_j\setminus E_i$.
On $Y_{j,i}$ split the defect as
\[
 \nu_{j,i}=(\alpha_{Y_{j,i}}-\alpha_{Z_{j,i}})
                +(\alpha_{Z_{j,i}}-\alpha_{X_j}).
\]
The first term has the uniform local relative-area bound of Lemma~\ref{lem:local-truncation} with $\Omega=Z_{j,i}$ and background $\C\setminus P_0$.
The finite-puncture cost for the second term is at most $|E_i\cap X_j|\le|E_i|$.
Hence
\begin{equation}\label{q8new:ambient-chart-budget}
 \nu_{j,i}(Y_{j,i}\cap B_i^1)\le |E_i|+C_i
 \quad\hbox{for every }j.
\end{equation}
Here $C_i$ is the uniform local relative-area constant for inner set $B_i^1$, a cutoff supported in $B_i^2$, outer set $B_i$, and fixed background $\C\setminus P_0$.
The constant is uniform in all Julia markings.
Set
\[
 B_*:=\sum_{i=1}^{s}(|E_i|+C_i)<\infty.
\]

The local restriction
\[
 f:D_i\cap f^{-1}(Y_{j,i})\longrightarrow Y_{j,i}
\]
is a covering, componentwise onto each component, since $f:D_i\to B_i$ is proper and its critical values have been deleted.
Its source is contained in $X_j$: if $x\in P_j$, then $f(x)\in P_j$, so $x$ cannot map into $Y_{j,i}$.
Therefore covering invariance and metric monotonicity give
\begin{equation}\label{q8new:local-ambient-pullback}
 f^*\alpha_{Y_{j,i}}
 =\alpha_{D_i\cap f^{-1}(Y_{j,i})}\ge\alpha_{X_j}
 \quad\hbox{on }D_i\cap f^{-1}(Y_{j,i}).
\end{equation}
This comparison uses only the indicated local source.
It does not assert that the full map over $X_j$ or $Y_{j,i}$ is a covering.

Apply the set construction with $r$ chosen so that $\alpha_\D(\overline{\D}_r)>B_*$.
For fixed $j$, put $b_m=\alpha_{X_j}(Q_m)$.
By \eqref{q8new:packet-routing}, the corresponding local inequality \eqref{q8new:local-ambient-pullback} holds on $Q_m$, and $Q_{m+1}\subset Y_{j,i_{N+m}}\cap B_{i_{N+m}}^1$.
Injectivity gives
\[
 (b_m-b_{m+1})_+
 \le\nu_{j,i_{N+m}}(Q_{m+1}).
\]
For each chart index the image sets are disjoint.
Consequently
\begin{equation}\label{q8new:final-finite-loss}
 \sum_{m\ge0}(b_m-b_{m+1})_+
 \le\sum_i\nu_{j,i}(Y_{j,i}\cap B_i^1)\le B_*.
\end{equation}
The punctured plane $X_j$ has finite complete normalized area $|P_j|-1$.
Disjointness of the $Q_m$ therefore implies $b_m\to0$.
Telescoping \eqref{q8new:final-finite-loss} yields $\alpha_{X_j}(Q)\le B_*$ for every $j$.
Finally, recovery of the complete Fatou metric by dense Julia markings gives
\[
 \alpha_{U_N}(Q)=\lim_{j\to\infty}\alpha_{X_j}(Q)\le B_*,
\]
contrary to \eqref{q8new:large-set-area} and the choice of $r$.

The contradiction excludes bounded point orbits in a wandering component.
Every normal subsequential limit of the iterates on a wandering component is constant.
Otherwise choose a small source disk on which the nonconstant limit is finite and univalent.
Uniform convergence and Rouch\'e's theorem place a fixed nonempty target disk in the image of that source disk for every large index of the subsequence.
That target disk would belong to infinitely many pairwise disjoint full forward Fatou components, a contradiction.
If the point orbit is unbounded, choose a subsequence tending to infinity in the sphere and then a normal subsubsequence on the component.
Its constant normal limit must be infinity by evaluation at the chosen point.
Conversely, an infinity normal subsequence makes every point orbit unbounded.
Thus exclusion of bounded point orbits is exactly the required existence of a locally uniform infinity subsequence.

This proves the theorem.
\end{proof}

\section{Baker cycles: proof of Theorem~\ref{q4:main}}\label{sec:q4}\label{sec:boundary}

Question~4 of Bergweiler \cite[Section~4.3]{Bergweiler} asks for a relation between inverse singular values and Baker-cycle boundaries.
We prove the finite-deletion and separation assertions of Theorem~\ref{q4:main}.

Zheng \cite[Theorem~2.2 and p.~30]{Zheng2005} established phasewise accumulation of singular values of an iterate at Baker-cycle limits.
To see the finite-deletion consequence, let $B_p$ be the set of defined images $f^j(s)$ with $s\in S$ and $0\le j<p$.
An inverse branch of $f^p$ may fail to continue only at a failure of one of its $p$ successive inverse steps.
Thus the inverse singular values of the return iterate lie in the spherical closure of $B_p$.
For finite $T$, $B_p\setminus\mathcal O_T$ is finite: each deleted generator contributes at most $p$ images.
Taking derived sets gives $a_i\in A_T'$.
Infinitely many distinct points of $B_p$ approaching $a_i$ necessarily originate from distinct generators.
For an infinite phase limit, Bergweiler's Theorem~16 \cite{Bergweiler} gives the finite-step accumulation directly.
We retain an area proof because the same argument supplies the separation assertion.
Earlier geometry for the full postsingular set includes Bergweiler's theorem for invariant entire Baker domains \cite[Theorem~3]{Bergweiler1995}, the meromorphic extension \cite[Proposition~7.5]{MBRG},
and the result under a complement condition \cite[Theorem~A]{BFJK}.
Rippon and Stallard \cite{RipponStallard2006} also established a finite-pole annular-density theorem for singular values of inverse iterates.
The hyperbolic separation in Theorem~\ref{q4:main} applies after deleting any finite set of original singular generators.
Rempe's example \cite[Theorem~1.2]{RempeSingularOrbits2022} has a Baker domain with every finite inverse singular value periodic; thus the accumulation statement does not force any fixed finite original generator to escape.
Distances are taken in the component containing the orbit point; distance to another component or the empty set is infinity.
\subsection{Topology of a periodic quotient}

\begin{lemma}\label{q4:planar-cyclic-genus}
Let $W$ be a connected orientable surface with a connected infinite cyclic cover $V\to W$.
If $V$ is planar and $W$ has finite type, then the genus of $W$ is at most one.
\end{lemma}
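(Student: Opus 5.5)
The plan is to argue by contradiction through intersection forms on first homology. Suppose $W$ has finite type and genus $g\ge2$. Write $W$ as a closed surface $\overline W$ of genus $g$ with finitely many punctures removed; the finite-type hypothesis is used exactly here. A connected infinite cyclic cover $V\to W$ corresponds to a surjection $\phi:\pi_1(W)\to\Z$, and $\phi$ factors through $H_1(W;\Z)$. The claim will then follow once we find an embedded subsurface of $W$ whose preimage in $V$ contains a compact lift that is not planar.

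The first step is to find a handle in $\ker\phi$. The intersection pairing on $H_1(W)$ descends to a symplectic form on the image of $H_1(W)$ in $H_1(\overline W)\cong\Z^{2g}$. Puncture classes are peripheral, and on them $\phi$ may or may not vanish.
\begin{enumerate}
\item Let $L\subset\Z^{2g}$ be the span of the images of the nonperipheral classes. Every class of $H_1(W)$ is a nonperipheral class plus a combination of puncture loops.
\item Choose a symplectic basis $a_1,b_1,\dots,a_g,b_g$ represented by disjoint handle curves that avoid the punctures.
\item The restriction $\phi|_{\langle a_i,b_i\rangle_{i}}$ is a homomorphism $\Z^{2g}\to\Z$. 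Its kernel has rank at least $2g-1\ge3$.
\item A rank-$(2g-1)$ sublattice of a symplectic lattice is never isotropic when $2g-1>g$, that is, when $g\ge2$. It therefore contains classes $x,y$ with $x\cdot y\ne0$.
\end{enumerate}

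The second step, which I expect to be the main obstacle, is to realize such a pair geometrically. We need simple closed curves $\alpha,\beta$ in $W$ meeting once, with $\phi(\alpha)=\phi(\beta)=0$. Arbitrary primitive kernel classes with nonzero pairing need not come from a once-intersecting pair. The plan is to change the symplectic basis by $\mathrm{Sp}(2g,\Z)$, which is realized by mapping classes of $\overline W$ and can be isotoped to fix the punctures. This reduces $\phi$ on the handle span to the normal form $\phi(a_1)=m$ and $\phi(b_1)=\phi(a_2)=\phi(b_2)=\cdots=0$. That is the standard transitivity of the symplectic group on primitive vectors, applied after factoring out the gcd. Then $\alpha=a_2$ and $\beta=b_2$ are disjoint from the punctures, meet once, and lie in $\ker\phi$. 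If the restriction to the handle span is zero, any handle works.

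The final step is the lifting argument. A regular neighborhood $T$ of $\alpha\cup\beta$ is a one-holed torus, and $\pi_1(T)$ lies in $\ker\phi$ because it is generated by $\alpha$ and $\beta$. Hence $T$ lifts homeomorphically to $V$. The lifted curves $\tilde\alpha$ and $\tilde\beta$ meet transversely in exactly one point. In a planar surface every simple closed curve separates, by the Jordan curve theorem on an embedding in the sphere. So algebraic intersection numbers of closed curves vanish mod 2, and a single transverse intersection is impossible. This contradiction gives $g\le1$.
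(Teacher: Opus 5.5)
Your overall strategy is workable and differs from the paper's: you exhibit a handle in $\ker\phi$, lift the one-holed torus $T$ homeomorphically to $V$, and contradict planarity. However, the realization step has a gap as written.

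Your normal form is a statement about $\phi|_H$, where $H\subset H_1(W;\Z)$ is the span of your original handle curves. It says nothing about $\phi$ on puncture loops. Take a homeomorphism $h$ of $\overline W$ that realizes $A\in\mathrm{Sp}(2g,\Z)$ and has merely been isotoped to fix the punctures. It acts on $H_1(W;\Z)$ by some lift of $A$, and that lift need not preserve $H$. The class of $h(a_2)$ is $Aa_2$ plus an integral combination of puncture loops, so $\phi(h(a_2))$ can be a nonzero combination of the peripheral values of $\phi$. For example, pushing a puncture $p_1$ once around a loop crossing $a_2$ is isotopic to the identity on $\overline W$. Yet it changes $[a_2]$ by $\pm[\partial_1]$, which is nonzero in $H_1(W;\Z)$ once there are at least two punctures.

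This is the relevant case for the paper: in the proof of Lemma~\ref{q4:quotient-infinite-area}, every peripheral value $a_j$ is nonzero. The repair is as follows.
\begin{enumerate}
\item Choose the original handle curves in $\overline W\setminus D$, for a closed disk $D$ containing all punctures.
\item Realize $A$ by a homeomorphism equal to the identity on $D$. This exists because the mapping class group of a genus-$g$ surface with one boundary circle already surjects onto $\mathrm{Sp}(2g,\Z)$.
\item Then $H_1(\overline W\setminus D;\Z)\to H_1(W;\Z)$ is injective with image $H$, and $h$ preserves $\overline W\setminus D$. So the new curves have classes $Aa_i,Ab_i\in H$, and $\phi(a_2')=\phi(b_2')=0$ follows from your normal form.
\end{enumerate}

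The paper avoids simple curves and mapping classes altogether. For any two classes in $\ker\chi$, it lifts representing closed curves to closed curves $\widetilde\alpha,\widetilde\beta$ in $V$. It then uses $[\alpha]\cdot[\beta]=\sum_k[\widetilde\alpha]\cdot[\Theta^k\widetilde\beta]=0$, since the intersection form of the planar $V$ vanishes. Hence the form vanishes on the hyperplane $\ker\chi\subset H_1(W;\R)$. A rank-$2g$ form restricted to a hyperplane has rank at least $2g-2$, so $g\le1$.

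Your item (4) is exactly this linear algebra. Pairing it with the summation formula would finish at once, and punctures cause no trouble there because peripheral classes lie in the radical of the form. Your route gives a more concrete obstruction, an embedded one-holed torus in $V$. The cost is the mapping-class realization and the care with punctures described above.
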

\begin{proof}
Let $\chi:\pi_1(W)\to\Z$ be the epimorphism defining the cover.
It factors through integral homology.
If two integral homology classes lie in $\ker\chi$, represent them by based closed curves in general position and lift them to closed curves $\widetilde\alpha,\widetilde\beta$ in $V$.
Writing $\Theta$ for a generator of the deck group, their algebraic intersection downstairs is
\[
 [\alpha]\cdot[\beta]
 =\sum_{k\in\Z}
 [\widetilde\alpha]\cdot[\Theta^k\widetilde\beta].
\]
Only finitely many terms can be nonzero, by proper discontinuity and compactness of the curves.
Every term is zero because the intersection form of a planar surface is zero.
Hence the intersection form on $H_1(W;\R)$ vanishes on the codimension-one subspace $\ker\chi$.
If the genus is $g$, the full intersection form has rank $2g$, and its restriction to a hyperplane has rank at least $2g-2$.
Therefore $g\le1$.
\end{proof}

\begin{lemma}
\label{q4:quotient-infinite-area}
Let $V\to\Sigma$ be a connected infinite cyclic holomorphic covering of complete hyperbolic surfaces.
Suppose that $V$ is planar and has no puncture ends.
Then $\Sigma$ has infinite complete hyperbolic area.
\end{lemma}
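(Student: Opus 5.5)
We argue by contradiction: assume $\Sigma$ has finite complete hyperbolic area and show that $V$ must then have a puncture end, or fail to be planar. The plan is to obtain finite topological type for $\Sigma$, apply Lemma~\ref{q4:planar-cyclic-genus} to control its genus, and then analyse the ends of the infinite cyclic cover.

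\emph{Step 1: finite type and genus.} A complete hyperbolic surface of finite area is of finite type: it is a closed surface of genus $g$ with finitely many punctures, all of them cusps. Orientability comes from the holomorphic structure. Since $V$ is connected, planar and covers $\Sigma$ infinitely cyclically, Lemma~\ref{q4:planar-cyclic-genus} gives $g\le1$.

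\emph{Step 2: cusps lift to puncture ends.} Suppose $\Sigma$ has a cusp. Take a horocyclic cusp neighbourhood $N\cong\D^*$ with peripheral loop $c$.
\begin{itemize}
\item If $\chi([c])=0$, the loop $c$ lifts to a closed loop, and each component of the preimage of $N$ maps isomorphically onto $N$.
\item If $\chi([c])=k\neq0$, each component of the preimage is a $|k|$-fold cyclic cover of $\D^*$, hence again a punctured disk.
\end{itemize}
In either case this preimage component is a neighbourhood of a puncture end of $V$, properly embedded by completeness. This contradicts the hypothesis, so $\Sigma$ has no cusps and is a closed surface. Finite area then forces $g\ge2$ by Gauss--Bonnet, contradicting $g\le1$.

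The one step needing care is the properness claim in Step~2. The preimage component must really be an end of $V$, that is, it must have no compact closure in $V$. This follows because the covering map is proper onto $N$ restricted to that component. More concretely, the lift of the horoball neighbourhood in $\D$ is stabilised by a parabolic subgroup of the deck group of $V$, since the fundamental group of $V$ injects into that of $\Sigma$. The degenerate case $\chi([c])=0$ is therefore handled uniformly with the other case. In both cases the complete metric on $V$ pulled back from $\Sigma$ has a cusp there, which is exactly a puncture end.

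Steps~1 and~2 together give the contradiction, so $\Sigma$ has infinite area.
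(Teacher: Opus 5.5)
There is a genuine gap in Step~2. Its second bullet is false, and that case is where all the content of the lemma lies.

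Write $\omega:\pi_1(\Sigma)\to\Z$ for the epimorphism (your $\chi$), so that $\pi_1(V)=\ker\omega$. If the peripheral class $c$ has $\omega(c)=k\neq0$, then $\omega(c^n)=nk\neq0$ for every $n\neq0$, so $\ker\omega\cap\langle c\rangle$ is trivial. Consequently the preimage of the cusp neighbourhood $N$ has $|k|$ components, indexed by $\Z/k\Z$. Each component is stabilised by the index-$|k|$ subgroup of the deck group and is simply connected. It is the universal cover of $\D^*$, a horodisk region mapping to $N$ with infinite degree.

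So these components are not punctured disks, and the pulled-back metric has no cusp there. The stabiliser of the lifted horodisk in $\pi_1(V)$ is trivial, not parabolic. Injectivity of $\pi_1(V)\to\pi_1(\Sigma)$ says nothing about whether $\pi_1(V)$ contains the parabolic, and here it does not. Hence the hypothesis that $V$ has no puncture ends only gives $a_j:=\omega(c_j)\neq0$ for every cusp, as in the paper; it does not exclude cusps of $\Sigma$.

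Nor does your bound $g\le1$ from Step~1 finish the argument. Take the thrice-punctured sphere with $\omega(a)=\omega(b)=1$ on free generators. The peripheral values are $1,1,-2$, all nonzero, so $V$ has no puncture ends, while $\Sigma$ has genus $0$ and finite area. What fails in this example is planarity of $V$, and that cannot be detected from the genus of $\Sigma$ alone.

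The missing idea is to apply Lemma~\ref{q4:planar-cyclic-genus} to every finite cyclic intermediate cover, not just to $\Sigma$. Let $\Sigma_N$ be the $N$-sheeted cover corresponding to $\omega^{-1}(N\Z)$. Then $V$ is again a connected infinite cyclic cover of $\Sigma_N$, so its genus satisfies $g_N\le1$.

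In a finite cover every cusp does lift to cusps, since $c_j^N\in\omega^{-1}(N\Z)$. Their number is $r_N=\sum_j\gcd(N,|a_j|)\le\sum_j|a_j|$, which is bounded in $N$ precisely because every $a_j$ is nonzero. Multiplicativity of Euler characteristic gives $2-2g_N-r_N=N\chi(\Sigma)\to-\infty$, while the left-hand side is bounded below. This is the contradiction. In the example above it shows $g_N\sim N/2$, so $V$ is not planar there.

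Your first bullet, where $\omega(c)=0$ gives a puncture end, is correct and matches the paper's argument that each $a_j\neq0$.
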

\begin{proof}
Suppose otherwise.
A complete finite-area hyperbolic surface is a compact surface of some genus $g$ with finitely many punctures, say $r$; in particular $\chi(\Sigma)=2-2g-r<0$.
Let $\omega:\pi_1(\Sigma)\to\Z$ define the infinite cyclic cover.
For a peripheral loop about the $j$th puncture, write $a_j$ for its image under $\omega$.
Each $a_j$ is nonzero: if $a_j=0$, a sufficiently small punctured-disk neighborhood lifts homeomorphically and conformally to a puncture end in $V$.

Let $\Sigma_N$ be the connected $N$-sheet cyclic cover corresponding to $\omega^{-1}(N\Z)$.
Its number of punctures is
\[
 r_N=\sum_{j=1}^{r}\gcd(N,|a_j|)
 \le\sum_{j=1}^{r}|a_j|.
\]
The formula counts the orbits of addition by $a_j$ on $\Z/N\Z$.
The same $V$ is an infinite cyclic cover of $\Sigma_N$, so Lemma~\ref{q4:planar-cyclic-genus} gives $g_N\le1$.
Multiplicativity of Euler characteristic now yields
\[
 2-2g_N-r_N=N\chi(\Sigma).
\]
The left-hand side is bounded below independently of $N$, while the right-hand side tends to $-\infty$, a contradiction.
\end{proof}

\begin{lemma}
\label{q4:direct-limit-no-punctures}
Let $\Gamma_0\subset\Gamma_1\subset\cdots$ be discrete torsion-free subgroups of $\operatorname{Aut}(\D)$ whose union $\Gamma_\infty$ is discrete.
Suppose each $U_n=\D/\Gamma_n$ is conformally a plane domain with no isolated spherical boundary point.
Then $V=\D/\Gamma_\infty$ has no puncture end.
\end{lemma}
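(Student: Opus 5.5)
We argue by contradiction and reduce a puncture end of $V$ to one of some $U_n$. Suppose $V$ has a puncture end. Then there is a parabolic element $\gamma\in\Gamma_\infty$ and a horoball neighborhood $H\subset\D$ of its fixed point $\xi$ on which the stabilizer $\langle\gamma\rangle$ is the full stabilizer of $H$ in $\Gamma_\infty$. We may take $\langle\gamma\rangle$ maximal parabolic and $H$ small enough that $gH\cap H=\varnothing$ for every $g\in\Gamma_\infty\setminus\langle\gamma\rangle$; this precisely invariant horoball exists because $\Gamma_\infty$ is discrete and torsion-free (Shimizu--Leutbecher, or the standard cusp lemma for Fuchsian groups). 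The cusp neighborhood $H/\langle\gamma\rangle$ then embeds in $V$ as a punctured disk.

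Since $\Gamma_\infty=\bigcup_n\Gamma_n$, there is $N$ with $\gamma\in\Gamma_N$, and hence $\gamma\in\Gamma_n$ for all $n\ge N$. Because $\Gamma_n\subset\Gamma_\infty$, every $g\in\Gamma_n\setminus\langle\gamma\rangle$ also satisfies $gH\cap H=\varnothing$. The stabilizer of $H$ in $\Gamma_n$ therefore contains $\langle\gamma\rangle$ and lies in $\langle\gamma\rangle$, so it equals $\langle\gamma\rangle$. The same precisely invariant horoball thus descends to an embedded punctured disk $H/\langle\gamma\rangle\hookrightarrow U_N$. This is a puncture end of $U_N$: it is conformally $\{0<|w|<\epsilon\}$, and a small parabolic loop about it has hyperbolic length tending to zero.

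It remains to turn this cusp end of the plane domain $U_N$ into an isolated boundary point. Realize $U_N$ as a domain $G\subset\sphere$ and restrict the embedding to a univalent map $j:\{0<|w|<\epsilon\}\to G$ onto the cusp neighborhood. Because $j$ is univalent and bounded into the sphere, it cannot have an essential singularity at zero (Casorati--Weierstrass contradicts injectivity), so it extends meromorphically. The value $a=j(0)$ is not in $G$: otherwise the cusp end would have compact closure in $G$, contradicting completeness and the divergence of the end. On the other hand, $j(\{0<|w|<\epsilon'\})\cup\{a\}$ is a spherical neighborhood of $a$ that meets $\partial G$ only at $a$. So $a$ is an isolated spherical boundary point of $U_N$, contrary to the hypothesis. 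This is the same univalent-parametrization argument used earlier for punctured-disk components and in Lemma~\ref{q11r:one-singular-disk}.

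I expect the main obstacle to be the first step: the precise invariance must be chosen in $\Gamma_\infty$, not in the individual $\Gamma_n$. The horoball radii that work for the $\Gamma_n$ could shrink to zero as $n$ grows, and then no single $U_n$ would detect the cusp. Using discreteness of the union to fix one horoball for all $\Gamma_n$ at once is exactly what makes the argument work.
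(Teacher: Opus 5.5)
Your proof is correct and takes essentially the same route as the paper. The cusp generator lies in some $\Gamma_N$, so a cusp neighborhood of $V$ embeds conformally in $U_N$; its univalent extension at the puncture has a limiting value outside $U_N$ and so exhibits an isolated spherical boundary point. The only difference is that you build the embedding from a horoball precisely invariant under $\Gamma_\infty$ (Shimizu's lemma), whereas the paper lifts the punctured-disk neighborhood $B\subset V$ through the covering $U_N\to V$ by the lifting criterion, injective because it is a section, which needs no cusp lemma.
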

\begin{proof}
Suppose $B\subset V$ is a punctured-disk neighborhood of a puncture end.
Its cyclic fundamental group, with a connecting path to a fixed base point, is generated by one element of $\Gamma_\infty$, hence belongs to $\Gamma_N$ for some $N$.
The covering $U_N\to V$ therefore has a single-valued lift $h:B\to U_N$ of the inclusion $B\hookrightarrow V$.
This lift is injective, since its composition with the covering is the identity on $B$.

Identify $B$ with a punctured disk and $U_N$ with its plane realization.
The univalent function $h$ has a removable singularity or a simple pole at the puncture: an essential singularity is incompatible with univalence, and an extension of multiplicity greater than one is also incompatible with univalence.
Its limiting value $b\in\sphere$ cannot lie in $U_N$, since continuity of $U_N\to V$ would then give an interior limiting value of $B$ at its missing puncture.
The image under $h$ contains a punctured spherical neighborhood of $b$.
Thus $b$ is an isolated spherical boundary point of $U_N$, a contradiction.
\end{proof}

\subsection{The periodic covering model and disjoint forward images}
\label{q4:covering-model-section}

Let $U$ be a Baker component of period $p$ and assume that $g=f^p:U\to U$ is a holomorphic covering.
In this section we use only this covering property, local uniform convergence of $g^n$ to a point
of $\partial U$, and the fact that $U$ is a plane Fatou component of a
transcendental meromorphic map.
In particular, no iterate of $g$ has a fixed point in $U$.

\begin{lemma}\label{q4:no-puncture-end}
Every Fatou component of a transcendental meromorphic function has no puncture end.
In particular this holds for $U$.
\end{lemma}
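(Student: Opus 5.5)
The plan is to argue exactly as in the punctured-disk remark of Section~\ref{sec:covering-loss}, now for an arbitrary end rather than the whole component. Let $G$ be a Fatou component of the transcendental meromorphic function $f$. Suppose $G$ has a puncture end. Then there is a closed subset $B\subset G$ and a conformal map $\phi:\D^*\to B^\circ$ onto a neighborhood of that end, with $\phi(\zeta)$ leaving every compact subset of $G$ as $\zeta\to0$. Because $G\subset\sphere$ is a plane domain, $\phi$ is a univalent map of the punctured disk into $\sphere$.

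First step: classify the singularity of $\phi$ at $0$. Univalence rules out an essential singularity. By Casorati--Weierstrass, the image of every small punctured disk would be dense, and two disjoint punctured subdisks would then have intersecting open images. So $\phi$ extends meromorphically to $\zeta=0$, with a value $b\in\sphere$. The extension is still univalent near $0$, since a local degree greater than one would produce two preimages of nearby points. Hence $\phi$ maps a small disk about $0$ homeomorphically onto a spherical neighborhood $N$ of $b$. Also $b\notin G$: the end leaves compact subsets of $G$, so $\phi(\zeta)$ cannot converge to an interior point.

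Second step: conclude that $b$ is an isolated boundary point of $G$. The set $N\setminus\{b\}=\phi(\text{punctured disk})$ lies in $G\subset\Fat(f)$, while $b\in\partial G\subset\Jul(f)$, spherically. So $b$ is an isolated point of $\Jul(f)$. For transcendental meromorphic $f$, the Julia set in $\sphere$ is perfect: it contains at least three points, and repelling periodic points are dense \cite[Theorem~4]{Bergweiler}. This is the same fact used in Lemma~\ref{lem:large-disks} and in the remark excluding punctured-disk Fatou components. If $b=\infty$, the same holds, since $\infty\in\Jul(f)$ is never isolated for transcendental $f$, being an accumulation point of prepoles or of Julia points. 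Thus we have a contradiction.

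The only delicate point I expect is the first one: making precise that a puncture end of a plane domain is represented by a univalent map of a punctured disk whose limit is a genuine spherical boundary point rather than an interior point. I would handle this by taking the end neighborhood to be the image of $\{0<|\zeta|<\varepsilon\}$ and noting that the complement in $G$ of this end neighborhood is closed in $G$. Finally, $U$ is a Fatou component, so the statement for $U$ follows at once.
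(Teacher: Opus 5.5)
Your argument is correct and follows the paper's proof closely. Both parametrize the end univalently by a punctured disk, exclude an essential singularity by univalence, and use local univalence of the extension to put a punctured spherical neighborhood of the limit point inside the component. That limit point is then an isolated Julia point, which is impossible; the paper handles a limit at $\infty$ via unboundedness of $\Jul(f)$, and your appeal to perfectness of $\Jul(f)$ on $\sphere$ already covers it.

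Two small wording fixes. In the Casorati--Weierstrass step, compare the dense image of a small punctured disk about $0$ with the open image of a disk disjoint from it, rather than two ``disjoint punctured subdisks''. Also, perfectness of $\Jul(f)$ is a standard fact to cite in its own right: density of repelling cycles alone does not imply it.
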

\begin{proof}
A puncture end has a univalent parametrization by a punctured disk whose outer boundary is compactly contained in $U$.
Its inclusion in the sphere extends at the puncture: an injective meromorphic function cannot have an essential singularity there.
The extension is locally univalent, so its image contains a punctured spherical neighborhood of the limiting point.
This would make that point an isolated point of the complement of $U$, and hence an isolated Julia point.
This is impossible.
The same argument at infinity uses that the Julia set of a transcendental meromorphic map is unbounded, so infinity is not an isolated Julia point either.
\end{proof}

We use the following consequence of J\o rgensen's inequality \cite{Jorgensen1976}.
With lifts to $\operatorname{SL}_2(\C)$, the inequality states that a discrete non-elementary group generated by $A,B$ satisfies
\begin{equation}\label{q4:jorgensen}
 |\operatorname{tr}(A)^2-4|
 +|\operatorname{tr}(ABA^{-1}B^{-1})-2|\ge1.
\end{equation}

\begin{lemma}\label{q4:nested-groups}
An increasing union of torsion-free discrete Fuchsian groups containing a non-elementary subgroup is discrete.
\end{lemma}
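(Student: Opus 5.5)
The plan is to derive discreteness from J\o rgensen's inequality \eqref{q4:jorgensen}, applied to pairs of elements of the union $\Gamma_\infty=\bigcup_n\Gamma_n$. Each such pair already lies in some single $\Gamma_n$.

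\emph{Step 1 (reduction).} Suppose $\Gamma_\infty$ is not discrete. Then there are elements $g_k\in\Gamma_\infty\setminus\{\mathrm{id}\}$ with $g_k\to\mathrm{id}$ in $\operatorname{Aut}(\D)$. Lift them to $\operatorname{SL}_2(\R)$ so that the lifts tend to the identity matrix. Then $\operatorname{tr}(g_k)^2-4\to0$. Fix a non-elementary subgroup $G\subset\Gamma_{n_0}$ and two elements $h_1,h_2\in G$ with no common fixed point on $\overline\D$. Since $G$ is non-elementary, for instance two hyperbolic elements with disjoint fixed-point pairs will do.

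\emph{Step 2 (commutator estimate).} For a fixed $h$, the commutator $g_khg_k^{-1}h^{-1}$ tends to the identity, so its trace tends to $2$. Choose $N_k$ with $g_k,h_1,h_2\in\Gamma_{N_k}$; this group is discrete, torsion-free and non-elementary because it contains $G$. If $\langle g_k,h_i\rangle$ is non-elementary for $i=1$ or $i=2$, then \eqref{q4:jorgensen} applied with $A=g_k$, $B=h_i$ forces the left-hand side to be at least $1$. For large $k$ this contradicts Step 1.

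\emph{Step 3 (elementary case).} It remains to treat $k$ for which both $\langle g_k,h_1\rangle$ and $\langle g_k,h_2\rangle$ are elementary. A torsion-free discrete elementary Fuchsian group is cyclic, or at worst abelian with a common fixed-point set. Hence $g_k$ shares its fixed points with $h_1$ and also with $h_2$. Since $h_1$ and $h_2$ have no common fixed point, this is impossible for a nontrivial $g_k$: a hyperbolic $g_k$ has two fixed points and would have to equal the fixed-point pair of both $h_1$ and $h_2$, while a parabolic $g_k$ has one fixed point and cannot commute with a hyperbolic element in a discrete group. So for every large $k$ at least one pair in Step 2 is non-elementary, and the contradiction is reached.

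The main obstacle is Step 3. One has to justify carefully that, in a torsion-free discrete group, an elementary two-generator subgroup $\langle g,h\rangle$ with $h$ hyperbolic forces $g$ to fix the fixed points of $h$. This rests on the classification of elementary discrete groups: there are no elliptics because of torsion-freeness, and parabolic and hyperbolic elements cannot share exactly one fixed point in a discrete group. Everything else is continuity of trace.
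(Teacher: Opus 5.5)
Your proposal is correct and follows essentially the same route as the paper: J\o rgensen's inequality, applied with one fixed hyperbolic element from a single stage, forces $\langle g_k,h_i\rangle$ to be elementary for large $k$, and the two disjoint fixed-point pairs then give the contradiction. The only cosmetic difference is in the elementary case. You cite the classification of torsion-free discrete elementary Fuchsian groups as cyclic, whereas the paper argues directly that $g_k$ preserves each fixed-point pair and cannot interchange one, since that would make it a half-turn in a torsion-free group.
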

\begin{proof}
Fix in one stage two hyperbolic elements $\gamma_1,\gamma_2$ with disjoint pairs of fixed points.
If the union were not discrete, it would contain nonidentity elements $h_j\to1$.
For each $j$, the elements $h_j,\gamma_1,\gamma_2$ lie in one common discrete stage.
Applying \eqref{q4:jorgensen} with $A=h_j$, $B=\gamma_i$, and choosing lifts of $h_j$ tending to the identity, shows that $\langle h_j,\gamma_i\rangle$ is elementary for both $i=1,2$ when $j$ is sufficiently large.
Therefore $h_j$ preserves each pair of fixed points.
It cannot interchange a pair: an orientation-preserving hyperbolic-plane isometry interchanging the endpoints of a geodesic is a half-turn, whereas the stage is torsion-free.
Thus $h_j$ fixes all four boundary points, which forces $h_j=1$.
\end{proof}

Choose a universal covering $\pi:\D\to U$, with deck group $\Gamma_0$.
Because $g$ is a covering, it has an automorphic lift $T\in \operatorname{Aut}(\D)$ satisfying $\pi T=g\pi$.
Hence
\begin{equation}\label{q4:ascending-groups}
 T\Gamma_0T^{-1}\subset\Gamma_0,
 \qquad \Gamma_n=T^{-n}\Gamma_0T^n,
 \qquad \Gamma_\infty=\bigcup_{n\ge0}\Gamma_n.
\end{equation}
The sequence $\Gamma_n$ is increasing, and $T$ normalizes its union.

\begin{lemma}\label{q4:stable-group-discrete}
The group $\Gamma_\infty$ is discrete and torsion-free.
The surface $W=\D/\Gamma_\infty$ has genus zero, and the natural map $h:U\to W$ is a holomorphic covering.
The map $T$ induces an automorphism $\tau:W\to W$ with $hg=\tau h$.
\end{lemma}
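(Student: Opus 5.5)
The plan is to treat discreteness first, then the covering and the automorphism, and finally genus zero.

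\emph{Discreteness.} I would split according to whether $\Gamma_0$ is elementary. Suppose $\Gamma_0$ is non-elementary. Each $\Gamma_n=T^{-n}\Gamma_0T^n$ is conjugate to $\Gamma_0$, hence discrete and torsion-free, and the sequence increases by \eqref{q4:ascending-groups}. Lemma~\ref{q4:nested-groups} then gives discreteness of $\Gamma_\infty$ directly.

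Suppose instead $\Gamma_0$ is elementary. Since $U$ is a plane hyperbolic domain, torsion-free and elementary means $\Gamma_0$ is trivial or cyclic. If $\Gamma_0$ is cyclic parabolic, $U$ is a punctured disk, which Lemma~\ref{q4:no-puncture-end} excludes. If $\Gamma_0$ is trivial, then $\Gamma_\infty$ is trivial. If $\Gamma_0=\langle\gamma\rangle$ is hyperbolic, the inclusion $T\gamma T^{-1}\in\Gamma_0$ forces $T\gamma T^{-1}=\gamma^{k}$. Comparing translation lengths gives $|k|=1$, so $\Gamma_n=\Gamma_0$ for all $n$ and the union is again discrete.

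Torsion-freeness is immediate in every case, because each element of $\Gamma_\infty$ already lies in some torsion-free $\Gamma_n$.

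\emph{Covering and automorphism.} These I expect to be formal. Since $\Gamma_0\subset\Gamma_\infty$ and $\Gamma_\infty$ acts properly discontinuously and freely, the map $h:\D/\Gamma_0\to\D/\Gamma_\infty$ is a holomorphic covering; composing with $U\cong\D/\Gamma_0$ gives $h:U\to W$. Because $T$ normalizes $\Gamma_\infty$, it descends to a holomorphic bijection $\tau$ of $W$. The relation $\pi T=g\pi$ then yields $hg=\tau h$.

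\emph{Genus zero.} This is the main obstacle. I would argue by contradiction: take two closed curves $\alpha,\beta$ in $W$ with algebraic intersection $1$. Their classes are represented by elements of $\Gamma_\infty$, so both lie in a single stage $\Gamma_N$. That stage gives the intermediate surface $\D/\Gamma_N\cong U$, via the conformal conjugacy by $T^N$, so $\alpha$ and $\beta$ lift to closed curves $\widetilde\alpha,\widetilde\beta$ in the planar surface $U$.

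I would choose these lifts to be the projections of the two compact fundamental segments of the generators in $\D$. The aim is then to compute $\alpha\cdot\beta$ as a finite sum of intersection numbers of the lifts $\widetilde\alpha$ with the translates of $\widetilde\beta$ under the deck group of $U\to W$. Each such number vanishes because the intersection form on a planar surface is zero. This is the same device as in Lemma~\ref{q4:planar-cyclic-genus}.

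The delicate point is that $U\to W$ need not be a normal covering. If that causes trouble, the fallback is a direct topological argument. A subsurface of $W$ of positive genus is a compact one-holed torus. Its fundamental group is finitely generated, hence contained in some $\Gamma_N$, so the subsurface lifts homeomorphically into $U\cong\D/\Gamma_N$. That would embed a one-holed torus in a plane domain, which is impossible.
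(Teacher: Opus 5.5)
Your proposal is correct and is essentially the paper's proof. The case split for discreteness (Lemma~\ref{q4:nested-groups} when $\Gamma_0$ is non-elementary, Lemma~\ref{q4:no-puncture-end} against the parabolic cyclic case, translation length in the hyperbolic cyclic case), the normalization argument for $h$ and $\tau$, and your fallback for genus zero all coincide with the paper's argument; that fallback lifts a compact subsurface, whose finitely generated $\pi_1$ lands in some $\Gamma_N$, injectively into $\D/\Gamma_N\cong U$. The intersection-number route is unnecessary, and your caution about it is justified: $\D/\Gamma_N\to W$ is generally not normal, so the preimage of $\beta$ can have components, including non-closed ones, that a sum over deck translates of $\widetilde\beta$ would miss.
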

\begin{proof}
If $\Gamma_0$ is non-elementary, discreteness follows from Lemma~\ref{q4:nested-groups}.
If $\Gamma_0$ is trivial, so is every $\Gamma_n$.
The remaining torsion-free elementary possibilities are infinite cyclic groups.
A parabolic cyclic group would make $U$ a punctured disk, contrary to Lemma~\ref{q4:no-puncture-end}.
If $\Gamma_0=\langle\gamma\rangle$ is hyperbolic cyclic, the inclusion in \eqref{q4:ascending-groups} gives $T\gamma T^{-1}=\gamma^d$ for a nonzero integer $d$.
Translation length is invariant under conjugacy, so $|d|=1$.
Thus all the groups coincide and are discrete.
Every element of the union lies in a torsion-free stage, proving torsion-freeness.
The covering and intertwining assertions now follow from subgroup inclusion and normalization.

To verify genus zero without a conformal uniformization assertion about $W$, take any compact subsurface $K\subset W$.
Its fundamental group is finitely generated.
Its image in $\Gamma_\infty$ is contained in some $\Gamma_n$, so the inclusion $K\hookrightarrow W$ lifts to $\D/\Gamma_n$.
The lift is an embedding because its composition with the covering projection is the original inclusion.
But $\D/\Gamma_n$ is conformally equivalent to the plane domain $U$.
Consequently $K$ cannot have positive genus.
This proves the assertion.
\end{proof}

\begin{lemma}\label{q4:full-group}
Let $H=\langle\Gamma_\infty,T\rangle$.
Then $H$ is a torsion-free discrete Fuchsian group, and
\[
 H/\Gamma_\infty\cong\Z.
\]
The automorphism $\tau$ generates a free properly discontinuous action on $W$, and $\Sigma=W/\langle\tau\rangle=\D/H$ is a hyperbolic Riemann surface.
There is a holomorphic covering $\Pi:U\to\Sigma$ satisfying $\Pi g=\Pi$.
\end{lemma}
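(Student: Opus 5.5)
The plan is to check four things in turn: that $T$ normalizes $\Gamma_\infty$, that $T$ has infinite order modulo $\Gamma_\infty$, that $H$ is discrete and torsion-free, and finally that the quotient maps descend.

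\emph{Normalization and the quotient.} From \eqref{q4:ascending-groups} we get $T\Gamma_nT^{-1}=\Gamma_{n-1}\subset\Gamma_\infty$ for $n\ge1$, and $T\Gamma_0T^{-1}\subset\Gamma_0$. Likewise $T^{-1}\Gamma_nT=\Gamma_{n+1}$. Hence $T\Gamma_\infty T^{-1}=\Gamma_\infty$, so every element of $H$ can be written as $\gamma T^k$ with $\gamma\in\Gamma_\infty$. The map $\gamma T^k\mapsto k$ is then a well-defined epimorphism onto $\Z$ exactly when no power $T^k$ with $k\ne0$ lies in $\Gamma_\infty$.

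\emph{Infinite order of $T$ modulo $\Gamma_\infty$.} Suppose $T^k\in\Gamma_\infty$ with $k\ge1$. Then $T^k$ acts trivially on $W$, so $\tau^k=\mathrm{id}$. Since $hg=\tau h$, we get $hg^k=h$: the iterate $g^k$ is a lift of $h$, i.e.\ a deck-type map of the covering $U\to W$.

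Such a map has bounded displacement. Concretely, fix $z\in U$. The points $g^{kn}(z)$ all project to $h(z)$. Write $T^k=\gamma\in\Gamma_m$ for some $m$, so that $T^{kn}=\gamma^n$. Then $\pi(T^{kn}0)=\pi(\gamma^n0)$, where $\gamma^n\in\Gamma_m$ is a fixed element of $\Gamma_m$ iterated.

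The key step is to compare with the group $\Gamma_m$. Since $\gamma\in\Gamma_m=T^{-m}\Gamma_0T^m$, we have $T^m\gamma T^{-m}\in\Gamma_0$. On the other hand $T^m\gamma T^{-m}=T^{k}$, because $T$ commutes with $T^k$. So $T^k\in\Gamma_0$, i.e.\ $T^k$ is a deck transformation of $\pi$, and therefore $g^k=\mathrm{id}$ on $U$.

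This contradicts the local uniform convergence $g^n\to a\in\partial U$: for instance, $g^{kn}(z)=z$ cannot tend to the boundary. So $T$ has infinite order modulo $\Gamma_\infty$, and $H/\Gamma_\infty\cong\Z$ via $\gamma T^k\mapsto k$.

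\emph{Discreteness and freeness.} I expect this step to be the main obstacle.

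Torsion-freeness is straightforward. An elliptic element $\gamma T^k$ of finite order $q$ has $q$th power in $\Gamma_\infty$, hence image $qk=0$ in $\Z$, so $k=0$. Then the element lies in $\Gamma_\infty$, which is torsion-free, so it is the identity.

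For discreteness, suppose there were elements $\gamma_jT^{k_j}\to1$. First consider the case $\Gamma_\infty$ non-elementary. By J\o rgensen's inequality \eqref{q4:jorgensen}, as in the proof of Lemma~\ref{q4:nested-groups}, such elements would eventually commute with two hyperbolic elements of $\Gamma_\infty$ having disjoint fixed pairs. This forces them to be the identity, but here we must apply the inequality inside the group generated by the element and $\gamma_i$, and that group's discreteness is not yet known.

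So instead I would argue on $W$ directly. I would show that $\langle\tau\rangle$ acts properly discontinuously and freely on $W$. The action is free because $\tau^k(w)=w$ with $k\ne0$ would give a point $z$ with $g^k(z)$ in the $\Gamma_\infty$-fibre of $z$. Using the compact-fundamental-group argument above, I would upgrade this to $g^{k'}(z)=z$ for some iterate, which is impossible since $g$ has no periodic points in $U$.

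Proper discontinuity I would obtain from the escape $g^n\to a$, transported to $W$. Suppose $\tau^{n_j}(w_j)\to w'$ with $w_j\to w$. Lift via $h$ to points $z_j$ near a fixed $z$ and to points $g^{n_j}(z_j)$ whose images converge in $W$. Using the bounded hyperbolic distance in $W$ and the fact that $h$ is a covering, I would then deduce bounded $U$-distance between $g^{n_j}(z_j)$ and a fixed compact set, which contradicts locally uniform convergence to $\partial U$.

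Discreteness of $H$ then follows, since $H$ is the deck group of the composite covering $\D\to W\to W/\langle\tau\rangle$, and a free properly discontinuous action of $\langle\tau\rangle$ gives a hyperbolic quotient surface $\Sigma=\D/H$.

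\emph{Conclusion.} Finally, set $\Pi=q\circ h$, where $q:W\to\Sigma$ is the quotient map. Then $\Pi$ is a composite of coverings, hence a holomorphic covering, and $\Pi g=q\tau h=qh=\Pi$.
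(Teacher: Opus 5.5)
Most of your outline is sound: the normalization, the infinite-order argument (which is the paper's), torsion-freeness, freeness of $\tau$, the reduction of discreteness of $H$ to proper discontinuity of $\langle\tau\rangle$ on $W$, and the construction of $\Pi$. \textbf{The gap is your proof of proper discontinuity.} From $h(g^{n_j}(z_j))=\tau^{n_j}(w_j)\to w'$ you only learn that $g^{n_j}(z_j)$ is at bounded $U$-distance from \emph{some} point of the fibre $h^{-1}(w')$. That fibre has cardinality $[\Gamma_\infty:\Gamma_0]$. Since $\Gamma_1$ is the deck group of $g\circ\pi=\pi\circ T$, one has $[\Gamma_1:\Gamma_0]=\deg(g|_U)$. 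So as soon as $g$ is not univalent on $U$, the fibre is an infinite discrete subset of $U$. It is not relatively compact and may accumulate on $\partial U$, including at $a$, so locally uniform escape gives no contradiction. Unwound, a failure of proper discontinuity means $g^{m_j}(x_j)=g^{m_j+n_j}(y_j)$ with $x_j,y_j$ in a fixed compact set and $|n_j|\to\infty$. Because the stage indices $m_j$ may be unbounded, convergence of $g^n$ to $a$ on compact sets does not exclude this. This is exactly the non-elementary situation in which you set J\o rgensen aside, so discreteness of $H$ remains unproved there.

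The missing idea is group-theoretic and needs no discreteness beyond that of $\Gamma_\infty$. $H$ normalizes $\Gamma_\infty$, and when $\Gamma_\infty$ is non-elementary its normalizer is discrete. Suppose $s_j\in H\setminus\{1\}$ and $s_j\to1$. Take two hyperbolic elements $\gamma_1,\gamma_2\in\Gamma_\infty$ with disjoint fixed pairs. The conjugates $s_j\gamma_is_j^{-1}$ lie in $\Gamma_\infty$ itself and tend to $\gamma_i$. Discreteness of $\Gamma_\infty$ then forces $s_j\gamma_is_j^{-1}=\gamma_i$ for large $j$, but the common centralizer of $\gamma_1,\gamma_2$ is trivial, a contradiction. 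Unlike J\o rgensen's inequality, this never requires discreteness of an auxiliary group. In the elementary case, the proof of Lemma~\ref{q4:stable-group-discrete} gives $\Gamma_\infty=\Gamma_0$, so $W=U$ and $h=\mathrm{id}$; there your escape argument is exactly right.

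A minor correction to your freeness step: conjugating $T^k\zeta=\gamma\zeta$ by $T^n$, where $\gamma\in\Gamma_n$, shows that $g^n(z)$ rather than $z$ is fixed by $g^k$. This still yields the forbidden periodic point.
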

\begin{proof}
Normalization makes $H/\Gamma_\infty$ cyclic.
If $T^k\in\Gamma_\infty$ for $k\ne0$, then $T^k\in\Gamma_n$ for some $n$.
Conjugating by $T^n$ shows $T^k\in\Gamma_0$.
Taking $k>0$ by inversion, this gives $g^k=\mathrm{id}$ on $U$, contrary to escape.
The quotient is therefore infinite cyclic.

More generally, $\tau^k$ has no fixed point for any $k\ne0$.
Otherwise $T^k\zeta=\gamma\zeta$ for some $\gamma\in\Gamma_\infty$ and $\zeta\in\D$.
Choose $n$ with $\gamma\in\Gamma_n$ and conjugate by $T^n$.
Since $T^n\gamma T^{-n}\in\Gamma_0$, projection by $\pi$ gives a fixed point of $g^k$ in $U$ (replace $k$ by $-k$ if necessary).
This is impossible.

If $\Gamma_\infty$ is non-elementary, its normalizer is discrete.
Here is the needed argument.
Were $s_j\to1$ nonidentity elements of the normalizer, then for two fixed hyperbolic elements of $\Gamma_\infty$ with disjoint endpoint pairs, $s_j\gamma_i s_j^{-1}\to\gamma_i$.
Discreteness of $\Gamma_\infty$ forces equality for all large $j$ and both $i$.
The common centralizer of those two hyperbolic elements is trivial, a contradiction.
Since $H$ lies in this normalizer, it is discrete.

If $\Gamma_\infty$ is elementary, the preceding proof shows $\Gamma_\infty=\Gamma_0$; thus $W=U$ and $g$ is an automorphism.
Local uniform escape implies that its cyclic action is properly discontinuous.
Indeed, for compact $K,L\subset U$, only finitely many $n\ge0$ satisfy $g^n(K)\cap L\ne\varnothing$, and negative $n$ reduce to the same statement with $K,L$ interchanged.
Lifting to $\D$ and using discreteness of $\Gamma_0$ proves that $H$ is discrete in this case as well.

There are no nontrivial elliptic elements of $H$.
An element has the form $\gamma T^k$.
For $k=0$ this follows from torsion-freeness of $\Gamma_\infty$.
If $k\ne0$ and it fixes a point of $\D$, the preceding finite-stage conjugation gives a periodic point of $g$ in $U$.
Therefore $H$ is torsion-free.
Its action on $\D$ is free and properly discontinuous, and the quotient map induces the asserted free cyclic covering $W\to\Sigma$.
Composition with $h$ gives $\Pi$.
\end{proof}

\begin{proposition}\label{q4:periodic-packet}
For every $B<\infty$ there is a relatively compact simply connected domain $D\subset U$ and a compact Jordan disk $K\subset D$ such that
\[
 \alpha_U(K)>B,
\]
every $g^n$ is injective on $D$, and the domains $g^n(D)$, $n\ge0$, are pairwise disjoint.
\end{proposition}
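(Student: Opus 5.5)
We will imitate the proof of Lemma~\ref{lem:large-disks}, now with the quotient surface $\Sigma=\D/H$ of Lemma~\ref{q4:full-group} in place of the limiting surface $V$. The plan has three steps: show that $\Sigma$ has infinite area, choose a large-area packet there, and lift it to $U$.

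\emph{Step 1: $\Sigma$ has infinite area.} By Lemma~\ref{q4:stable-group-discrete}, $W=\D/\Gamma_\infty$ is planar. By Lemma~\ref{q4:full-group}, $W\to\Sigma$ is a connected infinite cyclic holomorphic covering. The components $U_n\cong\D/\Gamma_n$ are all conformally the plane domain $U$. By Lemma~\ref{q4:no-puncture-end}, $U$ has no isolated boundary point on the sphere. Lemma~\ref{q4:direct-limit-no-punctures} then shows that $W$ has no puncture end. Lemma~\ref{q4:quotient-infinite-area} therefore gives $\alpha_\Sigma(\Sigma)=\infty$.

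\emph{Step 2: a packet in $\Sigma$.} Choose finitely many pairwise disjoint compact coordinate disks in $\Sigma$ whose total area exceeds $B$. Give each a slightly larger embedded disk neighborhood, keeping these neighborhoods pairwise disjoint. We want a single simply connected $D$. To get one, join the neighborhoods by thin embedded bands along disjoint simple arcs in $\Sigma$; such arcs exist because $\Sigma$ is a connected surface. The result is a closed topological disk $\overline{D_\Sigma}$ embedded in $\Sigma$ with compact area $>B$.

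Inside $D_\Sigma$ we need a compact Jordan disk $K_\Sigma$ of area $>B$. Take the union of the small disks and slightly thinner bands, and round its corners to obtain a smooth Jordan disk. Since $D_\Sigma$ is simply connected and relatively compact, it is evenly covered by the covering $\D\to\Sigma$.

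\emph{Step 3: lifting to $U$.} The covering $\Pi:U\to\Sigma$ of Lemma~\ref{q4:full-group} is holomorphic. Choose one lift $D\subset U$ of $D_\Sigma$ under $\Pi$, and let $K\subset D$ be the corresponding lift of $K_\Sigma$. Then $\Pi|_D$ is a homeomorphism onto $D_\Sigma$, and covering invariance gives $\alpha_U(K)=\alpha_\Sigma(K_\Sigma)>B$. The lift $D$ is relatively compact in $U$, because $\overline{D_\Sigma}$ is compact and also lifts homeomorphically.

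Now injectivity and disjointness follow from $\Pi g=\Pi$. Suppose $g^m(x)=g^n(y)$ with $x,y\in D$. Applying $\Pi$ gives $\Pi(x)=\Pi(y)$, and hence $x=y$ because $\Pi|_D$ is injective. Thus each $g^n$ is injective on $D$. For disjointness, suppose $m<n$ and $g^m(x)=g^n(y)$. The previous argument gives $x=y$, so $g^m(x)=g^{n-m}(g^m(x))$. This is a periodic point of $g$ in $U$, which is impossible since $g$ escapes locally uniformly.

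The main obstacle is Step 1, since it must verify the hypotheses of Lemmas~\ref{q4:direct-limit-no-punctures} and~\ref{q4:quotient-infinite-area}. One hypothesis is that each stage $\D/\Gamma_n$ has no isolated spherical boundary point; this holds because $\D/\Gamma_n\cong U$ via $\pi\circ T^n$. The other is torsion-freeness, which is supplied by Lemma~\ref{q4:stable-group-discrete}.
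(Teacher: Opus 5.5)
Your proposal is correct and follows the paper's route: you get infinite area of $\Sigma$ from Lemmas~\ref{q4:direct-limit-no-punctures}, \ref{q4:stable-group-discrete} and~\ref{q4:quotient-infinite-area}, build a packet of joined coordinate disks in $\Sigma$, and lift it through $\Pi$. Your last step uses $\Pi g=\Pi$ and injectivity of $\Pi|_D$ to reduce any coincidence $g^m(x)=g^n(y)$ to a periodic point of $g$; this is a slightly more direct version of the paper's argument through $h$ and the free action of $\tau$ on $W$ (just make sure the joining arcs form a tree, so that the resulting neighborhood is a disk).
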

\begin{proof}
Every $\D/\Gamma_n$ is conformally equivalent to $U$ and has no puncture end by Lemma~\ref{q4:no-puncture-end}.
Hence Lemma~\ref{q4:direct-limit-no-punctures} shows that $W$ has no puncture end.
Its genus is zero by Lemma~\ref{q4:stable-group-discrete}.
Lemma~\ref{q4:quotient-infinite-area} now gives $\alpha_\Sigma(\Sigma)=\infty$.
There is a smoothly bounded relatively compact disk $D_0\subset\Sigma$, whose closure has a slightly larger disk neighborhood, and a compact Jordan disk $K_0\subset D_0$ with $\alpha_\Sigma(K_0)>B$.
One explicit construction is to take finitely many disjoint compact coordinate disks whose total area exceeds $B$, join them by an embedded finite tree of arcs, and take a sufficiently small regular neighborhood.
The tree creates no handle or cycle, so this neighborhood is a disk.
Take $K_0$ as the closure of a smaller smooth disk that contains the chosen coordinate disks, and enlarge its neighborhood slightly to obtain $D_0$.

Lift a simply connected neighborhood of $\overline{D_0}$ through the covering $\Pi:U\to\Sigma$ of Lemma~\ref{q4:full-group}.
One component is mapped biholomorphically onto that neighborhood.
Let $D$ and $K$ be the corresponding lifts of $D_0$ and $K_0$.
They are relatively compact, and covering invariance of the complete hyperbolic metric gives $\alpha_U(K)=\alpha_\Sigma(K_0)>B$.

The restriction of $h$ to $D$ is injective, because $\Pi$ is injective there.
Its image $h(D)$ is a component over $D_0$ for the cyclic covering $W\to\Sigma$.
The components $\tau^n h(D)$ are pairwise disjoint: if a nonzero power preserved one component, injectivity of its projection to $D_0$ would make that power the identity on an open set and hence on $W$,
contradicting Lemma~\ref{q4:full-group}.
Since $h g^n=\tau^n h$, this proves that $g^n(D)$ are pairwise disjoint.
If $g^n(x)=g^n(y)$ for $x,y\in D$, applying $h$ gives $\tau^n h(x)=\tau^n h(y)$, hence $x=y$.
Thus each iterate is injective on $D$.
\end{proof}

\begin{corollary}\label{q4:full-cycle-packet}
With the same $D$, all maps $f^{np+j}|_D$, $n\ge0$, $0\le j<p$, are injective, and their images are pairwise disjoint.
\end{corollary}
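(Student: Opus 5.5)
The plan is to reduce everything to Proposition~\ref{q4:periodic-packet} by writing each iterate of $f$ through the return map $g=f^p$. Fix $D\subset U=U_0$ from that proposition. For $m\ge0$ write $m=np+j$ with $0\le j<p$, so that $f^{m}|_D=f^j\circ g^n|_D$. The sets $f^{np+j}(D)$ and $f^{n'p+j'}(D)$ with $j\ne j'$ lie in the distinct Fatou components $U_j$ and $U_{j'}$ of the cycle, hence are disjoint. It therefore remains to treat a fixed phase $j$. There we need injectivity of $f^j$ on $\bigcup_n g^n(D)$, and disjointness of the sets $f^j(g^n(D))$ for different $n$.

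Both facts I would get from the single relation $f^{p-j}\circ f^j=g$. Suppose $f^j(x)=f^j(y)$ with $x\in g^n(D)$ and $y\in g^{n'}(D)$. Applying $f^{p-j}$ gives $g(x)=g(y)$. Write $x=g^n(x_0)$ and $y=g^{n'}(y_0)$ with $x_0,y_0\in D$. Then $g^{n+1}(x_0)=g^{n'+1}(y_0)$, which lies in $g^{n+1}(D)\cap g^{n'+1}(D)$. By the pairwise disjointness in Proposition~\ref{q4:periodic-packet}, this forces $n=n'$. Injectivity of $g^{n+1}$ on $D$ then gives $x_0=y_0$, hence $x=y$.

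This one argument yields both conclusions. It gives injectivity of every $f^{np+j}$ on $D$, by taking $n=n'$. It also gives disjointness of distinct images within a phase. In particular, distinct pairs $(n,j)$ give pairwise disjoint images.

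I do not expect a real obstacle here. The only point needing care is that all these iterates are defined on $D$. They are, since $D\subset U$ lies in a Fatou component and contains no pre-poles. The remaining checks are bookkeeping: that $f^j$ maps $U_0$ into $U_j$, and that $f^{p-j}$ maps $U_j$ back into $U_0$ consistently with $g=f^p$.
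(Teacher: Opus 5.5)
Your proposal is correct and follows the paper's proof essentially verbatim. Both apply $f^{p-j}$, using $f^{p-j}\circ f^{np+j}=g^{n+1}$, to reduce injectivity and same-phase disjointness to the injectivity of $g^{n+1}$ on $D$ and the pairwise disjointness of the sets $g^{n+1}(D)$; different phases are disjoint because they lie in distinct components of the cycle.
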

\begin{proof}
Injectivity of $f^{np+j}$ follows from injectivity of $g^{n+1}$, since $f^{p-j}\circ f^{np+j}=g^{n+1}$.
Images in different cycle components are disjoint.
For the same $j$, an intersection of the images for indices $n\ne m$ would remain an intersection after applying $f^{p-j}$, contrary to the disjointness of $g^{n+1}(D)$ and $g^{m+1}(D)$.
\end{proof}

\subsection{The Baker-cycle conclusions}
\begin{proof}[Proof of the accumulation assertion in Theorem~\ref{q4:main}]
Fix $T$ and put $A=A_T$ and $E=S\setminus A\subset T$.
If $A$ meets one component, choose $x\in A\cap U_j$.
Its return orbit consists of distinct points of $A$ tending to $a_j$; repetition would give a periodic point in the Baker component.
Following the orbit through the other phases proves $a_i\in A'$ for every $i$.
We may therefore assume $A$ is disjoint from the whole cycle.

Lemma~\ref{q4:transitions} makes every transition a covering, including the return $g=f^p:U_0\to U_0$.
Apply Proposition~\ref{q4:periodic-packet} and Corollary~\ref{q4:full-cycle-packet} to obtain a compact set $K\Subset U_0$ with $\alpha_{U_0}(K)>|E|+2$, such that every iterate is injective on a neighborhood of $K$ and the sets $K_n=f^n(K)$ are pairwise disjoint.

Set $X_j=\Chat\setminus(A\cup P_j)$, $E_j=E\cap X_j$, $Y_j=X_j\setminus E_j$, $\mu_j=\alpha_{X_j}$, and $\nu_j=\alpha_{Y_j}-\alpha_{X_j}$.
Forward invariance of $A$ at finite points and of $P_j$ gives $f^{-1}(Y_j)\subset X_j$.
All singular values have been removed from $Y_j$, so $f:f^{-1}(Y_j)\to Y_j$ is a covering.
Hence
\[
 f^*(\mu_j+\nu_j)\ge\mu_j,
 \qquad \nu_j(Y_j)\le |E_j|\le |E|.
\]
Choose $j$ by Lemma~\ref{core:kernel} so that $\mu_j(K)>|E|+1$, and keep this $j$ fixed.
Write $\mu=\mu_j$ and $\nu=\nu_j$.
The covering inequality gives
\[
 (\mu-f^*\mu)_+\le f^*\nu.
\]
Injectivity and disjointness of all the $K_n$ now imply
\begin{equation}\label{q4:loss-estimate}
 \sum_{n\ge0}\int_{K_n}(\mu-f^*\mu)_+
 \le\sum_{n\ge0}\nu(K_{n+1})\le |E|.
\end{equation}
Thus, writing $b_n=\mu(K_n)$ and telescoping only the negative variation,
\begin{equation}\label{q4:mass-floor}
 b_n\ge b_0-|E|>1 \qquad(n\ge0).
\end{equation}

Fix $i$.
Surjectivity of $f^i:U_0\to U_i$ and the defining Baker limits give $f^{np+i}|_K\to a_i$ uniformly in the spherical metric.
If $a_i\notin A'$, choose a spherical disk $B$ about $a_i$ meeting $A\cup P_j$ in at most $a_i$.
On a smaller concentric disk $B'$ the measure $\mu$ has finite mass: compare with the complete metric of $B$ if $a_i$ is retained, and with that of $B\setminus\{a_i\}$ otherwise.
A cusp has finite area near its puncture.
The disjoint sets $K_{np+i}$ eventually lie in $B'$, contradicting \eqref{q4:mass-floor}.
Therefore $a_i\in A'$.

For distinct generators, after choosing $s_1,\ldots,s_{k-1}$, apply the result with $T\cup\{s_1,\ldots,s_{k-1}\}$.
Since the limit $a_i$ lies in the derived set of the corresponding carrier, choose a new generator and an image within spherical distance $1/k$ of $a_i$, avoiding all previous image points.
This proves both distinctness requirements.
\end{proof}

\begin{proof}[Proof of the separation assertion in Theorem~\ref{q4:main}]

Use the set and sufficiently large finite marking $P_j$ in the proof of Theorem~\ref{q4:main}.
They give $X=\Chat\setminus(A_T\cup P_j)$, $Y=X\setminus E$, $E=(S\setminus A_T)\cap X$, and $\alpha_X(K)>|E|$.
The third assertion of Lemma~\ref{lem:finite-loss} and Lemma~\ref{q4:summable-defects} yield a point $z\in U_0$ with
\[
 d_X(f^n(z),X\setminus U_{n\bmod p})\longrightarrow\infty.
\]
For any $w\in U_0$, Schwarz--Pick gives
\[
 d_X(f^n(w),f^n(z))
 \le d_{U_{n\bmod p}}(f^n(w),f^n(z))
 \le d_{U_0}(w,z).
\]
Thus the same separation holds locally uniformly for starting points in $U_0$.

It remains to remove the finite marking.
The accumulation assertion gives $a_i\in A_T'$ for every phase, so $A_T$ is infinite and $Z_T$ is hyperbolic.
For $w_n=f^n(w)$, completeness and properness of its Poincar\'e metric give
\[
 d_{Z_T}(w_n,P_j\setminus A_T)\longrightarrow\infty:
\]
on each phase $w_{np+i}\to a_i\in A_T$, whereas the relevant finite marks lie in the interior of the fixed component of $Z_T$.

Fix $R<\infty$.
On $B_{Z_T}(w_n,R)$ the distance from the finite marks tends uniformly to infinity.
The elementary covering-disk comparison therefore gives
\[
 1\le\rho_X/\rho_{Z_T}\le1+o(1)
\]
uniformly on that ball.
Indeed, at a point whose $Z_T$-distance from the marks exceeds $L$, restrict a universal cover to its centered hyperbolic disk of radius $L$; its image avoids the marks, and Schwarz--Pick bounds the density ratio by $\coth L$.
Every $Z_T$-path of length at most $R$ from $w_n$ consequently lies in $X$ and has $X$-length at most $(1+o(1))R$.
Separation in $X$ therefore puts $B_{Z_T}(w_n,R)$ inside $U_{n\bmod p}$ for large $n$.
Since $R$ was arbitrary, this is \eqref{eq:baker-head-separation}.
\end{proof}

\begin{corollary}
\label{q4:deletion-spatial-mesh}
Let $W$ be a phase of the singular-free Baker cycle for which $f^{np}|_W\to\infty$.
For every finite $T\subset S$, there is a sequence of finite points $p_n\in\mathcal O_T$ such that
\[
 |p_n|\to\infty,
 \qquad
 \sup_n\frac{|p_{n+1}|}{|p_n|}<\infty,
 \qquad
 \frac{\dist(p_n,W)}{|p_n|}\to0.
\]
No boundedness of the forward times defining $p_n$ is asserted.
\end{corollary}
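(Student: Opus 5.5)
The plan is to combine the separation assertion \eqref{eq:baker-head-separation} with the Euclidean comparison between the hyperbolic metric of a plane domain and the distance to its complement.

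\textbf{Case where $A_T$ meets the cycle.} Suppose first that $A_T$ meets the cycle. Take $x\in A_T\cap W$. If $x$ is not already in $\mathcal O_T$, perturb it to a point of $\mathcal O_T\cap W$; this is possible because $W$ is open and $A_T$ is the closure of $\mathcal O_T$. The return orbit $p_n=f^{np}(x)$ lies in $\mathcal O_T\cap W$, so $\dist(p_n,W)=0$. It also tends to infinity, since $f^{np}|_W\to\infty$. For the ratio bound, note that $f^p:W\to W$ does not increase $d_W$, so $d_W(p_n,p_{n+1})\le d_W(x,f^p(x))$ is bounded. The standard estimate $\rho_W(z)\ge 1/(C\,\dist(z,\partial W)\log(\cdots))$, or more simply comparison with $\C\setminus\{0,b\}$, then gives $|p_{n+1}|/|p_n|$ bounded. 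Concretely, a bounded hyperbolic distance in $\C\setminus\{0,1\}$ near infinity controls $\log|z|$ up to a multiplicative constant, which is not quite a ratio bound; so I would rather use the Baker-domain fact that $W$ omits a fixed pair of finite Julia points $b_1,b_2$. Since $W\subset\C\setminus\{b_1,b_2\}$, bounded $d_W$ between points going to infinity implies bounded $|\log|p_{n+1}|-\log|p_n||$. This holds because $\rho_{\C\setminus\{b_1,b_2\}}(z)\asymp 1/(|z|\log|z|)$ near infinity, so hyperbolic distance bounds $\log\log|z|$ differences, which is weaker than needed. The first obstacle is exactly this point: I will need $d_W(p_n,p_{n+1})\to0$, or a sharper density. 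Here I would use the summable-defect conclusion of Lemma~\ref{lem:finite-loss}: the one-step hyperbolic displacement satisfies $\rho_W(z_n)|z_{n+1}-z_n|$ bounded, and the Koebe-type inequality $\rho_W(z)\ge 1/(4\dist(z,\partial W))$ on a simply connected piece, or in general a lower bound of the form $1/(C\dist(z,\partial W))$ once $W$ is shown to be uniformly perfect near $a_i=\infty$, gives $|z_{n+1}-z_n|\le C\dist(z_n,\partial W)\le C|z_n|$. I expect this bound to hold, since $0$ lies in the Julia set after a translation.

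\textbf{Case where $A_T$ avoids the cycle.} Now apply Theorem~\ref{q4:main}: $d_{Z_T}(z_n,Z_T\setminus W)\to\infty$ along $z_n=f^{np}(z)$. Since the hyperbolic metric of $Z_T$ is dominated by the Euclidean quasihyperbolic density, we have $\rho_{Z_T}(\zeta)\le 1/\dist(\zeta,A_T\cap\C)$. Consequently the Euclidean disk $D(z_n,\dist(z_n,A_T\cap\C)\cdot\tanh R)$ lies in $B_{Z_T}(z_n,R)\subset W$ for large $n$. It follows that $d_n=\dist(z_n,\partial W)$ satisfies $\dist(z_n,A_T\cap\C)\le(1+o(1))\,d_n$. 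Pick $p_n\in\mathcal O_T$ finite with $|p_n-z_n|\le(1+1/n)\dist(z_n,A_T\cap\C)$. Then $\dist(p_n,W)\le|p_n-z_n|\le(1+o(1))d_n$. That is the wrong way round for the estimate $\dist(p_n,W)/|p_n|\to0$, so instead I would argue that $\dist(p_n,W)\le|p_n-z_n|-d_n+o(d_n)$ fails in general. The intended estimate is instead $d_n=o(|z_n|)$ together with $|p_n-z_n|=O(d_n)$.

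\textbf{Main obstacle.} The key step is therefore showing $d_n/|z_n|\to0$ for a Baker domain: no disk $D(z_n,\epsilon|z_n|)$ lies in $W$. For this I would invoke the ratio bound from the first case on one-step displacements together with Baker-domain hyperbolic geometry, namely that $\rho_W(z_n)|z_n|\to\infty$ because $W$ contains no large annuli around $0$. I expect this to be the hardest step, and would first try the round-ring Lemma~\ref{q11e:round-ring} combined with the presence of Julia points (poles or repelling points) in every annulus $\{r<|z|<Cr\}$ met by the orbit. With $d_n=o(|z_n|)$ in hand, $|p_n|\sim|z_n|$, $\dist(p_n,W)\le|p_n-z_n|=o(|p_n|)$, and $|p_{n+1}|/|p_n|\sim|z_{n+1}|/|z_n|$ is bounded by the one-step displacement estimate.
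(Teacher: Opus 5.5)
Your second case depends on the claim you single out, $d_n=\dist(z_n,\partial W)=o(|z_n|)$, and that claim is false for Baker domains in general. Without it, your nearest-point witness can even fail $|p_n|\to\infty$.

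Take $f(z)=2-\log 2+2z-e^z$. For large $R$ the half-plane $\{\Re z<-R\}$ is mapped into itself, with $\Re z$ decreasing by a fixed amount, so it lies in an invariant Baker domain $W$. The critical values are $\log2+4\pi ik$ and there is no finite asymptotic value. Under $w=e^z$ the critical values all lie over the superattracting fixed point $w=2$ of $\tfrac{e^2}{2}w^2e^{-w}$, whereas $W$ lies over the basin of $0$, so $W$ is singular-free. For every finite $T$ one has $A_T\cap\C\subset\{\log2+4\pi ik\}$.

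For a real orbit $z_n\to-\infty$ you get $|z_n|-R\le d_n\le|z_n|+\log 2$, so $d_n/|z_n|\to1$. Meanwhile the points of $A_T\cap\C$ nearest to $z_n$ do not depend on $n$. So a $p_n$ with $|p_n-z_n|\le(1+o(1))\dist(z_n,A_T\cap\C)$ need not tend to infinity. Incidentally, the inequality $\dist(p_n,W)\le|p_n-z_n|-d_n$ that you discard does hold, since $D(z_n,d_n)\subset W$. The real trouble is that a disk comparison at $z_n$ only sees scale $d_n$, not scale $|z_n|$.

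What is missing is a hyperbolic-length argument along a curve at scale $|w_n|$. The paper imports from the proof of \cite[Proposition~7.5]{MBRG} that $\partial W$ meets $\{|w_n|/C_0<|z|<C_0|w_n|\}$. It then joins $w_n$ to such a boundary point by an arc of $|z|=|w_n|$ followed by a radial segment, stopped at the first hit of $\partial W$. This curve has Euclidean length at most $(\pi+C_0)|w_n|$. Suppose it stayed at distance at least $\epsilon|w_n|$ from $A_T\cap\C$. Then $\rho_{Z_T}\le1/(\epsilon|w_n|)$ along it would give $d_{Z_T}(w_n,Z_T\setminus W)\le(\pi+C_0)/\epsilon$, contradicting \eqref{eq:baker-head-separation}. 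This produces points of $A_T$, and then of $\mathcal O_T$, of modulus comparable to $|w_n|$ and at distance $o(|w_n|)$ from $W$. Your remark about Julia points in every annulus met by the orbit is essentially this imported input, but it must be used to build such a curve, not to force $d_n=o(|z_n|)$.

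The ratio bound $\sup_n|p_{n+1}|/|p_n|<\infty$ is also not established, in either case. As you observe, two omitted points only give $\log|w_{n+1}|\le C\log|w_n|$.

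Your proposed upgrade, $\rho_W\ge1/(C\dist(\cdot,\partial W))$, holds by Koebe when $W$ is simply connected. But the paper's covering model in Section~\ref{q4:covering-model-section} does not assume simple connectivity, and you give no argument that $W$ is simply connected or that $\partial W$ is uniformly perfect. Also, Lemma~\ref{lem:finite-loss} controls metric defect ratios, not Euclidean step lengths; the bounded quantity you actually have is $d_W(z_n,z_{n+1})$, from Schwarz--Pick.

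The paper does not derive this bound from its area framework either. It imports $|w_{n+1}|\le C_1|w_n|$ from the same proof in \cite{MBRG}. Granting that estimate, your first case is fine. Taking $x\in\mathcal O_T\cap W$ and its return orbit is even slightly cleaner than the paper's approximation of the orbit of a point of $A_T\cap W$, since it gives $\dist(p_n,W)=0$.
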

\begin{proof}
Put $g=f^p|_W$.
We use the two Baker-orbit estimates appearing in the proof of \cite[Proposition~7.5]{MBRG}: for a fixed orbit $w_n=g^n(w)$ tending to infinity, there are constants $C_0,C_1>1$ such that,
for all large $n$,
\[
 |w_{n+1}|\le C_1|w_n|,
 \qquad
 \partial W\cap\{ |w_n|/C_0<|z|<C_0|w_n|\}\ne\varnothing.
\]
These are intrinsic Baker-domain estimates and do not depend on a choice of postsingular or finite-deletion carrier.

If $A_T$ meets the cycle, forward invariance gives a point in $A_T\cap W$.
Use its return orbit for $w_n$.
Approximate each $w_n$ by a point of the dense set $\mathcal O_T$ with error $o(|w_n|)$.
The displayed growth bound proves the conclusion in this case.

Otherwise use Theorem~\ref{q4:main} and fix any orbit in $W$.
Choose
$\zeta_n\in\partial W$ in the indicated annulus. Follow the shorter
arc of $|z|=|w_n|$ from $w_n$ to the ray through $\zeta_n$, then the radial segment to $\zeta_n$; stop at the first intersection with
$\partial W$. The resulting $\gamma_n$ lies in
$\{|w_n|/C_0\le |z|\le C_0|w_n|\}$, has length at most $(\pi+C_0)|w_n|$, and has interior in $W$.
We claim
\[
 \dist(\gamma_n,A_T\cap\C)=o(|w_n|).
\]
If not, along a subsequence the distance is at least $\epsilon|w_n|$ for some $\epsilon>0$.
The whole curve then lies in $Z_T$, and disk comparison gives $\rho_{Z_T}\le1/(\epsilon|w_n|)$ on it.
Its hyperbolic length is at most $C_2/\epsilon$, contrary to \eqref{eq:baker-head-separation} at its endpoint in $Z_T\setminus W$.

Choose $q_n\in A_T\cap\C$ at distance $o(|w_n|)$ from $\gamma_n$.
The annular location of $\gamma_n$ gives a constant $C_3>1$ with
\[
 |w_n|/C_3\le |q_n|\le C_3|w_n|,
 \qquad
 \dist(q_n,W)=o(|w_n|).
\]
Approximate $q_n$ by finite $p_n\in\mathcal O_T$ with error $o(|w_n|)$.
The same inequalities, with a slightly larger $C_3$, hold for $p_n$.
They imply escape and relative proximity, while $|p_{n+1}|/|p_n|\le C_3^2C_1$ for all sufficiently large $n$.
Discarding finitely many terms completes the proof.
\end{proof}

\section{Wandering boundaries: proof of Theorem~\ref{q11all:main}}\label{sec:q11}

For the full postsingular set, wandering-domain limit relations were established for entire functions by Bergweiler, Haruta, Kriete, Meier and Terglane \cite{BHKMT1993} and for meromorphic functions by Zheng \cite{Zheng2003} and Baker \cite[Theorem~1]{Baker2002}.
Bara\'nski, Fagella, Jarque and Karpi\'nska \cite[Theorem~B]{BFJK} proved proximity of the full postsingular set to forward Fatou components.
In fact, their \cite[Lemma~2.1 and the proof of Theorem~B]{BFJK} implies \eqref{q11all:distance-ratio} and \eqref{q11all:actual-witness} for $T=\varnothing$ when the full postsingular set avoids the forward components:
apply the lemma to a
shortest segment from $z_n$ to $\partial U_n$ and approximate the
resulting postsingular point by an actual forward singular image.
The full-set separation under the same avoidance condition follows also from \cite[Proposition~7.4]{MBRG}.
The assertions here hold for $A_T$ after any finite set $T$ of original singular generators is omitted, even when their orbits enter the forward components.
The witnesses have no prescribed forward times, and the orbits of the omitted generators are unrestricted.
The thin case gives \eqref{q11all:distance-ratio}-- \eqref{q11all:actual-witness} without asserting \eqref{q11all:thick-hyperbolic}.

\subsection{Thick wandering tails}
\begin{proposition}\label{q11geo:main}
The conclusions of Theorem~\ref{q11all:main} hold for a thick tail.
In the case $A_T\cap U_n=\varnothing$ for all $n$, every constant normal limit belongs to $A_T'$.
\end{proposition}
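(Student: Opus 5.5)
The proof has three parts. The first case is $A_T$ meeting some $U_m$. The second case is $A_T$ avoiding every forward component. A final paragraph treats the normal limits.

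\emph{First case.} Suppose $A_T$ meets some $U_m$. Choose a finite point $x\in A_T\cap U_m$. Since $A_T$ is a closure, $\mathcal O_T$ is dense in it, so $U_m$ contains an actual point $p_m\in\mathcal O_T$. Its forward images $f^{n-m}(p_m)$ lie in $\mathcal O_T\cap U_n$. They are finite and defined, because the iterates on Fatou components are defined throughout. This gives the witnesses with $f(p_n)=p_{n+1}$ for $n\ge N=m$. Every constant normal limit $a$ of $f^{n_k}|_{U_0}$ satisfies $p_{n_k}\to a$. To see this, connect $p_m$ to $f^m(z_0)$ by a compact path in $U_m$; the iterates are uniformly close to $a$ on it. Hence $a\in A_T$. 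Thickness upgrades this to $a\in A_T'$: the points $p_{n_k}$ lie in distinct components, so they are pairwise distinct.

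\emph{Second case: separation.} Now assume $A_T$ avoids every $U_n$. Put $E=S\setminus A_T\subset T$. Since $T$ is finite, after discarding finitely many indices the tail also avoids $E$. By Lemma~\ref{q4:transitions} all transitions are coverings. Lemma~\ref{lem:large-disks} supplies disks $D_i$ and $Q$ with $\alpha_{U_0}(Q)>|E|+2$, jointly injective, with disjoint images. Using \eqref{eq:recovery}, choose a marking $P_j$ so that $X=\sphere\setminus(A_T\cup P_j)$ has $\alpha_X(Q)>|E|+1$. Set $Y=X\setminus E$ and $V=f^{-1}(Y)\subset X$. Lemma~\ref{lem:finite-loss}(3) followed by Lemma~\ref{q4:summable-defects} gives $d_X(z_n,X\setminus U_n)\to\infty$ at one point. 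Schwarz--Pick then spreads this locally uniformly over $U_0$, as in the Baker case. To obtain \eqref{q11all:thick-hyperbolic} I apply Lemma~\ref{q11geo:remove-marks} with $W_n=U_n$, which removes the marks. This needs two inputs. First, $Z_T$ must be hyperbolic; this holds because $A_T$ is infinite, which follows from the limit assertion below or from Theorem~\ref{thm:finite-type}. Second, every cluster point of $z_n$ must lie in $A_T$. The latter is the positive-area argument of Proposition~\ref{prop:carrier}. Near a point $a\notin A_T$, the mark set has at most the point $a$, so a neighbourhood has finite $\alpha_X$-area. Disjoint images of a disk with a positive limiting density would violate this. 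The same argument excludes any limit in $A_T\setminus A_T'$, which yields the $A_T'$ conclusion.

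\emph{Second case: Euclidean estimates.} Next I derive \eqref{q11all:distance-ratio} and \eqref{q11all:actual-witness} from the separation. The ratio is at least $1$ because $U_n\cap A_T=\varnothing$. For the upper bound, let $\sigma_n$ be a shortest segment from $z_n$ to $\partial U_n$, of length $d_n$. I extend it past its endpoint by $\epsilon d_n$. Suppose the extended segment stayed at distance $\ge\epsilon d_n$ from $A_T\cap\C$. The disk comparison $\rho_{Z_T}\le1/\operatorname{dist}$ would then bound its $Z_T$-length by a constant $C(\epsilon)$. This contradicts $d_{Z_T}(z_n,Z_T\setminus U_n)\to\infty$; the point $\infty$ is harmless because it lies in $A_T$ or is far away. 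Hence there are points of $A_T$ within $o(d_n)$ of the endpoint of $\sigma_n$. Approximating them by actual points of $\mathcal O_T$ gives $p_n$ with $|p_n-z_n|/d_n\to1$ and $\dist(p_n,U_n)/d_n\to0$. It also gives the ratio upper bound. The case $A_T\cap\C=\varnothing$ needs separate handling; I plan to rule it out using the pole or the other points near infinity.

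\emph{Main obstacle.} I expect the hardest step to be the cluster-point condition of Lemma~\ref{q11geo:remove-marks}. For all cluster points, not just one orbit, I need to show they lie in $A_T$, and indeed in $A_T'$. This requires uniform control of the finite area near a cluster point $a$ that might be a mark or $\infty$. I will first try to copy the cusp-area argument of Proposition~\ref{prop:carrier} directly, with $A_T$ replacing $A$.
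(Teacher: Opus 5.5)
Your proposal is correct and follows the paper's proof essentially step for step. The first case uses the forward orbit of a point of $\mathcal O_T\cap U_m$. The second case uses Lemmas~\ref{lem:large-disks}, \ref{lem:finite-loss} and~\ref{q4:summable-defects}, then the finite-area neighbourhood argument placing every constant limit in $A_T'$, then Lemma~\ref{q11geo:remove-marks}, and finally the disk comparison for the Euclidean estimates. Your extension of the segment there is unnecessary, since the nearest boundary point already lies in $Z_T\setminus U_n$. The case $A_T\cap\C=\varnothing$ that you leave open needs no separate treatment: you have already shown $A_T$ is infinite, so it has finite points.
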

\begin{proof}
Write $A=A_T$.
If $A\cap U_N\ne\varnothing$, density and openness give $p_N\in\mathcal O_T\cap U_N$.
Its subsequent orbit proves the first assertion.
The covering $f^N:U_0\to U_N$ is onto.
Lifting $p_N$ to $U_0$ shows that every constant normal limit is a limit of distinct points of $A$, since these points lie in different forward components.
It therefore belongs to $A'$.

Assume that $A$ avoids all $U_n$, and put $E_0=S\setminus A\subset T$.
Lemma~\ref{lem:large-disks} gives disks with $\alpha_{U_0}(Q)>|E_0|+2$ and jointly injective, disjoint forward images.
Use \eqref{eq:recovery} to fix a finite repelling marking $P$ with
\[
 X=\sphere\setminus(A\cup P),\quad
 E=E_0\cap X,\quad Y=X\setminus E,
 \qquad\alpha_X(Q)>|E_0|+1.
\]
Forward invariance gives $f^{-1}(Y)\subset X$, and removal of the singular values makes $f:f^{-1}(Y)\to Y$ a covering.
Lemma~\ref{lem:finite-loss} supplies $\alpha_X(f^n(Q))>1$ for all $n$.

Let $a$ be a constant normal limit.
If $a\notin A'$, take a small spherical disk about $a$ meeting $A\cup P$ in at most $a$.
A smaller disk has finite $\alpha_X$-area, by comparison with the disk or punctured-disk metric.
A subsequence of the disjoint sets $f^n(Q)$ lies there, contradicting their positive area lower bound.
Thus $a\in A'$.
In particular $A$ is infinite and $Z=\sphere\setminus A$ is hyperbolic.

The third assertion of Lemma~\ref{lem:finite-loss} and Lemma~\ref{q4:summable-defects} give one $w\in U_0$ for which
\[
 d_X(f^n(w),X\setminus U_n)\longrightarrow\infty.
\]
For $z$ in any fixed compact subset of $U_0$, Schwarz--Pick gives
\[
 d_X(f^n(z),f^n(w))\le d_{U_n}(f^n(z),f^n(w))
 \le d_{U_0}(z,w).
\]
Hence the separation holds locally uniformly for all starting points.
All spherical cluster points of each orbit belong to $A$.
Lemma~\ref{q11geo:remove-marks} removes the finite marking and yields \eqref{q11all:thick-hyperbolic}.
The same conclusion is locally uniform: normality makes the cluster-point assertion uniform on a compact subset of $U_0$, and the preceding distance bound is uniform there.

Set $a_n=\dist(z_n,A\cap\C)\ge d_n$.
If $a_n\ge(1+\eta)d_n$ along a subsequence, a nearest boundary point of $U_n$ lies strictly inside the disk $D(z_n,a_n)\subset Z$ and is at $Z$-distance at most $\operatorname{arctanh}(1/(1+\eta))$ from $z_n$.
This contradicts the separation just proved.
Thus $a_n/d_n\to1$.
Choose a nearest point of the closed set $A\cap\C$ and approximate it by a finite point $p_n\in\mathcal O_T$ with error $o(d_n)$.
Then $|p_n-z_n|/d_n\to1$, and
\[
 \dist(p_n,U_n)\le |p_n-z_n|-d_n=o(d_n),
\]
because $D(z_n,d_n)\subset U_n$.
These are the required witnesses.
\end{proof}

\subsection{Thin wandering tails}
\begin{theorem}
\label{q11e:thin-spatial}
Let $(U_n)$ be a thin singular-free wandering tail of full Fatou components.
For a finite $T\subset S$, suppose $A_T\cap U_n=\varnothing$ for all $n$.
Then for every $z\in U_0$, writing
$z_n=f^n(z)$ and $d_n=\operatorname{dist}(z_n,\partial U_n)$, there are
finite points $p_n\in\mathcal O_T$ such that
\begin{equation}\label{q11e:thin-witness}
 \frac{|p_n-z_n|}{d_n}\longrightarrow1,
 \qquad
 \frac{\operatorname{dist}(p_n,U_n)}{d_n}\longrightarrow0.
\end{equation}
In particular,
\begin{equation}\label{q11e:thin-ratio}
 \frac{\operatorname{dist}(z_n,A_T\cap\C)}{d_n}
 \longrightarrow1.
\end{equation}
No restriction on the finite deleted set $T$ is required.
\end{theorem}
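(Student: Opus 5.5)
The plan is to combine the annular structure of thin tails from Section~\ref{q11r:section} with the round-ring estimate, Lemma~\ref{q11e:round-ring}. For large $n$, $U_n$ is a finite-modulus annulus with $\operatorname{mod}U_n\to\infty$ by \eqref{eq:annular-degree}. Its image curve $\gamma_n$ is a coordinate circle with a small Jordan side $B_n$, and $\operatorname{diam}_\#B_n\to0$. Theorem~\ref{q11r:reset-propagation} supplies witnesses $q_n\in B_n\cap\mathcal O_T$ for $n\ge N_T$. The point $z_n$ lies on the coordinate circle through $z_n$, so I will take $\gamma_0$ through $z$ itself, which is permitted since any concentric circle may be used. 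Then $z_n\in\gamma_n$, and $B_n$ is the complementary side of $\gamma_n$ that contains the small complementary continuum $K_n$ of $U_n$.

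\textbf{Step 1 (reduction to bounded geometry).} First I show that, for large $n$, the small side $B_n$ is finite and bounded. When $f^{n}|_{U_0}\to\infty$ along a subsequence, $B_n$ could contain $\infty$. I will handle this by working on the sphere and applying a Möbius normalization centered at a finite point of $K_n$, for example $q_n$ itself. The finite-point assertion of Theorem~\ref{q11r:reset-propagation}, namely that $B_n\subset\C$ at nonreset steps and that new generators at reset times are finite where needed, should let me assume $q_n\in K_n\cap\C$. I expect a point of $\mathcal O_T$ lying in $B_n$ to lie in $K_n$, because $\mathcal O_T$ avoids $U_n$.

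\textbf{Step 2 (geometry of $z_n$ relative to $K_n$ and $L_n$).} Next, with center $c=q_n\in K_n$, apply Lemma~\ref{q11e:round-ring} to the annulus $U_n$. Setting $R_n=\max_{K_n}|w-q_n|$ and $r_n=\min_{L_n}|w-q_n|$, the lemma gives $r_n/R_n\to\infty$. The point $z_n$ lies in $U_n$ at distance $d_n$ from $\partial U_n=\partial K_n\cup\partial L_n$. The key claim is that $d_n$ is realized, up to a factor $1+o(1)$, by the distance to $K_n$, and that $|z_n-q_n|/d_n\to1$. This holds if $|z_n-q_n|$ is large compared with $R_n$ and small compared with $r_n$. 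In that regime $\dist(z_n,K_n)=|z_n-q_n|(1+O(R_n/|z_n-q_n|))$ and $\dist(z_n,L_n)\ge r_n-|z_n-q_n|\gg|z_n-q_n|$. Then $p_n:=q_n$ satisfies $|p_n-z_n|/d_n\to1$ and $\dist(p_n,U_n)\le R_n=o(d_n)$.

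\textbf{Step 3 (the main obstacle).} The hard part is placing $z_n$ in the middle of the ring, with $R_n\ll|z_n-q_n|\ll r_n$. I intend to use the annular coordinate. Since $z_n$ lies on a fixed coordinate circle of relative position $t\in(0,1)$ in $U_0$, and the coverings are $\zeta\mapsto c\zeta^{\pm e}$, the point $z_n$ stays at relative height $t$ in $U_n$. Its extremal distance to both boundary components is therefore $t\operatorname{mod}U_n$ and $(1-t)\operatorname{mod}U_n$, and both tend to infinity. Applying Lemma~\ref{q11e:round-ring} to the two subannuli of $U_n$ cut by $\gamma_n$ gives $\min_{\gamma_n}|w-q_n|/R_n\to\infty$ and $r_n/\max_{\gamma_n}|w-q_n|\to\infty$, which is exactly the required regime. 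I should check that the orientation reversal in $\zeta^{-e}$ does not matter; it does not, since either side gets modulus at least $\min(t,1-t)\operatorname{mod}U_n$.

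\textbf{Step 4 (the ratio and the finitely many early indices).} The ratio \eqref{q11e:thin-ratio} then follows. The lower bound $\dist(z_n,A_T\cap\C)\ge d_n$ holds because $A_T$ avoids $U_n$, and the upper bound comes from $p_n$. Finitely many early indices do not affect the limits.
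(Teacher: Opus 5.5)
\textbf{The gap is in Step~1: the small side $B_n$ need not be bounded for large $n$, and you never show it is.} Lemma~\ref{q11r:nonreset-step} gives $B_n\subset\C$ only when $n+1$ is not a reset time. Theorem~\ref{q11r:reset-propagation} makes no finiteness claim at reset times; its construction explicitly allows a witness equal to $\infty$. Nothing in the hypotheses excludes annuli $U_n$ that surround a fixed finite point and drift to $\infty$, which is Baker's picture for multiply connected wandering domains of entire functions.

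In that situation $\infty\in B_n$ for all large $n$, so the unbounded continuum $L_n$ lies in $B_n$. Every $q_n\in B_n\cap\mathcal O_T$ then lies in $L_n\cup\{\infty\}$. Run your Step~3 estimate with any finite center $c$ in the bounded continuum $K_n$. It gives $\dist(z_n,L_n)\ge\min_{L_n}|w-c|-|z_n-c|$, which is much larger than $|z_n-c|\ge d_n$. Hence $|q_n-z_n|/d_n\to\infty$, and \eqref{q11e:thin-witness} fails for your choice $p_n=q_n$.

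A M\"obius normalization cannot repair this, because \eqref{q11e:thin-witness} and \eqref{q11e:thin-ratio} are Euclidean statements and are not M\"obius invariant. In this case the correct witness must lie in $K_n$, which sits on the spherically \emph{large} side of $\gamma_n$. So no point of $B_n$ will do.

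\textbf{What is missing is a source of finite points of $\mathcal O_T$ inside $K_n$ at those times.} The paper obtains one as follows.
\begin{enumerate}
\item Theorem~\ref{q11r:reset-propagation} yields infinitely many shrinking sides each containing two distinct points of $S$, so $S$ is infinite.
\item Fix once and for all a finite point $a\in S\setminus T\subset\mathcal O_T$.
\item When the small side contains $\infty$ and shrinks to it, $a$ lies on the bounded side of the curve for large $n$.
\item Since $a\in A_T$ and $A_T\cap U_n=\varnothing$, this forces $a\in K_n$.
\item Take $p_n=a$, and use it also as the center in Lemma~\ref{q11e:round-ring}.
\end{enumerate}
Using three concentric coordinate circles, the paper checks that for each late $n$ either all small sides are bounded or all contain $\infty$. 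It then selects the reset witness or $a$ accordingly.

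With this case distinction added, the rest of your argument goes through. That means the relative height of $\gamma_n$ in $U_n$ together with the round-ring estimate on the two subannuli cut out by $\gamma_n$. It is a mild variant of the paper's bounded-case argument, using subannuli of the full component in place of the paper's bands between three coordinate circles.
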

\begin{proof}
Discard the initial finite segment before the thin tail consists of finite-modulus annuli with finite covering degrees.
Reindex this tail temporarily from zero, replace the base point by its iterate at that initial time, and let $D_n$ be its cumulative degree.
Then $D_n\to\infty$: otherwise all sufficiently late degrees would equal one, so the increasing deck groups would stabilize and their union would be discrete, contrary to thinness.

In an annular uniformization of $U_0$, choose three concentric coordinate circles $\gamma_0^-,\gamma_0^0,\gamma_0^+$, with $z$ on the middle one and with the closed band between the two outer circles compactly contained in $U_0$.
A finite annular covering is conformally a power map, possibly followed by inversion.
Thus the underlying images $\gamma_n^-,\gamma_n^0, \gamma_n^+$ are disjoint nested Jordan curves, with $z_n\in\gamma_n^0$.
The two intervening bands have moduli equal to $D_n$ times their fixed positive initial moduli.
The labels $-$ and $+$ may exchange their Euclidean nesting order; the middle circle remains the middle one.

Normality and disjointness of the full wandering components imply that all normal subsequential limits are constant.
The spherical diameter of the image of the whole initial compact band therefore tends to zero.
In particular, each of the three circles has a small spherical Jordan side $B_n^\sigma$, of diameter tending to zero.
Apply Theorem~\ref{q11r:reset-propagation} separately to the two initial outer coordinate circles.
For all sufficiently large $n$ this supplies
\[
 q_n^-\in\mathcal O_T\cap B_n^-,
 \qquad q_n^+\in\mathcal O_T\cap B_n^+.
\]
That theorem also gives infinitely many shrinking sides containing two distinct points of $S$, so $S$ is infinite.
Fix once and for all a finite point $a\in S\setminus T\subset\mathcal O_T$.

We first justify a dichotomy of the small sides used in the witness selection.
Apart from finitely many indices, either all three small sides are bounded Euclidean disks, or all three contain infinity.
For otherwise take an infinite mixed subsequence and then a normal subsequence on the initial compact band.
If its constant limit is finite, all small sides are eventually contained in one small finite disk, which rules out mixing.
If the limit is infinity, the image of the compact band eventually avoids every fixed finite disk, in particular $a$.
The three curves are homotopic in this image band and hence have the same separation relation with $a$ and infinity.
If one small side contains infinity, that side shrinks to infinity, so its opposite bounded side contains $a$ for large indices.
The same is true for all three bounded sides.
None of them can then itself be the small side, since its diameter would be at least the spherical distance from $a$ to its curve, which tends to $d_\#(a,\infty)>0$.
This also rules out mixing.
The same argument shows that whenever all three small sides contain infinity, all their bounded sides contain the fixed point $a$, eventually.

Order the three curves now by Euclidean nesting, and denote their bounded closed sides by
\[
 D_n^-\subset D_n^0\subset D_n^+.
\]
The middle boundary is still $\gamma_n^0$.
The two annuli between these disks are contained in $U_n$ and have moduli tending to infinity.
Let $K_n$ and $L_n$ be the bounded and unbounded complementary continua of the full component $U_n$.
Then
\[
 K_n\subset D_n^-,
 \qquad L_n\subset\sphere\setminus\operatorname{int}D_n^+.
\]

If all the small sides are bounded, let $p_n$ be the reset witness for whichever of the original $-$ or $+$ circles bounds $D_n^-$.
This point is finite and belongs to $\mathcal O_T\subset A_T$.
Since $A_T$ avoids $U_n$, the point must lie in $K_n$.
If all the small sides contain infinity, put $p_n=a$.
The preceding paragraph gives $a\in D_n^-$, and again $a\notin U_n$ forces $a\in K_n$.
These choices give finite actual orbit points $p_n\in K_n\cap\mathcal O_T$ at every sufficiently large time.
They do not require applying $f$ to an infinity-valued reset witness.

Apply Lemma~\ref{q11e:round-ring} with the varying center $p_n$ to each of the two bands.
Define
\[
 R_n^-=\max_{w\in D_n^-}|w-p_n|,
 \quad r_n^0=\min_{w\in\partial D_n^0}|w-p_n|,
\]
\[
 R_n^0=\max_{w\in D_n^0}|w-p_n|,
 \quad r_n^+=\min_{w\in\partial D_n^+}|w-p_n|.
\]
Then
\[
 r_n^0/R_n^-\longrightarrow\infty,
 \qquad r_n^+/R_n^0\longrightarrow\infty.
\]
Since $z_n\in\partial D_n^0$, these inequalities imply, with
$a_n=|z_n-p_n|$,
\[
 R_n^-=o(a_n),\qquad r_n^+/a_n\longrightarrow\infty.
\]
The distance from $z_n$ to $K_n$ lies between $a_n-R_n^-$ and $a_n$, because $p_n\in K_n\subset D_n^-$.
Its distance to $L_n$ is at least $r_n^+-a_n$, which is much larger.
Consequently
\[
 d_n/a_n\longrightarrow1.
\]
This proves the first assertion of \eqref{q11e:thin-witness}.
Since $D(z_n,d_n)\subset U_n$,
\[
 \operatorname{dist}(p_n,U_n)\le a_n-d_n=o(d_n),
\]
giving the second.
Finally $A_T\cap U_n=\varnothing$ and $p_n\in A_T$ imply
\[
 d_n\le\operatorname{dist}(z_n,A_T\cap\C)
 \le |z_n-p_n|,
\]
which yields \eqref{q11e:thin-ratio}.
Restore the discarded finite initial segment to complete the assertion for the original tail.
\end{proof}

\begin{proof}[Proof of Theorem~\ref{q11all:main}]
Fix a finite $T\subset S$.
If $A_T\cap U_N\ne\varnothing$, openness and density give an actual finite point $p_N\in\mathcal O_T\cap U_N$.
Its subsequent orbit stays in the forward full components, proving the first alternative.
Assume otherwise that $A_T$ avoids all of them.
If the covering-group union is discrete, Proposition~\ref{q11geo:main} gives \eqref{q11all:distance-ratio}, \eqref{q11all:actual-witness} and the stronger assertion \eqref{q11all:thick-hyperbolic}.
If it is nondiscrete, the annular reduction and Theorem~\ref{q11e:thin-spatial} give the two Euclidean conclusions without restricting the finite deleted set.
These cases exhaust the covering tails.
The limit-point assertions follow from Proposition~\ref{q11geo:main} in the thick case and Theorem~\ref{q11r:reset-propagation} in the thin case.
\end{proof}

\section{Application: a relative area theorem for polynomial skew products}
\label{sec:elliptic-skew}

The area argument does not require continuity of the fiber Julia sets.
The missing link between fiberwise and joint normality can instead be recovered on almost every fiber by a plurisubharmonic upper envelope.

Let $V\subset\C^m$ be a bounded open set and $T:V\to V$ a biholomorphism.
Let $\sigma$ be a Borel probability measure such that
\begin{equation}\label{eq:rel-measure}
 T_*\sigma=\sigma,\qquad \operatorname{supp}_V\sigma=V,
 \qquad \sigma\ll\operatorname{Leb}_{2m}.
\end{equation}
Suppose that
\[
 F(z,w)=(Tz,q_z(w)),\qquad
 q_z(w)=\sum_{k=0}^d a_k(z)w^k,\qquad d\ge2,
\]
where the $a_k$ are holomorphic on $V$ and
\begin{equation}\label{eq:rel-coeff}
 \sup_{z\in V}|a_k(z)|<\infty\quad(0\le k\le d),
 \qquad\inf_{z\in V}|a_d(z)|>0.
\end{equation}
Write $Q_{0,z}(w)=w$ and $Q_{n,z}=q_{T^{n-1}z}\circ\cdots\circ q_z$.
Uniformity in \eqref{eq:rel-coeff} supplies an $R>1$ such that
\begin{equation}\label{eq:rel-escape}
 |q_z(w)|>2|w|\qquad(z\in V,\ |w|\ge R).
\end{equation}
Set
\[
 K_z=\{w\in\C:|Q_{n,z}(w)|\le R\text{ for all }n\ge0\},
 \qquad K=\{(z,w):w\in K_z\},\qquad\mathcal B=\operatorname{int}K.
\]
The components of $\mathcal B$ are precisely the bounded-orbit components of the relative Fatou set in $V\times\C$.

\begin{theorem}\label{thm:elliptic-skew}
Under \eqref{eq:rel-measure} and \eqref{eq:rel-coeff}, every connected component of $\mathcal B$ is eventually periodic.
\end{theorem}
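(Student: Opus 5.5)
The plan is to transplant the finite-loss argument of Theorem~\ref{thm:finite-type} and Lemma~\ref{lem:finite-loss} fiberwise, and then integrate the fiberwise budgets against $\sigma$. Invariance $T_*\sigma=\sigma$ is what makes the integrated budget time-independent.

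\textbf{Setup.} Suppose a component $\Omega$ of $\mathcal B$ is not eventually periodic. Since $F$ maps $\mathcal B$ into itself and permutes its components, the components $\Omega_n\supset F^n(\Omega)$ are then pairwise distinct, hence disjoint. For $z\in V$ write $\Omega_{n,z}=\{w:(z,w)\in\Omega_n\}$. Each slice is an open subset of $\operatorname{int}K_z$, and
\[
 q_z(\Omega_{n,z})\subset\Omega_{n+1,Tz}.
\]
Because $\Omega$ is open and $\sigma$ has full support, the set $V_\Omega=\{z:\Omega_{0,z}\neq\varnothing\}$ is open and $\sigma(V_\Omega)>0$.

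\textbf{Fiberwise markings and background.} For the fiber background I would replace repelling cycles by iterated preimages of a fixed three-point set. Fix $w_1,w_2,w_3$ with $|w_i|>R$ and set
\[
 P_{j,z}=Q_{j,T^{-j}z}^{-1}(\{w_1,w_2,w_3\}),
\]
which lies outside $K_z$ and accumulates on $\partial K_z$ as $j\to\infty$. Rather than a growing finite-area punctured plane, I would use the bounded background
\[
 X_z=\{w: G_z(w)<\epsilon\}\cap\{|w|<2R\}.
\]
Here $G_z=\lim_n d^{-n}\log^+|Q_{n,z}|$ is the fiber Green function, and \eqref{eq:rel-escape} makes $G_z$ locally uniformly controlled.

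The key inclusion comes from the escape rate. Since $G_{Tz}\circ q_z=dG_z$, we get $q_z^{-1}(X_{Tz})\subset X_z$ after adjusting $\epsilon$ by a factor $d$. Put $Y_z=X_z\setminus\operatorname{CV}(q_z)$, which removes at most $d-1$ points. Then $q_z:q_z^{-1}(Y_{Tz})\to Y_{Tz}$ is a covering. Lemma~\ref{core:puncture} bounds the cost of deleting the critical values by $d-1$ in every fiber.

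\textbf{Area packets and the integrated budget.} Next I would build packets. Choose compact disks $Q\Subset\Omega$ whose slices $Q_z$ have fiber intrinsic area in $\operatorname{int}K_z$ larger than $M$ on a set of fibers of positive $\sigma$-measure. Consider the $\sigma$-integrated quantity
\[
 b_n=\int\alpha_{X_{T^nz}}\bigl(Q_{n,T^nz}\bigr)\,\dd\sigma(z),
 \qquad Q_{n,T^nz}=Q_{T^n z}^{(n)}:=Q_{n,z}(Q_z).
\]
The covering inequality $q^*(\alpha_X+\nu)\ge\alpha_X$, as in \eqref{eq:area-compare}, together with $T_*\sigma=\sigma$, gives
\[
 \sum_n(b_n-b_{n+1})_+\le (d-1)\quad\text{per unit }\sigma\text{-mass}.
\]
This requires fiberwise injectivity of the $Q_{n,z}$ on the packets, which I would obtain as in Lemma~\ref{lem:large-disks} from the fiber covering structure after removing critical values.

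On the other side, the fiber areas $\alpha_{X_z}$ are not finite, so I would truncate using hyperbolic area relative to $\{|w|<2R\}$ via Lemma~\ref{lem:local-truncation}. That lemma gives a budget uniform in $z$ because the coefficients are uniformly bounded by \eqref{eq:rel-coeff}. Combining the telescoping with the disjointness of the $\Omega_n$ and Fubini, the integrated area of the disjoint images is bounded by a finite constant, while each image has integrated area at least $M-(d-1)$. This contradicts disjointness once $M$ is large, forcing periodicity.

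\textbf{Main obstacle.} The hardest step is the passage from fiberwise to joint structure. The slices $\Omega_{n,z}$ need not be whole fiber Fatou components, and fiber Julia sets need not vary continuously. So I must show that, for $\sigma$-a.e.\ $z$, the slice of $\Omega$ is a union of components of $\operatorname{int}K_z$, so that the fiber intrinsic metric equals the slice metric. My first attempt is the plurisubharmonic upper envelope suggested before the theorem. The function $\sup_n d^{-n}\log^+|Q_{n,z}(w)|$ has an upper semicontinuous regularization, and this regularization is psh in $(z,w)$. By absolute continuity $\sigma\ll\operatorname{Leb}_{2m}$ and Fubini, it agrees with the fiber Green function off a Lebesgue-null set, so off a pluripolar, and hence $\sigma$-null, set of fibers. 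On those good fibers, fiberwise bounded orbits on a fiber Fatou component propagate to an open set of $(z,w)$. This is the point where measurability and nullity of the exceptional fibers must be checked carefully.
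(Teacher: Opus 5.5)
Your proposal has a genuine gap, and it sits in the steps you defer.

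\textbf{Injectivity of the fiber maps.} You cannot get injectivity of $Q_{n,z}$ on the packets ``as in Lemma~\ref{lem:large-disks} from the fiber covering structure after removing critical values.'' That lemma uses a singular-free tail of full coverings with a discrete limiting deck group. Here the slice maps $q_{T^nz}:\Omega_{n,T^nz}\to\Omega_{n+1,T^{n+1}z}$ are only proper, and a critical point of $q_{T^nz}$ may lie in the slice at infinitely many times along a given base orbit. Deleting critical \emph{values} from the background does not make a map injective on a slice that contains a critical point.

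The missing idea is a counting argument that uses disjointness and invariance. Put $B_n=\{z:(\Omega_n)_z\cap\operatorname{Crit}(q_z)\neq\varnothing\}$. Each of the at most $d-1$ critical points of $q_z$ lies in at most one of the disjoint $\Omega_n$, so $\sum_n\sigma(T^{-n}B_n)=\sum_n\sigma(B_n)\le d-1$. Borel--Cantelli then gives a positive-measure set of base points whose slice maps are eventually critical-point-free proper maps between complete simply connected fiber components, hence conformal isomorphisms.

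\textbf{Complete slices.} Your treatment of the main obstacle does not work. Under \eqref{eq:rel-coeff} the functions $d^{-n}\log^+|Q_{n,z}|$ converge locally uniformly, so their supremum is already continuous and regularization adds nothing. What it encodes is an upper bound for the Green function $G$. But bounded orbits correspond to the \emph{closed} condition $G=0$, and knowing $G(z_0,\cdot)=0$ on a fiber component gives no neighborhood in $V\times\C$ on which $G\equiv0$. Such a neighborhood is exactly what membership in $\mathcal B$ requires. Two smaller errors: Lebesgue-null does not imply pluripolar, and a null set in $V\times\C$ need not lie over a null set of base points.

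The paper instead applies the upper-envelope theorem to functions of $z$ alone, namely $k^{-1}\log|c_{n,k}(z)|$, built from the Taylor coefficients of the iterates about a point of a tube in the component. Fiberwise boundedness on $D(a,r)$ then becomes the strict, hence open, inequality $v_N(z_0)<-\log s_1$. This gives uniform bounds on $B'\times D(a,s)$ and complete slices off a null set of base points (Lemmas~\ref{lem:rel-psh} and~\ref{lem:rel-full-slices}).

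\textbf{Area bookkeeping.} The bookkeeping also fails with the fixed background $X_z=\{G_z<\epsilon\}\cap\{|w|<2R\}$. Your telescoping controls $\alpha_{X_z}$-area, but the packets lie in $K_z$, which is compactly contained in $X_z$. So $b_0\le\int_S\alpha_{X_z}(K_z)\,d\sigma(z)$ does not grow with $M$, however large the intrinsic area of the slices in the fiber components is. Hence ``each image has integrated area at least $M-(d-1)$'' does not follow.

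Nor can the loss be normalized ``per unit $\sigma$-mass.'' Integrating the per-fiber budget over the forward images gives only the total bound $(d-1)\,\sigma\bigl(\bigcup_{n\ge1}T^nS\bigr)$, where $S$ is the base of the packet. For an ergodic rotation this is $d-1$ even when $\sigma(S)$ is tiny. Lemma~\ref{lem:local-truncation} bounds a defect, not a total area, so it does not help here.

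The paper's remedy is an increasing family of finite-area backgrounds built from the markers $E_j(z)=\bigcup_{n\le j}Q_{n,z}^{-1}(\{\infty,a,b\})$.
\begin{itemize}
\item They satisfy $q_z^{-1}(E_{j-1}(Tz))\subset E_j(z)$.
\item Gauss--Bonnet for the branched pullback bounds the defect $\alpha_{j,z}-q_z^*\alpha_{j-1,Tz}$ by $d+1$ without deleting any critical values.
\item The densities increase to $\rho_C$ on each bounded fiber component.
\end{itemize}
Together with the fiberwise bijection $F:W\to W\setminus W_0$ and $T_*\sigma=\sigma$, this gives $M_{j-1}(W_0)\le d+1$. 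On the other hand, $M_j(W_0)\to\infty$ over any base set of positive measure, because complete bounded simply connected components have infinite intrinsic area. This is the contradiction.
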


We prove Theorem~\ref{thm:elliptic-skew} through an almost-everywhere recovery of complete fiber components and a relative area bound.

For fixed $z$, let $\mathcal F_z$ be the normality set of $(Q_{n,z})_n$ and put $C_z=\operatorname{int}K_z$.
The connected components of $C_z$ are exactly the bounded-orbit components of $\mathcal F_z$: a fiber Fatou component cannot contain both bounded and escaping points, by the identity theorem for a normal limit.
Each component $C\subset C_z$ is simply connected.
If a Jordan curve lies in $C$, all $Q_{n,z}$ are bounded by $R$ on it, and the maximum principle gives the same bound on its interior.
The interior therefore lies in $C$.

The map $F:V\times\C\to V\times\C$ is finite and proper.
Indeed, $T$ is proper, and roots of $q_z(w)=u$ remain uniformly bounded when $(z,u)$ ranges over a compact set, by \eqref{eq:rel-coeff}.
Moreover
\begin{equation}\label{eq:rel-complete}
 F^{-1}(\mathcal B)=\mathcal B,
 \qquad q_z^{-1}(C_{Tz})=C_z.
\end{equation}
For the first equality, note that $F^{-1}(K)=K$ by \eqref{eq:rel-escape}; taking interiors commutes with inverse image because $F$ is open.
The second follows in the same way from the openness of $q_z$.
Properness now shows that $F$ maps each component of $\mathcal B$ properly and onto its successor, and that $q_z$ maps each complete component of $C_z$ properly and onto a complete component of $C_{Tz}$.

The following extension lemma makes the latter statement usable for slices of a joint Fatou component.

\begin{lemma}\label{lem:rel-psh}
Let $B\subset\C^m$ be a domain, $f_n\in\mathcal O(B\times\C)$, and suppose that for some $a\in\C$, $r_0>0$, and $M<\infty$,
\[
 |f_n(z,w)|\le M\quad(z\in B,\ |w-a|<r_0,\ n\ge0).
\]
There is a Borel Lebesgue-null set $P\subset B$ with this property: if $z_0\in B\setminus P$ and $(f_n(z_0,\cdot))_n$ is locally uniformly bounded on $D(a,r)$, then for every $s<r$ there is a neighborhood $B'\ni z_0$ on which $(f_n)_n$ is uniformly bounded on $B'\times D(a,s)$.
\end{lemma}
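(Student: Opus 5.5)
We will work with the plurisubharmonic upper envelope suggested in the section introduction. Put
\[
 u_n(z,w)=\log^+|f_n(z,w)|,\qquad u(z,w)=\sup_n u_n(z,w),
\]
and let $u^*$ be its upper semicontinuous regularization on $B\times\C$. This $u^*$ is plurisubharmonic wherever it is locally bounded above, and possibly $\equiv+\infty$ on components of its unboundedness locus. The main tool is the classical theorem (Bedford--Taylor, Lelong) that the set $\{u<u^*\}$ is pluripolar for an upper-bounded family of plurisubharmonic functions. The plan is to show that, for almost every $z_0$, the fiberwise bound transfers to the regularization $u^*(z_0,\cdot)$. Upper semicontinuity of $u^*$ will then give a bound on a product neighborhood.

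Step 1 (the set where the envelope is controlled). Let $G\subset B\times\C$ be the open set on which $(f_n)$ is locally uniformly bounded; by hypothesis $G\supset B\times D(a,r_0)$. Let $G_{z}$ denote the slice of $G$. For each $z$ let $R(z)$ be the supremum of radii $\rho$ with $\overline{D(a,\rho)}\times\{z\}\subset G$ in a uniform sense. The function $-\log\operatorname{dist}$ to the complement of a Hartogs-type domain of boundedness is plurisubharmonic: $G$ is a domain of normality of holomorphic functions, hence pseudoconvex. Consequently $z\mapsto-\log R(z)$ is plurisubharmonic on $B$ (Hartogs radius). In parallel, define the fiberwise radius $R_f(z_0)$, the largest $\rho$ such that $f_n(z_0,\cdot)$ is locally uniformly bounded on $D(a,\rho)$. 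We always have $R_f\ge R$.

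Step 2 (a.e.\ equality of radii). We must show $R_f=R$ outside a Lebesgue-null Borel set $P$. For rational $s$, consider the fiberwise maximum
\[
 \phi_{s}(z)=\sup_n\max_{|w-a|=s}\log^+|f_n(z,w)|.
\]
This is a supremum of plurisubharmonic functions of $z$, by the maximum principle and since $\sup_{|w-a|=s}\log|g(w)|$ of a holomorphic family is psh in $z$. Its regularization $\phi_s^*$ is psh or $\equiv\infty$, and $\{\phi_s<\phi_s^*\}$ is pluripolar, hence Lebesgue null. Take $P$ to be the countable union over rational $s$ of these negligible sets, together with the sets where $\phi_s<\infty<\phi_s^*$ (also negligible, since $\phi_s^*\equiv\infty$ on a component forces $\phi_s=\infty$ off a pluripolar set). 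This $P$ is Borel up to taking a Borel pluripolar hull.

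Step 3 (conclusion). Let $z_0\notin P$ with fiberwise boundedness on $D(a,r)$, and fix $s<s'<r$ with $s'$ rational. Then $\phi_{s'}(z_0)<\infty$, so $\phi_{s'}^*(z_0)<\infty$. By upper semicontinuity, $\phi_{s'}^*<C$ on a neighborhood $B'$ of $z_0$. Hence $|f_n|\le e^C$ on $B'\times\{|w-a|=s'\}$, and the maximum principle in $w$ bounds $f_n$ on $B'\times D(a,s)$. With this route Step 1 is not even needed. The main obstacle is the delicate point in Step 2: ensuring that $\phi_{s'}^*$ is not identically $+\infty$ near $z_0$. This is where the hypothesis of uniform bounds on $B\times D(a,r_0)$ enters. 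I would try a Hadamard three-circle/harmonic-measure argument in the $w$-variable, combined with a Bernstein–Walsh-type growth bound. However, the family may be unbounded on $|w-a|=s'$ over nonnegligible $z$-sets, in which case the pluripolarity theorem alone does not suffice. If so, I would instead apply Bedford–Taylor to the envelope of the psh functions $\log^+|f_n|/m$ normalized on the inner disk.
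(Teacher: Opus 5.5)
There is a genuine gap in Step~2, and it is the one you flag at the end. The negligible-set theorem of Lelong and Bedford--Taylor applies only to a family of plurisubharmonic functions that is locally uniformly bounded above. Only then is the regularized envelope plurisubharmonic and the set where it exceeds the unregularized supremum pluripolar. Your family $z\mapsto\max_{|w-a|=s}\log^+|f_n(z,w)|$ has no such bound when $s\ge r_0$, and that is the only nontrivial regime (for $r\le r_0$ one may take $B'=B$).

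Without the bound, the dichotomy ``$\phi_s^*$ is plurisubharmonic or $\equiv+\infty$'' fails. Runge-type sequences of polynomials that converge pointwise on $\C$ but not locally uniformly give an envelope equal to $+\infty$ on a nonempty nowhere dense set and finite elsewhere. The claimed negligibility of $\{\phi_s<\infty<\phi_s^*\}$ therefore does not follow from any pluripolarity theorem; it is essentially the statement of the lemma itself, so Step~3 rests on an unproved claim. Neither fallback repairs this. A three-circle inequality controls $\max_{|w-a|=s}|f_n|$ only through maxima on larger circles, which are themselves unbounded. Dividing $\log^+|f_n|$ by a fixed constant creates no upper bound.

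The missing idea is to normalize by the Taylor degree in $w$, as in Hartogs' lemma. Write $f_n(z,a+\zeta)=\sum_k c_{n,k}(z)\zeta^k$. Cauchy's estimate on a disk of radius $t<r_0$ gives $k^{-1}\log|c_{n,k}(z)|\le k^{-1}\log M-\log t$ for all $z\in B$ and all $n,k$. So the tail envelopes $s_N=\sup_{n\ge0,\,k\ge N}k^{-1}\log|c_{n,k}|$ come from a uniformly upper-bounded plurisubharmonic family. The envelope theorem now applies, and $P=\bigcup_N\{s_N<s_N^*\}$ is a Borel null set.

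At $z_0\notin P$, fiberwise boundedness on $D(a,r)$ gives $\inf_N s_N^*(z_0)=\limsup_k k^{-1}\log\sup_n|c_{n,k}(z_0)|\le-\log r$. Given $s<s_1<r$, choose $N$ with $s_N^*(z_0)<-\log s_1$. Upper semicontinuity keeps this inequality on a neighborhood $B'$, so $|c_{n,k}|\le s_1^{-k}$ there for $k\ge N$. The geometric tail, together with the Cauchy bounds on the finitely many lower coefficients, bounds $f_n$ on $B'\times D(a,s)$. This is the paper's proof. The decreasing limit $\inf_N s_N^*$ also plays the role of your Hartogs radius $-\log R$, so the pseudoconvexity claim in Step~1 is not needed.
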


\begin{proof}
Write $f_n(z,a+\zeta)=\sum_{k\ge0}c_{n,k}(z)\zeta^k$.
Cauchy's estimate on any fixed disk of radius $t<r_0$ gives $|c_{n,k}(z)|\le Mt^{-k}$ for all $z,n,k$.
For $N\ge1$ let
\[
 s_N(z)=\sup_{\substack{n\ge0\\ k\ge N}}
       k^{-1}\log|c_{n,k}(z)|,
 \qquad v_N=s_N^*.
\]
Here a logarithm of zero has value $-\infty$.
The functions inside the supremum are plurisubharmonic and locally uniformly bounded above.
The upper-envelope theorem \cite[Chapter~I, Theorem~5.7]{DemaillyCADG} says that $v_N$ is plurisubharmonic and agrees with $s_N$ off a Lebesgue-null set.
Delete the countable union $P$ of these discrepancy sets.

For $z_0\notin P$, fiberwise Cauchy estimates give
\[
 \inf_N v_N(z_0)
 =\limsup_{k\to\infty}k^{-1}\log\sup_n|c_{n,k}(z_0)|
 \le-\log r.
\]
Given $s<s_1<r$, choose $N$ such that $v_N(z_0)<-\log s_1$.
Upper semicontinuity gives the same strict inequality throughout a smaller $B'$.
Hence $|c_{n,k}(z)|\le s_1^{-k}$ for $k\ge N$, $n\ge0$, and $z\in B'$.
The finitely many lower coefficients have the previous Cauchy bound.
Summing the geometric tail proves a common bound on $B'\times D(a,s)$.
\end{proof}

\begin{lemma}\label{lem:rel-full-slices}
There is a Borel Lebesgue-null set $P\subset V$ such that, whenever $z\notin P$ and $U$ is a component of $\mathcal B$, each component of the slice $U_z$ is a complete component of $C_z$.
\end{lemma}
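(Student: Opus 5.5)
The plan is to apply Lemma~\ref{lem:rel-psh} to the iterates $f_n(z,w)=Q_{n,z}(w)$ on countably many test bidisks inside $\mathcal B$, and to take for $P$ the union of the exceptional sets obtained. Fix a countable family of pairs $(B,D(a,r_0))$ with $B\subset V$ a ball with rational center and radius, $a$ a rational complex point, $r_0$ rational, and $B\times D(a,r_0)\Subset\mathcal B$. On such a product the definition of $K$ gives $|Q_{n,z}(w)|\le R$. Lemma~\ref{lem:rel-psh} therefore applies with $M=R$ and yields a null set $P_{B,a,r_0}$. Let $P$ be the countable union of these sets, so $P$ is Borel and Lebesgue-null.

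Now fix $z\notin P$, a component $U$ of $\mathcal B$, and a component $S$ of the slice $U_z$. Since $S\subset C_z$ is open and connected, it lies in a unique component $C$ of $C_z$, and the task is to show $S=C$. The set $S$ is relatively open in $C$ and nonempty, so by connectedness of $C$ it suffices to show that $S$ is relatively closed in $C$. Take $w_1\in C\cap\overline S$. Since $C$ is connected and open, choose a polygonal path in $C$ from a point $w_0\in S$ to $w_1$. Cover this path by finitely many disks $D(a_i,r_i)$ with rational data, each compactly contained in $C$ and overlapping its neighbour, and with $D(a_0,r_0)\subset S$ so small that $B_0\times D(a_0,r_0)\Subset U$ for some rational ball $B_0\ni z$.

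Now propagate along the chain. The fiber iterates $Q_{n,z}$ are bounded by $R$ on a neighborhood of each $\overline{D(a_i,r_i)}\subset C\subset K_z$, so they are locally uniformly bounded on a slightly larger disk $D(a_i,r_i')$. Inductively suppose some $B_i\times D(a_i,\rho_i)\subset U$, where $D(a_i,\rho_i)$ is a small rational disk centered in the overlap with the next disk. Apply the choice of $P$ to the test pair $(B_i,D(a_i,\rho_i))$, where $z\notin P_{B_i,\dots}$. Then Lemma~\ref{lem:rel-psh} gives a neighborhood $B'$ of $z$ with $(Q_{n,\cdot})$ uniformly bounded on $B'\times D(a_i,s)$ for $s<r_i'$. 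Taking $s>r_i$, it remains to check that a uniformly bounded open product lies in $\mathcal B$. Uniform boundedness by some $M'$ together with \eqref{eq:rel-escape} forces the bound $R$: a point with $|Q_{n,z}(w)|>R$ escapes to infinity under further iterates. Hence the product lies in $K$, so it is open in $K$ and lies in $\mathcal B$. The product is connected and meets $U$, so it lies in $U$. Its slice $D(a_i,s)\subset U_z$ is connected and meets $S$, hence lies in $S$. After finitely many steps $w_1\in S$.

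The main obstacle is the bookkeeping in this induction: the successive test bidisks must be chosen from the fixed countable family, independent of $z$, so that a single null set works for all $z$. The resolution is that at each step one picks a rational ball $B_{i+1}\subset B'$ containing $z$ and a rational disk in the overlap. Every such pair belongs to the countable family, since $B_{i+1}\times D\Subset U\subset\mathcal B$, so $z\notin P$ covers it automatically.
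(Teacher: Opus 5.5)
Your proof is correct and follows the paper's route: you take the exceptional set to be the countable union of the Lemma~\ref{lem:rel-psh} null sets over rational tubes in $\mathcal B$, and you propagate fiberwise boundedness from a tube through $(z,a)$ to a full disk about $a$ inside $C$. The only difference is in the final topological step. The paper finishes in one step by showing $\operatorname{dist}(a,\partial D)=\operatorname{dist}(a,\partial C)$ for rational $a$ in the slice component. You instead chain disks along a path; this works once each next center $a_{i+1}$ is chosen inside the previous disk, so that the next test tube can be centered at it.
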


\begin{proof}
Fix $U$.
For rational $a\in\C$, choose from a countable basis all product tubes $B\times D(a,r_0)\subset U$.
On each tube all the $Q_{n,z}$ have modulus at most $R$, so Lemma~\ref{lem:rel-psh} applies.
Delete the union $P_U$ of its exceptional sets, extended by the empty set outside the corresponding $B$.

Let $z\notin P_U$, let $D$ be a component of $U_z$, and let $C$ be the component of $C_z$ containing $D$.
For each rational $a\in D$
and $s<r<\operatorname{dist}(a,\partial C)$, the sequence
$(Q_{n,z})_n$ is uniformly bounded on $D(a,r)$.
A tube through $(z,a)$ and the preceding lemma imply that $D(a,s)$ belongs to $\mathcal B_z$.
It meets $D$, so it belongs to $D$.
Letting $s$ increase gives
\[
 \operatorname{dist}(a,\partial D)
       =\operatorname{dist}(a,\partial C).
\]
If $D$ were a proper subset of $C$, a point of $\partial D\cap C$
and a nearby rational $a\in D$ would contradict this equality.
There are only countably many components $U$ of the open set $\mathcal B$; taking the union of their $P_U$ proves the assertion.
\end{proof}

\begin{proof}[Proof of Theorem~\ref{thm:elliptic-skew}]
Suppose that $U_0,U_1,\ldots$ is a pairwise disjoint forward orbit of components of $\mathcal B$, with $F(U_n)=U_{n+1}$.
Put
\[
 B_n=\{z\in V:(U_n)_z\cap\operatorname{Crit}(q_z)\ne\varnothing\}.
\]
These sets are open, since polynomial critical roots persist under parameter perturbation.
At a fixed base point $z$, each of the at most $d-1$ distinct finite critical points of $q_z$ lies in at most one of the disjoint $U_n$.
Thus
\begin{equation}\label{eq:rel-critical}
 \sum_{n\ge0}\sigma(T^{-n}B_n)
 =\sum_{n\ge0}\sigma(B_n)\le d-1.
\end{equation}
By summability, almost every base orbit meets $B_n$ only finitely often at time $n$.
By absolute continuity, we may also require that every forward and backward iterate avoid the null set in Lemma~\ref{lem:rel-full-slices}.

Choose a product tube $B\times D_*\Subset U_0$ and $w_*\in D_*$.
For $z\in B$, let $H_{0,z}$ be the component of $(U_0)_z$ containing $w_*$, and let $H_0$ be their union.
A fiber path joining $w_*$ to another point of $H_{0,z}$ persists for nearby base points, so $H_0$ is open.
Set $H_n=F^n(H_0)$; these sets are open.
Since $\sigma$ has full support, there are $N\ge0$ and a Borel set $S\subset B$ of positive $\sigma$-measure such that each $z\in S$ and all its $T$-iterates avoid the exceptional set,
and $T^nz\notin B_n$ for every $n\ge N$.
Define
\[
 W_k=H_{N+k}\cap\pi^{-1}(T^{N+k}S),\qquad
 W=\bigsqcup_{k\ge0}W_k.
\]
These are Borel sets.
For $z\in S$, Lemma~\ref{lem:rel-full-slices} and \eqref{eq:rel-complete} say that $H_{n,T^nz}$ is a complete component of $C_{T^nz}$.
For $n\ge N$, its map to the next slice is proper and has no critical point.
Both slices are simply connected, so this map is a conformal isomorphism.
As the $U_n$ are disjoint and $T$ is injective, $F$ maps $W$ fiberwise bijectively onto $W\setminus W_0$.

Choose distinct constants $a,b$ with $|a|,|b|>R$, and set $A=\{\infty,a,b\}$.
Introduce increasing finite escape markers
\[
 E_0(z)=A,\qquad
 E_j(z)=A\cup q_z^{-1}\bigl(E_{j-1}(Tz)\bigr)
       =\bigcup_{n=0}^jQ_{n,z}^{-1}(A).
\]
They avoid every bounded fiber component.
Set $X_{j,z}=\sphere\setminus E_j(z)$ and let $\alpha_{j,z}=\alpha_{X_{j,z}}$ be the normalized Poincar\'e area measure from Definition~\ref{def:area}.
For $j\ge1$ the Schwarz--Pick inequality gives a nonnegative defect measure
\[
 \nu_{j,z}=\alpha_{j,z}-q_z^*\alpha_{j-1,Tz}
 \quad\text{on }X_{j,z}.
\]
Because $\infty\in q_z^{-1}(E_{j-1}(Tz))$, the fixed marker set $A$ adds at most two new punctures.
Counting distinct finite preimages,
\[
 |E_j(z)|\le d\bigl(|E_{j-1}(Tz)|-1\bigr)+3.
\]
Gauss--Bonnet and change of variables, ignoring only finitely many zero-area points, therefore give
\begin{equation}\label{eq:rel-defect}
 \nu_{j,z}(X_{j,z})
 =|E_j(z)|-2-d\bigl(|E_{j-1}(Tz)|-2\bigr)
 \le d+1.
\end{equation}

For any bounded fiber component $C\subset C_z$, the hyperbolic densities satisfy
\begin{equation}\label{eq:rel-kernel}
 \rho_{X_{j,z}}\uparrow\rho_C\quad\text{on }C.
\end{equation}
Indeed, a disk outside the closure of all the markers has every $Q_{n,z}$ omit the fixed triple $A$, and so lies in $\mathcal F_z$.
The marker closure misses $C$, because $C\subset\operatorname{int}K_z$.
As $\partial C$ lies in the fiber Julia set, $C$ is exactly its
component of the complement of the marker closure.
Now apply Lemma~\ref{core:kernel}, which uses Hejhal's kernel theorem \cite{Hejhal}.
Each such $C$ is a bounded simply connected plane domain, and therefore $\alpha_C(C)=\infty$.

The finite root sets $E_j(z)$ vary measurably with $z$, and so do their Poincar\'e densities, including at collisions of punctures; this follows from kernel convergence.
Extend their area measures by zero at punctures and set
\[
 M_j(H)=\int_V\alpha_{j,z}(H_z)\,\dd\sigma(z).
\]
Each $M_j$ is finite, since $|E_j(z)|\le3(1+d+\cdots+d^j)$.
The measures increase with $j$.
Fiberwise change of variables under the bijection $F:W\to W\setminus W_0$, and invariance of $\sigma$, imply
\[
 \int_V(q_z^*\alpha_{j-1,Tz})(W_z)\,\dd\sigma(z)
   =M_{j-1}(W\setminus W_0).
\]
Integrating \eqref{eq:rel-defect}, we obtain
\[
 d+1\ge M_j(W)-M_{j-1}(W\setminus W_0)
 =\bigl[M_j(W)-M_{j-1}(W)\bigr]+M_{j-1}(W_0)
 \ge M_{j-1}(W_0).
\]
On every selected fiber, $W_0$ is a complete bounded component and \eqref{eq:rel-kernel} gives $\alpha_{j,z}((W_0)_z)\uparrow\alpha_{(W_0)_z}((W_0)_z)=\infty$.
The base projection of $W_0$ has measure $\sigma(S)>0$.
Monotone convergence therefore yields $M_j(W_0)\to\infty$, a contradiction.
\end{proof}

For a unitary $T$ on a ball $B_r$, suppose that the coefficients are holomorphic on a neighborhood of $\overline{B_r}$ and the leading coefficient is nonzero throughout $\overline{B_r}$.
Then normalized Lebesgue measure satisfies \eqref{eq:rel-measure}, while compactness gives \eqref{eq:rel-coeff}.
Theorem \ref{thm:elliptic-skew} then excludes all bounded wandering components without a condition on $z\mapsto\Jul_z$.
The common exterior region $B_r\times(\{|w|>R\}\cup\{\infty\})$ lies in an invariant escape Fatou component; an escaping component eventually enters it.
Thus the compactified relative system on $B_r\times\sphere$ has no wandering Fatou component.

\section{Application: regular polynomial skew products}
\label{sec:elliptic-global}

A polynomial skew product
\[
 F(z,w)=(p(z),q(z,w)),\qquad (z,w)\in\C^2,
\]
is \emph{regular of degree $d$} if its homogeneous degree-$d$ part has no common nonzero zero, so that $F$ extends to a holomorphic endomorphism of $\mathbb P^2$ of algebraic degree $d$.
Theorem~\ref{thm:elliptic-global} combines the relative-area theorem on periodic Siegel base components with the known theorem of Lilov near the superattracting fiber at infinity \cite[Theorem~1.1]{PetersVivas2016}.

\begin{proof}[Proof of Theorem~\ref{thm:elliptic-global}]
Let $V$ be the union of all periodic Siegel components of $p$.
The filled Julia set of $p$ is compact, so $V$ is bounded; it may be empty or disconnected.
On each periodic Siegel cycle, a linearizing coordinate for the return map transports normalized area measure on the unit disk to an absolutely continuous invariant probability.
Average over the phases of that cycle and then take a positive summable mixture over all cycles.
When $V\ne\varnothing$, the resulting probability measure $\sigma$ has full support in $V$, is absolutely continuous with respect to planar area, and satisfies $p_*\sigma=\sigma$.
Also $p:V\to V$ is biholomorphic.

Regularity forces $\deg p=d$: otherwise the first leading homogeneous coordinate would vanish identically and the second would have a zero on $\mathbb P^1$.
Write $q(z,w)=\sum_{j=0}^d a_j(z)w^j$.
Regularity also implies that $a_d$ is a nonzero constant: the total degree is $d$, and a zero of the homogeneous leading pair on $[0:1]$ would otherwise occur.
All $a_j$ are bounded on $V$, since they are polynomials and $V$ is bounded.
Hence, when $V\ne\varnothing$, the invariant-measure form of Theorem~\ref{thm:elliptic-skew} applies to $F:V\times\C\to V\times\C$ and excludes wandering components of its bounded normality set.

We verify that this relative conclusion covers every globally bounded Fatou component.
Since the homogeneous part of $F$ has no nonzero common zero, there is an escape radius $R$ such that $\|F(x)\|>2\|x\|$ whenever $\|x\|>R$.
Thus each affine orbit either stays in the closed ball of radius $R$ or tends to the line at infinity.
A Fatou component of $F$ cannot contain points of both kinds.
Indeed, if an open subset escapes, any normal subsequential limit of the iterates sends that open subset into the line at infinity and hence sends the whole connected component into that line by the identity theorem.
At a bounded-orbit point the same subsequence has a finite limit, which is impossible.

Let $U$ be a Fatou component with bounded orbits.
All its iterates remain in the ball of radius $R$.
Consequently $p^n$ is uniformly bounded on the open connected projection $\pi(U)$, so $\pi(U)$ lies in one Fatou component of $p$.
That component cannot contain an escaping point: normality and the identity theorem give the same bounded-versus-escaping dichotomy as above.
Hence it is contained in the compact filled Julia set of $p$ and is bounded.
By hypothesis some iterate of that base component is a periodic Siegel component.
The full forward component $U_N$ of $U$ then lies in $V\times\C$: its projection is a connected Fatou subset meeting a periodic Siegel component and therefore lies in it.
The bounded normality set on $V\times\C$ is exactly the restriction of the global bounded normality set.
Thus $U_N$ is a full component to which Theorem~\ref{thm:elliptic-skew} applies.
It cannot be wandering.
If $V=\varnothing$, the same projection argument shows that no bounded Fatou component exists.

It remains to treat an escaping Fatou component.
The line at infinity has a superattracting point $P_\infty=[0:1:0]$.
In the chart $w\ne0$, the coordinates $(u,v)=(z/w,1/w)$ show directly that both components of $F(u,v)$ vanish to order at least $d$ at $(0,0)$; regularity makes their common denominator nonzero there.
Hence a component meeting a sufficiently small attracting neighborhood of $P_\infty$ belongs to its basin: normality and the identity theorem propagate convergence to $P_\infty$ from that open intersection to the whole component.
Every component of this basin eventually enters the invariant immediate basin, so it is preperiodic.

For the remaining escaping points the base coordinate tends to infinity.
Use the chart $z\ne0$ near the rest of the line at infinity, with
\[
  s=\frac1z,\qquad t=\frac wz.
\]
If $p(z)=a z^d+\text{lower-degree terms}$, then in these coordinates
\[
  (s,t)\longmapsto
  \left(\frac{s^d}{P(s)},\frac{Q(s,t)}{P(s)}\right),
  \qquad
  P(0)=a\ne0,\quad
  Q(0,t)=q_d(1,t),
\]
where $q_d$ is the homogeneous degree-$d$ part of $q$.
The base map is superattracting at $s=0$; the fiber map is a degree-$d$ polynomial in $t$ with nonzero leading coefficient, holomorphic in $s$ near $0$.
Its central fiber is $t\mapsto q_d(1,t)/a$.

Let $U$ be an escaping Fatou component outside the basin of $P_\infty$ and choose an affine point $(z_0,w_0)\in U$.
Its base orbit $z_n=p^n(z_0)$ tends to infinity.
Otherwise the base orbit remains bounded, and the escaping total orbit eventually enters an attracting neighborhood of $P_\infty$.
For large $n$ put $s_n=1/z_n$ and $t_n=w_n/z_n$.
The sequence $(t_n)$ is eventually bounded: if $|t_{n_k}|\to\infty$ along a subsequence, then in the $w\ne0$ chart $(u_{n_k},v_{n_k})=(1/t_{n_k},s_{n_k}/t_{n_k})\to(0,0)$, so that orbit also enters the attracting basin of $P_\infty$.
Choose $n$ such that $s_n$ belongs to the immediate basin of $s=0$.
Openness of the forward component $U_n$ supplies a vertical disk in $U_n$ over $s=s_n$.
Lilov's theorem, in the form stated by Peters and Vivas \cite[Theorem~1.1]{PetersVivas2016}, says that the forward orbit of this disk meets a bulging Fatou component of the central polynomial.
Such a component is preperiodic because the central one-dimensional Fatou component is preperiodic by Sullivan's theorem \cite{Sullivan}.
Hence the original global Fatou component is preperiodic.
This excludes wandering in the escaping case and proves the theorem.
\end{proof}

\begin{corollary}\label{cor:brjuno-base-global}
Let $d\ge2$, let $\lambda=e^{2\pi i\theta}$ with $\theta$ an irrational Brjuno number, and let $q(z,w)$ be a polynomial of total degree at most $d$ whose $w^d$ coefficient is nonzero.
Then
\[
  F(z,w)=\bigl(\lambda z+z^d,q(z,w)\bigr)
\]
extends holomorphically to $\mathbb P^2$ and has no wandering Fatou component there.
No dynamical assumption is imposed on the finite critical points of the fiber polynomials.
\end{corollary}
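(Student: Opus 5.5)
The plan is to deduce the corollary from Theorem~\ref{thm:elliptic-global}. That requires two things: that $F$ is regular of degree $d$, and that every bounded Fatou component of $p(z)=\lambda z+z^d$ eventually maps to a periodic Siegel disk.

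\emph{Regularity.} The homogeneous degree-$d$ part of the first coordinate is $z^d$. Write $q_d(z,w)=c\,w^d+\sum_{k<d}b_kz^{d-k}w^k$ for the degree-$d$ part of $q$, with $c\ne0$. A common nonzero zero $(z,w)$ of $z^d$ and $q_d$ must have $z=0$, so $q_d(0,w)=cw^d=0$ and hence $w=0$. Thus $F$ is regular of degree $d$ and extends to $\mathbb P^2$. If $q$ has total degree strictly less than $d$, then $q_d$ would have no $w^d$ term, contradicting the hypothesis. So the degree-$d$ part really is $cw^d+\cdots$, and regularity holds as stated.

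\emph{Base dynamics.} Since $\theta$ is Brjuno, the fixed point $0$ of $p$, with multiplier $\lambda$, is linearizable by the Siegel--Brjuno theorem. It is therefore the center of a bounded Siegel disk $\Delta$. The boundedness of $\Delta$ follows because $\Delta$ lies in the compact filled Julia set.

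Next we show that every bounded Fatou component of $p$ is preperiodic to $\Delta$. Sullivan's theorem (Theorem~\ref{core:sullivan}(i)) makes every bounded Fatou component eventually periodic. A periodic bounded component is an attracting basin, a parabolic basin, or a Siegel disk, since a polynomial has no Herman rings. Each attracting or parabolic cycle attracts a critical point. A Siegel cycle has a critical orbit accumulating on its boundary.

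The map $p$ has $d-1$ finite critical points, the roots of $d z^{d-1}=-\lambda$. These are permuted by the rotation $z\mapsto e^{2\pi i/(d-1)}z$, and $p$ satisfies
\[
 p\bigl(e^{2\pi i/(d-1)}z\bigr)=e^{2\pi i/(d-1)}p(z).
\]
So the critical orbits are rotations of one another, and the symmetry also permutes the periodic cycles. The boundary of $\Delta$ must be accumulated by some critical orbit. By the symmetry, $\partial\Delta$ is accumulated by every rotated orbit, because $\Delta$ is invariant under the rotation: $0$ is fixed, and the rotation conjugates $p$ to itself, so it maps $\Delta$ to the Siegel disk at $0$.

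An attracting or parabolic cycle would absorb a critical orbit, whose rotated copies then converge to rotated cycles. Such an orbit cannot accumulate on $\partial\Delta$, which is contained in $\Jul(p)$. This contradicts the fact that every critical orbit accumulates on $\partial\Delta$.

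The same argument excludes a second Siegel cycle, provided the critical orbit accumulating on each Siegel boundary comes from the $\omega$-limit data. Here I would invoke the standard fact that each Siegel boundary lies in the closure of the postcritical set. The rotation symmetry then gives a contradiction, as long as one orbit cannot accumulate on two distinct Siegel cycle boundaries.

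\emph{Main obstacle.} This last point is the main obstacle. A single critical orbit may in principle accumulate on several boundaries, so the symmetry argument alone does not rule out a second Siegel cycle. If this step fails, it is enough to observe that Theorem~\ref{thm:elliptic-global} only needs every bounded component to eventually reach \emph{some} periodic Siegel disk. That is automatic once attracting and parabolic cycles are excluded, and that exclusion is what the symmetry argument genuinely gives: an orbit converging to an attracting or parabolic cycle cannot also accumulate on $\partial\Delta$.

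With both hypotheses verified, Theorem~\ref{thm:elliptic-global} applies. No assumption on the fiber critical points enters.
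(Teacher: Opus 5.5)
Your route is the paper's: regularity from $z^d$ and $cw^d$, a Siegel disk $\Delta$ at $0$ from Brjuno, the symmetry $p(\zeta z)=\zeta p(z)$, the Fatou--Ma\~n\'e fact that $\partial\Delta$ lies in the postcritical closure, Sullivan's theorem with no Herman rings, and then Theorem~\ref{thm:elliptic-global}. Your ``main obstacle'' is not an obstacle. The paper also never excludes a second Siegel cycle, because the theorem only requires every bounded component to eventually reach \emph{some} periodic Siegel disk. The paper phrases the symmetry step as the stronger claim that all finite critical points lie in $\Jul(p)$, and then notes that every immediate basin contains a critical point. Your direct exclusion of basins is equivalent.

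One step has an inadequate justification, in the parabolic case. You say an absorbed critical orbit ``cannot accumulate on $\partial\Delta$, which is contained in $\Jul(p)$.''
\begin{itemize}
\item For an attracting cycle this is correct, since the orbit closure lies in the Fatou set.
\item For a parabolic cycle it fails as stated. The orbit converges to the parabolic cycle, and that cycle lies in $\Jul(p)$. Nothing you have said prevents it from lying on $\partial\Delta$, so the orbit closure may meet $\partial\Delta$ and there is no contradiction yet.
\end{itemize}

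The repair is the counting the paper uses. Since $p^n(\zeta^k c)=\zeta^k p^n(c)$, capture of one critical point forces every critical orbit to converge to one of finitely many rotated cycles. Hence the postcritical closure meets $\Jul(p)$ in only finitely many points. But that closure contains $\partial\Delta$, which is an infinite (indeed uncountable) subset of $\Jul(p)$, a contradiction.

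Equivalently, $\partial\Delta$ is covered by the finitely many closed sets $\overline{O^+(c)}$, so some orbit closure meets $\partial\Delta$ in an infinite set. By the rotation symmetry all of them do, whereas in the parabolic case each meets $\Jul(p)$ in a finite set. With this fix your argument is complete.
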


\begin{proof}
Put $p(z)=\lambda z+z^d$.
The Brjuno linearization theorem \cite{Yoccoz1995} gives a fixed Siegel disk $\Delta$ about $0$.
If $\zeta^{d-1}=1$, then
\[
 p(\zeta z)=\zeta p(z).
\]
The finite critical points, given by $z^{d-1}=-\lambda/d$, form a single orbit under this rotation symmetry.
We claim that they all lie in $\Jul(p)$.

Suppose instead that one finite critical point lies in $\Fat(p)$.
By symmetry all of them do.
Sullivan's one-dimensional no-wandering theorem \cite{Sullivan} and the classification of periodic polynomial Fatou components imply that the closure of each of their forward orbits meets $\Jul(p)$ in at most a finite parabolic cycle:
an escaping orbit converges to infinity, an attracting orbit converges inside its basin, and an orbit eventually in a Siegel disk remains on a compact rotation curve inside that disk.
Therefore the closure of the union of the finite critical orbits meets $\Jul(p)$ in only finitely many points.
But the boundary of $\Delta$ is an infinite subset of that postcritical closure \cite{Mane1993}, a contradiction.
This proves the claim.

An immediate finite attracting or parabolic basin of a polynomial contains a finite critical point.
There are therefore no such basins for $p$.
Polynomial maps have no Herman rings, so every bounded periodic Fatou component is a Siegel disk.
Sullivan's theorem then places every bounded Fatou component eventually in a periodic Siegel disk.

Finally the highest homogeneous terms of $F$ are $(z^d,q_d(z,w))$.
When $z=0$ and $w\ne0$, $q_d(0,w)=c w^d\ne0$, so they have no common nonzero zero.
Thus $F$ is regular and Theorem~\ref{thm:elliptic-global} applies.
\end{proof}

\begin{example}\label{ex:double-siegel-global}
Choose irrational Brjuno numbers $\theta,\eta$, put $\lambda=e^{2\pi i\theta}$ and $\mu=e^{2\pi i\eta}$, and take $\beta\in\C\setminus\{0\}$.
For every $d\ge2$,
\[
 F(z,w)=\bigl(\lambda z+z^d,\,
                  \mu w+w^d+\beta z\bigr)
\]
has no wandering Fatou component on $\mathbb P^2$ by Corollary~\ref{cor:brjuno-base-global}.
Both the base map and the center-fiber map $w\mapsto\mu w+w^d$ have Siegel disks.
The same critical-orbit symmetry argument as above places all finite critical points of the center-fiber map in its Julia set.
Thus this family does not satisfy the critical-point attraction or parabolic-capture hypothesis of Peters and Raissy \cite[Theorem~1]{PetersRaissy}.

For an explicit quadratic member take $\lambda=e^{2\pi i\sqrt2}$, $\mu=e^{2\pi i\sqrt3}$, and $\beta=1$.
After translating $w$ by $\mu/2$, it may be written
\[
 F(z,w)=\left(\lambda z+z^2,\,
             w^2+\frac{\mu}{2}-\frac{\mu^2}{4}+z\right).
\]
Its center fiber has a Siegel fixed point at $w=\mu/2$ with multiplier $\mu$, and its unique finite critical point $w=0$ lies in the Julia set.
The example also has a nonempty open set of bounded orbits.
At $(0,\mu/2)$ the derivative is diagonalizable, with eigenvalues $\lambda$ and $\mu$.
For integers $(a,b)\ne(0,0)$, choose the nearest integer $k$ to $a\sqrt2+b\sqrt3$.
The nonzero algebraic integer $a\sqrt2+b\sqrt3-k$ has norm of absolute value at least one in $\mathbb Q(\sqrt2,\sqrt3)$, while each of its other three conjugates is $O(1+|a|+|b|)$.
Consequently
\[
 |\lambda^a\mu^b-1|\ge
 c(1+|a|+|b|)^{-3}.
\]
The spectrum is nonresonant and satisfies the multidimensional Brjuno condition.
Brjuno's linearization theorem, in the form recorded in \cite[Theorem~1.2]{RaissyBrjuno}, therefore conjugates the map locally to $(u,v)\mapsto(\lambda u,\mu v)$.
The image of a sufficiently small invariant bidisk is a two-dimensional open subset of the bounded-orbit Fatou set.
\end{example}

\begin{example}\label{ex:bfp-global}
Boc Thaler, Forn\ae ss and Peters introduced the regular cubic skew product \cite[Section~2]{BocThalerFornaessPeters2015}
\[
 F_{\mathrm{BFP}}(z,w)
 =\bigl(\lambda z+z^3,\,
        \lambda^{-1}(w+zw^2)+w^3\bigr),
 \qquad \lambda=e^{2\pi i\sqrt2}.
\]
Their Theorem~7 constructs an invariant Fatou component whose orbits have a punctured-disk limit set in the invariant axis $\{w=0\}$.
Their Remark~2.1 shows that the Siegel disk in the distinct central fiber $\{z=0\}$ is entirely contained in $\Jul(F_{\mathrm{BFP}})$ \cite[Theorem~7 and Remark~2.1]{BocThalerFornaessPeters2015}.
The bounded-type choice of $\sqrt2$ makes both one-dimensional maps $z\mapsto\lambda z+z^3$ and $w\mapsto\lambda^{-1}w+w^3$ linearizable at $0$, as required for their construction.
Since the fiber polynomial of $F_{\mathrm{BFP}}$ has total degree three and $w^3$ coefficient one, Corollary~\ref{cor:brjuno-base-global} implies that this same map has no wandering Fatou components on $\mathbb P^2$.
The global no-wandering conclusion is the application of our criterion; the invariant component and non-bulging Siegel disk are the results of Boc Thaler, Forn\ae ss and Peters.
\end{example}

\section*{Acknowledgments}
I am deeply grateful to my advisor, Zhangchi Chen, for his generous support and encouragement throughout this work.
I also thank him and Professor Guizhen Cui for their thoughtful comments and constructive suggestions, which have greatly improved the manuscript.

\section*{Use of artificial intelligence}
The author formulated the initial idea and guiding intuition and established the research strategy.
Working within this strategy and under the author's direction, ChatGPT (OpenAI) identified and developed the concrete arguments used in the proofs.
The author also used ChatGPT to assist with organizing and revising the manuscript and checking the mathematical arguments.
The author takes full responsibility for the mathematical content, the references, and the final presentation of the paper.

\end{document}